\documentclass[12pt, reqno]{amsart}
\usepackage[normalem]{ulem}
\usepackage{amsmath, amsthm, amssymb}
\usepackage{enumerate}
\usepackage[margin=1.0in]{geometry}
\usepackage{xcolor}
\definecolor{cite}{rgb}{0.30,0.60,1.00}
\definecolor{url}{rgb}{0.00,0.00,0.80}
\definecolor{link}{rgb}{0.40,0.10,0.20}

\usepackage{zref-clever}

\AddToHook{env/theorem/begin}{%
	\zcsetup{countertype={theorem=theorem}}}
\AddToHook{env/claim/begin}{%
	\zcsetup{countertype={theorem=theorem}}}  
\AddToHook{env/proposition/begin}{%
	\zcsetup{countertype={theorem=proposition}}}
\AddToHook{env/lemma/begin}{%
	\zcsetup{countertype={theorem=lemma}}}
\AddToHook{env/conjecture/begin}{%
	\zcsetup{countertype={theorem=theorem}}}  
\AddToHook{env/corollary/begin}{%
	\zcsetup{countertype={theorem=corollary}}}
\AddToHook{env/definition/begin}{%
	\zcsetup{countertype={theorem=definition}}}
\AddToHook{env/remark/begin}{%
	\zcsetup{countertype={theorem=remark}}}
\AddToHook{env/example/begin}{%
	\zcsetup{countertype={theorem=example}}}

\zcsetup{nameinlink=false}

\usepackage[pdfusetitle,colorlinks,linkcolor=link,urlcolor=url,citecolor=cite,pagebackref,breaklinks]{hyperref}
\usepackage{graphicx}
\usepackage{mathdots}
\usepackage{tikz-cd}
\usepackage{comment}
\usepackage{xypic}
\usepackage{mathtools}
\usepackage{float}

\newtheorem{theorem}{Theorem}[section]

\newtheorem{proposition}[theorem]{Proposition}
\newtheorem{lemma}[theorem]{Lemma}

\newtheorem{corollary}[theorem]{Corollary}
\theoremstyle{definition}

\theoremstyle{definition}
\newtheorem{remark}[theorem]{Remark}
\theoremstyle{definition}

\newcommand{\cref}[1]{\zcref{#1}}
\newcommand{\Cref}[1]{\zcref[S]{#1}}

\newcommand{\zIntegers}{\mathbb{Z}}
\newcommand{\rReal}{\mathbb{R}}
\newcommand{\cComplex}{\mathbb{C}}
\newcommand{\multiplicativegroup}[1]{#1^{\times}}

\newcommand{\detQuadratic}{{\det}_{\quadraticExtension}}

\newcommand{\Hom}{\mathrm{Hom}}
\newcommand{\EndomorphismRing}{\operatorname{End}}
\newcommand{\Span}{\mathrm{span}}

\newcommand{\Stab}{\mathrm{stab}}
\newcommand{\idmap}{\mathrm{id}}

\newcommand{\abs}[1]{\left|#1\right|}
\newcommand{\sizeof}[1]{\left|#1\right|}

\newcommand{\hermitianSpace}{\mathbb{V}}
\newcommand{\xIsotropic}{\mathbb{X}}
\newcommand{\yIsotropic}{\mathbb{Y}}

\newcommand{\GramMatrix}[1]{\operatorname{Gram}\left(#1\right)}

\newcommand{\gHermitianSpace}{\boldsymbol{\mathrm{V}}}
\newcommand{\gSubspace}[1]{\boldsymbol{\mathrm{#1}}}
\newcommand{\gxIsotropic}{\boldsymbol{\mathrm{X}}}
\newcommand{\gyIsotropic}{\boldsymbol{\mathrm{Y}}}
\newcommand{\gField}{\boldsymbol{\mathrm{F}}}
\newcommand{\gQuadraticExtension}{\boldsymbol{\mathrm{E}}}

\newcommand{\innerproduct}[2]{\left\langle #1,#2\right\rangle}

\newcommand{\standardForm}[2]{\left\langle #1,#2\right\rangle}

\newcommand{\fieldCharacter}{\psi}
\newcommand{\centralCharacter}[1]{\omega_{#1}}
\newcommand{\Ind}[3]{\mathrm{Ind}_{#1}^{#2}\left(#3\right)}
\newcommand{\ind}[3]{\mathrm{ind}_{#1}^{#2}\left(#3\right)}

\newcommand{\Contragredient}[1]{#1^{\vee}}

\newcommand{\SteinbergRepresentation}[2]{\mathrm{St}\left(#1, #2\right)}
\newcommand{\standardRepresentation}{\mathrm{std}}

\newcommand{\grpIndex}[2]{\left[#1:#2\right]}

\newcommand{\transpose}[1]{\, {}^{t}#1}

\newcommand{\involution}[1]{#1^{c}}
\newcommand{\minusInvolution}[1]{#1^{-c}}
\newcommand{\involutionPlusOne}[1]{#1^{1+c}}
\newcommand{\IdentityMatrix}[1]{I_{#1}}
\newcommand{\diag}{\mathrm{diag}}

\newcommand{\trace}{\operatorname{tr}}
\newcommand{\GL}{\mathrm{GL}}
\newcommand{\SL}{\mathrm{SL}}
\newcommand{\SO}{\mathrm{SO}}

\newcommand{\Sp}{\mathrm{Sp}}

\newcommand{\FieldNorm}[2]{\mathrm{N}_{#1:#2}}
\newcommand{\aFieldNorm}{\mathrm{N}}
\newcommand{\finiteField}{\mathbb{F}}
\newcommand{\quadraticExtension}{\mathbb{E}}
\newcommand{\finiteFieldExtension}[1]{\finiteField_{#1}}
\newcommand{\quadraticFieldExtension}[1]{\quadraticExtension_{#1}}
\newcommand{\NormOneGroup}[1]{\finiteFieldExtension{#1}^{\aFieldNorm = 1}}
\newcommand{\algebraicClosure}[1]{\overline{#1}}
\newcommand{\Galois}{\operatorname{Gal}}
\newcommand{\Frobenius}{\operatorname{Fr}}

\newcommand{\squareMatrix}{\operatorname{Mat}}

\newcommand{\dblJacobiSum}[2]{\mathcal{J}^{\mathrm{dbl}}\left(#1, #2\right)}

\newcommand{\dblJacobiSumScalar}[2]{\mathrm{J}^{\mathrm{dbl}}\left(#1, #2\right)}

\newcommand{\dblPreGammaFactor}[2]{\Gamma^{\mathrm{dbl}}\left(#1, #2\right)}
\newcommand{\dblPreLocalGammaFactor}[3]{\Gamma^{\mathrm{dbl}}\left(#1, #2, #3\right)}
\newcommand{\dblLocalGammaFactor}[4]{\gamma^{\mathrm{dbl}}\left(#1, #2 \times #3, #4\right)}
\newcommand{\dblEpsilonZeroFactor}[3]{\varepsilon_0^{\mathrm{dbl}}\left(#1 \times #2, #3\right)}

\newcommand{\GaussSumCharacter}[3]{\tau\left(#1 \times #2, #3\right)}
\newcommand{\GaussSumSingleCharacter}[2]{\tau\left(#1, #2\right)}

\newcommand{\IsometryGroup}{\mathrm{Isom}}
\newcommand{\IsometryLieAlgebra}{\operatorname{Lie} \mathrm{Isom}}
\newcommand{\lieAlgebra}{\mathfrak{g}}
\newcommand{\DeligneLusztigInduction}[2]{\mathrm{R}_{#1}^{#2}}

\newcommand{\posHermitianJacobiKernel}[2]{\Phi_{#1,#2}}
\newcommand{\doubleSpace}[1]{#1^{\Box}}
\newcommand{\plusSpace}[1]{#1^{+}}
\newcommand{\minusSpace}[1]{#1^{-}}
\newcommand{\diagSpace}[1]{#1^{\Delta}}
\newcommand{\antidiagSpace}[1]{#1^{\nabla}}

\newcommand{\coefficientsField}{\mathbb{K}}
\newcommand{\siegelDoublingParabolic}[1]{P_{\diagSpace{#1} \subset \doubleSpace{#1}}}
\newcommand{\siegelOpDoublingParabolic}[1]{P_{\antidiagSpace{#1} \subset \doubleSpace{#1}}}
\newcommand{\siegelDoublingUnipotent}[1]{N_{\diagSpace{#1} \subset \doubleSpace{#1}}}
\newcommand{\siegelDoublingLevi}[1]{L_{\diagSpace{#1} \subset \doubleSpace{#1}}}
\newcommand{\doublingIsotropicUnipotent}[2]{N_{\doubleSpace{#1} \subset \doubleSpace{#2}}}
\newcommand{\doublingIsotropicLevi}[2]{L_{\doubleSpace{#1} \subset \doubleSpace{#2}}}
\newcommand{\doublingIsotropicParabolic}[2]{P_{\doubleSpace{#1} \subset \doubleSpace{#2}}}
\newcommand{\degeneratePrincipalSeries}[2]{\mathrm{I}\left(#1, #2\right)}
\newcommand{\localdegeneratePrincipalSeries}[3]{\mathrm{I}\left(#1, #2, #3\right)}
\newcommand{\flatOperator}[2]{\Psi_{#1, #2}}
\newcommand{\iEmbedding}{\imath}
\newcommand{\intertwiningOperator}[2]{\mathrm{M}_{#1, #2}}

\newcommand{\zetaIntegral}{\mathrm{Z}}
\newcommand{\dualZetaIntegral}{\mathrm{Z}^{\ast}}
\newcommand{\adjointHermitian}[1]{#1^{\dagger}}

\newcommand{\weylDoubleHermitian}[1]{w_{\doubleSpace{#1}}}

\newcommand{\gmatrix}{\mathbf g}
\newcommand{\Lemb}[2]{\ell\left(#1,#2\right)}

\newcommand{\localField}{F}
\newcommand{\localQuadraticExtension}{E}
\newcommand{\ringOfIntegers}{\mathfrak{o}_{\localField}}
\newcommand{\quadraticRingOfIntegers}{\mathfrak{o}_{\localQuadraticExtension}}
\newcommand{\quadraticQuotientMap}{\nu_{\localQuadraticExtension}}
\newcommand{\quotientMap}{\nu_{\localField}}
\newcommand{\uniformizer}{\varpi}
\newcommand{\maximalIdeal}{\mathfrak{p}_{\localField}}
\newcommand{\quadraticMaximalIdeal}{\mathfrak{p}_{\localQuadraticExtension}}
\newcommand{\localHermitianSpace}{\mathrm{V}}

\newcommand{\maximalCompact}{\mathcal{K}}
\newcommand{\residueFieldCardinality}{q_{\finiteField}}
\newcommand{\quadraticResidueFieldCardinality}{q_{\quadraticExtension}}
\newcommand{\characterLift}{\mathcal{X}}
\newcommand{\depthZero}{\mathcal{T}}
\newcommand{\liftOf}{\mathcal{L}}
\newcommand{\transfer}[1]{{#1}^{\natural}}
\newcommand{\differential}{\mathrm{d}}
\newcommand{\mdifferential}{\differential^{\times}}
\newcommand{\detSpace}[1]{\operatorname{det}_{#1}}
\newcommand{\kappaCharacter}[1]{\kappa_{#1}}
\newcommand{\unramified}{\mathrm{unr}}

\newcommand{\discriminant}{\mathrm{disc}}
\newcommand{\principalSeries}{\mathrm{PS}}
\newcommand{\borel}{\mathrm{B}}
\newcommand{\restrictionOperator}{\operatorname{Res}}
\newcommand{\Frohlich}{Fr\"ohlich}
\newcommand{\qRational}{\mathbb{Q}}
\newcommand{\charactergroup}[1]{\widehat{\multiplicativegroup{\finiteFieldExtension{#1}}}}
\newcommand{\LGroup}[1]{ {}^{L}{#1}}

\hypersetup{pdfauthor={Johannes Girsch, Elad Zelingher},
	pdfsubject={Number theory, Representation theory},
	pdfkeywords={Doubling method, Representation theory}}

\title[Doubling method for depth zero representations]{On classical doubling method gamma factors for certain depth zero representations}

\author{Johannes Girsch}
\address{Department of Mathematical Sciences, Durham University, DH1 3LE, United Kingdom.}
\email{johannes.girsch@live.de}

\author{Elad Zelingher}
\address{Department of Mathematics, University of Michigan, 1844 East Hall, 530 Church Street, Ann Arbor, MI 48109-1043 USA}
\email{eladz@umich.edu}

\subjclass[2020]{11F70, 11F66, 20C33, 11L05, 11T24}

\begin{document}

\begin{abstract}
Piatetski-Shapiro--Rallis discovered an integral representation construction, known as the doubling method, for the tensor product $L$-function of a cuspidal automorphic representation of $G \times \GL_1$, where $G$ is a classical group. Lapid--Rallis defined and studied the counterpart local factors. In this article, following Lapid--Rallis, we define and study an analogous doubling method gamma factor associated to irreducible representations of classical finite groups of Lie type. We prove that this gamma factor is multiplicative and use results of Yost-Wolff--Zelingher to give explicit formulas for it in terms of the Deligne--Lusztig data of the representation in the non-conjugate-dual character case. Finally, we relate our construction to the local construction of Lapid--Rallis via certain depth zero supercuspidal representations of classical groups.
\end{abstract}
\dedicatory{\bf To David Soudry, with admiration}

\maketitle

\section{Introduction}
$L$-functions and $\varepsilon$-factors are central objects in number theory and in the theory of automorphic representations. Given an irreducible automorphic representation $\Pi$ of a reductive algebraic group $G$, the most basic $L$-function one can associate to $\Pi$ is the standard $L$-function. For general linear groups, integral representations for the standard $L$-function are originally due to Godement--Jacquet \cite{GodementJacquet1972}. Their construction is a natural generalization of the one given by Tate for $\GL_1$ in \cite{Tate1967} and works for any cuspidal automorphic representation of a general linear group.

In the 1980s, Piatetski-Shapiro--Rallis discovered an integral representation for the $L$-function associated with the standard representation of a classical group $G$ and, more generally, with the tensor product $L$-function of $G \times \GL_1$. This construction is often referred to as ``the (classical) doubling method'' \cite[Part A]{GelbartPiatetskiShapiroRallis1987}, \cite{PiatetskiShapiroRallis1986}. Unlike many other constructions, this construction works for any cuspidal automorphic representation $\Pi$, and does not require $\Pi$ to admit a model of any sort. Moreover, this construction is uniform, in the sense that the zeta integral used in the construction has the same form for all classical groups. Since its discovery, the doubling method has found many important applications, including theta dichotomy \cite{HarrisKudlaSweet1996}, the Rallis inner product formula \cite{GanQiuTakeda2014}, and constructions of $p$-adic standard $L$-functions \cite{DisegniLiu2024,EischenHarrisLiSkinner2020}.

Integral representations of $L$-functions have local analogs. These local theories study objects called ``gamma factors'', which in some sense are the local objects playing the role of $L$-functions. For example, similar to global $L$-functions, gamma factors fit into functional equations. Having a well developed local theory of gamma factors is crucial for defining and studying their global counterpart, the complete $L$-function.

In the case of the classical doubling method construction of Piatetski-Shapiro--Rallis, the local theory was developed by Lapid--Rallis \cite{LapidRallis2005}. It allows one to define gamma, local $L$- and $\epsilon$-factors (the so called local constants) of representations of $G \times \GL_1$ for a classical group $G$ over a local field. As above, this construction works for any admissible irreducible representation of $G$, without requiring it to admit a model of any sort. The local theory of Lapid--Rallis was recently adapted by the first author for representations in algebraic families \cite{Girsch2023}.

It is interesting to ask for explicit formulas for local factors of irreducible representations under a given parameterization. A full classification of the irreducible representations of $\GL_n$ with complex coefficients is now known mainly due to works of Bernstein--Zelevinsky \cite{BernsteinZelevinsky1976, BernsteinZelevisnky1977, Zelevinsky1980} and Bushnell--Kutzko \cite{BushnellKutzko1993}. Thanks to their multiplicativity property, it suffices to compute gamma factors for irreducible \emph{supercuspidal} representations. The local Godement--Jacquet factors were explicitly computed by Bushnell and Bushnell--\Frohlich{} for many irreducible representations of general linear groups with complex coefficients. We refer the reader to \cite{Bushnell1991} where an explicit formula is given for supercuspidal representations.

For classical groups, a classification of irreducible representations with complex coefficients is now known due to the Langlands classification \cite{Konno2003} (see also \cite[Page 523]{LapidTadic2020} and see \cite{Jantzen2014} for a classification of irreducible tempered representations) and the work of Stevens \cite{Stevens2008} (if the residue characteristic of the local field is odd). It is then natural to compute the doubling method gamma factors for irreducible supercuspidal representations explicitly, and to verify that these factors agree with those that are predicted by Langlands functoriality.

In this work we make a step in this direction. We compute the doubling method gamma factor for certain irreducible \emph{depth zero supercuspidal} representations. A first attempt at this for the standard doubling gamma factors was given previously by Kim in his doctoral thesis \cite{Kim2000}.

We begin by developing a finite field analog of the local theory of Lapid--Rallis for the doubling method gamma factor for certain classical groups of Lie type defined over a finite field of odd order $q$. This was previously done for representations with complex coefficients of symplectic groups and general linear groups in the unpublished doctoral thesis of Chang \cite{Chang1997}. Let $\coefficientsField$ be an algebraically closed field whose characteristic does not divide $q$, which serves as the field of coefficients over which our representations and characters are defined. By using a multiplicity one result and the analog of the doubling method zeta integrals, we define a constant $\dblPreGammaFactor{\pi}{\chi} \in \multiplicativegroup{\coefficientsField}$ for any irreducible representation $\pi$ of the finite isometry group $\IsometryGroup\left(\hermitianSpace\right)$ corresponding to a nondegenerate $\epsilon$-sesquilinear space $\left(\hermitianSpace, \innerproduct{\cdot}{\cdot}\right)$, and for any character $\chi \colon Z\left(\GL\left(\hermitianSpace\right)\right) \to \multiplicativegroup{\coefficientsField}$ of the center $Z\left(\GL\left(\hermitianSpace\right)\right)$. We explicitly compute $\dblPreGammaFactor{\pi}{\chi}$ and obtain the following expression (see \eqref{eq:def-fin-gamma}).
\begin{theorem}For any irreducible representation $\pi$ of $\IsometryGroup\left(\hermitianSpace\right)$ and any character $\chi$ of $Z\left(\GL\left(\hermitianSpace\right)\right)$ we have
	$$\dblPreGammaFactor{\pi}{\chi} = \centralCharacter{\pi}\left(-1\right) \chi\left(-2\right)^{-\dim \hermitianSpace} \dblJacobiSumScalar{\pi}{\chi},$$
	where $$\dblJacobiSumScalar{\pi}{\chi} \cdot \idmap_{\pi} = \dblJacobiSum{\pi}{\chi} = \frac{1}{\sqrt{\sizeof{\lieAlgebra}}}\sum_{\substack{g \in \IsometryGroup^0\left(\hermitianSpace\right), h \in \GL\left(\hermitianSpace\right)\\
	-g+h = \idmap_{\hermitianSpace}}} \pi\left(g\right) \chi\left(\det h\right) \in \Hom_{\IsometryGroup\left(\hermitianSpace\right)}\left(\pi, \pi\right).$$
Here $\centralCharacter{\pi}$ is the central character of $\pi$, $\IsometryGroup^0\left(\hermitianSpace\right)$ is the connected component of $\IsometryGroup\left(\hermitianSpace\right)$ and $\lieAlgebra$ is the Lie algebra of $\IsometryGroup\left(\hermitianSpace\right)$.
\end{theorem}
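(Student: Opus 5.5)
We would follow the Lapid--Rallis template directly: fix a test section in the degenerate principal series on which both sides of the functional equation \eqref{eq:def-fin-gamma} can be evaluated by hand, and read off the scalar. Recall the setup. The doubled space is $\doubleSpace{\hermitianSpace} = \hermitianSpace \oplus \left(-\hermitianSpace\right)$, and $\iEmbedding \colon \IsometryGroup\left(\hermitianSpace\right) \times \IsometryGroup\left(\hermitianSpace\right) \to \IsometryGroup\left(\doubleSpace{\hermitianSpace}\right)$. The diagonal $\diagSpace{\hermitianSpace}$ and antidiagonal $\antidiagSpace{\hermitianSpace}$ are complementary maximal isotropic subspaces, stabilized by the Siegel parabolics $\siegelDoublingParabolic{\hermitianSpace}$ and $\siegelOpDoublingParabolic{\hermitianSpace}$. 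The zeta integral pairs a matrix coefficient $\innerproduct{\pi(g)v}{\Contragredient{v}}$ against $f\left(\iEmbedding(g,1)\right)$ for $f \in \degeneratePrincipalSeries{\hermitianSpace}{\chi}$. The dual integral does the same with $\intertwiningOperator{\hermitianSpace}{\chi} f$. By multiplicity one, $\dblPreGammaFactor{\pi}{\chi}$ is the ratio of the two for any $f$ with nonzero zeta integral.

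\textbf{Step 1: choice of section.} We use the fact that $\siegelDoublingParabolic{\hermitianSpace}\,\iEmbedding\left(\IsometryGroup(\hermitianSpace)\times 1\right)$ is the open double coset. Its $\iEmbedding$-stabilizer is the diagonal copy of $\IsometryGroup(\hermitianSpace)$, and $\siegelDoublingParabolic{\hermitianSpace} \cap \siegelDoublingUnipotent{\hermitianSpace}^{\mathrm{op}}$ is trivial. We take $f$ to be the section supported on $\siegelDoublingParabolic{\hermitianSpace}\cdot \iEmbedding(1,1)$ with $f\left(\iEmbedding(1,1)\right)=1$. Then $f\left(\iEmbedding(g,1)\right)$ vanishes unless $g=1$, so $\zetaIntegral(v,\Contragredient{v},f) = \innerproduct{v}{\Contragredient{v}}$, up to the normalization $\sizeof{\IsometryGroup}^{-1}$ if one is used.

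\textbf{Step 2: the intertwined side.} Here $\intertwiningOperator{\hermitianSpace}{\chi}f(x)$ is a sum of $f(w n x)$ over $n$ in the unipotent radical of $\siegelOpDoublingParabolic{\hermitianSpace}$, with $w$ exchanging $\diagSpace{\hermitianSpace}$ and $\antidiagSpace{\hermitianSpace}$. We must evaluate $\intertwiningOperator{\hermitianSpace}{\chi}f\left(\iEmbedding(g,1)\right)$. The radical is parametrized by $X \in \Hom\left(\antidiagSpace{\hermitianSpace},\diagSpace{\hermitianSpace}\right)$ satisfying an $\epsilon$-skew condition. Identifying $\antidiagSpace{\hermitianSpace}$ and $\diagSpace{\hermitianSpace}$ with $\hermitianSpace$ via $v\mapsto (v,\mp v)$ turns this set into $\lieAlgebra$; this is the source of the $\sqrt{\sizeof{\lieAlgebra}}$ normalization. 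The condition $w n \iEmbedding(g,1) \in \siegelDoublingParabolic{\hermitianSpace}\iEmbedding(1,1)$ forces $n\iEmbedding(g,1)$ into a single coset. Writing the Levi component $h\in\GL(\diagSpace{\hermitianSpace})\cong \GL(\hermitianSpace)$, the condition becomes a Cayley-transform relation between $g$ and $X$, namely $h = 1+g$ after identifications, with the skewness of $X$ equivalent to $g$ being an isometry. The value of $f$ there is $\chi(\det h)$, times fixed factors coming from $w$ and from the identification of $\antidiagSpace{\hermitianSpace}$ with $\diagSpace{\hermitianSpace}$. These factors should produce $\chi(-2)^{-\dim\hermitianSpace}$ (from $(v,v)\pm(v,-v) = 2(v,0)$) and a sign $g\mapsto -g$. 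The sign turns $1+g$ into $-g+h=\idmap_{\hermitianSpace}$, and it contributes $\centralCharacter{\pi}(-1)$ when moved through $\pi$.

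\textbf{Step 3: assembling.} Summing the dual zeta integral over $g$ gives $\innerproduct{\mathcal{J} v}{\Contragredient{v}}$, with $\mathcal{J}$ the operator sum in the statement up to the constants above. Since the sum is conjugation-invariant, Schur's lemma shows $\mathcal{J} = \dblJacobiSumScalar{\pi}{\chi}\cdot\idmap_\pi$. Comparing with Step 1 yields the formula.

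We must also explain why only $\IsometryGroup^0(\hermitianSpace)$ appears. The relation $-g+h=1$ with $h$ invertible should force $\det(1+g)\ne0$. For orthogonal groups, a Cayley argument shows such $g$ (those arising from the big cell) lie in $\SO$. We expect the main obstacle to be the bookkeeping in Step 2: matching the exact parametrization of the unipotent radical, the Weyl element $w$, and the Levi identification so that the constants come out precisely as $\centralCharacter{\pi}(-1)\chi(-2)^{-\dim\hermitianSpace}$ uniformly for all $\epsilon$. We would first do the computation for $\SL_2=\Sp_2$, then carry it out in block-matrix coordinates adapted to $\diagSpace{\hermitianSpace}\oplus\antidiagSpace{\hermitianSpace}$.
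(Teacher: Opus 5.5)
Your strategy is the one the paper uses. The section $f_0$ supported on $\siegelDoublingParabolic{\hermitianSpace}$ with $f_0(\idmap)=1$ gives $\zetaIntegral_{\hermitianSpace,\pi}(f_0)=\idmap_{\pi}$, because $\iEmbedding(g,\idmap_{\hermitianSpace})\in\siegelDoublingParabolic{\hermitianSpace}$ only for $g=\idmap_{\hermitianSpace}$. On the dual side one substitutes $g\mapsto -g$, which produces $\centralCharacter{\pi}(-1)$, and solves $\weylDoubleHermitian{\hermitianSpace}u=p\,\iEmbedding\left(-g^{-1},\idmap_{\hermitianSpace}\right)$ in block coordinates. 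This yields the Cayley parametrization and shows that the Levi part of $p$ is conjugate to $-\tfrac12\adjointHermitian{\left(g^{-1}+\idmap_{\hermitianSpace}\right)}$. With $d=\dim_{\quadraticExtension}\hermitianSpace$, the term for $g$ is therefore $\frac{1}{\sqrt{\sizeof{\lieAlgebra}}}\chi(-2)^{-d}\,\involution{\chi}\left(\detSpace{\hermitianSpace}\left(g^{-1}+\idmap_{\hermitianSpace}\right)\right)=\frac{1}{\sqrt{\sizeof{\lieAlgebra}}}\chi(-2)^{-d}\,\chi\left(\detSpace{\hermitianSpace}\left(\idmap_{\hermitianSpace}+g\right)\right)$, using $\adjointHermitian{g}=g^{-1}$. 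That is the precise content of your ``fixed factors''.

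Three points need repair, though none changes the route.

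\textbf{The intertwining operator.} It sums $f(\weylDoubleHermitian{\hermitianSpace}ux)$ over $u$ in the radical $\siegelDoublingUnipotent{\hermitianSpace}$ of $\siegelDoublingParabolic{\hermitianSpace}$ itself, not over the radical of $\siegelOpDoublingParabolic{\hermitianSpace}$. For $n$ in the opposite radical, $\weylDoubleHermitian{\hermitianSpace}n\weylDoubleHermitian{\hermitianSpace}^{-1}\in\siegelDoublingUnipotent{\hermitianSpace}$, so every term equals $f(\weylDoubleHermitian{\hermitianSpace}x)$ and the sum degenerates. Your parametrization by $\Hom\left(\antidiagSpace{\hermitianSpace},\diagSpace{\hermitianSpace}\right)$ does describe the correct radical, so this is a slip.

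\textbf{Multiplicity one.} You cannot use it to define $\dblPreGammaFactor{\pi}{\chi}$ as the ratio ``for any $f$''. The multiplicity one theorem needs cuspidal-support hypotheses, and it fails for trivial $\pi$ and trivial $\chi$, where the space of invariant forms has dimension $r+1$ with $r$ the Witt index. For arbitrary irreducible $\pi$ the paper \emph{defines} $\dblPreGammaFactor{\pi}{\chi}$ as the scalar by which $\dualZetaIntegral_{\hermitianSpace,\pi}(f_0)$ acts. That definition is legitimate only because this operator is scalar, which your class-function and Schur argument in Step 3 supplies. The statement for general $\pi$ is then this definition combined with your computation.

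\textbf{Uniqueness of $u$.} To get exactly one term per $g$, you need a \emph{unique} $u$ satisfying the support condition for every $g\in\IsometryGroup^0\left(\hermitianSpace\right)$ with $\detSpace{\hermitianSpace}\left(\idmap_{\hermitianSpace}+g\right)\neq 0$. So the Cayley map must be a bijection from the admissible $X$ onto this set, not merely land inside it. In the orthogonal case the restriction to $\IsometryGroup^0$ is then automatic. There $\det\adjointHermitian{T}=\det T$, so for $g\in\IsometryGroup\left(\hermitianSpace\right)$ one has $\det(\idmap+g)=\det(g)\det\left(\adjointHermitian{(\idmap+g)}\right)=\det(g)\det(\idmap+g)$. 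Hence $\det g=-1$ forces $\det(\idmap+g)=0$.
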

We call the character sum $\dblJacobiSum{\pi}{\chi}$ a \emph{non-abelian Jacobi sum} (see \Cref{subsec:jacobi-sums}). When $\chi^{1+q} \ne 1$, we show that these Jacobi sums, and thus our gamma factors, satisfy a multiplicativity property (\Cref{thm:identity-of-jacobi-sums}). If $\coefficientsField = \cComplex$ we can go further and these Jacobi sums are studied and explicitly computed by Yost-Wolff and the second author in \cite{YostWolffZelingher2025}, where the multiplicativity property mentioned above is used as a key ingredient. It is very likely that the techniques of \cite{YostWolffZelingher2025} can be generalized to the banal case, i.e., where the characteristic of $\coefficientsField$ does not divide the cardinality of $\IsometryGroup\left(\hermitianSpace\right)$.

Next, we show that our finite field doubling method gamma factors are related to certain irreducible depth zero supercuspidal representations of an analogous classical $p$-adic group $\IsometryGroup\left(\localHermitianSpace\right)$. Similarly to \cite{YeZelingher2020, Zelingher2024}, this is done by constructing lifts of objects defined over the finite field to the non-archimedean local field. Our first result concerns the non-conjugate-dual case, i.e., when $\chi^{1+q} \ne 1$. In this case we are able to compute the gamma factor explicitly in terms of the finite field gamma factor (\Cref{thm:epsilon-zero-equation-for-chi-1-plus-c-is-non-trivial}). Moreover, when $\coefficientsField = \cComplex$, by using the results of \cite{YostWolffZelingher2025}, we explicitly express the gamma factor in terms of Gauss sums attached to the Deligne--Lusztig data of $\pi$.
\begin{theorem}\label{thm:main-theorem-chi-1-plus-c-not-1}
	Let $\Pi$ be an irreducible depth zero supercuspidal representation of $\IsometryGroup\left(\localHermitianSpace\right)$ constructed from an irreducible cuspidal representation $\pi$ of $\IsometryGroup\left(\hermitianSpace\right)$, and let $\characterLift_{\chi, t} \colon Z\left(\GL\left(\localHermitianSpace\right)\right) \to \multiplicativegroup{\coefficientsField}$ be a depth zero character constructed from a character $\chi\colon Z\left(\GL\left(\hermitianSpace\right)\right)\to\multiplicativegroup{\coefficientsField}$. Suppose that $\chi^{1+q} \ne 1$. Then we have the equality of the doubling method gamma factors
	$$\dblPreLocalGammaFactor{X}{\Pi}{\characterLift_{\chi, t}} = \dblPreGammaFactor{\pi}{\chi}\in\multiplicativegroup{\coefficientsField}.$$
	In this case, the normalized gamma factor satisfies $\dblLocalGammaFactor{X}{\Pi}{\characterLift_{\chi, t}}{\fieldCharacter} = \dblEpsilonZeroFactor{\pi}{\chi}{\fieldCharacter}$ and
	$$\dblEpsilonZeroFactor{\pi}{\chi}{\fieldCharacter} = \beta^{\ast}_{\hermitianSpace}\left(\chi, \fieldCharacter\right) \dblJacobiSumScalar{\pi}{\chi},$$
	where $\beta^{\ast}_{\hermitianSpace}\left(\chi, \fieldCharacter\right)$ is an explicit factor given in \Cref{prop:constant-coefficient-of-beta}, and where $\fieldCharacter \colon F \to \multiplicativegroup{\coefficientsField}$ is a non-trivial additive character with conductor $\maximalIdeal$.
	
	Here, when $\coefficientsField = \cComplex$, the doubling method gamma factors from \cite{PiatetskiShapiroRallis1986} and \cite{LapidRallis2005} are obtained by substituting $X = q^{-s}$.
	
	Moreover, if $\coefficientsField = \cComplex$ and $\innerproduct{\trace \pi \restriction_{\IsometryGroup^0\left(\hermitianSpace\right)}}{R_{T}^{\IsometryGroup^{0}\left(\hermitianSpace\right)} \theta} \ne 0$ for a character $\theta \colon T \to \multiplicativegroup{\cComplex}$ of a maximal anisotropic torus $T$ of $\IsometryGroup^0\left(\hermitianSpace\right)$ then 
	\begin{equation}\label{eq:main-theorem-epsilon-zero-explicit-computation}
		\begin{split}
			\dblEpsilonZeroFactor{\pi}{\chi}{\fieldCharacter} =& \left(-1\right)^{\mathrm{rel.rank} \transfer{T}} q^{-\left\lfloor\frac{\grpIndex{\quadraticExtension}{\finiteField} \cdot \dim_{\quadraticExtension} \hermitianSpace}{2}\right\rfloor} \sum_{\transfer{t} \in \transfer{T}} \left(\transfer{\theta}\right)^{-1}\left(\transfer{t}\right) \chi^{-1}\left(\det \transfer{t}\right) \fieldCharacter\left(\trace \transfer{t}\right) \\
			& \times \begin{dcases}
				q^{-1/2}\sum_{x \in \multiplicativegroup{\finiteField}} \chi^{-1}\left(x\right) \fieldCharacter\left(x\right) &\text{if } \hermitianSpace \text{ is symplectic,}\\
				\left(-1\right)^{\dim_{\quadraticExtension}\hermitianSpace} &\text{if }\hermitianSpace \text{ is hermitian,}\\
				1 &\text{if } \hermitianSpace \text{ is symmetric.}\\
			\end{dcases}
		\end{split}
	\end{equation}
	See \Cref{thm:explicit-computation-of-jacobi-sum} for the notation.
\end{theorem}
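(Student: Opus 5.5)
The plan is to compute the Lapid--Rallis zeta integral for $\Pi$ with a carefully chosen test section and matrix coefficient, both supported near the maximal compact $\maximalCompact = \IsometryGroup\left(\localHermitianSpace\right)(\ringOfIntegers)$ whose reduction gives $\IsometryGroup\left(\hermitianSpace\right)$. The supercuspidal $\Pi$ is compactly induced from the inflation of $\pi$ (twisted by the centre) on $\maximalCompact$. I would take its matrix coefficient to be the inflation of a matrix coefficient of $\pi$ extended by zero, and set $\characterLift_{\chi,t}$ to be the tame lift of $\chi$ with $\characterLift_{\chi,t}(\uniformizer) = t$. For the section I would use an element of the local degenerate principal series $\localdegeneratePrincipalSeries{\cdot}{\cdot}{\cdot}$ on $\IsometryGroup\left(\doubleSpace{\localHermitianSpace}\right)$ supported on $P\cdot \iEmbedding(\maximalCompact\times\maximalCompact)$ times an Iwahori-type subgroup. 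Its values are read off from a finite-field section in $\degeneratePrincipalSeries{\cdot}{\chi}$. With these choices the zeta integral $\zetaIntegral$ collapses to a finite sum over $\IsometryGroup\left(\hermitianSpace\right)$, and it equals (up to volume normalisation) the finite field zeta integral of $\pi$ against the finite section. This step is formal once the supports are checked: the integrand vanishes off $\maximalCompact$ because the matrix coefficient is compactly supported mod centre and the section is supported on the big cell.

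The second step is to compute the local intertwining operator $\intertwiningOperator{\cdot}{\cdot}$ applied to this lifted section, and then the dual zeta integral $\dualZetaIntegral$. I would decompose the integral over the unipotent radical $\siegelDoublingUnipotent{\localHermitianSpace}$ (identified with $\epsilon$-hermitian matrices over $F$) into the shells $\uniformizer^{-k}\squareMatrix(\ringOfIntegers)\setminus \uniformizer^{-k+1}\squareMatrix(\ringOfIntegers)$. On each shell the inner integral becomes a character sum of $\chi$ composed with determinants over the residue field. The hypothesis $\chi^{1+q}\ne 1$ (non-conjugate-dual) should force all shells with $k\geq 1$ to cancel, because the relevant sum over the centre (or over norms) of a nontrivial character vanishes. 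Only the level-zero contribution then survives. That contribution is exactly the finite field intertwining operator $\flatOperator{\cdot}{\cdot}$ applied to the finite section, so the result is a constant independent of $X$. Comparing with the definition of $\dblPreLocalGammaFactor{X}{\Pi}{\characterLift_{\chi,t}}$ as the ratio of $\dualZetaIntegral$ to $\zetaIntegral$, and with the finite definition of $\dblPreGammaFactor{\pi}{\chi}$, gives the first equality. I expect this vanishing of the higher shells to be the main obstacle. To handle it I would first treat the rank-one case and then reduce to it via the block structure, using elementary divisors $\diag(\uniformizer^{a_i})$ of hermitian matrices.

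For the normalised factor, $\dblLocalGammaFactor{X}{\Pi}{\characterLift_{\chi,t}}{\fieldCharacter}$ is $\dblPreLocalGammaFactor{X}{\Pi}{\characterLift_{\chi,t}}$ times the Lapid--Rallis normalising factor. That factor is a product of abelian Tate gamma factors of $\characterLift_{\chi,t}$ and its restrictions. For a tamely ramified character with $\chi^{1+q}\neq 1$ these are constants (Gauss sums over the residue field), and they combine into the factor $\beta^{\ast}_{\hermitianSpace}(\chi,\fieldCharacter)$ of \Cref{prop:constant-coefficient-of-beta}. Multiplying by $\omega_\pi(-1)\chi(-2)^{-\dim\hermitianSpace}$ from the first theorem gives the formula $\beta^{\ast}_{\hermitianSpace}\dblJacobiSumScalar{\pi}{\chi}$.

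Finally, for \eqref{eq:main-theorem-epsilon-zero-explicit-computation} I would substitute the explicit evaluation of $\dblJacobiSumScalar{\pi}{\chi}$ from \Cref{thm:explicit-computation-of-jacobi-sum}, which is the Yost-Wolff--Zelingher computation via the multiplicativity \Cref{thm:identity-of-jacobi-sums} and Deligne--Lusztig data. This expresses the Jacobi sum as a torus Gauss sum over $\transfer{T}$ with sign $(-1)^{\mathrm{rel.rank}\,\transfer{T}}$. What remains is to match powers of $q$ and the case-dependent constants in $\beta^{\ast}$: a $\GL_1$ Gauss sum in the symplectic case, the sign $(-1)^{\dim}$ in the hermitian case, and $1$ in the symmetric case. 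This final matching is routine bookkeeping.
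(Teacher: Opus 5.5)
Your architecture matches the paper's, and steps one, three and four are fine:
\begin{itemize}
\item take the flat lift $\liftOf{f}$ and lifts of $v,v^{\vee}$ supported on $\maximalCompact$, so the local zeta integral equals $\innerproduct{\zetaIntegral_{\hermitianSpace,\pi}(f)v}{v^{\vee}}$;
\item prove $\intertwiningOperator{\localHermitianSpace}{\characterLift_{\chi,t}}\liftOf{f}=\liftOf{\left(\intertwiningOperator{\hermitianSpace}{\chi}f\right)}$;
\item note that $\beta_{\localHermitianSpace}$ is the constant $\beta^{\ast}_{\hermitianSpace}(\chi,\fieldCharacter)$;
\item insert \Cref{thm:explicit-computation-of-jacobi-sum}.
\end{itemize}
There are two small slips. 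The collapse of the zeta integral comes from the matrix coefficient of the lifted vectors being supported on $\maximalCompact$ itself. The centre of $\IsometryGroup(\localHermitianSpace)$ is compact, so no central twist and no support condition on the section are needed. Also, the finite operator that appears is $\intertwiningOperator{\hermitianSpace}{\chi}$, not $\flatOperator{\hermitianSpace}{\chi}$.

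The gap is the second step, which carries all the content. You must show $\int_{\siegelDoublingUnipotent{\localHermitianSpace}\setminus\siegelDoublingUnipotent{\localHermitianSpace}(\quadraticRingOfIntegers)}\liftOf{f}(\weylDoubleHermitian{\localHermitianSpace}uh)\,\mdifferential u=0$ for $h\in\maximalCompact_{\doubleSpace{\localHermitianSpace}}$, and the mechanism you propose does not give this.
\begin{itemize}
\item A shell $\uniformizer^{-k}\squareMatrix(\quadraticRingOfIntegers)\setminus\uniformizer^{-k+1}\squareMatrix(\quadraticRingOfIntegers)$ mixes all elementary-divisor types. For such $u$, the reduction of the $\maximalCompact_{\doubleSpace{\localHermitianSpace}}$-component of $\weylDoubleHermitian{\localHermitianSpace}u$ lies in an intermediate Bruhat cell that depends on the integral part of $u$. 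So the shell integral is not a residue-field character sum.
\item Averaging over the centre of the Levi cannot produce zero. It imposes exactly the equivariance already satisfied by the surviving big-cell functional $f\mapsto\sum_{u\in\siegelDoublingUnipotent{\hermitianSpace}}f(\weylDoubleHermitian{\hermitianSpace}u)$, which is nonzero.
\item Rescaling the whole polar part of $u$ (of rank $r$) by $x\in\multiplicativegroup{\ringOfIntegers}$ only multiplies the integrand by $\chi(x)^{\pm r}$. This can be trivial under the hypothesis, e.g.\ in the unitary case with $\chi\restriction_{\multiplicativegroup{\finiteField}}$ quadratic and $r=2$.
\end{itemize}

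The missing idea is to isolate one polar direction at a time. The paper does this by factoring both intertwining operators into rank-one operators along a reduced word, \eqref{eq:decomposition-of-intertwining-operator-into-reflections} and \eqref{eq:decomposition-of-intertwining-operator-into-reflections-finite-field}, with the same scalar $\chi(2)^{-d}\chi^{-1}(\detQuadratic S)$.
\begin{itemize}
\item In each rank-one integral, the part $\abs{x}_{\localField}>1$ is a geometric series in $X$, times the integral over units of the character attached to that reflection, times $f(\idmap)$.
\item Hence $M_i\liftOf{f}=\liftOf{(m_if)}$ exactly when that character is nontrivial on units (\Cref{lem:simple-reflection-operator-gl-reflection}, \Cref{lem:simple-reflection-operator-central-reflection}).
\item Along the word, the characters met are always $\involutionPlusOne{\chi}$ (for $s_1,\dots,s_{d-1}$) or $\chi\restriction_{\multiplicativegroup{\finiteField}}$ (for $s_d$ when $\localField_{\localHermitianSpace}\neq 0$). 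Both are nontrivial when $\involutionPlusOne{\chi}\neq1$.
\end{itemize}

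Your elementary-divisor route can be repaired, but only by essentially redoing this. You would parametrize each $\maximalCompact$-orbit in $\siegelDoublingUnipotent{\localHermitianSpace}$ as $kD(\epsilon)k^{\ast}$ and check that $\differential k\,\differential\epsilon$ pushes forward to a multiple of Haar measure. Then split $\weylDoubleHermitian{\localHermitianSpace}u(D)$ into commuting rank-one factors and discard the polar $\maximalCompact_{\doubleSpace{\localHermitianSpace},+}$-parts using normality. Finally, integrate over a single polar unit $\epsilon_i$.

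A shorter alternative works directly. Set $T(f)=\int_{\siegelDoublingUnipotent{\localHermitianSpace}\setminus\siegelDoublingUnipotent{\localHermitianSpace}(\quadraticRingOfIntegers)}\liftOf{f}(\weylDoubleHermitian{\localHermitianSpace}u)\,\mdifferential u$.
\begin{itemize}
\item $T$ satisfies $T(R(p)f)=\minusInvolution{\chi}_{\diagSpace{\hermitianSpace}}(p)\,T(f)$ for $p\in\siegelDoublingParabolic{\hermitianSpace}$.
\item By Mackey theory and $\involutionPlusOne{\chi}\neq1$ (using torus elements as in the proof of \Cref{thm:embedding-of-degenerate-ps-to-parabolic-induction-of-degenerate-ps}), $T$ is a multiple of the big-cell functional.
\item But $T$ kills sections supported on the big cell. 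For non-integral $u$, the reduction of the Lagrangian $u^{-1}\antidiagSpace{\localHermitianSpace}$ meets $\diagSpace{\hermitianSpace}$ nontrivially.
\item Hence $T=0$.
\end{itemize}
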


When $\chi^{1+q} = 1$, the expression for the gamma factor is more involved. For $\chi = 1$ this was studied by Kim \cite{Kim2000}, where he explicitly computed the doubling method gamma factor for general linear groups and for symplectic groups of low rank. Instead of computing the expression for the gamma factor in this case, we establish a relation between $\dblEpsilonZeroFactor{\pi}{\chi}{\fieldCharacter}$ and $\dblJacobiSumScalar{\pi}{\chi}$ (\Cref{thm:epsilon-zero-equation-for-chi-1-plus-c-is-trivial}).
\begin{theorem}
	We keep the notation as in \Cref{thm:main-theorem-chi-1-plus-c-not-1}, but suppose that $\chi^{1+q} = 1$. Then 
	$$\dblEpsilonZeroFactor{\pi}{\chi}{\fieldCharacter} = \frac{\centralCharacter{\pi}\left(-1\right) \chi\left(-2\right)^{-\dim \hermitianSpace} \beta_{\hermitianSpace}^{\ast}\left(\chi, \fieldCharacter\right) }{\dblPreGammaFactor{\pi}{\chi}} = \frac{\beta_{\hermitianSpace}^{\ast}\left(\chi, \fieldCharacter\right)}{\dblJacobiSumScalar{\pi}{\chi}}.$$
	Moreover, suppose that $\coefficientsField = \cComplex$ and that $T$ is a maximal anisotropic torus of $\IsometryGroup^0\left(\hermitianSpace\right)$ and that $\theta \colon T \to \multiplicativegroup{\cComplex}$ is a character in general position. Let $\pi_0$ be the irreducible cuspidal representation of $\IsometryGroup^0\left(\hermitianSpace\right)$ such that $\trace \pi_0 = \pm R_{T}^{\IsometryGroup^0\left(\hermitianSpace\right)} \theta$ and let $\pi$ be an irreducible cuspidal representation of $\IsometryGroup\left(\hermitianSpace\right)$ whose restriction to $\IsometryGroup^0\left(\hermitianSpace\right)$ contains $\pi_0$. Then \eqref{eq:main-theorem-epsilon-zero-explicit-computation} holds.
\end{theorem}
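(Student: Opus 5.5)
We treat the two assertions separately. The first equality reduces the local computation to the finite field via the lifting machinery of the non-conjugate-dual case. The second (explicit formula) reduces to an inversion identity for the Jacobi sum in the general-position cuspidal case.

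For the first equality we would follow the proof of \Cref{thm:epsilon-zero-equation-for-chi-1-plus-c-is-non-trivial} up to the point where $\chi^{1+q}\neq 1$ was used. The inputs are:
\begin{itemize}
\item take as test data matrix coefficients of $\Pi$ supported on $Z\cdot\maximalCompact$, inflated from $\pi$;
\item take sections of $\localdegeneratePrincipalSeries{\cdot}{\cdot}{\cdot}$ supported in the big cell, built from the lifted finite sections.
\end{itemize}
The zeta integral then collapses to the finite field zeta integral, up to an explicit volume factor.

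The difference from the non-conjugate-dual case lies in the intertwining operator. When $\chi^{1+q}\neq1$, the normalized operator applied to the lifted section is again a lift of the finite-field intertwined section. When $\chi^{1+q}=1$, the section picks up an additional contribution from the boundary of the cell. We would compute the relevant coefficient as a finite sum over $\squareMatrix$-type Gauss sums; this is the factor $\beta^\ast_{\hermitianSpace}(\chi,\fieldCharacter)$ of \Cref{prop:constant-coefficient-of-beta}.

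The functional equation should then appear in inverted form. Instead of reading off $\Gamma^{\mathrm{dbl}}$, we would use the finite functional equation in the reverse direction: apply the finite intertwining operator twice, which gives a scalar $\chi(-1)^{\dim}\cdot q^{\ast}$-type identity. This yields
$$\dblEpsilonZeroFactor{\pi}{\chi}{\fieldCharacter}\,\dblPreGammaFactor{\pi}{\chi}=\centralCharacter{\pi}(-1)\chi(-2)^{-\dim\hermitianSpace}\beta^\ast_{\hermitianSpace}(\chi,\fieldCharacter).$$
The second form of the equality then follows from the first theorem of the introduction. The main obstacle is tracking this boundary contribution precisely: showing that the normalizing $L$-factor exactly cancels the pole-type term, so that only the constant coefficient survives. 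We would handle this by expanding in $X$ and comparing constant terms, as was done for $\beta^\ast$.

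For the explicit formula, it suffices by the first part to show that $\dblJacobiSumScalar{\pi}{\chi}^{-1}$ equals $\beta^\ast_{\hermitianSpace}(\chi,\fieldCharacter)^{-1}$ times the right-hand side of \eqref{eq:main-theorem-epsilon-zero-explicit-computation}. Because $\theta$ is in general position, $\pi_0$ is determined by $\pm R_T\theta$, and $\dblJacobiSumScalar{\pi}{\chi}$ is computed by the results of Yost-Wolff--Zelingher (\Cref{thm:explicit-computation-of-jacobi-sum}). These express $\dblJacobiSumScalar{\pi}{\chi}$ as a normalized Gauss sum over $\transfer T$ with character $(\transfer\theta)\chi\circ\det$.

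It therefore remains to invert that Gauss sum. Since $\chi^{1+q}=1$ and $\transfer T$ is a product of norm-one groups, the standard identity $\tau(\eta,\fieldCharacter)\tau(\eta^{-1},\fieldCharacter)=\eta(-1)q^{\ast}$ applies factorwise. Its conjugate-dual twist turns the Gauss sum for $\transfer\theta\chi$ into one for $(\transfer\theta)^{-1}\chi^{-1}$, up to signs and powers of $q$. Matching these signs with $(-1)^{\mathrm{rel.rank}\transfer T}$ and the case-dependent factor is routine but delicate bookkeeping, and we would check it separately for the symplectic, hermitian and symmetric cases.
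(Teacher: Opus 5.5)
There is a genuine gap in the first part: you misidentify both the source of $\beta^{\ast}_{\hermitianSpace}(\chi,\fieldCharacter)$ and the source of the inversion.

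\emph{Where $\beta^{\ast}$ comes from.} The factor $\beta^{\ast}_{\hermitianSpace}(\chi,\fieldCharacter)$ is simply the constant term in $X^{-1}$ of the normalizing factor $\beta_{\localHermitianSpace}(X,\characterLift_{\chi,t},\fieldCharacter)$ (\Cref{prop:constant-coefficient-of-beta}). It is not a boundary term of the intertwining operator. The extra terms that appear when $\chi^{1+q}=1$ are the rank-one corrections of \Cref{lem:simple-reflection-operator-gl-reflection} and \Cref{lem:simple-reflection-operator-central-reflection}. Write $\intertwiningOperator{\localHermitianSpace}{\characterLift_{\chi,t}}$ as a product of simple-reflection operators $M_i$. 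Each $M_i$ sends a lift $\liftOf f$ to $\liftOf(m_i f)$ minus $c_i(1-aX^{-k})^{-1}\liftOf f$ with $k\in\{1,2\}$. Here $c_i=\chi(-1)(\quadraticResidueFieldCardinality^{1/2}-\quadraticResidueFieldCardinality^{-1/2})$ for $i<d$ and $c_d=\Delta_{\hermitianSpace,\chi}\chi(\delta_{\hermitianSpace})(\residueFieldCardinality^{1/2}-\residueFieldCardinality^{-1/2})$. Since $(1-aX^{-k})^{-1}$ has constant term $1$, no cancellation against an $L$-factor is involved. The constant term of $\intertwiningOperator{\localHermitianSpace}{\characterLift_{\chi,t}}\liftOf f$ is the lift of a product of the operators $m_i-c_i\cdot\idmap$ (\Cref{thm:explicit-computaiton-of-intertwining-operator}, \Cref{prop:epsilon-zero-equation-for-chi-1-plus-c-is-trivial}).

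\emph{The missing step.} The Iwahori--Hecke quadratic relations $(m_i^{\ast})^2=(\quadraticResidueFieldCardinality^{1/2}-\quadraticResidueFieldCardinality^{-1/2})m_i^{\ast}+\idmap$ (and the analogue for $m_d^{\ast}$) say precisely that $m_i-c_i\cdot\idmap=m_i^{-1}$. Comparing the two decompositions \eqref{eq:decomposition-of-intertwining-operator-into-reflections-finite-field} and \eqref{eq:decomposition-of-intertwining-operator-into-reflections-finite-field-alternative}, this product is an explicit scalar times $\intertwiningOperator{\hermitianSpace}{\chi}^{-1}f$. This gives
$\dblEpsilonZeroFactor{\pi}{\chi}{\fieldCharacter}\,\zetaIntegral_{\hermitianSpace,\pi}(f)=\centralCharacter{\pi}(-1)\beta^{\ast}_{\hermitianSpace}(\chi,\fieldCharacter)\chi(-2)^{-d}\,\zetaIntegral_{\hermitianSpace,\pi}(\intertwiningOperator{\hermitianSpace}{\chi}^{-1}f)$,
and taking $f=\intertwiningOperator{\hermitianSpace}{\chi}f_0$ yields the claim.

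\emph{Why your substitute fails.} You propose that applying the finite intertwining operator twice gives a scalar. This is false here: $\minusInvolution{\chi}=\chi$, $\degeneratePrincipalSeries{\hermitianSpace}{\chi}$ is reducible, and $\intertwiningOperator{\hermitianSpace}{\chi}^2$ is not a scalar. Already for $\chi=1$ and $\dim_{\quadraticExtension}\hermitianSpace=1$ in the hermitian case, $\intertwiningOperator{\hermitianSpace}{\chi}=m_1^{\ast}$ acts by $\residueFieldCardinality^{1/2}$ on constants and by $-\residueFieldCardinality^{-1/2}$ on the Steinberg constituent. In any case, a scalar identity for the finite operator alone would say nothing about the constant term of the local operator, and that constant term is the actual content of the argument.

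For the explicit formula, you invoke \Cref{thm:explicit-computation-of-jacobi-sum}, but that theorem assumes $\chi^{1+q}\neq1$. It rests on the multiplicativity of \Cref{thm:identity-of-jacobi-sums}, which needs the same hypothesis, so it is not available here. The correct input is \Cref{thm:explicit-computation-of-jacobi-sum-chi-1-plus-c-equals-1}, which is where the hypotheses on $\theta$ enter. It already gives $\dblJacobiSumScalar{\pi}{\chi}=\varepsilon_{\IsometryGroup\left(\hermitianSpace\right)}\,q^{-\frac12\lfloor\dim_{\finiteField}\hermitianSpace/2\rfloor}\,\tau_{\transfer{T}}(\transfer{\theta}\times\chi,\fieldCharacter)^{-1}$, times $\chi(2)$ in the odd orthogonal case. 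Divide the $\chi^{1+q}=1$ case of \Cref{prop:constant-coefficient-of-beta} by this. The sign $\varepsilon_{\IsometryGroup\left(\hermitianSpace\right)}$ and the power of $q$ cancel, and $\chi(2)^{-2}=1$ because $\chi^2=1$ in the orthogonal case. What remains is $\tau_{\transfer{T}}(\transfer{\theta}\times\chi,\fieldCharacter)$ times the case-dependent factor in \eqref{eq:main-theorem-epsilon-zero-explicit-computation}. So the Gauss sum appears directly in the numerator, and there is no inversion step or extra sign bookkeeping to do.
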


In recent years, Cai--Friedberg--Kaplan together with Ginzburg and Gourevitch introduced a generalization of the doubling method \cite{CaiFriedbergGinzburgKaplan2019, Cai2021, CaiFriedbergKaplan2022, GourevitchKaplan2023, CaiFriedbergGourevitchKaplan2023, CaiFriedbergKaplan2024}. They introduced an integral representation for the tensor product $L$-function of $G \times \GL_k$, where $G$ is a classical group. As with the classical doubling method, their construction, which we refer to as the ``generalized doubling method'', does not require the irreducible representation of $G$ to admit any sort of model. This allows one to define the tensor product $L$-function for \emph{any} choice of irreducible cuspidal automorphic representations $\Pi$ and $\mathcal{T}$ of $G$ and $\GL_k$, respectively, unlike previous constructions of the tensor product $L$-function that required $\Pi$ to admit a Whittaker model. The local theory of the generalized doubling method was developed in \cite{CaiFriedbergKaplan2022}. Recently, the first author made steps towards adapting this method to representations in algebraic families \cite{Girsch2024}. In the future, we hope to complete a similar computation for the generalized doubling method gamma factors for depth zero representations.
\subsection*{Acknowledgments}
J.\ G.\ would like to thank Robert Kurinczuk for helpful discussions and was supported by EPSRC grants EP/V001930/1 and UKRI1020.

E.\ Z.\ would like to thank Nir Elber, Hahn Lheem and Jialiang Zou for discussions about lifts of intertwining operators that led to the results of \Cref{subsec:decomposiiton-into-simple-reflections}.

\section{Doubling method setup}

Let $\gField$ be a finite field or a non-archimedean local field, and let $\gQuadraticExtension \slash \gField$ be a field extension of degree $1$ or $2$. We denote the generator of $\Galois\left(\gQuadraticExtension \slash \gField\right)$ by $x \mapsto \involution{x}$. If $\gField$ is finite, we let $q$ be its cardinality. If $\gField$ is non-archimedean we let $q$ be the cardinality of its residue field. We always assume that $q$ is odd. Throughout this article we will write $\detSpace{\mathbf W}(f)$ for the determinant of an endomorphism $f$ of an $\gQuadraticExtension$-vector space $\mathbf W$.

 Let $\coefficientsField$ be an algebraically closed field and we assume that the characteristic of $\coefficientsField$ does not divide $q$.  We fix a square root of $q$ in $\coefficientsField$, which we denote $\sqrt{q}$, and for any $j \in \zIntegers$ we define $\sqrt{q^j} = \sqrt{q}^j$ and $q^{j/2} = \sqrt{q}^j$.

\subsection{$\epsilon$-Sesquilinear spaces}

 An \emph{$\epsilon$-sesquilinear space} $\left(\gHermitianSpace, \innerproduct{\cdot}{\cdot}_{\gHermitianSpace}\right)$ is a finite dimensional vector space $\gHermitianSpace$ over $\gQuadraticExtension$, equipped with a pairing $\innerproduct{\cdot}{\cdot}_{\gHermitianSpace} \colon \gHermitianSpace \times \gHermitianSpace \to \gQuadraticExtension$ and a sign $\epsilon_{\gHermitianSpace}\in\{-1,1\}$, such that for all $x_1, x_2, y \in \gHermitianSpace$ and $t \in \gQuadraticExtension$ we have that
\begin{enumerate}
	\item $\innerproduct{x_1 + x_2}{y}_{\gHermitianSpace} = \innerproduct{x_1}{y}_{\gHermitianSpace} + \innerproduct{x_2}{y}_{\gHermitianSpace}.$
	\item $\innerproduct{t x_1}{x_2}_{\gHermitianSpace} = t \innerproduct{x_1}{x_2}_{\gHermitianSpace}.$
	\item ($\epsilon$-symmetry) $\innerproduct{x_2}{x_1}_{\gHermitianSpace} = \epsilon_{\gHermitianSpace} \involution{\innerproduct{x_1}{x_2}}_{\gHermitianSpace}.$
\end{enumerate}
We say that $\left(\gHermitianSpace, \innerproduct{\cdot}{\cdot}_{\gHermitianSpace}\right)$ (or just $ \innerproduct{\cdot}{\cdot}_{\gHermitianSpace}$) is \emph{nondegenerate}, if for every non-zero $x \in \gHermitianSpace$ there exists $y \in \gHermitianSpace$ such that $\innerproduct{x}{y}_{\gHermitianSpace} \ne 0$. In this case, for any $\gQuadraticExtension$-linear map $T \colon \gHermitianSpace \to \gHermitianSpace$ there exists a unique $\gQuadraticExtension$-linear map $\adjointHermitian{T} \colon \gHermitianSpace \to \gHermitianSpace$ such that $$\innerproduct{Tx}{y}_{\gHermitianSpace} = \innerproduct{x}{\adjointHermitian{T}y}_{\gHermitianSpace}$$
for every $x,y \in \gHermitianSpace$. Note that $\detSpace{\gHermitianSpace} \adjointHermitian{T} = \involution{\left(\detSpace{\gHermitianSpace} T\right)}$. 

A subspace $\gHermitianSpace' \subset \gHermitianSpace$ is called \emph{anisotropic} if every non-zero $x \in \gHermitianSpace'$ satisfies that $\innerproduct{x}{x}_{\gHermitianSpace} \ne 0$. A subspace $\gxIsotropic \subset \gHermitianSpace$ is called \emph{totally isotropic} if for all $x,y \in \gxIsotropic$, we have that $\innerproduct{x}{y}_{\gHermitianSpace} = 0$. 

If $\gxIsotropic$ and $\gyIsotropic$ are totally isotropic subspaces of $\gHermitianSpace$, we say that $\gxIsotropic$ and $\gyIsotropic$ are \emph{in duality} (with respect to $\innerproduct{\cdot}{\cdot}_{\gHermitianSpace}$) if for any linear functional $\ell \colon \gxIsotropic \to \gQuadraticExtension$, there exists a unique $y \in \gyIsotropic$ such that $$\ell\left(x\right) = \innerproduct{x}{y}_{\gHermitianSpace}$$ for any $x \in \gHermitianSpace$.

For any subspace $\gSubspace{W} \subset \gHermitianSpace$ we define its \emph{orthogonal complement} to be the subspace
$$\gSubspace{W}^{\perp} = \left\{v \in \gHermitianSpace \mid \innerproduct{v}{w} = 0, \text{for all } w \in \gSubspace{W}\right\}.$$

\subsubsection{Gram matrix}

Suppose that $\mathcal B = \left(b_1,\dots,b_d\right)$ is a basis for $\gHermitianSpace$. The \emph{Gram matrix} of $\mathcal B$ with respect to $\innerproduct{\cdot}{\cdot}_{\gHermitianSpace}$ is defined to be the matrix
$$\GramMatrix{\mathcal B} = \begin{pmatrix}
	\innerproduct{b_1}{b_1}_{\gHermitianSpace} & \innerproduct{b_1}{b_2}_{\gHermitianSpace} & \dots & \innerproduct{b_1}{b_d}_{\gHermitianSpace}\\
	\vdots & \vdots & \ddots & \vdots\\
	\innerproduct{b_d}{b_1}_{\gHermitianSpace} & \innerproduct{b_d}{b_2}_{\gHermitianSpace} & \dots & \innerproduct{b_d}{b_d}_{\gHermitianSpace}\\
\end{pmatrix}.$$

Note that $\left(\gHermitianSpace, \innerproduct{\cdot}{\cdot}_{\gHermitianSpace}\right)$ is nondegenerate if and only if $\GramMatrix{\mathcal B}$ is an invertible matrix. In this case, we define the \emph{determinant} of $\gHermitianSpace$, which we denote by ${\det}_{\gQuadraticExtension} \gHermitianSpace$, to be the class of ${\det}_{\gQuadraticExtension} \GramMatrix{\mathcal B}$ in $\multiplicativegroup{\gField} \slash \involutionPlusOne{\left(\multiplicativegroup{\gQuadraticExtension}\right)}$, where
$$\involutionPlusOne{\left(\multiplicativegroup{\gQuadraticExtension}\right)} = \left\{x \cdot \involution{x} \mid x \in \multiplicativegroup{\gQuadraticExtension}\right\}.$$

\subsubsection{Isometry groups}
Let $\left(\gHermitianSpace, \innerproduct{\cdot}{\cdot}_{\gHermitianSpace}\right)$ be an $\epsilon$-sesquilinear space. 

If $\left(\gHermitianSpace, \innerproduct{\cdot}{\cdot}_{\gHermitianSpace}\right)$ is nondegenerate or $\innerproduct{\cdot}{\cdot}_{\gHermitianSpace}$ is identically zero, we denote by $\IsometryGroup\left(\gHermitianSpace\right)$ the \emph{isometry group} of $\gHermitianSpace$ consisting of all invertible $\gQuadraticExtension$-linear maps $g \colon \gHermitianSpace \to \gHermitianSpace$, such that $\innerproduct{gx}{gv}_{\gHermitianSpace} = \innerproduct{x}{y}_{\gHermitianSpace}$ for all $x,y \in \gHermitianSpace$. Note that when $\innerproduct{\cdot}{\cdot}_{\gHermitianSpace}$ is identically zero, $\IsometryGroup\left(\gHermitianSpace\right) = \GL_{\gQuadraticExtension}\left(\gHermitianSpace\right)$. If $\left(\gHermitianSpace, \innerproduct{\cdot}{\cdot}_{\gHermitianSpace}\right)$ is nondegenerate, then $g \in \IsometryGroup\left(\gHermitianSpace\right)$ satisfies $\adjointHermitian{g} = g^{-1}$.

We denote by $\IsometryLieAlgebra\left(\gHermitianSpace\right)$ the \emph{Lie algebra} of $\gHermitianSpace$, consisting of all $\gQuadraticExtension$-linear maps $A \colon \gHermitianSpace \to \gHermitianSpace$ such that $$\innerproduct{Ax}{y}_{\gHermitianSpace} + \innerproduct{x}{Ay}_{\gHermitianSpace} = 0.$$
If $\innerproduct{\cdot}{\cdot}_{\gHermitianSpace}$ is identically zero, $\IsometryLieAlgebra\left(\gHermitianSpace\right)$ equals $\EndomorphismRing_{\gQuadraticExtension}\left(\gHermitianSpace\right)$.

Let $\mathbf P$ be a parabolic subgroup of $\IsometryGroup\left(\gHermitianSpace\right)$ with a Levi decomposition $\mathbf P=\mathbf M\mathbf N$. If $\sigma$ is a representation of $\mathbf M$ we will write $\Ind{\mathbf P}{\IsometryGroup\left(\gHermitianSpace\right)}{\sigma}$ for the parabolic induction of $\sigma$ to $\IsometryGroup\left(\gHermitianSpace\right)$. Namely, we first inflate $\sigma$ to a representation of $\mathbf P$ (and twist by the modulus character of $\mathbf P$ if $\gField$ is non-archimedean) and then take the usual induction of this representation from $\mathbf P$ to $\IsometryGroup\left(\gHermitianSpace\right)$. If $\mathbf P$ has Levi part $\mathbf M$ isomorphic to $\GL_{\gQuadraticExtension}\left(\gxIsotropic_1\right) \times \dots \times \GL_{\gQuadraticExtension}\left(\gxIsotropic_r\right) \times \IsometryGroup\left(\gHermitianSpace'\right)$ where $\gxIsotropic_1,\dots,\gxIsotropic_r$ are totally isotropic subspaces and $\gHermitianSpace'$ is a subspace of $\gHermitianSpace$ (nondegenerate if $\gHermitianSpace$ is nondegenerate), then given representations $\tau_1,\dots,\tau_r$ and $\pi$ of $\GL_{\gQuadraticExtension}\left(\gxIsotropic_1\right)$, $\dots$, $\GL_{\gQuadraticExtension}\left(\gxIsotropic_r\right)$ and $\IsometryGroup\left(\gHermitianSpace'\right)$, respectively, we denote by $\tau_1 \boxtimes \dots \boxtimes \tau_r \boxtimes \pi$ both the tensor product representation of $\mathbf M$ and its inflation to $\mathbf P$.

If $\gQuadraticExtension \ne \gField$ and $\left(\gHermitianSpace, \innerproduct{\cdot}{\cdot}_{\gHermitianSpace}\right)$ is nondegenerate with $\epsilon_{\gHermitianSpace} = -1$, the group $\IsometryGroup\left(\gHermitianSpace\right)$ is isomorphic to $\IsometryGroup\left(\gHermitianSpace_{\delta}\right)$ where $\delta \in \multiplicativegroup{\gQuadraticExtension}$ is a trace zero element and where $\left(\gHermitianSpace_{\delta}, \innerproduct{\cdot}{\cdot}_{\gHermitianSpace_{\delta}}\right)$ is the $\epsilon$-sesquilinear space whose underlying vector space is $\gHermitianSpace$ equipped with the pairing given by $\innerproduct{x}{y}_{\gHermitianSpace_{\delta}} = \delta \innerproduct{x}{y}_{\gHermitianSpace}$ for $x,y \in \gHermitianSpace$. Note that $\innerproduct{x}{y}_{\gHermitianSpace_{\delta}}=\involution{\innerproduct{y}{x}}_{\gHermitianSpace_{\delta}}$ for all $x,y\in\gHermitianSpace_{\delta}$ and thus throughout the text we will always assume that $\epsilon_{\gHermitianSpace} = 1$ if $\gQuadraticExtension \ne \gField$.
\subsection{Doubling space}

Let $\left(\gHermitianSpace, \innerproduct{\cdot}{\cdot}_{\gHermitianSpace}\right)$ be an $\epsilon$-sesquilinear space over $\gQuadraticExtension$ with respect to the involution $x \mapsto \involution{x}$. Assume that $\left(\gHermitianSpace, \innerproduct{\cdot}{\cdot}_{\gHermitianSpace}\right)$ is either nondegenerate or that $\innerproduct{\cdot}{\cdot}_{\gHermitianSpace}$ is identically zero. Let $\left(\plusSpace{\gHermitianSpace}, \innerproduct{\cdot}{\cdot}_{\plusSpace{\gHermitianSpace}}\right)$ be an $\epsilon$-sesquilinear space isomorphic to $\gHermitianSpace$ via an isomorphism $\gHermitianSpace \to \plusSpace{\gHermitianSpace}$ denoted by $v \mapsto v^+$. Let $\left(\minusSpace{\gHermitianSpace}, \innerproduct{\cdot}{\cdot}_{\minusSpace{\gHermitianSpace}}\right)$ be an $\epsilon$-sesquilinear space such that $\gHermitianSpace$ and $\minusSpace{\gHermitianSpace}$ are isomorphic as $\gQuadraticExtension$-vector spaces via an isomorphism $\gHermitianSpace \to \minusSpace{\gHermitianSpace}$ denoted by $v \mapsto v^-$ that satisfies $$\innerproduct{v_1^-}{v_2^-}_{\minusSpace{\gHermitianSpace}} = -\innerproduct{v_1}{v_2}_{\gHermitianSpace}$$
for any $v_1, v_2 \in \gHermitianSpace$.

Let $$\left(\doubleSpace{\gHermitianSpace}, \innerproduct{\cdot}{\cdot}_{\doubleSpace{\gHermitianSpace}}\right) = \left(\plusSpace{\gHermitianSpace}, \innerproduct{\cdot}{\cdot}_{\plusSpace{\gHermitianSpace}}\right) \oplus \left(\minusSpace{\gHermitianSpace}, \innerproduct{\cdot}{\cdot}_{\minusSpace{\gHermitianSpace}}\right)$$
 be the orthogonal direct sum of these two spaces.
 
 For any subspace $\gSubspace{W} \subset \gHermitianSpace$ we define $$\plusSpace{\gSubspace{W}} = \left\{w^+ \mid w \in \gSubspace{W}\right\} \subset \plusSpace{\gHermitianSpace} \subset \doubleSpace{\gHermitianSpace},$$ and similarly
 $$\minusSpace{\gSubspace{W}} = \left\{w^{-} \mid w \in \gSubspace{W}\right\} \subset \minusSpace{\gHermitianSpace} \subset \doubleSpace{\gHermitianSpace}.$$
 
We proceed by defining certain subspaces that are of importance for the doubling method. The diagonal subspace
$$\diagSpace{\gHermitianSpace} = \left\{ v^+ + v^- \mid v \in \gHermitianSpace \right\}$$ and the anti-diagonal subspace $$\antidiagSpace{\gHermitianSpace} = \left\{ v^+ - v^- \mid v \in \gHermitianSpace \right\}$$
are both maximal totally isotropic subspaces of $\doubleSpace{\gHermitianSpace}$. Moreover, they are in duality with respect to $\innerproduct{\cdot}{\cdot}_{\doubleSpace{\gHermitianSpace}}$.

\subsubsection{Embedding of $\IsometryGroup\left(\gHermitianSpace\right) \times \IsometryGroup\left(\gHermitianSpace\right)$}

We define an embedding $\iEmbedding \colon \IsometryGroup\left(\gHermitianSpace\right) \times \IsometryGroup\left(\gHermitianSpace\right) \to \IsometryGroup\left(\doubleSpace{\gHermitianSpace}\right)$ as follows. Given $g_+, g_- \in \IsometryGroup\left(\gHermitianSpace\right)$, the element $\iEmbedding\left(g_+, g_-\right)$ is the $\gQuadraticExtension$-linear map $\doubleSpace{\gHermitianSpace} \to \doubleSpace{\gHermitianSpace}$ that sends $v^+$ to $\left(g_+ v\right)^+$ and $v^-$ to $\left(g_- v\right)^-$, for any $v \in \gHermitianSpace$. It is easy to verify that $\iEmbedding$ is an injective group homomorphism.

Given a subgroup $H$ of $\IsometryGroup\left(\gHermitianSpace\right)$, we set $$\diagSpace{H} = \left\{\iEmbedding\left(h,h\right) \mid h \in H\right\}.$$

\subsection{Siegel parabolic subgroup}\label{subsec:siegel-parabolic-subgroup}

Let $\siegelDoublingParabolic{\gHermitianSpace}$ be the parabolic subgroup of $\IsometryGroup\left(\doubleSpace{\gHermitianSpace}\right)$ consisting of all elements stabilizing the diagonal subspace $\diagSpace{\gHermitianSpace}$. If $\left(\gHermitianSpace, \innerproduct{\cdot}{\cdot}_{\gHermitianSpace}\right)$ is nondegenerate, $\siegelDoublingParabolic{\gHermitianSpace}$ has Levi part isomorphic to $\GL_{\gQuadraticExtension}\left(\diagSpace{\gHermitianSpace}\right)$. On the other hand, if $\innerproduct{\cdot}{\cdot}_{\gHermitianSpace}$ is identically zero, the parabolic subgroup $\siegelDoublingParabolic{\gHermitianSpace}$ has Levi part isomorphic to $\GL_{\gQuadraticExtension}\left(\diagSpace{\gHermitianSpace}\right) \times \GL_{\gQuadraticExtension}\left(\antidiagSpace{\gHermitianSpace}\right)$.

The unipotent part of $\siegelDoublingParabolic{\gHermitianSpace}$, which we denote by $\siegelDoublingUnipotent{\gHermitianSpace}$, is isomorphic to $\IsometryLieAlgebra\left(\gHermitianSpace\right)$ via the isomorphism $\exp \colon \IsometryLieAlgebra\left(\gHermitianSpace\right) \to \siegelDoublingUnipotent{\gHermitianSpace}$ given by
$$\exp\left(T\right)\left(v^+ + v^{-}\right) = v^+ + v^{-}$$ and
$$\exp\left(T\right)\left(v^+ - v^{-}\right) =  \left(\left(Tv\right)^{+} + \left(Tv\right)^{-}\right) + \left(v^+ - v^{-}\right)$$
for every $v \in \gHermitianSpace$.

Similarly, let $\siegelOpDoublingParabolic{\gHermitianSpace}$ be the parabolic subgroup of $\IsometryGroup\left(\doubleSpace{\gHermitianSpace}\right)$ stabilizing the space $\antidiagSpace{\gHermitianSpace}$.

We have the following result regarding elements in the image of $\iEmbedding$ that lie in either of these Siegel parabolic subgroups.
\begin{proposition}\label{prop:intersection-of-i-with-siegel-parabolic}
	Let $g_1, g_2 \in \IsometryGroup\left(\gHermitianSpace\right)$. Then $\iEmbedding\left(g_1, g_2\right) \in \siegelDoublingParabolic{\gHermitianSpace}$ (respectively, $\iEmbedding\left(g_1, g_2\right) \in \siegelOpDoublingParabolic{\gHermitianSpace}$) if and only if $g_1 = g_2$.
\end{proposition}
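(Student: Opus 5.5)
Since $\siegelDoublingParabolic{\gHermitianSpace}$ is by definition the stabilizer of $\diagSpace{\gHermitianSpace}$ (and $\siegelOpDoublingParabolic{\gHermitianSpace}$ the stabilizer of $\antidiagSpace{\gHermitianSpace}$), the statement reduces to a direct linear-algebra computation with the explicit formula for $\iEmbedding\left(g_1, g_2\right)$. No structure of the isometry group beyond this definition is needed. In particular, the argument works uniformly whether $\innerproduct{\cdot}{\cdot}_{\gHermitianSpace}$ is nondegenerate or identically zero.

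For the easy direction, suppose $g_1 = g_2 = g$. For $v \in \gHermitianSpace$,
$$\iEmbedding\left(g,g\right)\left(v^+ \pm v^-\right) = \left(gv\right)^+ \pm \left(gv\right)^-,$$
which lies in $\diagSpace{\gHermitianSpace}$ (respectively $\antidiagSpace{\gHermitianSpace}$). Hence $\iEmbedding\left(g,g\right)$ stabilizes both subspaces and so lies in both Siegel parabolic subgroups.

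For the converse, suppose $\iEmbedding\left(g_1, g_2\right)$ stabilizes $\diagSpace{\gHermitianSpace}$. Then for each $v \in \gHermitianSpace$ there is some $w \in \gHermitianSpace$ with
$$\left(g_1 v\right)^+ + \left(g_2 v\right)^- = w^+ + w^-.$$
Since $\doubleSpace{\gHermitianSpace} = \plusSpace{\gHermitianSpace} \oplus \minusSpace{\gHermitianSpace}$ is a direct sum and the maps $v \mapsto v^{\pm}$ are isomorphisms, comparing components gives $g_1 v = w = g_2 v$. As $v$ is arbitrary, $g_1 = g_2$. The antidiagonal case is identical: from
$$\left(g_1 v\right)^+ - \left(g_2 v\right)^- = w^+ - w^-$$
we again get $g_1 v = w = g_2 v$.

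There is no real obstacle here. The only point requiring care is to use the direct sum decomposition to compare the $\plusSpace{\gHermitianSpace}$ and $\minusSpace{\gHermitianSpace}$ components separately.
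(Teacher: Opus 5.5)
Your proof is correct and follows the paper's argument essentially verbatim. For the forward direction you check directly that $\iEmbedding\left(g,g\right)$ preserves $\diagSpace{\gHermitianSpace}$, and for the converse you compare the $\plusSpace{\gHermitianSpace}$ and $\minusSpace{\gHermitianSpace}$ components of $\left(g_1 v\right)^+ + \left(g_2 v\right)^-$. The only difference is that you write out the antidiagonal case explicitly, whereas the paper leaves it as analogous.
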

\begin{proof}
	We will only prove the statement for $\siegelDoublingParabolic{\gHermitianSpace}$, as the proof for $\siegelOpDoublingParabolic{\gHermitianSpace}$ is analogous.
	
	If $g_1 = g_2 = g \in \IsometryGroup\left(\gHermitianSpace\right)$ then $$\iEmbedding\left(g,g\right)\left(v^{+} + v^{-}\right) = \left(gv\right)^{+} + \left(gv\right)^{-} \in \diagSpace{\gHermitianSpace}$$ for any $v \in \gHermitianSpace$. Thus $\iEmbedding\left(g,g\right) \in \siegelDoublingParabolic{\gHermitianSpace}$.
	
	Conversely, if $\iEmbedding\left(g_1, g_2\right) \in \siegelDoublingParabolic{\gHermitianSpace}$ then for any $v \in \gHermitianSpace$ we must have $\iEmbedding\left(g_1,g_2\right)\left(v^{+} + v^{-}\right) \in \diagSpace{\gHermitianSpace}$. Thus $$\left(g_1 v\right)^{+} + \left(g_2 v\right)^{-} \in \diagSpace{\gHermitianSpace}$$ for every $v \in \gHermitianSpace$, which implies that $g_1 v = g_2 v$ for every $v \in \gHermitianSpace$.
\end{proof}

We set $$\weylDoubleHermitian{\gHermitianSpace} = \iEmbedding\left(\idmap_{\gHermitianSpace}, -\idmap_{\gHermitianSpace}\right) = -\iEmbedding\left(-\idmap_{\gHermitianSpace}, \idmap_{\gHermitianSpace}\right).$$
This is a Weyl element that flips the totally isotropic subspaces $\diagSpace{\gHermitianSpace}$ and $\antidiagSpace{\gHermitianSpace}$.

\subsection{Matrix representation}\label{subsec:matrix-representation}
Let $\left(\gHermitianSpace, \innerproduct{\cdot}{\cdot}_{\gHermitianSpace}\right)$ be a nondegenerate $\epsilon$-sesquilinear space with $d = \dim_{\gQuadraticExtension} \gHermitianSpace$. In this section, we will explain how to construct a basis $\tilde{\mathcal B}$ of $\doubleSpace{\gHermitianSpace}$ from a ``good'' basis $\mathcal B$ of $\gHermitianSpace$. Then we describe matrix representations with respect to $\tilde{\mathcal B}$ of some objects of the previous sections, which will be needed for some explicit computations later.
\begin{itemize}
    \item Suppose first that $\gQuadraticExtension \ne \gField$ or $\epsilon_{\gHermitianSpace} = 1$. In this case, let $\mathcal B= \left(b_1,\dots,b_d\right)$ be an orthogonal basis of $\gHermitianSpace$. Let $T$ be the Gram matrix of $\innerproduct{\cdot}{\cdot}_{\gHermitianSpace}$ with respect to the basis $\mathcal B$. Then $$T = \diag\left(t_1,\dots,t_d\right)$$ is a diagonal matrix with $t_i = \innerproduct{b_i}{b_i} \in \multiplicativegroup{\gQuadraticExtension}$ for every $i$.
    \item Suppose that $\gQuadraticExtension = \gField$ and $\epsilon_{\gHermitianSpace} = -1$. In this case, $d = 2n$ and there exists a basis $\mathcal B = \left(b_1,\dots,b_n,b_{-n},\dots,b_{-1}\right)$ of $\gHermitianSpace$, such that $\innerproduct{b_i}{b_{j}} = 0$ and $\innerproduct{b_i}{b_{-j}} = \delta_{ij}$ for $1 \le i, j \le n$, where $\delta_{ij}$ is Kronecker's delta function. The Gram matrix of $\innerproduct{\cdot}{\cdot}_{\gHermitianSpace}$ with respect to the basis $\mathcal B$ is $$J_n \coloneq \begin{pmatrix}
	& w_n\\
	-w_n
\end{pmatrix},$$
where $$w_n = \begin{pmatrix}
	& & & 1\\
	& & 1\\
	& \iddots\\
	1
\end{pmatrix} \in \GL_n\left(\gQuadraticExtension\right)$$ is the longest Weyl element.

\end{itemize}

We will use the basis $\mathcal B$ to construct a ``standard'' basis $\tilde{\mathcal B}$ of $\doubleSpace{\gHermitianSpace}$.
\begin{itemize}
    \item If $\gQuadraticExtension \ne \gField$ or $\epsilon_{\gHermitianSpace} = 1$, we choose the basis $$\tilde{\mathcal B} = \left(b_1^{+} + b_1^{-},\dots,b_d^{+} + b_d^{-},\frac{1}{2t_d}\left(b_d^{+} - b_d^{-}\right),\dots, \frac{1}{2t_1}\left(b_1^{+} - b_1^{-}\right)\right)$$ for $\doubleSpace{\gHermitianSpace}$. The Gram matrix of $\tilde{\mathcal B}$ with respect to $\innerproduct{\cdot}{\cdot}_{\doubleSpace{\gHermitianSpace}}$ is $w_{2d}$.
    \item If $\gQuadraticExtension = \gField$ and $\epsilon_{\gHermitianSpace} = -1$, we choose the basis \begin{align*}
	\tilde{\mathcal B} =& \biggl(b_1^{+} + b_1^{-}, \dots, b_n^{+} + b_n^{-}, b_{-n}^{+} + b_{-n}^{-}, \dots, b_{-1}^{+} + b_{-1}^{-},\\
	&\left.-\frac{1}{2}\left(b_1^{+} - b_1^{-}\right), \dots, -\frac{1}{2}\left(b_n^{+} - b_n^{-}\right), \frac{1}{2}\left(b_{-n}^{+} - b_{-n}^{-}\right), \dots, \frac{1}{2}\left(b_{-1}^{+} - b_{-1}^{-}\right) \right).
\end{align*}
The Gram matrix of $\tilde{\mathcal B}$ with respect to $\innerproduct{\cdot}{\cdot}_{\doubleSpace{\gHermitianSpace}}$ is $J_{2n}$.
\end{itemize}

We set $$S = \begin{dcases}
	\begin{pmatrix}
		-\IdentityMatrix{n}\\
		& \IdentityMatrix{n}
	\end{pmatrix} & \text{if }\gQuadraticExtension = \gField \text{ and } \epsilon_{\gHermitianSpace} = -1,\\
	w_d \cdot T & \text{ otherwise.}
\end{dcases}$$
The Gram matrix of $\mathcal{B}$ with respect to $\innerproduct{\cdot}{\cdot}_{\gHermitianSpace}$ is $w_d S$.

Suppose that $ g \in \IsometryGroup\left(\gHermitianSpace\right)$ is represented by a matrix $\gmatrix\in \GL_{d}\left(\gQuadraticExtension\right)$ with respect to the basis $\mathcal{B}$. Then one can check that the representation of $\iEmbedding\left( g,\idmap_{\gHermitianSpace}\right)$ with respect to the basis $\tilde{\mathcal B}$ is given by the block matrix $$\begin{pmatrix}
	\IdentityMatrix{d}\\
	& 2S
\end{pmatrix} \begin{pmatrix}
	\frac{1}{2}\left(\gmatrix + \IdentityMatrix{d}\right) & \frac{1}{2}\left(\gmatrix - \IdentityMatrix{d}\right)\\
	\frac{1}{2}\left(\gmatrix - \IdentityMatrix{d}\right) & \frac{1}{2}\left(\gmatrix + \IdentityMatrix{d}\right)
\end{pmatrix} \begin{pmatrix}
	\IdentityMatrix{d}\\
	& 2S
\end{pmatrix}^{-1} = \begin{pmatrix}
	\frac{1}{2}\left(\gmatrix + \IdentityMatrix{d}\right) & \frac{1}{4}\left(\gmatrix - \IdentityMatrix{d}\right) S^{-1}\\
	S\left(\gmatrix - \IdentityMatrix{d}\right) & \frac{1}{2} S \left(\gmatrix + \IdentityMatrix{d}\right) S^{-1}
\end{pmatrix}.$$
Moreover, the matrix representation of the element $\weylDoubleHermitian{\gHermitianSpace}$ with respect to the basis $\tilde{\mathcal B}$ is given by $$w = \begin{pmatrix}
	I_d\\
	& 2S
\end{pmatrix} \begin{pmatrix}
	& I_d\\
	I_d
\end{pmatrix} \begin{pmatrix}
	I_d\\
	& 2S
\end{pmatrix}^{-1} = \begin{pmatrix}
	& \frac{1}{2} S^{-1}\\
	2S
\end{pmatrix}.$$
See also \cite[Lemma 3.2]{RallisSoudry2005} and \cite[Section 5]{RallisSoudry2017}.

The advantage of the basis $\tilde{\mathcal B}$ is that with respect to it, the matrix representation of $\IsometryGroup\left(\gHermitianSpace\right)$ is the standard matrix group preserving a split $\epsilon$-sesquilinear form, and the parabolic subgroup $\siegelDoublingParabolic{\gHermitianSpace}$ is the standard Siegel parabolic subgroup given by
$$\left\{ \begin{pmatrix}
	A & \\
	& A^{\ast}
\end{pmatrix} \cdot \begin{pmatrix}
\IdentityMatrix{d} & X\\
& \IdentityMatrix{X}
\end{pmatrix} \mid A \in \GL_{d}\left(\gQuadraticExtension\right), X \in \squareMatrix_d\left(\gQuadraticExtension\right), X w_d + \epsilon_{\gHermitianSpace} w_d \cdot \transpose{\involution{X}} = 0 \right\},$$
where $A^{\ast} = w_d \left(\transpose{\involution{A}}\right)^{-1} w_d$.

If $X \in \squareMatrix_d\left(\gQuadraticExtension\right)$ is the matrix representation with respect to $\mathcal{B}$ of an element $T \in \IsometryLieAlgebra\left(\gHermitianSpace\right)$, then $X$ satisfies $w_d S X  + \involution{\transpose{X}} \cdot w_d S = 0$ and the matrix representation of $\exp\left(T\right)$ with respect to $\tilde{\mathcal B}$ is
$$\begin{pmatrix}
	\IdentityMatrix{d}\\
	& 2S
\end{pmatrix} \begin{pmatrix}
	\IdentityMatrix{d} & X\\
	& \IdentityMatrix{d}
\end{pmatrix} \begin{pmatrix}
	\IdentityMatrix{d}\\
	& 2S
\end{pmatrix}^{-1} = \begin{pmatrix}
	\IdentityMatrix{d} & \frac{1}{2} X S^{-1}\\
	& \IdentityMatrix{d}
\end{pmatrix}.$$

We will often denote the elements of the basis $\tilde{\mathcal B}$ by $\left(e_1^{\Delta},\dots,e_d^{\Delta}, e_d^{\nabla}, \dots, e_1^{\nabla}\right)$.

\subsubsection{Matrix representation for general linear groups}\label{subsubsec:matrix-representation-for-general-linear-groups}
We write down the analogous basis construction for the case that $\innerproduct{\cdot}{\cdot}_{\gHermitianSpace}$ is identically zero.

Let $d = \dim_{\gQuadraticExtension} \gHermitianSpace$ and let $\mathcal B = \left(b_1,\dots,b_d\right)$ be a basis of $\gHermitianSpace$. Choose the basis $$\tilde{\mathcal B} = \left(b^+_1 + b^-_1,\dots,b^+_d + b^-_d, b_1^+-b_1^-,\dots,b_d^+-b_d^-\right)$$ of $\doubleSpace{\gHermitianSpace}$. With respect to the basis $\tilde{\mathcal B}$, elements in $\siegelDoublingParabolic{\gHermitianSpace}$ correspond to elements of the maximal parabolic subgroup $P_{(d,d)}$ defined as $$P_{(d,d)} = \left\{ \begin{pmatrix}
	A_1 & X\\
	& A_2
\end{pmatrix} \mid A_1, A_2 \in \GL_d\left(\gQuadraticExtension\right), X \in \squareMatrix_d\left(\gQuadraticExtension\right) \right\}.$$

If $\gmatrix \in \GL_d\left(\gQuadraticExtension\right)$ is the matrix representation with respect to $\mathcal B$ of $g \in \IsometryGroup\left(\gHermitianSpace\right) = \GL_{\gQuadraticExtension}\left(\gHermitianSpace\right)$ then the matrix $$\begin{pmatrix}
	\frac{1}{2}\left(\gmatrix + \IdentityMatrix{d}\right) & \frac{1}{2}\left(\gmatrix - \IdentityMatrix{d}\right)\\
	\frac{1}{2}\left(\gmatrix - \IdentityMatrix{d}\right) & \frac{1}{2}\left(\gmatrix + \IdentityMatrix{d}\right)
\end{pmatrix}$$
is the matrix representation of the linear map $\iEmbedding\left(g, \idmap_{\gHermitianSpace}\right)$ with respect to the basis $\tilde{\mathcal B}$.

The matrix representation of $\weylDoubleHermitian{\gHermitianSpace}$ with respect to $\tilde{\mathcal B}$ is $$\begin{pmatrix}
	& \IdentityMatrix{d}\\
	\IdentityMatrix{d}
\end{pmatrix}.$$

\section{Doubling method gamma factors for finite fields}

In this section, we take $\gField = \finiteField$ to be a finite field with characteristic not equal to $2$, and $\gQuadraticExtension = \quadraticExtension$ where $\quadraticExtension \slash \finiteField$ is a field extension of degree $1$ or $2$. Let $\left(\hermitianSpace, \innerproduct{\cdot}{\cdot}_{\hermitianSpace}\right)$ be an $\epsilon$-sesquilinear space with respect to $x \mapsto \involution{x}$, the generator of $\Galois\left(\quadraticExtension \slash \finiteField\right)$. Assume that $\left(\hermitianSpace, \innerproduct{\cdot}{\cdot}_{\hermitianSpace}\right)$ is nondegenerate or that $\innerproduct{\cdot}{\cdot}_{\hermitianSpace}$ is identically zero.

\subsection{Degenerate principal series representations}\label{subsec:degenerate-principal-series}

Let $\chi \colon \multiplicativegroup{\quadraticExtension} \to \multiplicativegroup{\coefficientsField}$ be a character. For any $\quadraticExtension$-vector space $\mathbb W$ we set $\chi_{\mathbb W}$ to be the character $\chi_{\mathbb W}\colon\GL_{\quadraticExtension}(\mathbb W)\to\multiplicativegroup{\coefficientsField}$ that sends an element $g\in \GL_{\quadraticExtension}(\mathbb W)$ to $\chi(\detSpace{\mathbb W}(g))$. Similarly, we define characters $\chi_{\mathbb W}^{\pm c}$, by setting that $\chi_{\mathbb W}^{\pm c}(g)=\chi(\detSpace{\mathbb W}(g)^{c})^{\pm 1}$ for $g\in \GL_{\quadraticExtension}(\mathbb W)$. By abuse of notation we denote any restriction of $\chi_{\mathbb W}$ (respectively $\chi_{\mathbb W}^{\pm c}$) to a subgroup of $\GL_{\quadraticExtension}(\mathbb W)$ by $\chi_{\mathbb W}$ (respectively $\chi_{\mathbb W}^{\pm c}$).

We consider the degenerate principal series representation, defined as the parabolically induced representation $$\degeneratePrincipalSeries{\hermitianSpace}{\chi} = \begin{dcases}
	\Ind{\siegelDoublingParabolic{\hermitianSpace}}{\IsometryGroup\left(\doubleSpace{\hermitianSpace}\right)}{\chi_{\diagSpace{\hermitianSpace}}} & \text{if }\left(\hermitianSpace, \innerproduct{\cdot}{\cdot}_{\hermitianSpace}\right) \text{ is nondegenerate},\\
	\Ind{\siegelDoublingParabolic{\hermitianSpace}}{\IsometryGroup\left(\doubleSpace{\hermitianSpace}\right)}{\chi_{\diagSpace{\hermitianSpace}} \boxtimes \minusInvolution{\chi}_{\antidiagSpace{\hermitianSpace}}} & \text{if }\innerproduct{\cdot}{\cdot}_{\hermitianSpace} = 0,
\end{dcases}$$
where, as in \Cref{subsec:siegel-parabolic-subgroup}, we identify the Levi part of $\siegelDoublingParabolic{\hermitianSpace}$ with $\GL_{\quadraticExtension}\left(\diagSpace{\hermitianSpace}\right)$ in the nondegenerate case and with $\GL_{\quadraticExtension}\left(\diagSpace{\hermitianSpace}\right) \times \GL_{\quadraticExtension}\left(\antidiagSpace{\hermitianSpace}\right)$ in the $\innerproduct{\cdot}{\cdot}_{\hermitianSpace} = 0$ case. We warn the reader that in the nondegenerate case we will often write $\chi_{\diagSpace{\hermitianSpace}}$ to refer to the character of $\siegelDoublingParabolic{\hermitianSpace}$ given by the inflation of the character $\chi_{\diagSpace{\hermitianSpace}}$ of its Levi part $\siegelDoublingLevi{\hermitianSpace}$.

By \cite{ElberLheem2025}, when $\coefficientsField=\cComplex$, the representation $\degeneratePrincipalSeries{\hermitianSpace}{\chi}$ is irreducible if and only if $\involutionPlusOne{\chi} \ne 1$, where $\involutionPlusOne{\chi} \colon \multiplicativegroup{\quadraticExtension} \to \multiplicativegroup{\coefficientsField}$ is given by $$\involutionPlusOne{\chi}\left(x\right) = \chi\left(x \cdot \involution{x}\right).$$

We have a nondegenerate pairing $\innerproduct{\cdot}{\cdot} \colon \degeneratePrincipalSeries{\hermitianSpace}{\chi} \times \degeneratePrincipalSeries{\hermitianSpace}{\chi^{-1}} \to \coefficientsField$ given by
$$\innerproduct{f_1}{f_2} = \sum_{g \in \siegelDoublingParabolic{\hermitianSpace} \backslash \IsometryGroup\left(\doubleSpace{\hermitianSpace}\right)} f_1\left(g\right) f_2\left(g\right).$$

\subsection{Intertwining operator}\label{subsec:intertwining-operator}

We define an intertwining operator $\intertwiningOperator{\hermitianSpace}{\chi} \colon \degeneratePrincipalSeries{\hermitianSpace}{\chi} \to \degeneratePrincipalSeries{\hermitianSpace}{\minusInvolution{\chi}}$ by the formula
$$\intertwiningOperator{\hermitianSpace}{\chi}f\left(g\right) = \frac{1}{\sqrt{\sizeof{\lieAlgebra}}} \sum_{u \in \siegelDoublingUnipotent{\hermitianSpace}} f\left(\weylDoubleHermitian{\hermitianSpace} \cdot u \cdot g\right),$$
where $\siegelDoublingUnipotent{\hermitianSpace}$ is the unipotent radical of $\siegelDoublingParabolic{\hermitianSpace}$. Here $\lieAlgebra = \IsometryLieAlgebra\left(\hermitianSpace\right)$ is the Lie algebra of $\IsometryGroup\left(\hermitianSpace\right)$, which is isomorphic to $\siegelDoublingUnipotent{\hermitianSpace}$ (see \Cref{subsec:siegel-parabolic-subgroup}).

This map is always an isomorphism, as it can be written as a product of invertible intertwining operators corresponding to simple reflections. We explain this in \Cref{subsec:decomposiiton-into-simple-reflections}.

\subsection{Certain parabolic subgroups}\label{sec:parabolicsubs}
The definition of our gamma factors relies on certain multiplicity one theorems. To state these, we first need to introduce more notation.
\subsubsection{Nondegenerate case}
 Suppose that $\innerproduct{\cdot}{\cdot}_{\hermitianSpace}$ is nondegenerate and assume we have a decomposition $\hermitianSpace = \xIsotropic \oplus \hermitianSpace_0 \oplus \yIsotropic$, where $\xIsotropic$ and $\yIsotropic$ are totally isotropic subspaces in duality with respect to $\innerproduct{\cdot}{\cdot}_{\hermitianSpace}$, and the subspace $\hermitianSpace_0$ together with the restriction of $\innerproduct{\cdot}{\cdot}_{\hermitianSpace}$ to $\hermitianSpace_0\times\hermitianSpace_0$ is nondegenerate. 
Let $\mathcal{F}$ be the flag $$\mathcal{F} \colon 0 \subset \xIsotropic \subset \xIsotropic \oplus \hermitianSpace_0 \subset \xIsotropic \oplus \hermitianSpace_0 \oplus \yIsotropic = \hermitianSpace$$
and let $P = P_{\hermitianSpace}\left(\mathcal{F}\right) \subset \IsometryGroup\left(\hermitianSpace\right)$ be the parabolic subgroup of $\IsometryGroup\left(\hermitianSpace\right)$ consisting of elements that stabilize the flag $\mathcal{F}$. Its Levi part $L$ is isomorphic to $\GL_{\quadraticExtension}\left(\xIsotropic\right) \times \IsometryGroup\left(\hermitianSpace_0\right)$. We set $N$ to be the unipotent radical of $P$.

The decomposition of $\hermitianSpace$ induces a decomposition $$\doubleSpace{\hermitianSpace} = \doubleSpace{\xIsotropic} \oplus \doubleSpace{\hermitianSpace_0} \oplus \doubleSpace{\yIsotropic},$$
    which is similar to the one of $\hermitianSpace$. That is, the spaces $\doubleSpace{\xIsotropic}$ and $\doubleSpace{\yIsotropic}$ are totally isotropic subspaces in duality with respect to $\innerproduct{\cdot}{\cdot}_{\doubleSpace{\hermitianSpace}}$, and $\doubleSpace{\hermitianSpace_0}$ is a nondegenerate subspace of $\doubleSpace{\hermitianSpace}$ orthogonal to $\doubleSpace{\xIsotropic}$ and to $\doubleSpace{\yIsotropic}$.
    
Let $\doublingIsotropicParabolic{\xIsotropic}{\hermitianSpace}$ be the parabolic subgroup of $\IsometryGroup\left(\doubleSpace{\hermitianSpace}\right)$ consisting of elements that stabilize the flag $$0 \subset \doubleSpace{\xIsotropic} \subset \doubleSpace{\xIsotropic} \oplus \doubleSpace{\hermitianSpace_0} \subset \doubleSpace{\xIsotropic} \oplus \doubleSpace{\hermitianSpace_0} \oplus \doubleSpace{\yIsotropic} = \doubleSpace{\hermitianSpace}.$$ 
We choose its Levi part $\doublingIsotropicLevi{\xIsotropic}{\hermitianSpace}$ to be the one that stabilizes the subspaces $\doubleSpace{\xIsotropic}$, $\doubleSpace{\yIsotropic}$ and $\doubleSpace{\hermitianSpace_0}$. Note that $\doublingIsotropicLevi{\xIsotropic}{\hermitianSpace}$ is isomorphic to $\GL_{\quadraticExtension}\left(\doubleSpace{\xIsotropic}\right) \times \IsometryGroup\left(\doubleSpace{\hermitianSpace_0}\right)$. We set $\doublingIsotropicUnipotent{\xIsotropic}{\hermitianSpace}$ to be the unipotent radical of $\doublingIsotropicParabolic{\xIsotropic}{\hermitianSpace}$.

\subsubsection{The case $\innerproduct{\cdot}{\cdot}_{\hermitianSpace}=0$}
If $\innerproduct{\cdot}{\cdot}_{\hermitianSpace}=0$, we assume that we have a decomposition $\hermitianSpace=\xIsotropic\oplus\hermitianSpace_0$. Let $\mathcal{F}$ be the flag $$\mathcal{F} \colon 0 \subset \xIsotropic \subset \xIsotropic \oplus \hermitianSpace_0= \hermitianSpace$$
and let $P = P_{\hermitianSpace}\left(\mathcal{F}\right)$ be the parabolic subgroup of $\IsometryGroup\left(\hermitianSpace\right)=\GL_{\quadraticExtension}\left(\hermitianSpace\right)$ consisting of elements that stabilize the flag $\mathcal{F}$. Its Levi part $L$ is isomorphic to $\GL_{\quadraticExtension}\left(\xIsotropic\right) \times \GL_{\quadraticExtension}\left(\hermitianSpace_0\right)$. We set $N$ to be the unipotent radical of $P$.

We again obtain a decomposition $$\doubleSpace{\hermitianSpace} = \doubleSpace{\xIsotropic} \oplus \doubleSpace{\hermitianSpace_0}$$
and set $\doublingIsotropicParabolic{\xIsotropic}{\hermitianSpace}$ to be the parabolic subgroup of $\IsometryGroup\left(\doubleSpace{\hermitianSpace}\right)=\GL_{\quadraticExtension}\left(\doubleSpace{\hermitianSpace}\right)$ that stabilizes the flag 
$$0 \subset \doubleSpace{\xIsotropic} \subset \doubleSpace{\xIsotropic} \oplus \doubleSpace{\hermitianSpace_0} = \doubleSpace{\hermitianSpace}.$$
Its Levi part $\doublingIsotropicLevi{\xIsotropic}{\hermitianSpace}$ is isomorphic to $\GL_{\quadraticExtension}\left(\doubleSpace{\xIsotropic}\right) \times\GL_{\quadraticExtension}\left(\doubleSpace{\hermitianSpace_0}\right)$. We set $\doublingIsotropicUnipotent{\xIsotropic}{\hermitianSpace}$ to be the unipotent radical of $\doublingIsotropicParabolic{\xIsotropic}{\hermitianSpace}$.

\subsection{Multiplicity one theorems}\label{subsec:multiplicity-one-theorems}

We are now in a position to state the multiplicity one theorems. Throughout this section we assume that we have a decomposition of $\hermitianSpace$ as in \Cref{sec:parabolicsubs}.

\begin{theorem}\label{thm:embedding-of-degenerate-ps-to-parabolic-induction-of-degenerate-ps}
	Suppose that $\involutionPlusOne{\chi} \ne 1$. Then the space $$\Hom_{\IsometryGroup\left(\doubleSpace{\hermitianSpace}\right)}\left(\degeneratePrincipalSeries{\hermitianSpace}{\chi}, \Ind{\doublingIsotropicParabolic{\xIsotropic}{\hermitianSpace}}{\IsometryGroup\left(\doubleSpace{\hermitianSpace}\right)}{\degeneratePrincipalSeries{\xIsotropic}{\chi} \boxtimes \degeneratePrincipalSeries{\hermitianSpace_0}{\chi}}\right)$$ is one-dimensional.
\end{theorem}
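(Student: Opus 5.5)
By Frobenius reciprocity, the Hom space in the statement equals
$$\Hom_{\doublingIsotropicLevi{\xIsotropic}{\hermitianSpace}}\left(r_{\doublingIsotropicUnipotent{\xIsotropic}{\hermitianSpace}}\left(\degeneratePrincipalSeries{\hermitianSpace}{\chi}\right), \degeneratePrincipalSeries{\xIsotropic}{\chi} \boxtimes \degeneratePrincipalSeries{\hermitianSpace_0}{\chi}\right),$$
where $r_{\doublingIsotropicUnipotent{\xIsotropic}{\hermitianSpace}}$ is the Jacobi (unipotent-coinvariants) functor. Over a finite field with $\mathrm{char}\,\coefficientsField \nmid q$, taking $N$-invariants is exact. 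I would therefore compute this Jacquet module by the geometric lemma (Mackey theory). Concretely, I would decompose $\IsometryGroup\left(\doubleSpace{\hermitianSpace}\right)$ into double cosets
$$\siegelDoublingParabolic{\hermitianSpace} \backslash \IsometryGroup\left(\doubleSpace{\hermitianSpace}\right) \slash \doublingIsotropicParabolic{\xIsotropic}{\hermitianSpace}.$$
These correspond to the relative positions of the maximal isotropic subspace $g\diagSpace{\hermitianSpace}$ with respect to the flag $\doubleSpace{\xIsotropic} \subset \doubleSpace{\xIsotropic}\oplus\doubleSpace{\hermitianSpace_0}$. They are indexed by the dimension $k$ of $g\diagSpace{\hermitianSpace}\cap \doubleSpace{\xIsotropic}$, with $0\le k\le \dim\xIsotropic$, together with the isometry class data of the middle piece.

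This gives a filtration of $r\left(\degeneratePrincipalSeries{\hermitianSpace}{\chi}\right)$ whose subquotients are parabolic inductions to $\GL\left(\doubleSpace{\xIsotropic}\right)\times\IsometryGroup\left(\doubleSpace{\hermitianSpace_0}\right)$ of characters built from $\chi$ and $\minusInvolution{\chi}$. Each such piece has the form
$$\Ind{}{\GL(\doubleSpace{\xIsotropic})}{\chi_{k}\boxtimes \minusInvolution{\chi}_{2\dim\xIsotropic-k}}\boxtimes \Ind{}{}{\chi \text{ on a parabolic of } \IsometryGroup(\doubleSpace{\hermitianSpace_0})}.$$
The open orbit, where $g\diagSpace{\hermitianSpace}$ is in generic position, has representative compatible with $\diagSpace{\xIsotropic}\oplus\diagSpace{\hermitianSpace_0}\oplus\diagSpace{\yIsotropic}$. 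Its subquotient is exactly $\degeneratePrincipalSeries{\xIsotropic}{\chi}\boxtimes\degeneratePrincipalSeries{\hermitianSpace_0}{\chi}$. Here one must check that the modulus-free finite-field induction matches the $\chi_{\diagSpace{\xIsotropic}}\boxtimes\minusInvolution{\chi}_{\antidiagSpace{\xIsotropic}}$ normalization, which I expect to be a direct computation with the matrix bases of \Cref{subsec:matrix-representation}. This orbit contributes at least a one-dimensional Hom space.

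It then remains to show two things. First, the open-orbit Hom is exactly one-dimensional. Second, no other orbit contributes, including through extensions: since we are in characteristic possibly dividing the group order, I would rather argue via the Mackey sum directly. That is, I would compute
$$\dim\Hom\left(\degeneratePrincipalSeries{\hermitianSpace}{\chi}, \Ind{}{}{\cdots}\right)$$
as the dimension of the space of functions on the double cosets
$$\siegelDoublingParabolic{\hermitianSpace}\backslash \IsometryGroup\left(\doubleSpace{\hermitianSpace}\right)\slash Q,$$
where $Q = \left(P_{\diagSpace{\xIsotropic}}\times\siegelDoublingParabolic{\hermitianSpace_0}\right)\ltimes\doublingIsotropicUnipotent{\xIsotropic}{\hermitianSpace}$. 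These functions must be supported on those double cosets $\siegelDoublingParabolic{\hermitianSpace}\gamma Q$ on which the characters $\chi_{\diagSpace{\hermitianSpace}}$ and the inducing character of $Q$ agree on $\gamma^{-1}\siegelDoublingParabolic{\hermitianSpace}\gamma\cap Q$. The Hom space of two induced-from-characters representations is given by such intertwining functions (Mackey's formula), which holds over any $\coefficientsField$ since it only uses induction adjunctions.

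The main obstacle is the orbit analysis. For each non-open $\gamma$, the stabilizer contains a torus element, coming from $\GL(\doubleSpace{\xIsotropic})$ or from the Levi of $\siegelDoublingParabolic{\hermitianSpace_0}$ acting on a piece where $\gamma\diagSpace{\hermitianSpace}$ meets $\antidiagSpace{\xIsotropic}$ or $\antidiagSpace{\hermitianSpace_0}$ nontrivially. On such an element the two characters differ by $\involutionPlusOne{\chi}(a)$, say of the form $\diag(a,\involution{a}^{-1})$. Since $\involutionPlusOne{\chi}\ne1$, choosing $a$ with $\chi(a\involution{a})\neq1$ kills the contribution. On the open orbit the compatibility holds automatically, giving exactly one dimension. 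Organizing the orbits via the invariant $\left(\dim g\diagSpace{\hermitianSpace}\cap\doubleSpace{\xIsotropic},\ \dim \text{projection to } \doubleSpace{\hermitianSpace_0}\text{ data}\right)$ and producing the explicit torus element for each is where I expect the real work to lie. I would first do the case $\hermitianSpace_0=0$, where $\IsometryGroup\left(\doubleSpace{\hermitianSpace}\right)\supset\GL$ orbits reduce to Bruhat-type cells. I would then handle general $\hermitianSpace_0$ by induction, also treating the $\innerproduct{\cdot}{\cdot}_{\hermitianSpace}=0$ case with $\GL$ Bruhat cells in parallel.
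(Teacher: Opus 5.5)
Your framework is the paper's. By transitivity of induction, the target is induced from the character $\chi_{\diagSpace{\xIsotropic}}\boxtimes\minusInvolution{\chi}_{\antidiagSpace{\xIsotropic}}\boxtimes\chi_{\diagSpace{\hermitianSpace_0}}$ of your $Q$, the stabilizer of $\diagSpace{\xIsotropic}\subset\doubleSpace{\xIsotropic}\subset\doubleSpace{\xIsotropic}\oplus\diagSpace{\hermitianSpace_0}$. Frobenius reciprocity and Mackey then give, over any $\coefficientsField$, one Hom between characters of $Q\cap\gamma^{-1}\siegelDoublingParabolic{\hermitianSpace}\gamma$ for each double coset $\siegelDoublingParabolic{\hermitianSpace}\gamma Q$.

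\textbf{The gap: the surviving orbit is misidentified.} The orbit analysis, where all the content lies, picks the wrong surviving orbit. Write $U=\gamma^{-1}\diagSpace{\hermitianSpace}$. The representative you single out, $\diagSpace{\xIsotropic}\oplus\diagSpace{\hermitianSpace_0}\oplus\diagSpace{\yIsotropic}=\diagSpace{\hermitianSpace}$, gives the identity coset, and that orbit is not open. It contains $\diagSpace{\xIsotropic}$ and meets $\doubleSpace{\xIsotropic}$ in dimension $\dim\xIsotropic$, whereas a generic maximal isotropic subspace meets $\doubleSpace{\xIsotropic}$ trivially. For example, take $\hermitianSpace$ symplectic of dimension $2$ with $\dim\xIsotropic=1$. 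Then $Q$ is a Borel subgroup of $\Sp_4$ acting on the $3$-dimensional Lagrangian Grassmannian, and the orbit of $\diagSpace{\hermitianSpace}$ lies in the $1$-dimensional family of Lagrangians containing $\diagSpace{\xIsotropic}$. The same misidentification appears in your Jacquet-module paragraph.

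On the genuinely open orbit, compatibility does not hold automatically. It fails, because the two characters differ there by $\involutionPlusOne{\chi}$ (see below), so that orbit must be killed. Your mechanism, a torus element on a piece where $U$ meets $\antidiagSpace{\xIsotropic}$, only handles orbits whose intersection with $\doubleSpace{\xIsotropic}$ is too large. It cannot reach orbits with $\diagSpace{\xIsotropic}\not\subset U$, such as the open one where $U\cap\doubleSpace{\xIsotropic}=0$. Nor can it reach orbits where $U\cap(\doubleSpace{\xIsotropic}\oplus\diagSpace{\hermitianSpace_0})$ is too small. Your invariants also do not separate $Q$-orbits. You need all three numbers $\dim(U\cap\diagSpace{\xIsotropic})$, $\dim(U\cap\doubleSpace{\xIsotropic})$ and $\dim(U\cap(\doubleSpace{\xIsotropic}\oplus\diagSpace{\hermitianSpace_0}))$.

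\textbf{The missing idea: a hyperbolic-pair element.} Suppose $\diagSpace{\xIsotropic}\not\subset U$.
\begin{enumerate}
\item Pick $v\in\diagSpace{\xIsotropic}\setminus U$ and $v'\in U$ with $\innerproduct{v}{v'}\neq 0$.
\item Let $\alpha_v$ be the isometry scaling $v$ by $a$ and $v'$ by $\minusInvolution{a}$, and fixing the orthogonal complement of $\{v,v'\}$.
\item Every flag piece is isotropic and contains $v$, and $U$ is isotropic and contains $v'$. Hence $\alpha_v\in Q\cap\gamma^{-1}\siegelDoublingParabolic{\hermitianSpace}\gamma$.
\item The inducing character of $Q$ gives $\chi(a)$, while $\chi_U$ gives $\minusInvolution{\chi}(a)$. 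Since $\involutionPlusOne{\chi}\neq 1$, this orbit contributes nothing.
\end{enumerate}

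The same device kills the orbits with $\diagSpace{\xIsotropic}\subset U$ and $U\cap\doubleSpace{\xIsotropic}=\diagSpace{\xIsotropic}$ but $\dim(U\cap(\doubleSpace{\xIsotropic}\oplus\diagSpace{\hermitianSpace_0}))<\dim\xIsotropic+\dim\hermitianSpace_0$. There one takes $v\in(\doubleSpace{\xIsotropic}\oplus\diagSpace{\hermitianSpace_0})\setminus(U+\doubleSpace{\xIsotropic})$ and $v'\in U\cap(\doubleSpace{\xIsotropic})^{\perp}$. Your element on $U\cap\antidiagSpace{\xIsotropic}$ kills the remaining orbits, those with $U\cap\doubleSpace{\xIsotropic}\supsetneq\diagSpace{\xIsotropic}$.

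What survives is exactly the identity coset, where all three intersections are maximal, and there the Hom is one-dimensional. The same correction is needed when $\innerproduct{\cdot}{\cdot}_{\hermitianSpace}=0$. There a generic subspace of dimension $\dim\hermitianSpace$ also misses $\diagSpace{\xIsotropic}$, and it is killed by scaling a vector of $\diagSpace{\xIsotropic}\setminus U$ while fixing a complement containing $U$.
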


\begin{proof}
    We first assume that $\innerproduct{\cdot}{\cdot}_{\hermitianSpace}$ is nondegenerate. Let $\mathcal F_{\xIsotropic}^\Box$ be the flag
    $$\mathcal F_{\xIsotropic}^\Box \colon 0 \subset \diagSpace{\xIsotropic} \subset \doubleSpace{\xIsotropic}  \subset\doubleSpace{\xIsotropic}\oplus\diagSpace{\hermitianSpace_0}$$
    of totally isotropic subspaces in $\doubleSpace{\hermitianSpace}$. Let $P_{\xIsotropic}^\Box$ be the parabolic subgroup of $\IsometryGroup\left(\doubleSpace{\hermitianSpace}\right)$ that stabilizes $\mathcal F_{\xIsotropic}^\Box$. Note that $\doubleSpace{\xIsotropic}\oplus\diagSpace{\hermitianSpace_0}$ is a maximal totally isotropic subspace of $\doubleSpace{\hermitianSpace}$ and hence a Levi subgroup of $P_{\xIsotropic}^\Box$ is isomorphic to $$\GL_{\quadraticExtension}\left(\diagSpace{\xIsotropic}\right)\times\GL_{\quadraticExtension}\left(\antidiagSpace{\xIsotropic}\right)\times\GL_{\quadraticExtension}\left(\diagSpace{\hermitianSpace_0}\right).$$
    Then by transitivity of parabolic induction, we obtain that
    $$\Ind{\doublingIsotropicParabolic{\xIsotropic}{\hermitianSpace}}{\IsometryGroup\left(\doubleSpace{\hermitianSpace}\right)}{\degeneratePrincipalSeries{\xIsotropic}{\chi} \boxtimes \degeneratePrincipalSeries{\hermitianSpace_0}{\chi}}\cong\Ind{P_{\xIsotropic}^\Box}{\IsometryGroup\left(\doubleSpace{\hermitianSpace}\right)}{\chi_{\diagSpace{\xIsotropic}}\boxtimes\minusInvolution{\chi}_{\antidiagSpace{\xIsotropic}}\boxtimes\chi_{\diagSpace{\hermitianSpace_0}}}.$$
    Hence by Frobenius reciprocity the space in question is isomorphic to
    $$\Hom_{P_{\xIsotropic}^\Box}\left(\operatorname{Res}_{P_{\xIsotropic}^\Box}^{\IsometryGroup\left(\doubleSpace{\hermitianSpace}\right)}\degeneratePrincipalSeries{\hermitianSpace}{\chi},\chi_{\diagSpace{\xIsotropic}}\boxtimes\minusInvolution{\chi}_{\antidiagSpace{\xIsotropic}}\boxtimes\chi_{\diagSpace{\hermitianSpace_0}}\right),$$
    and we will use Mackey theory to describe $\operatorname{Res}_{P_{\xIsotropic}^\Box}^{\IsometryGroup\left(\doubleSpace{\hermitianSpace}\right)}\degeneratePrincipalSeries{\hermitianSpace}{\chi}$.
    Let $\Lambda$ be a set of double coset representatives for $\siegelDoublingParabolic{\hermitianSpace}\backslash\IsometryGroup\left(\doubleSpace{\hermitianSpace}\right)/P_{\xIsotropic}^\Box$. By Witt's theorem we can identify $\siegelDoublingParabolic{\hermitianSpace}\backslash\IsometryGroup\left(\doubleSpace{\hermitianSpace}\right)$ with the set of maximal isotropic subspaces in $\doubleSpace{\hermitianSpace}$. Two such maximal isotropic subspaces $U,U'$ lie in the same $P_{\xIsotropic}^\Box$-orbit if and only if 
    \begin{align*}
        \dim_{\quadraticExtension}\left(U\cap\diagSpace{\xIsotropic}\right)&=\dim_{\quadraticExtension}\left(U'\cap\diagSpace{\xIsotropic}\right),\\
        \dim_{\quadraticExtension}\left(U\cap\doubleSpace{\xIsotropic}\right)&=\dim_{\quadraticExtension}\left(U'\cap\doubleSpace{\xIsotropic}\right),\\
        \dim_{\quadraticExtension}\left(U\cap\left(\doubleSpace{\xIsotropic}\oplus\diagSpace{\hermitianSpace_0}\right)\right)&=        \dim_{\quadraticExtension}\left(U'\cap\left(\doubleSpace{\xIsotropic}\oplus\diagSpace{\hermitianSpace_0}\right)\right).
    \end{align*}
    We then have
    $$\operatorname{Res}_{P_{\xIsotropic}^\Box}^{\IsometryGroup\left(\doubleSpace{\hermitianSpace}\right)}\degeneratePrincipalSeries{\hermitianSpace}{\chi}\cong\bigoplus_{\delta\in\Lambda}\Ind{\delta^{-1}\siegelDoublingParabolic{\hermitianSpace}\delta\cap P_{\xIsotropic}^\Box}{P_{\xIsotropic}^\Box}{\chi_{\diagSpace{\hermitianSpace}}^{\delta}},$$ where $\chi_{\diagSpace{\hermitianSpace}}^{\delta} \colon \delta^{-1}\siegelDoublingParabolic{\hermitianSpace}\delta \to \multiplicativegroup{\coefficientsField}$ is the character given by
    $$\chi_{\diagSpace{\hermitianSpace}}^{\delta}\left(p\right) = \chi_{\diagSpace{\hermitianSpace}}\left(\delta p \delta^{-1}\right).$$
    The space whose dimension we want to compute is hence isomorphic to
    $$
            \bigoplus_{\delta\in\Lambda}\Hom_{P_{\xIsotropic}^\Box}\left(\Ind{\delta^{-1}\siegelDoublingParabolic{\hermitianSpace}\delta\cap P_{\xIsotropic}^\Box}{P_{\xIsotropic}^\Box}{\chi_{\diagSpace{\hermitianSpace}}^{\delta}},\chi_{\diagSpace{\xIsotropic}}\boxtimes\minusInvolution{\chi}_{\antidiagSpace{\xIsotropic}}\boxtimes\chi_{\diagSpace{\hermitianSpace_0}}\right),$$
which by Frobenius reciprocity is isomorphic to

      \begin{equation}\label{eq:multmultone}       
      \bigoplus_{\delta\in\Lambda}\Hom_{\delta^{-1}\siegelDoublingParabolic{\hermitianSpace}\delta\cap P_{\xIsotropic}^\Box}\left(\chi_{\diagSpace{\hermitianSpace}}^{\delta},\chi_{\diagSpace{\xIsotropic}}\boxtimes\minusInvolution{\chi}_{\antidiagSpace{\xIsotropic}}\boxtimes\chi_{\diagSpace{\hermitianSpace_0}}\right).
    \end{equation}

    Suppose first that $\delta^{-1}(\diagSpace{\hermitianSpace})=U$ satisfies that $\dim_{\quadraticExtension}\left(U\cap\diagSpace{\xIsotropic}\right)<\dim_{\quadraticExtension}\left(\xIsotropic\right)$. Hence there is a $v\in\diagSpace{\xIsotropic}$ such that $v\not\in U$, and since $U$ is maximal totally isotropic, there is also a $v'\in U$ such that $\innerproduct{v}{v'}_{\doubleSpace{\hermitianSpace}}\not=0$. Moreover, let $R\subset\doubleSpace{\hermitianSpace}$ be the orthogonal complement of the span of $v$ and $v'$ and note that $\Span_{\quadraticExtension}\left(v,v'\right)\oplus R=\doubleSpace{\hermitianSpace}$. For any $\alpha \in \multiplicativegroup{\quadraticExtension}$, let $\alpha_v$ be the element of $\IsometryGroup{\left(\doubleSpace{\hermitianSpace}\right)}$ such that $\alpha_v(v)=\alpha v$ and $\alpha_v(v')=\minusInvolution{\alpha}v'$ and $\alpha_v\restriction_{R}=\idmap_R$. Note that $\alpha_v\in\delta^{-1}\siegelDoublingParabolic{\hermitianSpace}\delta\cap P_{\xIsotropic}^\Box$ and a quick computation shows that 
    $$\chi_{\diagSpace{\xIsotropic}}\boxtimes\minusInvolution{\chi}_{\antidiagSpace{\xIsotropic}}\boxtimes\chi_{\diagSpace{\hermitianSpace_0}}(\alpha_v)=\chi(\alpha)$$
    and
    $$\chi_{\diagSpace{\hermitianSpace}}^{\delta}(\alpha_v)=\chi_U(\alpha_v)=\minusInvolution{\chi}(\alpha).$$
    Since we assume that $\involutionPlusOne{\chi}\ne 1$ this implies that
    $$\Hom_{\delta^{-1}\siegelDoublingParabolic{\hermitianSpace}\delta\cap P_{\xIsotropic}^\Box}\left(\chi_{\diagSpace{\hermitianSpace}}^{\delta},\chi_{\diagSpace{\xIsotropic}}\boxtimes\minusInvolution{\chi}_{\antidiagSpace{\xIsotropic}}\boxtimes\chi_{\diagSpace{\hermitianSpace_0}}\right)=0,$$
    unless $\dim_\quadraticExtension\left(\delta^{-1}(\diagSpace{\hermitianSpace})\cap\diagSpace{\xIsotropic}\right)=\dim_\quadraticExtension\left(\xIsotropic\right)$.

    Now assume that $\delta^{-1}(\diagSpace{\hermitianSpace})=U$ satisfies $\dim_\quadraticExtension\left(U\cap\diagSpace{\xIsotropic}\right)=\dim_\quadraticExtension\left(\xIsotropic\right)$ and $\dim_\quadraticExtension\left(U\cap\doubleSpace{\xIsotropic}\right)>\dim_\quadraticExtension\left(\xIsotropic\right)$. Then there is $0 \ne v\in U\cap\antidiagSpace{\xIsotropic}$. Let $R\subset\Span\left(v\right)^\perp$ be such that $\Span_{\quadraticExtension}\left(v\right)\oplus R=U+\left(\antidiagSpace{\xIsotropic}\oplus\diagSpace{\hermitianSpace}_0\right)$. For any $\beta \in \multiplicativegroup{\quadraticExtension}$, let $\beta_v$ be any element of $\IsometryGroup\left(\doubleSpace{\hermitianSpace}\right)$ such that $\beta_v(v)=\beta v$ and $\beta_v\restriction_{R}=\idmap_R$. We have that $\beta_v\in\delta^{-1}\siegelDoublingParabolic{\hermitianSpace}\delta\cap P_{\xIsotropic}^\Box$ and
    $$\chi_{\diagSpace{\xIsotropic}}\boxtimes\minusInvolution{\chi}_{\antidiagSpace{\xIsotropic}}\boxtimes\chi_{\diagSpace{\hermitianSpace_0}}(\beta_v)=\minusInvolution{\chi}(\beta),$$
    as well as,
    $$\chi_{\diagSpace{\hermitianSpace}}^{\delta}(\beta_v)=\chi_U(\beta_v)=\chi(\beta).$$
    Again, since we assume that $\involutionPlusOne{\chi}\ne 1$ this implies that
    $$\Hom_{\delta^{-1}\siegelDoublingParabolic{\hermitianSpace}\delta\cap P_{\xIsotropic}^\Box}\left(\chi_{\diagSpace{\hermitianSpace}}^{\delta},\chi_{\diagSpace{\xIsotropic}}\boxtimes\minusInvolution{\chi}_{\antidiagSpace{\xIsotropic}}\boxtimes\chi_{\diagSpace{\hermitianSpace_0}}\right)=0,$$
    unless $\dim_\quadraticExtension\left(\delta^{-1}(\diagSpace{\hermitianSpace})\cap\diagSpace{\xIsotropic}\right)=\dim_\quadraticExtension\left(\delta^{-1}(\diagSpace{\hermitianSpace})\cap\doubleSpace{\xIsotropic}\right)=\dim_\quadraticExtension\left(\xIsotropic\right)$.
    
    Finally, assume now that $\delta^{-1}(\diagSpace{\hermitianSpace})=U$ satisfies $\dim_\quadraticExtension\left(U\cap\diagSpace{\xIsotropic}\right)=\dim_\quadraticExtension\left(U\cap\doubleSpace{\xIsotropic}\right)=\dim_\quadraticExtension\left(\xIsotropic\right)$ and $\dim_{\quadraticExtension}\left(U\cap\left(\doubleSpace{\xIsotropic}\oplus\diagSpace{\hermitianSpace_0}\right)\right)<\dim_\quadraticExtension\left(\xIsotropic\right)+\dim_\quadraticExtension\left(\hermitianSpace_0\right)$. Then there exist elements $v\in\doubleSpace{\xIsotropic}\oplus\diagSpace{\hermitianSpace}_0$ with $v\not\in U$ and $v'\in U$ such that $\innerproduct{v}{v'}_{\doubleSpace{\hermitianSpace}}\ne 0$. We can argue as in the first case that  $$\Hom_{\delta^{-1}\siegelDoublingParabolic{\hermitianSpace}\delta\cap P_{\xIsotropic}^\Box}\left(\chi_{\diagSpace{\hermitianSpace}}^{\delta},\chi_{\diagSpace{\xIsotropic}}\boxtimes\minusInvolution{\chi}_{\antidiagSpace{\xIsotropic}}\boxtimes\chi_{\diagSpace{\hermitianSpace_0}}\right)=0,$$
    unless
    $$\dim_\quadraticExtension\left(\delta^{-1}(\diagSpace{\hermitianSpace})\cap\diagSpace{\xIsotropic}\right)=\dim_\quadraticExtension\left(\delta^{-1}(\diagSpace{\hermitianSpace})\cap\doubleSpace{\xIsotropic}\right)=\dim_\quadraticExtension\left(\xIsotropic\right)$$
        and
        $$\dim_{\quadraticExtension}\left(\delta^{-1}(\diagSpace{\hermitianSpace})\cap\left(\doubleSpace{\xIsotropic}\oplus\diagSpace{\hermitianSpace_0}\right)\right)=\dim_\quadraticExtension\left(\xIsotropic\right)+\dim_\quadraticExtension\left(\hermitianSpace_0\right).$$
This shows that there is only one coset in $\Lambda$ that can have a nontrivial contribution to \eqref{eq:multmultone}, namely the coset which we choose to be represented by the maximal totally isotropic subspace $\diagSpace{\hermitianSpace}$. It is straightforward to see that
$$\Hom_{\siegelDoublingParabolic{\hermitianSpace}\cap P_{\xIsotropic}^\Box}\left(\chi_{\diagSpace{\hermitianSpace}},\chi_{\diagSpace{\xIsotropic}}\boxtimes\minusInvolution{\chi}_{\antidiagSpace{\xIsotropic}}\boxtimes\chi_{\diagSpace{\hermitianSpace_0}}\right)$$
is one-dimensional.

Assume now that $\innerproduct{\cdot}{\cdot}_{\hermitianSpace}=0$. Let $P_{\xIsotropic}^\Box$ be the stabilizer of the flag
$$0\subset\diagSpace{\xIsotropic}\subset\doubleSpace{\xIsotropic}\subset\doubleSpace{\xIsotropic}\oplus\diagSpace{\hermitianSpace}_0\subset\doubleSpace{\hermitianSpace}.$$
A Levi subgroup of $P_{\xIsotropic}^\Box$ is isomorphic to 
$$\GL_{\quadraticExtension}\left(\diagSpace{\xIsotropic}\right)\times\GL_{\quadraticExtension}\left(\antidiagSpace{\xIsotropic}\right)\times\GL_{\quadraticExtension}\left(\diagSpace{\hermitianSpace_0}\right)\times\GL_{\quadraticExtension}\left(\antidiagSpace{\hermitianSpace_0}\right).$$
  By transitivity of parabolic induction, we obtain that
    $$\Ind{\doublingIsotropicParabolic{\xIsotropic}{\hermitianSpace}}{\IsometryGroup\left(\doubleSpace{\hermitianSpace}\right)}{\degeneratePrincipalSeries{\xIsotropic}{\chi} \boxtimes \degeneratePrincipalSeries{\hermitianSpace_0}{\chi}}\cong\Ind{P_{\xIsotropic}^\Box}{\IsometryGroup\left(\doubleSpace{\hermitianSpace}\right)}{\chi_{\diagSpace{\xIsotropic}}\boxtimes\minusInvolution{\chi}_{\antidiagSpace{\xIsotropic}}\boxtimes\chi_{\diagSpace{\hermitianSpace_0}}\boxtimes\minusInvolution{\chi}_{\antidiagSpace{\hermitianSpace_0}}}.$$
    Hence by Frobenius reciprocity the space in question is isomorphic to
    $$\Hom_{P_{\xIsotropic}^\Box}\left(\operatorname{Res}_{P_{\xIsotropic}^\Box}^{\IsometryGroup\left(\doubleSpace{\hermitianSpace}\right)}\degeneratePrincipalSeries{\hermitianSpace}{\chi},\chi_{\diagSpace{\xIsotropic}}\boxtimes\minusInvolution{\chi}_{\antidiagSpace{\xIsotropic}}\boxtimes\chi_{\diagSpace{\hermitianSpace_0}}\boxtimes\minusInvolution{\chi}_{\antidiagSpace{\hermitianSpace_0}}\right),$$
    and we will again use Mackey theory to describe $\operatorname{Res}_{P_{\xIsotropic}^\Box}^{\IsometryGroup\left(\doubleSpace{\hermitianSpace}\right)}\degeneratePrincipalSeries{\hermitianSpace}{\chi}$.
    Let $\Lambda$ be a set of double coset representatives for $\siegelDoublingParabolic{\hermitianSpace}\backslash\IsometryGroup\left(\doubleSpace{\hermitianSpace}\right)/P_{\xIsotropic}^\Box$. We can identify $\siegelDoublingParabolic{\hermitianSpace}\backslash\IsometryGroup\left(\doubleSpace{\hermitianSpace}\right)$ with the set of subspaces of $\doubleSpace{\hermitianSpace}$ that have dimension $\dim_{\quadraticExtension}\left(\hermitianSpace\right)$. Two such subspaces $U,U'$ lie in the same $P_{\xIsotropic}^\Box$-orbit, if and only if
    \begin{align*}
        \dim_{\quadraticExtension}\left(U\cap\diagSpace{\xIsotropic}\right)&=\dim_{\quadraticExtension}\left(U'\cap\diagSpace{\xIsotropic}\right),\\
        \dim_{\quadraticExtension}\left(U\cap\doubleSpace{\xIsotropic}\right)&=\dim_{\quadraticExtension}\left(U'\cap\doubleSpace{\xIsotropic}\right),\\
        \dim_{\quadraticExtension}\left(U\cap\left(\doubleSpace{\xIsotropic}\oplus\diagSpace{\hermitianSpace_0}\right)\right)&=        \dim_{\quadraticExtension}\left(U'\cap\left(\doubleSpace{\xIsotropic}\oplus\diagSpace{\hermitianSpace_0}\right)\right).
    \end{align*}
     Analogously to the nondegenerate case we obtain that the space in question is isomorphic to
      \begin{equation}\label{eq:multmultonedeg}       
      \bigoplus_{\delta\in\Lambda}\Hom_{\delta^{-1}\siegelDoublingParabolic{\hermitianSpace}\delta\cap P_{\xIsotropic}^\Box}\left(\left(\chi_{\diagSpace{\hermitianSpace}}\boxtimes\minusInvolution{\chi}_{\antidiagSpace{\hermitianSpace}}\right)^{\delta},\chi_{\diagSpace{\xIsotropic}}\boxtimes\minusInvolution{\chi}_{\antidiagSpace{\xIsotropic}}\boxtimes\chi_{\diagSpace{\hermitianSpace_0}}\boxtimes\minusInvolution{\chi}_{\antidiagSpace{\hermitianSpace_0}}\right),
    \end{equation}
    where $\left(\chi_{\diagSpace{\hermitianSpace}}\boxtimes\minusInvolution{\chi}_{\antidiagSpace{\hermitianSpace}}\right)^{\delta} \colon \delta^{-1}\siegelDoublingParabolic{\hermitianSpace}\delta \to \multiplicativegroup{\coefficientsField}$ is the character given by $$\left(\chi_{\diagSpace{\hermitianSpace}}\boxtimes\minusInvolution{\chi}_{\antidiagSpace{\hermitianSpace}}\right)^{\delta}\left(p\right) = \left(\chi_{\diagSpace{\hermitianSpace}}\boxtimes\minusInvolution{\chi}_{\antidiagSpace{\hermitianSpace}}\right)\left(\delta p \delta^{-1}\right).$$
    Firstly, assume that $\delta^{-1}(\diagSpace{\hermitianSpace})=U$ satisfies $\dim_{\quadraticExtension}\left(U\cap\diagSpace{\xIsotropic}\right)<\dim_{\quadraticExtension}\left(\xIsotropic\right)$. Then there is $v\in\diagSpace{\xIsotropic}$ such that $v\not\in U$ and we choose $R$ such that $U\subset R$ and $R \oplus \Span_{\quadraticExtension}\left(v\right) =\doubleSpace{\hermitianSpace}$. Then for any $\alpha \in \multiplicativegroup{\quadraticExtension}$ let $\alpha_v$ be the element of $\GL_{\quadraticExtension}\left(\doubleSpace{\hermitianSpace}\right)$ such that $\alpha_v(v)=\alpha v$ and $\alpha_v\restriction_{R}=\idmap_R$. Clearly, we have that $\alpha_v\in\delta^{-1}\siegelDoublingParabolic{\hermitianSpace}\delta\cap P_{\xIsotropic}^\Box$ and
    $$\left(\chi_{\diagSpace{\hermitianSpace}}\boxtimes\minusInvolution{\chi}_{\antidiagSpace{\hermitianSpace}}\right)^{\delta}(\alpha_v)=\chi_U(\alpha_v)\minusInvolution{\chi}_{\doubleSpace{\hermitianSpace}/U}(\alpha_v)=\minusInvolution{\chi}(\alpha),$$
    as well as $$\chi_{\diagSpace{\xIsotropic}}\boxtimes\minusInvolution{\chi}_{\antidiagSpace{\xIsotropic}}\boxtimes\chi_{\diagSpace{\hermitianSpace_0}}\boxtimes\minusInvolution{\chi}_{\antidiagSpace{\hermitianSpace_0}}(\alpha_v)=\chi(\alpha).$$

    Since we assume that $\involutionPlusOne{\chi}\ne 1$ this implies that
    $$\Hom_{\delta^{-1}\siegelDoublingParabolic{\hermitianSpace}\delta\cap P_{\xIsotropic}^\Box}\left(\left(\chi_{\diagSpace{\hermitianSpace}}\boxtimes\minusInvolution{\chi}_{\antidiagSpace{\hermitianSpace}}\right)^{\delta},\chi_{\diagSpace{\xIsotropic}}\boxtimes\minusInvolution{\chi}_{\antidiagSpace{\xIsotropic}}\boxtimes\chi_{\diagSpace{\hermitianSpace_0}}\boxtimes\minusInvolution{\chi}_{\antidiagSpace{\hermitianSpace_0}}\right)=0,$$
    unless $\dim_\quadraticExtension\left(\delta^{-1}(\diagSpace{\hermitianSpace})\cap\diagSpace{\xIsotropic}\right)=\dim_\quadraticExtension\left(\xIsotropic\right)$.
    Arguing similarly to above, we see that an element $\delta\in\Lambda$ can contribute a non-zero summand to \eqref{eq:multmultonedeg} if and only if
 $$      \dim_{\quadraticExtension}\left(\delta^{-1}(\diagSpace{\hermitianSpace})\cap\diagSpace{\xIsotropic}\right)=\dim_{\quadraticExtension}\left(\delta^{-1}(\diagSpace{\hermitianSpace})\cap\doubleSpace{\xIsotropic}\right)=\dim_{\quadraticExtension}\left(\xIsotropic\right)$$
 and 
  $$\dim_{\quadraticExtension}\left(\delta^{-1}(\diagSpace{\hermitianSpace})\cap\left(\doubleSpace{\xIsotropic}\oplus\diagSpace{\hermitianSpace_0}\right)\right)=\dim_{\quadraticExtension}\left(\xIsotropic\right)+\dim_{\quadraticExtension}\left(\hermitianSpace_0\right).$$
Again this shows that there is only one coset in $\Lambda$ that can have a nontrivial contribution to \eqref{eq:multmultonedeg}, namely the coset which we choose to be represented by the subspace $\diagSpace{\hermitianSpace}$. It is straightforward to see that
$$\Hom_{\siegelDoublingParabolic{\hermitianSpace}\cap P_{\xIsotropic}^\Box}\left(\chi_{\diagSpace{\hermitianSpace}},\chi_{\diagSpace{\xIsotropic}}\boxtimes\minusInvolution{\chi}_{\antidiagSpace{\xIsotropic}}\boxtimes\chi_{\diagSpace{\hermitianSpace_0}}\boxtimes\minusInvolution{\chi}_{\antidiagSpace{\hermitianSpace_0}}\right)$$
is one-dimensional, which implies the result.

\end{proof}

\begin{theorem}\label{thm:multiplicity-one-for-zeta-integrals}
	Let $\pi_0$ be an irreducible cuspidal representation of $\IsometryGroup{\left(\hermitianSpace_0\right)}$ and let $\tau$ be an irreducible representation of $\GL_{\quadraticExtension}\left(\xIsotropic\right)$. Suppose that
	\begin{itemize}
        \item if $\innerproduct{\cdot}{\cdot}_{\hermitianSpace}$ is nondegenerate, the characters $\chi^{-1}$ and $\involution{\chi}$ do not appear in the cuspidal support of $\tau$.
        \item if $\innerproduct{\cdot}{\cdot}_{\hermitianSpace}=0$, the character $\chi^{-1}$ and the trivial character do not appear in the cuspidal support of $\tau$,
  \end{itemize}
    Then for any irreducible representation $\pi \subset \Ind{P}{\IsometryGroup\left(\hermitianSpace\right)}{\tau \boxtimes \pi_0}$, the space $$ \Hom_{\iEmbedding\left(\IsometryGroup\left(\hermitianSpace\right) \times \IsometryGroup\left(\hermitianSpace\right)\right)}\left(\left(\pi \boxtimes \Contragredient{\left(\pi\otimes\chi_{\hermitianSpace}\right)}\right) \otimes \degeneratePrincipalSeries{\hermitianSpace}{\chi}, \coefficientsField\right)$$ is one-dimensional.
\end{theorem}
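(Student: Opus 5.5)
The plan is the orbit-by-orbit analysis of Piatetski-Shapiro--Rallis, carried out with Mackey theory, which in the finite group setting gives a direct sum decomposition. We restrict $\degeneratePrincipalSeries{\hermitianSpace}{\chi}$ to $\iEmbedding\left(\IsometryGroup\left(\hermitianSpace\right) \times \IsometryGroup\left(\hermitianSpace\right)\right)$ and decompose it along the double cosets
$$\siegelDoublingParabolic{\hermitianSpace} \backslash \IsometryGroup\left(\doubleSpace{\hermitianSpace}\right) / \iEmbedding\left(\IsometryGroup\left(\hermitianSpace\right) \times \IsometryGroup\left(\hermitianSpace\right)\right).$$

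\textbf{Nondegenerate case: orbits and the open orbit.} By Witt's theorem, $\siegelDoublingParabolic{\hermitianSpace}\backslash \IsometryGroup(\doubleSpace{\hermitianSpace})$ is the set of maximal totally isotropic subspaces $U \subset \doubleSpace{\hermitianSpace}$. I expect the $\IsometryGroup(\hermitianSpace)\times\IsometryGroup(\hermitianSpace)$-orbits to be classified by $r = \dim_{\quadraticExtension}(U \cap \plusSpace{\hermitianSpace})$. This should equal $\dim_{\quadraticExtension}(U\cap \minusSpace{\hermitianSpace})$, and $U\cap\plusSpace{\hermitianSpace} = \plusSpace{\gSubspace{W}}$ for a totally isotropic $\gSubspace{W}$ of dimension $r$. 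The open orbit $r=0$ is represented by $\diagSpace{\hermitianSpace}$. By \Cref{prop:intersection-of-i-with-siegel-parabolic}, its stabilizer is $\diagSpace{\IsometryGroup(\hermitianSpace)}$, on which $\chi_{\diagSpace{\hermitianSpace}}$ restricts to $\chi_{\hermitianSpace}$. By Frobenius reciprocity the open-orbit summand contributes
$$\Hom_{\IsometryGroup(\hermitianSpace)}\left(\pi \otimes \Contragredient{(\pi\otimes\chi_{\hermitianSpace})}\otimes \chi_{\hermitianSpace}, \coefficientsField\right) = \Hom_{\IsometryGroup(\hermitianSpace)}\left(\pi \otimes \Contragredient{\pi}, \coefficientsField\right),$$
which is one-dimensional by Schur's lemma. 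Since Mackey's decomposition is a direct sum, the total dimension is $1$ plus the contributions of the orbits with $r>0$. So it suffices to show that each such contribution vanishes.

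\textbf{Vanishing for $r>0$.} Fix $U$ with $U\cap \plusSpace{\hermitianSpace} = \plusSpace{\gSubspace{W}}$ and $U \cap \minusSpace{\hermitianSpace} = \minusSpace{\gSubspace{W}'}$. I will choose the representative so that $\gSubspace{W}'=\gSubspace{W}$, writing $U = \plusSpace{\gSubspace{W}} \oplus \minusSpace{\gSubspace{W}} \oplus \diagSpace{(\gSubspace{W}^\perp/\gSubspace{W})}$. The stabilizer of $U$ in $\iEmbedding(\IsometryGroup(\hermitianSpace)\times\IsometryGroup(\hermitianSpace))$ then lies in $\iEmbedding(P_{\gSubspace{W}}\times P_{\gSubspace{W}})$. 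It contains $\iEmbedding(N_{\gSubspace{W}} \times N_{\gSubspace{W}})$ and the full product $\GL(\gSubspace{W})\times \GL(\gSubspace{W})$, and the part acting on $\gSubspace{W}^\perp/\gSubspace{W}$ is the diagonal $\IsometryGroup(\gSubspace{W}^\perp/\gSubspace{W})$. Computing $\chi_U$ on this stabilizer, a nonzero functional must factor through the Jacquet modules $r_{P_{\gSubspace{W}}}(\pi)\boxtimes r_{P_{\gSubspace{W}}}(\Contragredient{(\pi\otimes\chi_{\hermitianSpace})})$. In that factorization, the $\GL(\gSubspace{W})$ factor acting on the $\plusSpace{}$-side must act through a fixed one-dimensional character $\mu\circ\det$, where I expect $\mu\in\{\chi^{-1}, \involution{\chi}\}$ up to the conventions. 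This is the analogue of the characters $\alpha_v$, $\beta_v$ in the proof of \Cref{thm:embedding-of-degenerate-ps-to-parabolic-induction-of-degenerate-ps}.

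\textbf{Jacquet module bookkeeping.} Next I use $\pi\subset \Ind{P}{\IsometryGroup(\hermitianSpace)}{\tau\boxtimes\pi_0}$ with $\pi_0$ cuspidal, together with the geometric lemma (Mackey theory for $P_{\gSubspace{W}}\backslash \IsometryGroup(\hermitianSpace)/P$). It follows that every irreducible $\GL(\gSubspace{W})$-constituent of $r_{P_{\gSubspace{W}}}(\pi)$ has cuspidal support contained in the multiset formed by the cuspidal supports of $\tau$ and of its conjugate-dual $\involution{\Contragredient{\tau}}$. Indeed, cuspidality of $\pi_0$ forces all of the $\GL(\gSubspace{W})$-part to come from the $\GL(\xIsotropic)$-block. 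The character $\mu\circ\det$ of $\GL(\gSubspace{W})$ has cuspidal support $\{\mu,\dots,\mu\}$. Hence a nonzero contribution would force $\chi^{-1}$ or $\involution{\chi}$ into the cuspidal support of $\tau$: $\mu$ appearing in $\involution{\Contragredient{\tau}}$ is the same as the other character appearing in $\tau$. This contradicts the hypothesis. The main obstacle, and the step I would check most carefully, is the exact identification of the character $\mu$ on the stabilizer, including the signs and $c$-twists, so that the excluded characters are exactly $\chi^{-1}$ and $\involution{\chi}$.

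\textbf{The case $\innerproduct{\cdot}{\cdot}_{\hermitianSpace}=0$.} Here $\siegelDoublingParabolic{\hermitianSpace}\backslash\GL(\doubleSpace{\hermitianSpace})$ is the Grassmannian of $\dim\hermitianSpace$-subspaces. The orbits are indexed by $(\dim U\cap \plusSpace{\hermitianSpace}, \dim U\cap\minusSpace{\hermitianSpace})$, and the open orbit is again handled by Schur's lemma. In a non-open orbit, the stabilizer involves two different parabolics of $\GL(\hermitianSpace)$, one with $U\cap\plusSpace{\hermitianSpace}$ as a subspace and one with a quotient. The characters appearing on the Levi factors are $\chi^{-1}\circ\det$ from the $\chi_U$ part and the trivial character from the $\minusInvolution{\chi}_{\doubleSpace{\hermitianSpace}/U}\cdot\chi$ cancellation. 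The same cuspidal support argument, now for ordinary Jacquet modules of $\GL$, then excludes these orbits precisely under the hypothesis that $\chi^{-1}$ and the trivial character are not in the cuspidal support of $\tau$.
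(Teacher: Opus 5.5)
In the nondegenerate case your argument is the paper's proof. It has the same Mackey decomposition with orbits indexed by $\dim_{\quadraticExtension}(U\cap\plusSpace{\hermitianSpace})$, and your $\plusSpace{W}\oplus\minusSpace{W}\oplus\diagSpace{(W^{\perp}/W)}$ is the paper's $\mathbb U_i$. The open-orbit computation and the vanishing mechanism are also the same. The paper phrases the vanishing through the subgroup $R_i\times R_i\subset\Stab(\mathbb U_i)$, with $R_i\cong\GL_{\quadraticExtension}(U_i)\ltimes N_i$, and Frobenius reciprocity into $\Ind{R_i}{\IsometryGroup(\hermitianSpace)}{\chi_i}$; this is your Jacquet-module formulation. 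Your $\mu$ is $\chi^{-1}$: for $g$ in the $\GL(W)$-factor, $\iEmbedding(g,\idmap_{\hermitianSpace})$ acts on $U$ with determinant $\det(g|_W)$. Conjugate-duality then yields exactly $\chi^{-1}$ and $\involution{\chi}$, so that half is fine.

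The case $\innerproduct{\cdot}{\cdot}_{\hermitianSpace}=0$ has a genuine gap. The paper's proof shares it, so the statement itself needs correcting there.

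First, the open orbit is not ``again Schur''. The inducing character $\chi_{\diagSpace{\hermitianSpace}}\boxtimes\minusInvolution{\chi}_{\antidiagSpace{\hermitianSpace}}$ restricts to $\iEmbedding(h,h)$ as $\chi(\kappaCharacter{\hermitianSpace}(\detSpace{\hermitianSpace}h))$, not $\chi(\detSpace{\hermitianSpace}h)$. So the $(0,0)$ summand is $\Hom_{\GL_{\quadraticExtension}(\hermitianSpace)}(\pi\otimes\minusInvolution{\chi}_{\hermitianSpace},\pi)$. For $\xIsotropic=0$, $\dim\hermitianSpace=1$ and $\chi\neq 1$, one checks that all three summands vanish, so the space is $0$. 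The twist under which $\innerproduct{\zetaIntegral(f)v}{v^{\vee}}$ is actually invariant is $\chi\circ\kappaCharacter{\hermitianSpace}\circ\detSpace{\hermitianSpace}$, not $\chi_{\hermitianSpace}$. With that twist the open orbit is Schur, and the $\beta>0$ orbits force $\involution{\chi}$, not the trivial character, into the cuspidal support.

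Your ``$\minusInvolution{\chi}_{\doubleSpace{\hermitianSpace}/U}\cdot\chi$ cancellation'' is also not what happens. On $R^{1}_{\alpha,\beta}\times R^{2}_{\alpha,\beta}$ one has $\det_{\doubleSpace{\hermitianSpace}/U}=1$, and the paper's trivial character comes from the $\chi_{\hermitianSpace}$-twist on the second factor.

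Second, here $\pi_0$ is a cuspidal representation of $\GL_{\quadraticExtension}(\hermitianSpace_0)$. When $\dim\hermitianSpace_0=1$ it is a character and can itself be one of the excluded characters. Your claim that cuspidality of $\pi_0$ confines the $\GL(W)$-part to $\tau$ is valid only for a classical $\pi_0$. The paper's own Remark ($\chi=1$, $\pi$ trivial) with $\xIsotropic=0$ and $\dim\hermitianSpace=1$ gives dimension $3$, although the hypothesis on $\tau$ is vacuous. So in this case the hypothesis must be imposed on the whole cuspidal support of $\pi$, not only on that of $\tau$.
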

\begin{proof} To ease notation, in this proof, we will always view $\IsometryGroup\left(\hermitianSpace\right) \times \IsometryGroup\left(\hermitianSpace\right)$ as a subgroup of $\IsometryGroup\left(\doubleSpace{\hermitianSpace}\right)$ via the embedding $\iEmbedding$. Moreover, we will write $\mathbf H$ for the space
$$\Hom_{\IsometryGroup\left(\hermitianSpace\right) \times \IsometryGroup\left(\hermitianSpace\right)}\left(\left(\pi \boxtimes \Contragredient{\left(\pi\otimes\chi_{\hermitianSpace}\right)}\right) \otimes \degeneratePrincipalSeries{\hermitianSpace}{\chi}, \coefficientsField\right)$$
whose dimension we want to compute.
Note that
$$\mathbf H\cong\Hom_{\IsometryGroup\left(\hermitianSpace\right) \times \IsometryGroup\left(\hermitianSpace\right)}\left(\pi \boxtimes \Contragredient{\left(\pi\otimes\chi_{\hermitianSpace}\right)}, \degeneratePrincipalSeries{\hermitianSpace}{\chi^{-1}}\right),$$
and we will use Mackey theory to describe $$\operatorname{Res}_{\IsometryGroup\left(\hermitianSpace\right)\times \IsometryGroup\left(\hermitianSpace\right)}^{\IsometryGroup\left(\doubleSpace{\hermitianSpace}\right)}\degeneratePrincipalSeries{\hermitianSpace}{\chi^{-1}}.$$
Let $\Lambda$ be a set of double coset representatives of $$\siegelDoublingParabolic{\hermitianSpace}\backslash\IsometryGroup\left(\doubleSpace{\hermitianSpace}\right)/\IsometryGroup\left(\hermitianSpace\right)\times \IsometryGroup\left(\hermitianSpace\right)$$
and we then have
$$\operatorname{Res}_{\IsometryGroup\left(\hermitianSpace\right)\times \IsometryGroup\left(\hermitianSpace\right)}^{\IsometryGroup\left(\doubleSpace{\hermitianSpace}\right)}\degeneratePrincipalSeries{\hermitianSpace}{\chi^{-1}}\cong\bigoplus_{\delta\in\Lambda}\Ind{\delta^{-1}\siegelDoublingParabolic{\hermitianSpace}\delta\cap\left(\IsometryGroup\left(\hermitianSpace\right)\times \IsometryGroup\left(\hermitianSpace\right)\right)}{\IsometryGroup\left(\hermitianSpace\right)\times \IsometryGroup\left(\hermitianSpace\right)}{\tau^{\delta}},$$
where $\tau$ is either $\chi_{\diagSpace{\hermitianSpace}}^{-1}$ or $\chi_{\diagSpace{\hermitianSpace}}^{-1}\boxtimes\chi_{\doubleSpace{\hermitianSpace}/\diagSpace{\hermitianSpace}}^{c}$ depending on the degeneracy of $\innerproduct{\cdot}{\cdot}_{\hermitianSpace}$.
This implies that 

\begin{equation}\label{eq:directsum}
\mathbf H\cong    \bigoplus_{\delta\in\Lambda} \Hom_{\IsometryGroup\left(\hermitianSpace\right) \times \IsometryGroup\left(\hermitianSpace\right)}\left(\pi \boxtimes \Contragredient{\left(\pi\otimes\chi_{\hermitianSpace}\right)},\Ind{\delta^{-1}\siegelDoublingParabolic{\hermitianSpace}\delta\cap\left(\IsometryGroup\left(\hermitianSpace\right)\times \IsometryGroup\left(\hermitianSpace\right)\right)}{\IsometryGroup\left(\hermitianSpace\right)\times \IsometryGroup\left(\hermitianSpace\right)}{\tau^{\delta}}\right)
\end{equation}
and we write $\Sigma_\delta$ for the summand corresponding to $\delta\in\Lambda$ in the above expression.

We will now give an explicit description of $\Lambda$, which differs if $\innerproduct{\cdot}{\cdot}_{\hermitianSpace}$ is trivial or nondegenerate, so we will distinguish these two cases.

Firstly, assume that $\innerproduct{\cdot}{\cdot}_{\hermitianSpace}$ is nondegenerate. By Witt's theorem $\siegelDoublingParabolic{\hermitianSpace}\backslash\IsometryGroup\left(\doubleSpace{\hermitianSpace}\right)$ is in bijection with the set of maximal isotropic subspaces of $\doubleSpace{\hermitianSpace}$. By \cite[Part A, Lemma 2.1]{GelbartPiatetskiShapiroRallis1987} two maximal isotropic subspaces $X_1,X_2$ are in the same $\IsometryGroup\left(\hermitianSpace\right)\times \IsometryGroup\left(\hermitianSpace\right)$ coset if and only if 
\begin{equation}\label{eq:conddouble}   \dim_{\quadraticExtension}(\plusSpace{\hermitianSpace}\cap X_1)=\dim_{\quadraticExtension}(\plusSpace{\hermitianSpace}\cap X_2). 
\end{equation}
Let $r$ be the Witt index of $(\hermitianSpace,\innerproduct{\cdot}{\cdot}_{\hermitianSpace})$ and for each $i=0,\dotsc,r$ choose a totally isotropic subspace $U_i$ of $\hermitianSpace$ with $\dim_\quadraticExtension(U_i)=i$. By Witt's theorem we may choose elements $\delta_i\in\IsometryGroup\left(\doubleSpace{\hermitianSpace}\right)$ such that $\delta_i^{-1}$ maps $\diagSpace{\hermitianSpace}$ to the maximal isotropic subspace
$$\mathbb U_i=\left\{\left(u+v\right)^{+} + v^{-} \mid u \in U_i, v \in U_i^\perp\right\}$$
of $\doubleSpace{\hermitianSpace}$. By \eqref{eq:conddouble} we see that $\Lambda=\left\{\delta_0,\dotsc,\delta_r\right\}$ and we may choose $\delta_0$ to be the identity.  We will write $\Stab\left(\mathbb U_i\right)$ for the stabilizer  $\delta_i^{-1}\siegelDoublingParabolic{\hermitianSpace}\delta_i\cap\left(\IsometryGroup\left(\hermitianSpace\right)\times \IsometryGroup\left(\hermitianSpace\right)\right)$ of $\mathbb U_{i}$ in $\IsometryGroup\left(\hermitianSpace\right)\times \IsometryGroup\left(\hermitianSpace\right)$.

 Let $P_i$ be the parabolic subgroup of $\IsometryGroup\left(\hermitianSpace\right)$ that stabilizes the flag 
$$0\subset U_i\subset\hermitianSpace$$
and let $N_i$ be its unipotent radical. Moreover, choose a Levi subgroup $M_i$ of $P_i$. Set $R_i$ to be the subgroup of $P_i$ that acts trivially on $U_i^\perp/U_i$. Note that $R_i\cong\GL_{\quadraticExtension}\left(U_i\right)\ltimes N_i$. It is straightforward to see that $\Stab\left(\mathbb U_i\right)$ needs to stabilize $\plusSpace{U_i}$ and $\minusSpace{U_i}$ and hence 
$$\Stab\left(\mathbb U_i\right)\subset P_i\times P_i.$$
Moreover, any element of $R_i\times R_i$ stabilizes $\mathbb U_i$ and hence 
$$R_i\times R_i\subset\Stab\left(\mathbb U_i\right).$$
Let $\chi_i\colon R_i\to\coefficientsField^\times$ be the character $\chi_i(g)=\chi_{U_i}(g)^{-1}=\chi\left(\det(g\restriction_{U_i})\right)^{-1}$. A quick computation shows that for $g_1, g_2 \in R_i$
$$\left(\chi_{\diagSpace{\hermitianSpace}}^{-1}\right)^{\delta_i}(g_1,g_2)=\chi_i(g_1g_2)$$
and hence we obtain an embedding
$$\left(\chi_{\diagSpace{\hermitianSpace}}^{-1}\right)^{\delta_i}\hookrightarrow\Ind{R_i\times R_i}{\Stab\left(\mathbb U_i\right)}{\chi_i\boxtimes\chi_i}.$$
This gives rise to an embedding 
$$\Ind{\Stab\left(\mathbb U_i\right)}{\IsometryGroup\left(\hermitianSpace\right) \times \IsometryGroup\left(\hermitianSpace\right)}{\left(\chi_{\diagSpace{\hermitianSpace}}^{-1}\right)^{\delta_i}}\hookrightarrow \Ind{R_i\times R_i}{\IsometryGroup\left(\hermitianSpace\right) \times \IsometryGroup\left(\hermitianSpace\right)}{\chi_i\boxtimes\chi_i}.$$
So, if
$$\Sigma_{\delta_i}=\Hom_{\IsometryGroup\left(\hermitianSpace\right) \times \IsometryGroup\left(\hermitianSpace\right)}\left(\pi\boxtimes\Contragredient{\left(\pi\otimes\chi_{\hermitianSpace}\right)},\Ind{\Stab\left(\mathbb U_i\right)}{\IsometryGroup\left(\hermitianSpace\right) \times \IsometryGroup\left(\hermitianSpace\right)}{\left(\chi_{\diagSpace{\hermitianSpace}}^{-1}\right)^{\delta_i}}\right)$$ 
is non-zero, 
\begin{multline*}
    \Hom_{\IsometryGroup\left(\hermitianSpace\right) \times \IsometryGroup\left(\hermitianSpace\right)}\left(\pi\boxtimes\Contragredient{\left(\pi\otimes\chi_{\hermitianSpace}\right)}, \Ind{R_i\times R_i}{\IsometryGroup\left(\hermitianSpace\right) \times \IsometryGroup\left(\hermitianSpace\right)}{\chi_i\boxtimes\chi_i}\right)\cong\\
    \Hom_{\IsometryGroup\left(\hermitianSpace\right)}\left(\pi,\Ind{R_i}{\IsometryGroup\left(\hermitianSpace\right)}{\chi_i}\right)\otimes \Hom_{\IsometryGroup\left(\hermitianSpace\right)}\left(\Contragredient{\left(\pi\otimes\chi_{\hermitianSpace}\right)},\Ind{R_i}{\IsometryGroup\left(\hermitianSpace\right)}{\chi_i}\right)
\end{multline*}
is also non-zero.
Note that $M_i\cong \GL_{\quadraticExtension}(U_i) \times \IsometryGroup\left(U_i^\perp/U_i\right)$ and
$$\Ind{R_i}{\IsometryGroup\left(\hermitianSpace\right)}{\chi_i}\cong\Ind{P_i}{\IsometryGroup\left(\hermitianSpace\right)}{\chi_{U_i}^{-1} \boxtimes \coefficientsField[\IsometryGroup\left(U_i^\perp/U_i\right)]}.$$
Hence if $\Hom_{\IsometryGroup\left(\hermitianSpace\right)}\left(\pi,\Ind{R_i}{\IsometryGroup\left(\hermitianSpace\right)}{\chi_i}\right)$ is non-zero, there is some irreducible representation $\sigma$ of $\IsometryGroup\left(U_i^\perp/U_i\right)$ such that 
$\Hom_{\IsometryGroup\left(\hermitianSpace\right)}\left(\pi,\Ind{P_i}{\IsometryGroup\left(\hermitianSpace\right)}{\chi_{U_i}^{-1} \boxtimes \sigma}\right)$ is non-zero. Since for $i>0$, the character $\chi_{U_i}^{-1}$ of $\GL_\quadraticExtension\left(U_i\right)$ has $\chi^{-1}$ in its cuspidal support, this contradicts our assumption that $\chi^{-1}$ is not in the cuspidal support of the general linear part of $\pi$ and hence $\Sigma_{\delta_i}=0$.

If $i=0$, by \Cref{prop:intersection-of-i-with-siegel-parabolic} we have that $\Stab\left(\mathbb U_0\right)=\siegelDoublingParabolic{\hermitianSpace}\cap(\IsometryGroup\left(\hermitianSpace\right)\times \IsometryGroup\left(\hermitianSpace\right))=\diagSpace{\IsometryGroup\left(\hermitianSpace
\right)}$.
Then by Frobenius reciprocity
$$
    \Sigma_{\delta_0}=\Hom_{\IsometryGroup\left(\hermitianSpace\right) \times \IsometryGroup\left(\hermitianSpace\right)}\left(\pi \boxtimes \Contragredient{\left(\pi\otimes\chi_{\hermitianSpace}\right)},\Ind{\diagSpace{\IsometryGroup\left(\hermitianSpace
\right)}}{\IsometryGroup\left(\hermitianSpace\right)\times \IsometryGroup\left(\hermitianSpace\right)}{\chi_{\diagSpace{\hermitianSpace}}^{-1}}\right)$$
is isomorphic to
$$\Hom_{\diagSpace{\IsometryGroup\left(\hermitianSpace
\right)}}\left(\restrictionOperator_{\diagSpace{\IsometryGroup\left(\hermitianSpace
\right)}}^{\IsometryGroup\left(\hermitianSpace\right) \times \IsometryGroup\left(\hermitianSpace\right)} \left(\pi \boxtimes \Contragredient{\left(\pi\otimes\chi_{\hermitianSpace}\right)}\right),\chi_{\diagSpace{\hermitianSpace}}^{-1}\right)\cong\Hom_{\IsometryGroup\left(\hermitianSpace
\right)}\left(\pi \otimes \Contragredient{\pi},\coefficientsField\right).$$
By tensor-hom adjunction we see that
$$\Hom_{\IsometryGroup\left(\hermitianSpace
\right)}\left(\pi \otimes \Contragredient{\pi},\coefficientsField\right)\cong \Hom_{\IsometryGroup\left(\hermitianSpace
\right)}\left(\pi,\Contragredient{\left(\Contragredient{\pi}\right)}\right)\cong\Hom_{\IsometryGroup\left(\hermitianSpace
\right)}\left(\pi,\pi\right),$$
and this latter space is one-dimensional by Schur's lemma. Overall we obtain that $\Sigma_{\delta_i}=0$ if $i>0$ and $\dim_{\coefficientsField}\left(\Sigma_{\delta_0}\right)=1$, which implies the result by \eqref{eq:directsum}.

If $\innerproduct{\cdot}{\cdot}_{\hermitianSpace}=0$, the description of $\Lambda$ is slightly different. Set $\dim_{\quadraticExtension}(\hermitianSpace)=n$. We can identify $\siegelDoublingParabolic{\hermitianSpace}\backslash\GL_{\quadraticExtension}\left(\doubleSpace{\hermitianSpace}\right)$ with the set of $n$-dimensional subspaces of $\doubleSpace{\hermitianSpace}$. Two such subspaces $U,U'$ lie in the same $\GL_\quadraticExtension\left(\hermitianSpace\right)\times \GL_\quadraticExtension\left(\hermitianSpace\right)$-orbit if and only if
$$\dim_{\quadraticExtension}\left(U\cap\plusSpace{\hermitianSpace}\right)=\dim_{\quadraticExtension}\left(U'\cap\plusSpace{\hermitianSpace}\right)$$
and
$$\dim_{\quadraticExtension}\left(U\cap\minusSpace{\hermitianSpace}\right)=\dim_{\quadraticExtension}\left(U'\cap\minusSpace{\hermitianSpace}\right).$$
For any $0\leq\alpha,\beta\leq n$ where $\alpha+\beta\leq n$ we choose subspaces $U_{\alpha,\beta}^{1},U_{\alpha,\beta}^{2}$ and $W$ of $\hermitianSpace$ such that $\dim_\quadraticExtension\left(U_{\alpha,\beta}^1\right)=\alpha$ and $\dim_\quadraticExtension\left(U_{\alpha,\beta}^2\right)=\beta$ and
$$U_{\alpha,\beta}^1 \oplus U_{\alpha,\beta}^2 \oplus W=\hermitianSpace.$$
We then set
$$\mathbb U_{\alpha,\beta}=\plusSpace{\left(U_{\alpha,\beta}^1\right)} \oplus \minusSpace{\left(U_{\alpha,\beta}^2\right)} \oplus\diagSpace{W}$$
and choose elements $\delta_{\alpha,\beta}\in\GL_{\quadraticExtension}\left(\doubleSpace{\hermitianSpace}\right)$ such that $\delta_{\alpha,\beta}^{-1}(\diagSpace{\hermitianSpace})=\mathbb U_{\alpha,\beta}$. Then
$$\Lambda=\left\{\delta_{\alpha,\beta}\mid \alpha,\beta\in\mathbb Z_{\geq 0}\text{ and }\alpha+\beta\leq n\right\}$$
and we may choose $\delta_{0,0}$ to be the identity. We will write $\Stab\left(\mathbb U_{\alpha,\beta}\right)$ for the stabilizer  $\delta_{\alpha,\beta}^{-1}\siegelDoublingParabolic{\hermitianSpace}\delta_{\alpha,\beta}\cap\left(\GL_{\quadraticExtension}\left(\hermitianSpace\right)\times \GL_{\quadraticExtension}\left(\hermitianSpace\right)\right)$ of $\mathbb U_{\alpha,\beta}$ in $\GL_{\quadraticExtension}\left(\hermitianSpace\right)\times \GL_{\quadraticExtension}\left(\hermitianSpace\right)$.

For $i=1,2$, let $P^{i}_{\alpha,\beta}$ be the parabolic subgroup of $\GL_\quadraticExtension\left(\hermitianSpace\right)$ that stabilizes $U_{\alpha,\beta}^{i}$. We have a Levi decomposition $P^{i}_{\alpha,\beta}=M^{i}_{\alpha,\beta}N^{i}_{\alpha,\beta}$ and let $R^{i}_{\alpha,\beta}$ be the subgroup of $P^{i}_{\alpha,\beta}$ that acts trivially on $\hermitianSpace/U_{\alpha,\beta}^i$. Note that $R^i_{\alpha,\beta}\cong\GL_{\quadraticExtension}\left(U_{\alpha,\beta}^i\right)\ltimes N^{i}_{\alpha,\beta}$. Moreover, we set $\chi_{\alpha,\beta}^{i}\colon R_{\alpha,\beta}^{i}\to\coefficientsField^\times$ to be the character given by $\chi_{\alpha,\beta}^{i}(g)=\chi\left(\detSpace{U_{\alpha,\beta}^{i}}\left(g\right)\right)^{-1}$ for $g\in R_{\alpha,\beta}^{i}$.

 It is straightforward to see that $\Stab\left(\mathbb U_{\alpha,\beta}\right)$ needs to stabilize $\plusSpace{\left(U_{\alpha,\beta}^1\right)}$ and $\minusSpace{\left(U_{\alpha,\beta}^2\right)}$ and hence
$$\Stab\left(\mathbb U_{\alpha,\beta}\right) \subset P_{\alpha,\beta}^1 \times P_{\alpha,\beta}^2.$$
Moreover, any element of $R^1_{\alpha,\beta}\times R^2_{\alpha,\beta}$ stabilizes $\mathbb U_{\alpha,\beta}$ and hence 
$$R^1_{\alpha,\beta}\times R^2_{\alpha,\beta} \subset \Stab\left(\mathbb U_{\alpha,\beta}\right).$$
A quick computation shows that
$${\left(\chi^{-1}\right)}^{\delta_{\alpha,\beta}}(g_1,g_2)=\chi_{\alpha,\beta}^{1}(g_1)\chi_{\alpha,\beta}^{2}(g_2)$$
for $g_1\in R_{\alpha,\beta}^1,g_2\in R_{\alpha,\beta}^2$.
Hence we have an embedding
$${\left(\chi^{-1}\right)}^{\delta_{\alpha,\beta}}\hookrightarrow\Ind{R_{\alpha,\beta}^1\times R_{\alpha,\beta}^2}{\Stab\left(\mathbb U_{\alpha,\beta}\right)}{\chi_{\alpha,\beta}^{1}\boxtimes\chi_{\alpha,\beta}^{2}}$$
which implies that
$$\Ind{\Stab\left(\mathbb U_{\alpha,\beta}\right)}{\GL_{\quadraticExtension}\left(\hermitianSpace\right)\times \GL_{\quadraticExtension}\left(\hermitianSpace\right)}{{\left(\chi^{-1}\right)}^{\delta_{\alpha,\beta}}}\hookrightarrow\Ind{P_{\alpha,\beta}^1\times P_{\alpha,\beta}^2}{\GL_{\quadraticExtension}\left(\hermitianSpace\right)\times \GL_{\quadraticExtension}\left(\hermitianSpace\right)}{\Ind{R_{\alpha,\beta}^1\times R_{\alpha,\beta}^2}{P_{\alpha,\beta}^1\times P_{\alpha,\beta}^2}{\chi_{\alpha,\beta}^{1}\boxtimes\chi_{\alpha,\beta}^{2}}}.$$

If $$\Sigma_{\delta_{\alpha,\beta}}=\Hom_{\GL_{\quadraticExtension}\left(\hermitianSpace\right)\times \GL_{\quadraticExtension}\left(\hermitianSpace\right)}\left(\pi\boxtimes\Contragredient{\left(\pi\otimes\chi_{\hermitianSpace}\right)},\Ind{\Stab\left(\mathbb U_{\alpha,\beta}\right)}{\GL_{\quadraticExtension}\left(\hermitianSpace\right)\times \GL_{\quadraticExtension}\left(\hermitianSpace\right)}{{\left(\chi^{-1}\right)}^{\delta_{\alpha,\beta}}}\right)$$ is non-zero we would obtain by the above a non-zero element of  \begin{multline*}
    \Hom_{\GL_{\quadraticExtension}\left(\hermitianSpace\right)\times \GL_{\quadraticExtension}\left(\hermitianSpace\right)}\left(\pi\boxtimes\Contragredient{\left(\pi\otimes\chi_{\hermitianSpace}\right)},\Ind{R_{\alpha,\beta}^1\times R_{\alpha,\beta}^2}{\GL_{\quadraticExtension}\left(\hermitianSpace\right)\times \GL_{\quadraticExtension}\left(\hermitianSpace\right)}{\chi_{\alpha,\beta}^{1}\boxtimes\chi_{\alpha,\beta}^{2}}\right)\cong\\
    \Hom_{\GL_{\quadraticExtension}\left(\hermitianSpace\right)}\left(\pi,\Ind{R_{\alpha,\beta}^1}{\GL_{\quadraticExtension}\left(\hermitianSpace\right)}{\chi_{\alpha,\beta}^{1}}\right)\otimes \Hom_{\GL_{\quadraticExtension}\left(\hermitianSpace\right)}\left(\Contragredient{\left(\pi\otimes\chi_{\hermitianSpace}\right)},\Ind{R_{\alpha,\beta}^2}{\GL_{\quadraticExtension}\left(\hermitianSpace\right)}{\chi_{\alpha,\beta}^{2}}\right).
\end{multline*}
Now for $i=1,2$, $M^{i}_{\alpha,\beta}\cong\GL_{\quadraticExtension}\left(U_{\alpha,\beta}^{i}\right)\times\GL_{\quadraticExtension}\left(\hermitianSpace/U_{\alpha,\beta}^{i}\right)$ and
$$\Ind{R_{\alpha,\beta}^i}{\GL_{\quadraticExtension}\left(\hermitianSpace\right)}{\chi_{\alpha,\beta}^{i}}\cong\Ind{P^{i}_{\alpha,\beta}}{\GL_{\quadraticExtension}\left(\hermitianSpace\right)}{\chi_{\alpha,\beta}^{i} \boxtimes \coefficientsField[\GL_{\quadraticExtension}\left(\hermitianSpace/U_{\alpha,\beta}^{i}\right)]}$$

Note that if $\Hom_{\GL_{\quadraticExtension}\left(\hermitianSpace\right)}\left(\pi,\Ind{R_{\alpha,\beta}^1}{\GL_{\quadraticExtension}\left(\hermitianSpace\right)}{\chi_{\alpha,\beta}^{1}}\right)$ is non-zero there is some irreducible representation $\rho$ of $\GL_{\quadraticExtension}\left(\hermitianSpace/U_{\alpha,\beta}^{1}\right)$ such that 
$\Hom_{\GL_{\quadraticExtension}\left(\hermitianSpace\right)}\left(\pi,\Ind{P_{\alpha,\beta}^1}{\GL_{\quadraticExtension}\left(\hermitianSpace\right)}{\chi_{\alpha,\beta}^{1} \boxtimes \rho}\right)$ is non-zero. If $\alpha>0$, this contradicts our assumption that $\pi$ does not contain $\chi^{-1}$ in its cuspidal support.

On the other hand, if $\beta>0$ and $\Hom_{\GL_{\quadraticExtension}\left(\hermitianSpace\right)}\left(\Contragredient{\left(\pi\otimes\chi_{\hermitianSpace}\right)},\Ind{R_{\alpha,\beta}^2}{\GL_{\quadraticExtension}\left(\hermitianSpace\right)}{\chi_{\alpha,\beta}^{2}}\right)$ is non-zero there is some irreducible representation $\rho'$ of $\GL_{\quadraticExtension}\left(\hermitianSpace/U_{\alpha,\beta}^{2}\right)$ such that 
$\Hom_{\GL_{\quadraticExtension}\left(\hermitianSpace\right)}\left(\Contragredient{\left(\pi\otimes\chi_{\hermitianSpace}\right)},\Ind{P_{\alpha,\beta}^2}{\GL_{\quadraticExtension}\left(\hermitianSpace\right)}{\chi_{\alpha,\beta}^{2} \boxtimes \rho'}\right)$ is non-zero. Hence $\chi^{-1}$ lies in the cuspidal support of $\Contragredient{\left(\pi\otimes\chi_{\hermitianSpace}\right)}$ or equivalently the trivial character lies in the cuspidal support of $\Contragredient{\pi}$, which contradicts our assumptions.

If $\alpha=\beta=0$, then $\Stab\left(U_{0,0}\right)=\diagSpace{\GL_{\quadraticExtension}\left(\hermitianSpace\right)}\cong\GL_{\quadraticExtension}\left(\diagSpace{\hermitianSpace}\right)$ and we have that
$$\Sigma_{\delta_{0,0}}=\Hom_{\GL_{\quadraticExtension}\left(\hermitianSpace\right)\times \GL_{\quadraticExtension}\left(\hermitianSpace\right)}\left(\pi\boxtimes\Contragredient{\left(\pi\otimes\chi_{\hermitianSpace}\right)},\Ind{\diagSpace{\GL_{\quadraticExtension}\left(\hermitianSpace\right)}}{\GL_{\quadraticExtension}\left(\hermitianSpace\right)\times \GL_{\quadraticExtension}\left(\hermitianSpace\right)}{\chi_{\diagSpace{\hermitianSpace}}^{-1}}\right)$$
is by Frobenius reciprocity isomorphic to
$$\Hom_{\diagSpace{\GL_{\quadraticExtension}\left(\hermitianSpace\right)}}\left(\restrictionOperator_{\diagSpace{\GL_{\quadraticExtension}\left(\hermitianSpace\right)}}^{\GL_{\quadraticExtension}\left(\hermitianSpace\right) \times \GL_{\quadraticExtension}\left(\hermitianSpace\right)}\left(\pi\boxtimes\Contragredient{\left(\pi\otimes\chi_{\hermitianSpace}\right)}\right),\chi_{\diagSpace{\hermitianSpace}}^{-1}\right)\cong \Hom_{\GL_{\quadraticExtension}\left(\hermitianSpace\right)}\left(\pi\otimes\Contragredient{\pi},\coefficientsField\right).$$
This space is one dimensional since $\Contragredient{\left(\Contragredient{\pi}\right)}\cong\pi$ and Schur's lemma holds for $\pi$. Overall we obtain again that the only non-zero summand in \eqref{eq:directsum} is $\Sigma_{\delta_{0,0}}$ which has dimension one and the result follows.

\end{proof}

\begin{remark}
The analogous multiplicity one statement over non-archimedean local fields has recently been proven by Droschl \cite{Droschl2023} to hold for \emph{all} irreducible representations of certain classical groups. However, in our situation (and contrary to what is claimed in Theorem 3.13 in \cite{Chang1997}) the above result can fail for some irreducible representations. For example, if $\chi$ is trivial and $\pi$ is the trivial representation of $\IsometryGroup(\hermitianSpace)$ then clearly each summand in \eqref{eq:directsum} is one-dimensional. In particular, the dimension of the space in question equals the number of double cosets $$\siegelDoublingParabolic{\hermitianSpace}\backslash\IsometryGroup\left(\doubleSpace{\hermitianSpace}\right)/\iota\left(\IsometryGroup\left(\hermitianSpace\right)\times \IsometryGroup\left(\hermitianSpace\right)\right),$$ which is
\begin{itemize} 
\item $r+1$, where $r$ is the Witt index of $(\hermitianSpace,\innerproduct{\cdot}{\cdot}_{\hermitianSpace})$, if $(\hermitianSpace,\innerproduct{\cdot}{\cdot}_{\hermitianSpace})$ is nondegenerate,
    \item $(\dim_{\quadraticExtension}(\hermitianSpace)+1)(\dim_{\quadraticExtension}(\hermitianSpace)+2)/2$, if $\innerproduct{\cdot}{\cdot}_{\hermitianSpace}=0$.
   
\end{itemize}
\end{remark}

\subsection{The doubling method zeta operators}\label{subsec:doubling-method-zeta-integrals}

Let $\pi$ be a representation of $\IsometryGroup\left(\hermitianSpace\right)$ over $\coefficientsField$. For any $f \in \degeneratePrincipalSeries{\hermitianSpace}{\chi}$ and any $v \in \pi$, we define the \emph{doubling method zeta operator} by the formula
$$\zetaIntegral_{\hermitianSpace,\pi}\left(f\right)v = \zetaIntegral_{\hermitianSpace,\chi,\pi}\left(f\right)v \coloneq \sum_{g \in \IsometryGroup\left(\hermitianSpace\right)} f\left(\iEmbedding\left(g, \idmap_{\hermitianSpace}\right)\right) \pi\left(g\right) v.$$
Let us set $\kappaCharacter{\hermitianSpace} \colon \multiplicativegroup{\quadraticExtension} \to \multiplicativegroup{\quadraticExtension}$ to be the character 
$$\kappaCharacter{\hermitianSpace}\left(x\right) = \begin{dcases}
	x & \text{if }\hermitianSpace \text{ is nondegenerate,}\\
	\frac{x}{\involution{x}} & \text{if }\innerproduct{\cdot}{\cdot}_{\hermitianSpace} = 0.
\end{dcases}$$
Notice that for any $g \in \IsometryGroup\left(\hermitianSpace\right)$ the value $\kappaCharacter{\hermitianSpace}\left(\detSpace{\hermitianSpace} g\right)$ always lies in the kernel of $x \mapsto x \cdot \involution{x}$. Then the doubling method zeta operator can be rewritten as
$$\zetaIntegral_{\hermitianSpace,\pi}\left(f\right)v = \sum_{g \in \IsometryGroup\left(\hermitianSpace\right)} f\left(\iEmbedding\left(\idmap_{\hermitianSpace}, g^{-1}\right)\right) \chi\left(\kappaCharacter{\hermitianSpace}\left(\detSpace{\hermitianSpace} g\right)\right) \pi\left(g\right) v.$$

We define the \emph{dual doubling method zeta operator} for any $f \in \degeneratePrincipalSeries{\hermitianSpace}{\chi}$ and $v \in \pi$ by the formula
$$\dualZetaIntegral_{\hermitianSpace,\pi}\left(f\right)v = \dualZetaIntegral_{\hermitianSpace,\chi,\pi}\left(f\right)v \coloneq \zetaIntegral_{\hermitianSpace,\minusInvolution{\chi},\pi}\left(\intertwiningOperator{\hermitianSpace}{\chi} f\right)v.$$

The assignments $\pi \times \Contragredient{\pi} \times \degeneratePrincipalSeries{\hermitianSpace}{\chi} \to \coefficientsField$ given by $\left(v,v^{\vee}, f\right) \mapsto \standardForm{\zetaIntegral_{\hermitianSpace,\pi}\left(f\right)v}{v^{\vee}}$ and $\left(v,v^{\vee}, f\right) \mapsto \standardForm{\dualZetaIntegral_{\hermitianSpace,\pi}\left(f\right)v}{v^{\vee}}$ define elements in the Hom-space from \Cref{thm:multiplicity-one-for-zeta-integrals}. As we explain below, these elements are non-zero and hence if the conditions of \Cref{thm:multiplicity-one-for-zeta-integrals} are satisfied, there exists a constant $\dblPreGammaFactor{\pi}{\chi} \in \multiplicativegroup{\coefficientsField}$ such that $$\dualZetaIntegral_{\hermitianSpace,\pi}\left(f\right) = \dblPreGammaFactor{\pi}{\chi} \zetaIntegral_{\hermitianSpace,\pi}\left(f\right)$$ for any $f \in \degeneratePrincipalSeries{\hermitianSpace}{\chi}$. Thus we obtain a finite field analog of the functional equation satisfied by the doubling method zeta operators. In the next section we will find an explicit formula for $\dblPreGammaFactor{\pi}{\chi}$, which will be used to define this invariant even when the conditions of \Cref{thm:multiplicity-one-for-zeta-integrals} are not satisfied.

\subsection{Explicit computation}\label{subsec:explicit-computation}

In this section, we follow Rallis--Soudry \cite[Theorem 3.1]{RallisSoudry2005} to find an explicit formula for the constant $\dblPreGammaFactor{\pi}{\chi}$.

Let $f_0 = f_{0,\hermitianSpace} \in \degeneratePrincipalSeries{\hermitianSpace}{\chi}$ be the unique function supported on $\siegelDoublingParabolic{\hermitianSpace}$ such that $f_{0,\hermitianSpace}\left(\idmap_{\doubleSpace{\hermitianSpace}}\right) = 1$. We will compute $\dblPreGammaFactor{\pi}{\chi}$ by computing $\zetaIntegral_{\hermitianSpace,\pi}\left(f_0\right)$ and $\dualZetaIntegral_{\hermitianSpace,\pi}\left(f_0\right)$.

Since by \Cref{prop:intersection-of-i-with-siegel-parabolic} the intersection of the image of $\iEmbedding$ with $\siegelDoublingParabolic{\hermitianSpace}$ is $\diagSpace{\IsometryGroup\left(\hermitianSpace\right)}$, it is straightforward to see that $\zetaIntegral_{\hermitianSpace,\pi}\left(f_0\right) = \idmap_{\hermitianSpace}$. Moreover, the intertwining operator $\intertwiningOperator{\hermitianSpace}{\chi}$ is an isomorphism, which implies that $f \mapsto \dualZetaIntegral_{\hermitianSpace,\pi}\left(f\right)$ is non-zero. Thus, if $\pi$ and $\chi$ satisfy the conditions of \Cref{thm:multiplicity-one-for-zeta-integrals}, we have that $\dualZetaIntegral_{\hermitianSpace,\pi}\left(f_0\right) = \dblPreGammaFactor{\pi}{\chi} \cdot \idmap_{\pi}$.

We will compute $\dualZetaIntegral_{\hermitianSpace,\pi}\left(f_0\right)$ and, before, show that it is a scalar operator, regardless of whether the conditions of \Cref{thm:multiplicity-one-for-zeta-integrals} are satisfied.
\begin{proposition}\label{prop:kernel-statement}
	Suppose that $f \in \degeneratePrincipalSeries{\hermitianSpace}{\minusInvolution{\chi}}$ satisfies $f\left(g \iEmbedding\left(h,h\right)\right) = \chi\left(\kappaCharacter{\hermitianSpace}\left(\detSpace{\hermitianSpace} h\right)\right) f\left(g\right)$ for any $g \in \IsometryGroup\left(\hermitianSpace\right)$ and $h \in \IsometryGroup\left(\hermitianSpace\right)$. Then the assignment $\IsometryGroup\left(\hermitianSpace\right) \to \coefficientsField$ given by $g \mapsto f\left(\iEmbedding\left(g, \idmap_{\hermitianSpace}\right)\right)$ is a class function.
\end{proposition}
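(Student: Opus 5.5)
The plan is to verify directly that $F(g) \coloneq f\left(\iEmbedding\left(g, \idmap_{\hermitianSpace}\right)\right)$ satisfies $F\left(hgh^{-1}\right) = F(g)$ for all $g, h \in \IsometryGroup\left(\hermitianSpace\right)$. We use the left $\siegelDoublingParabolic{\hermitianSpace}$-equivariance of $f$ together with the assumed right $\diagSpace{\IsometryGroup\left(\hermitianSpace\right)}$-equivariance. (The hypothesis should be read with $g$ ranging over $\IsometryGroup\left(\doubleSpace{\hermitianSpace}\right)$.)

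The first step is the identity
$$\iEmbedding\left(hgh^{-1}, \idmap_{\hermitianSpace}\right) = \iEmbedding\left(h,h\right)\, \iEmbedding\left(g, \idmap_{\hermitianSpace}\right)\, \iEmbedding\left(h^{-1},h^{-1}\right),$$
which holds because $\iEmbedding$ is a homomorphism. Applying $f$ to both sides, the right factor contributes $\chi\left(\kappaCharacter{\hermitianSpace}\left(\detSpace{\hermitianSpace} h\right)\right)^{-1}$ by hypothesis. By \Cref{prop:intersection-of-i-with-siegel-parabolic}, the left factor $\iEmbedding\left(h,h\right)$ lies in $\siegelDoublingParabolic{\hermitianSpace}$. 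So it remains to compute the value of the inducing character of $\degeneratePrincipalSeries{\hermitianSpace}{\minusInvolution{\chi}}$ at $\iEmbedding\left(h,h\right)$.

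The second step is that computation, which splits into two cases. The element $\iEmbedding\left(h,h\right)$ preserves $\diagSpace{\hermitianSpace}$ and $\antidiagSpace{\hermitianSpace}$, and it acts on each of them through $h$ via $v \mapsto v^+ \pm v^-$.

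In the nondegenerate case, the character is $\minusInvolution{\chi}_{\diagSpace{\hermitianSpace}}$, so its value is $\chi\left(\involution{\left(\detSpace{\hermitianSpace} h\right)}\right)^{-1}$. Since $\adjointHermitian{h} = h^{-1}$, we have $\detSpace{\hermitianSpace} h \cdot \involution{\left(\detSpace{\hermitianSpace} h\right)} = 1$. Hence this value equals $\chi\left(\detSpace{\hermitianSpace} h\right) = \chi\left(\kappaCharacter{\hermitianSpace}\left(\detSpace{\hermitianSpace} h\right)\right)$.

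In the case $\innerproduct{\cdot}{\cdot}_{\hermitianSpace} = 0$, the character is $\minusInvolution{\chi}_{\diagSpace{\hermitianSpace}} \boxtimes \left(\minusInvolution{\chi}\right)^{-c}_{\antidiagSpace{\hermitianSpace}} = \minusInvolution{\chi}_{\diagSpace{\hermitianSpace}} \boxtimes \chi_{\antidiagSpace{\hermitianSpace}}$. Its value at $\iEmbedding\left(h,h\right)$ is $\chi\left(\detSpace{\hermitianSpace} h\right)\chi\left(\involution{\left(\detSpace{\hermitianSpace} h\right)}\right)^{-1}$, which again equals $\chi\left(\kappaCharacter{\hermitianSpace}\left(\detSpace{\hermitianSpace} h\right)\right)$.

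In both cases the two factors cancel, so $F\left(hgh^{-1}\right) = F(g)$. The only step requiring care is the second one: identifying correctly the Levi characters of $\degeneratePrincipalSeries{\hermitianSpace}{\minusInvolution{\chi}}$ (noting that $\left(\minusInvolution{\chi}\right)^{-c} = \chi$), and checking that $\iEmbedding\left(h,h\right)$ acts on $\diagSpace{\hermitianSpace}$ and $\antidiagSpace{\hermitianSpace}$ with determinant $\detSpace{\hermitianSpace} h$. Beyond that the argument is a direct computation.
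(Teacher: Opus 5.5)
Your proof is correct and is the same argument as the paper's: conjugate $\iEmbedding\left(g,\idmap_{\hermitianSpace}\right)$ by $\iEmbedding\left(h,h\right)$, use $\iEmbedding\left(h,h\right)\in\siegelDoublingParabolic{\hermitianSpace}$ for the left equivariance and the hypothesis for the right equivariance, and check that the two character values cancel. The paper does not split into the nondegenerate and $\innerproduct{\cdot}{\cdot}_{\hermitianSpace}=0$ cases; it observes in one line that the combined factor is $\involutionPlusOne{\chi}\left(\kappaCharacter{\hermitianSpace}\left(\detSpace{\hermitianSpace} h\right)\right)=1$.
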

\begin{proof}
	By \Cref{prop:intersection-of-i-with-siegel-parabolic} we have that $\iEmbedding\left(h, h\right) \in \siegelDoublingParabolic{\hermitianSpace}$ for any $h\in \IsometryGroup\left(\hermitianSpace\right)$, and therefore using the $\iEmbedding\left(\diagSpace{\IsometryGroup\left(\hermitianSpace\right)}\right)$-equivariance properties of $f$ from both sides we have  $$f\left(\iEmbedding\left(h, h\right)^{-1} \iEmbedding\left(g, \idmap_{\hermitianSpace}\right) \iEmbedding\left(h, h\right)\right) = \involution{\chi}\left(\kappaCharacter{\hermitianSpace}\left(\detSpace{\hermitianSpace} h\right)\right) \chi\left(\kappaCharacter{\hermitianSpace}\left(\detSpace{\hermitianSpace} h\right)\right) f\left(\iEmbedding\left(g, \idmap_{\hermitianSpace}\right)\right)$$
    for any $g\in\IsometryGroup\left(\hermitianSpace\right)$.
	The statement now follows because $\involutionPlusOne{\left(\kappaCharacter{\hermitianSpace} \left(\detSpace{\hermitianSpace} h\right)\right)} = 1$ for any $h \in \IsometryGroup\left(\hermitianSpace\right)$.
\end{proof}
\begin{corollary}\label{cor:kernel-zeta-integral-is-a-scalar}
	Let $f \in \degeneratePrincipalSeries{\hermitianSpace}{\minusInvolution{\chi}}$ be as in \Cref{prop:kernel-statement}. Then for any irreducible representation $\pi$ of $\IsometryGroup\left(\hermitianSpace\right)$ we have that $\zetaIntegral_{\hermitianSpace,\minusInvolution{\chi},\pi}\left(f\right)$ is a scalar operator.
\end{corollary}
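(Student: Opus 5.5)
By the definition in \Cref{subsec:doubling-method-zeta-integrals}, the operator in question is
$$\zetaIntegral_{\hermitianSpace,\minusInvolution{\chi},\pi}\left(f\right) = \sum_{g \in \IsometryGroup\left(\hermitianSpace\right)} \phi\left(g\right) \pi\left(g\right),$$
where $\phi\left(g\right) = f\left(\iEmbedding\left(g, \idmap_{\hermitianSpace}\right)\right)$. The plan is to show that this operator commutes with $\pi\left(h\right)$ for every $h \in \IsometryGroup\left(\hermitianSpace\right)$, and then to apply Schur's lemma. Schur's lemma is available because $\pi$ is irreducible, the group is finite and $\coefficientsField$ is algebraically closed.

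The key input is \Cref{prop:kernel-statement}. Under the stated hypothesis on $f$, it says that $\phi$ is a class function on $\IsometryGroup\left(\hermitianSpace\right)$. Given this, for any $h$ we compute
$$\pi\left(h\right) \zetaIntegral\left(f\right) \pi\left(h\right)^{-1} = \sum_{g} \phi\left(g\right) \pi\left(hgh^{-1}\right).$$
Substituting $g' = hgh^{-1}$ turns this into $\sum_{g'} \phi\left(h^{-1}g'h\right)\pi\left(g'\right)$. Since $\phi$ is a class function, this equals $\sum_{g'} \phi\left(g'\right) \pi\left(g'\right) = \zetaIntegral\left(f\right)$. Hence $\zetaIntegral_{\hermitianSpace,\minusInvolution{\chi},\pi}\left(f\right) \in \Hom_{\IsometryGroup\left(\hermitianSpace\right)}\left(\pi,\pi\right)$, and Schur's lemma shows that it is a scalar.

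No step here is a real obstacle, since everything reduces to \Cref{prop:kernel-statement}. The only point to check is that the hypothesis of that proposition is exactly the hypothesis assumed in the corollary, which is true by construction.
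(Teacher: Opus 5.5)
Your proposal is correct and is essentially the paper's proof: the paper also uses the class-function property from the preceding proposition to place the operator in $\Hom_{\IsometryGroup\left(\hermitianSpace\right)}\left(\pi,\pi\right)$, and then applies Schur's lemma. You additionally write out the conjugation computation $\pi(h)\,\zetaIntegral(f)\,\pi(h)^{-1} = \zetaIntegral(f)$, which the paper leaves implicit.
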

\begin{proof}
	By \Cref{prop:kernel-statement}, the assignment $g \mapsto f\left(g, \idmap_{\hermitianSpace}\right)$ is a class function. Thus 
	$$\zetaIntegral_{\hermitianSpace,\pi}\left(f\right) = \sum_{g\in \IsometryGroup\left(\hermitianSpace\right)}  f\left(\iEmbedding\left(g, \idmap_{\hermitianSpace}\right)\right) \pi\left(g\right)$$ 
    gives rise to an element of $\Hom_{\IsometryGroup\left(\hermitianSpace\right)}\left(\pi, \pi\right).$ Since $\pi$ is irreducible, it follows from Schur's lemma that $\zetaIntegral_{\hermitianSpace,\pi}\left(f\right)$ is a scalar operator.
\end{proof}
Since $f_{0,\hermitianSpace}$ is $\left(\siegelDoublingParabolic{\hermitianSpace}, \chi_{\diagSpace{\hermitianSpace}}\right)$-equivariant from the right and $\intertwiningOperator{\hermitianSpace}{\chi}$ is an intertwining operator, $\intertwiningOperator{\hermitianSpace}{\chi} f_{0,\hermitianSpace}$ is $\left(\siegelDoublingParabolic{\hermitianSpace}, \chi_{\diagSpace{\hermitianSpace}}\right)$-equivariant from the right. It follows from \Cref{cor:kernel-zeta-integral-is-a-scalar} that $\zetaIntegral_{\hermitianSpace,\minusInvolution{\chi},\pi}\left(\intertwiningOperator{\hermitianSpace}{\chi} f_0\right)=\dualZetaIntegral_{\hermitianSpace,\pi}\left(f_0\right)$ is a scalar operator. 

We move to the explicit computation of $\dualZetaIntegral_{\hermitianSpace, \pi}\left(f_0\right)$. For any $h \in \IsometryGroup\left(\doubleSpace{\hermitianSpace}\right)$, we have that $$\left(\intertwiningOperator{\hermitianSpace}{\chi} f_0\right)\left(h\right) = \frac{1}{\sqrt{\sizeof{\lieAlgebra}}}\sum_{u \in \siegelDoublingUnipotent{\hermitianSpace}} f_0\left(\weylDoubleHermitian{\hermitianSpace} \cdot u \cdot h \right).$$
Thus an element $h\in\IsometryGroup\left(\doubleSpace{\hermitianSpace}\right)$ lies in the support of $\intertwiningOperator{\hermitianSpace}{\chi} f_0$, if there are elements $p\in\siegelDoublingParabolic{\hermitianSpace}$ and $u \in \siegelDoublingUnipotent{\hermitianSpace}$ such that  $\weylDoubleHermitian{\hermitianSpace} \cdot u \cdot h = p$.   Recall that \begin{equation*}
	\dualZetaIntegral_{\hermitianSpace,\pi}\left(f_0\right) = \sum_{g \in \IsometryGroup\left(\hermitianSpace\right)} \left(\intertwiningOperator{\hermitianSpace}{\chi} f_0\right)\left(\iEmbedding\left(g,\idmap_{\hermitianSpace}\right)\right) \pi\left(g\right)
\end{equation*}
and by changing variable $g \mapsto -g$, we obtain that
\begin{equation}\label{eq:evaluation-of-dual-zeta-integral-rallis-soudry}
	\dualZetaIntegral_{\hermitianSpace,\pi}\left(f_0\right) = \centralCharacter{\pi}\left(-1\right) \sum_{g\in \IsometryGroup\left(\hermitianSpace\right)} \left(\intertwiningOperator{\hermitianSpace}{\chi} f_0\right)\left(\iEmbedding\left(-g,\idmap_{\hermitianSpace}\right)\right) \pi\left(g\right).
\end{equation}
By the above, the summand corresponding to $g\in \IsometryGroup\left(\hermitianSpace\right)$ in \eqref{eq:evaluation-of-dual-zeta-integral-rallis-soudry} can only be non-zero if there are $u \in \siegelDoublingUnipotent{\hermitianSpace}$ and $p \in \siegelDoublingParabolic{\hermitianSpace}$ 
such that \begin{equation}\label{eq:support-condition}
	\weylDoubleHermitian{\hermitianSpace} \cdot u = p \cdot \iEmbedding\left(-g^{-1},\idmap_{\hermitianSpace}\right).
\end{equation}

 We will use the matrix representation from \Cref{subsec:matrix-representation} to explicitly compute $\dualZetaIntegral_{\hermitianSpace,\pi}\left(f_0\right)$. Again, we need to separate two cases: either $\innerproduct{\cdot}{\cdot}_{\hermitianSpace}$ is nondegenerate or identically zero. 

\subsubsection{Computation for the nondegenerate case}\label{subsec:computation-for-special-section-nondegenerate}

Suppose that $\left(\hermitianSpace, \innerproduct{\cdot}{\cdot}_{\hermitianSpace}\right)$ is nondegenerate and let $d = \dim_{\quadraticExtension} \hermitianSpace$. 

We write $\gmatrix\in \GL_d\left(\quadraticExtension\right)$ for the matrix representation of $g^{-1} \in \IsometryGroup\left(\hermitianSpace\right)$ with respect to the basis $\mathcal{B}$ from \Cref{subsec:matrix-representation}. The formulas from \Cref{subsec:matrix-representation} allow us to rewrite \eqref{eq:support-condition} in terms of matrices, which amounts to \begin{equation}\label{eq:support-condition-matrix-expression}
	\begin{pmatrix}
		& \frac{1}{2} S^{-1}\\
		2S
	\end{pmatrix}
	\begin{pmatrix}
		\IdentityMatrix{d} & X\\
		& \IdentityMatrix{d}
	\end{pmatrix} = \begin{pmatrix}
		A & Y\\
		& A^{\ast} \end{pmatrix} \begin{pmatrix}
		\frac{1}{2}\left(\IdentityMatrix{d} - \gmatrix\right) & -\frac{1}{4}\left(\IdentityMatrix{d} + \gmatrix\right)S^{-1}\\
		-S\left(\IdentityMatrix{d} + \gmatrix\right) & \frac{1}{2} S \left(\IdentityMatrix{d} - \gmatrix\right)S^{-1}
	\end{pmatrix},
\end{equation}
for elements $A \in \GL_{d}\left(\quadraticExtension\right)$, $A^{\ast} = w_d \cdot \minusInvolution{\transpose{A}} \cdot w_d$ and $X$ and $Y$ that satisfy certain conditions so that all matrices lie in the matrix group corresponding to $\IsometryGroup\left(\doubleSpace{\hermitianSpace}\right)$ with respect to the basis $\tilde{\mathcal B}$.  In particular, the matrix $$\begin{pmatrix}
	\IdentityMatrix{d} & X\\
	& \IdentityMatrix{d}
\end{pmatrix}$$ is an element of $\siegelDoublingUnipotent{\hermitianSpace}$, if and only if $X w_d + \epsilon_{\hermitianSpace} w_d \cdot \involution{\transpose{X}} = 0$. If \eqref{eq:support-condition-matrix-expression} holds, we get by comparing the bottom $d \times 2d$ blocks of both sides that \begin{equation}\label{eq:left-bottom-equation-rallis-soudry}
	-A^{\ast} S \left(\gmatrix + \IdentityMatrix{d}\right) = 2S
\end{equation} and \begin{equation}\label{eq:right-bottom-equation-rallis-soudry}
	\frac{1}{2} A^{\ast} S \left(\IdentityMatrix{d} - \gmatrix\right) S^{-1} = 2SX.
\end{equation}
From \eqref{eq:left-bottom-equation-rallis-soudry} we deduce immediately that $\gmatrix + \IdentityMatrix{d}$ is invertible. Multiplying \eqref{eq:left-bottom-equation-rallis-soudry} by $-\left(2S\right)^{-1}$ (from the right) and adding it to \eqref{eq:right-bottom-equation-rallis-soudry}, we obtain the equality
$$A^{\ast} = 2SX - \IdentityMatrix{d}.$$
Since by assumption $A^{\ast}$ is an invertible matrix, \eqref{eq:support-condition-matrix-expression} can only hold if $2SX - \IdentityMatrix{d}$ is an invertible matrix.
From \eqref{eq:left-bottom-equation-rallis-soudry} and the equation obtained by comparing the top left $d \times d$ block, we see that $\gmatrix$ and $p$ are determined by such an $X$ as follows:
\begin{equation}\label{eq:elements-in-terms-of-X}
	\begin{split}
		g &= -\left(2SX - \IdentityMatrix{d}\right)^{-1}\left(2SX + \IdentityMatrix{d}\right),\\
		\left(A^{\ast}\right)^{-1} &= -\frac{1}{2} S \left(\gmatrix + \IdentityMatrix{d}\right) S^{-1},\\
		Y &= \frac{1}{2} A \left(\IdentityMatrix{d} - \gmatrix\right) \left(\gmatrix + \IdentityMatrix{d}\right)^{-1} S^{-1}.
	\end{split}
\end{equation}
Since $\involution{\transpose{S}} = \epsilon_{\hermitianSpace} w_dSw_d$ and $\involution{\transpose{X}} = -\epsilon_{\hermitianSpace} w_d X w_d$, we have when $\quadraticExtension = \finiteField$ that, $$\det\left(2SX + \IdentityMatrix{d}\right) = \det\left(-\left(2SX - \IdentityMatrix{d}\right)\right),$$ which implies that $\gmatrix$ has determinant $1$ when $\quadraticExtension = \finiteField$. We conclude that in all cases $g$ lies in the connected component $\IsometryGroup^0\left(\hermitianSpace\right)$ of $\IsometryGroup\left(\hermitianSpace\right)$.

Conversely, given $X\in\squareMatrix_d\left(\quadraticExtension\right)$ such that $X w_d + \epsilon_{\hermitianSpace} w_d \cdot \involution{\transpose{X}} = 0$ and $2SX - \IdentityMatrix{d}$ is invertible, if we define $\gmatrix, A$ and $Y$ by \eqref{eq:elements-in-terms-of-X}, it is straightforward to check that \eqref{eq:support-condition-matrix-expression} is satisfied.

The Cayley map $X \mapsto  h = \left(\IdentityMatrix{d} + 2SX\right)\left(\IdentityMatrix{d} - 2SX\right)^{-1}$ is a bijection $\lieAlgebra_S^0 \to G_S^0$, where $$\lieAlgebra_S^0 = \left\{ X \in \squareMatrix_d\left(\quadraticExtension\right) \mid X w_d + w_d \cdot \involution{\transpose{X}} = 0,\,\, \det\left(2SX - \IdentityMatrix{d}\right) \ne 0 \right\}$$ and $$G_S^0 = \left\{h \in \GL^0_d\left(\quadraticExtension\right) \mid h \cdot w_d S \cdot \involution{\transpose{h}} = w_d S,\,\, \det\left(h + \IdentityMatrix{d}\right) \ne 0 \right\},$$ 
where $$\GL^0_d\left(\quadraticExtension\right) = \begin{dcases}
	\SL_d\left(\finiteField\right) & \text{if }\quadraticExtension = \finiteField,\\
	\GL_d\left(\quadraticExtension\right) & \text{if }\quadraticExtension \ne \finiteField.
\end{dcases}$$
The inverse of the Cayley map is given by $h \mapsto X = \frac{1}{2} S^{-1} \left(h + \IdentityMatrix{d}\right)^{-1} \left(h - \IdentityMatrix{d}\right)$.

Thus we may reduce the sum in \eqref{eq:evaluation-of-dual-zeta-integral-rallis-soudry} to $g\in \IsometryGroup^0\left(\hermitianSpace\right)$ such that $\detSpace{\hermitianSpace}\left(g^{-1} + \idmap_{\hermitianSpace}\right) \ne 0$. For each such $g$ we showed that there exist unique $p \in \siegelDoublingParabolic{\hermitianSpace}$ and $u \in \siegelDoublingUnipotent{\hermitianSpace}$ such that \eqref{eq:support-condition} holds, and that $p$ has Levi part conjugate (using elements in $\GL_{\quadraticExtension}\left(\hermitianSpace\right)$) to $-\frac{1}{2} \adjointHermitian{\left(g^{-1} + \idmap_{\hermitianSpace}\right)}$. Hence we have
$$\sqrt{\sizeof{\lieAlgebra}}\centralCharacter{\pi}\left(-1\right) \dualZetaIntegral_{\hermitianSpace,\pi}\left(f_0\right) = {\chi\left(-2\right)^{-d}} \sum_{\substack{g \in \IsometryGroup^0\left(\hermitianSpace\right)\\
		\detSpace{\hermitianSpace}\left(\idmap_{\hermitianSpace} + g\right) \ne 0}}  \involution{\chi}\left(\detSpace{\hermitianSpace}\left(g^{-1} + \idmap_{\hermitianSpace}\right)\right) \pi\left(g\right).$$
Using the fact that $\adjointHermitian{g} = g^{-1}$, we get
$$\dualZetaIntegral_{\hermitianSpace,\pi}\left(f_0\right) = \frac{\centralCharacter{\pi}\left(-1\right) \chi\left(-2\right)^{-d}}{\sqrt{\sizeof{\lieAlgebra}}} \sum_{\substack{g \in \IsometryGroup^0\left(\hermitianSpace\right)\\
		\detSpace{\hermitianSpace}\left(\idmap_{\hermitianSpace} + g\right) \ne 0}}  \chi \left(\detSpace{\hermitianSpace}\left(\idmap_{\hermitianSpace} + g\right)\right) \pi\left(g\right).$$	

\subsubsection{Computation for general linear groups}
In this section we complete the analogous computation for $\innerproduct{\cdot}{\cdot}_{\hermitianSpace} = 0$. Let $d = \dim_{\quadraticExtension} \hermitianSpace$. Choose a basis $\mathcal{B}$ of $\hermitianSpace$ and let $\tilde{\mathcal B}$ be the basis constructed in \Cref{subsubsec:matrix-representation-for-general-linear-groups}.

As in the previous section, we will use the matrix representations of the elements to find when condition \eqref{eq:support-condition} holds. Suppose that the matrix representation of $g^{-1}\in\GL_{\quadraticExtension}\left(\hermitianSpace\right)$ with respect to the basis $\mathcal{B}$ is $\gmatrix \in \GL_d\left(\quadraticExtension\right)$. Using the bases $\tilde{\mathcal B}$ and $\mathcal{B}$, condition \eqref{eq:support-condition} is equivalent to
\begin{equation}\label{eq:support-condition-matrices-gln}
	\begin{pmatrix}
		& I_d\\
		I_d
	\end{pmatrix} \begin{pmatrix}
		\IdentityMatrix{d} & X\\
		& \IdentityMatrix{d}
	\end{pmatrix} = \begin{pmatrix}
		A_1 & Y\\
		& A_2
	\end{pmatrix} \begin{pmatrix}
		\frac{1}{2}\left(\IdentityMatrix{d}-\gmatrix\right) & -\frac{1}{2}\left(\IdentityMatrix{d}+\gmatrix\right)\\
		-\frac{1}{2}\left(\IdentityMatrix{d}+\gmatrix\right) & \frac{1}{2}\left(\IdentityMatrix{d}-\gmatrix\right)
	\end{pmatrix},
\end{equation}
where $A_1, A_2 \in \GL_d\left(\quadraticExtension\right)$ and $X, Y \in \squareMatrix_d\left(\quadraticExtension\right)$.
Comparing the bottom $d \times 2d$ block gives rise to the equations \begin{equation*}
	\begin{split}
		-\frac{1}{2} A_2 \left(\IdentityMatrix{d} + \gmatrix\right) &= \IdentityMatrix{d}\\
		\frac{1}{2} A_2 \left(\IdentityMatrix{d}-\gmatrix\right) &= X.
	\end{split}
\end{equation*}
From these equations we conclude that $\gmatrix+\IdentityMatrix{d}$ must be invertible and that $A_2$ and $X$ are determined by $\gmatrix$. Comparing the top $d \times 2d$ block gives
\begin{equation*}
	\begin{split}
		-\frac{1}{2} A_1 \left(\IdentityMatrix{d}+\gmatrix\right) + \frac{1}{2}Y\left(\IdentityMatrix{d}-\gmatrix\right) &= \IdentityMatrix{d}\\
		\frac{1}{2} A_1\left(\IdentityMatrix{d}-\gmatrix\right) - \frac{1}{2} Y \left(\IdentityMatrix{d}+\gmatrix\right) &= 0.
	\end{split}
\end{equation*}
Summing these equations yields $$-A_1 \gmatrix = \IdentityMatrix{d} + Y\gmatrix,$$ while subtracting them yields
$$A_1 - Y = -\IdentityMatrix{d}.$$
We thus have $$-A_1 \gmatrix = \IdentityMatrix{d} + \left(A_1+\IdentityMatrix{d}\right)\gmatrix$$ and therefore
$$A_1 = -\frac{1}{2}\left(\gmatrix^{-1} + \IdentityMatrix{d}\right).$$
To conclude, we showed that if the equality \eqref{eq:support-condition-matrices-gln} holds, we must have that $\gmatrix + \IdentityMatrix{d}$ is invertible and then $A_1, A_2 \in \GL_d\left(\quadraticExtension\right)$ and $X,Y \in \squareMatrix_d\left(\quadraticExtension\right)$ are determined by $\gmatrix$ and are given by
\begin{equation}\label{eq:system-for-gln-computation}
	\begin{split}
		A_1 &= -\frac{1}{2}\left(\gmatrix^{-1} + \IdentityMatrix{d}\right),\\
		A_2 &= -2\left(\IdentityMatrix{d}+\gmatrix\right)^{-1},\\
		Y &= A_1 + \IdentityMatrix{d},\\
		X &= \frac{1}{2} A_2 \left(\IdentityMatrix{d} - \gmatrix\right).
	\end{split}
\end{equation}
Conversely, given $\gmatrix \in \GL_d\left(\quadraticExtension\right)$ such that $\gmatrix + \IdentityMatrix{d}$ is invertible, if $A_1,A_2 \in \GL_d\left(\quadraticExtension\right)$ and $X,Y \in \squareMatrix_d\left(\quadraticExtension\right)$ are given by \eqref{eq:system-for-gln-computation}, it is straightforward to check that \eqref{eq:support-condition-matrices-gln} holds.

We thus have that $\sqrt{\sizeof{\lieAlgebra}} \centralCharacter{\pi}\left(-1\right)\dualZetaIntegral_{\hermitianSpace,\pi}\left(f_0\right)$ is given by $$\chi\left(-2\right)^{-d} \minusInvolution{\chi}\left(-2\right)^d \sum_{\substack{g \in \GL_{\quadraticExtension}\left(\hermitianSpace\right)\\
		\detSpace{\hermitianSpace}\left(g^{-1} + \idmap_{\hermitianSpace}\right) \ne 0}} \chi\left(\detSpace{\hermitianSpace}\left(\idmap_{\hermitianSpace} + g \right)\right) \involution{\chi}\left(\detSpace{\hermitianSpace}\left(\idmap_{\hermitianSpace} + g^{-1}\right)\right) \pi\left(g\right),$$
        which equals
$$\chi\left(4\right)^{-d} \sum_{\substack{g \in \GL_{\quadraticExtension}\left(\hermitianSpace\right)\\
		\detSpace{\hermitianSpace}\left(\idmap_{\hermitianSpace} + g\right) \ne 0}} \involutionPlusOne{\chi} \left(\detSpace{\hermitianSpace}\left(\idmap_{\hermitianSpace} + g\right)\right) \minusInvolution{\chi}\left(\detSpace{\hermitianSpace} g\right) \pi\left(g\right).$$

\subsubsection{Jacobi sums}\label{subsec:jacobi-sums}
Let $\left(\hermitianSpace, \innerproduct{\cdot}{\cdot}_{\hermitianSpace}\right)$ be either nondegenerate or $\innerproduct{\cdot}{\cdot}_{\hermitianSpace} = 0$, and let $\lieAlgebra = \IsometryLieAlgebra{\hermitianSpace}$ be the Lie algebra of $\IsometryGroup \hermitianSpace$.

The assignment $\posHermitianJacobiKernel{\hermitianSpace}{\chi} \colon \IsometryGroup\left(\hermitianSpace\right) \to \coefficientsField$ given by $$\posHermitianJacobiKernel{\hermitianSpace}{\chi}\left(g\right) = \begin{dcases}
	0 &\text{if } g \notin \IsometryGroup^0\left(\hermitianSpace\right) \text{ or }\detSpace{\hermitianSpace}\left(\idmap_{\hermitianSpace} + g\right) = 0,\\
	\chi\left(\detSpace{\hermitianSpace}\left(\idmap_{\hermitianSpace} + g\right)\right) & \text{otherwise,}
\end{dcases}$$
is a class function. Thus, for any representation $\pi$ of $\IsometryGroup\left(\hermitianSpace\right)$, the operator $$\dblJacobiSum{\pi}{\chi} = \frac{1}{\sqrt{\sizeof{\lieAlgebra}}} \sum_{g \in \IsometryGroup\left(\hermitianSpace\right)} \posHermitianJacobiKernel{\hermitianSpace}{\chi}\left(g\right) \pi\left(g\right)$$
lies in $\Hom_{\IsometryGroup\left(\hermitianSpace\right)}\left(\pi, \pi\right)$. By Schur's lemma, if $\pi$ is irreducible, $\dblJacobiSum{\pi}{\chi}$ is a scalar operator, and we can write $$\dblJacobiSum{\pi}{\chi} = \dblJacobiSumScalar{\pi}{\chi} \cdot \idmap_{\pi},$$ where $\dblJacobiSumScalar{\pi}{\chi} \in \coefficientsField$. We thus proved that for the special element $f_0 \in \degeneratePrincipalSeries{\hermitianSpace}{\pi}$ from the previous section, that\begin{equation}\label{eq:dual-zeta-integral-is-a-jacobi-sum}
	\begin{split}
			\dualZetaIntegral_{\hermitianSpace,\pi}\left(f_0\right) &= \centralCharacter{\pi}\left(-1\right) \\
			& \times \begin{dcases}
			\chi\left(-2\right)^{-d} \dblJacobiSumScalar{\pi}{\chi} \cdot \idmap_{\pi} & \text{if }\left(\hermitianSpace, \innerproduct{\cdot}{\cdot}_{\hermitianSpace}\right) \text{ is nondegenerate,} \\
			\chi\left(4\right)^{-d} \dblJacobiSumScalar{\pi \otimes \minusInvolution{\chi}_{\hermitianSpace}}{\involutionPlusOne{\chi}} \cdot \idmap_{\pi} & \text{if }\innerproduct{\cdot}{\cdot}_{\hermitianSpace} = 0.
		\end{dcases}
	\end{split}
\end{equation}

By \Cref{cor:kernel-zeta-integral-is-a-scalar} we can extend the definition of $\dblPreGammaFactor{\pi}{\chi} \in \coefficientsField$ to irreducible representations $\pi$ of $\IsometryGroup\left(\hermitianSpace\right)$ and characters $\chi$ that do not satisfy the conditions of \Cref{thm:multiplicity-one-for-zeta-integrals} by setting $\dblPreGammaFactor{\pi}{\chi} \cdot \idmap_{\pi} = \dualZetaIntegral_{\hermitianSpace,\pi}\left(f_0\right)$. Overall, our computations above show that
\begin{equation}\label{eq:def-fin-gamma}
	\begin{split}
			\dblPreGammaFactor{\pi}{\chi} &= \centralCharacter{\pi}\left(-1\right) \\
			& \times \begin{dcases}
			\chi\left(-2\right)^{-d} \dblJacobiSumScalar{\pi}{\chi} &\text{if } \left(\hermitianSpace, \innerproduct{\cdot}{\cdot}_{\hermitianSpace}\right) \text{ is nondegenerate,} \\
			\chi\left(4\right)^{-d} \dblJacobiSumScalar{\pi \otimes \minusInvolution{\chi}_{\hermitianSpace}}{\involutionPlusOne{\chi}}&\text{if } \innerproduct{\cdot}{\cdot}_{\hermitianSpace} = 0,
		\end{dcases}
	\end{split}
\end{equation}
for any irreducible representation $\pi$ of $\IsometryGroup(\hermitianSpace)$ and character $\chi$.

We call $\dblJacobiSum{\pi}{\chi}$ (and $\dblJacobiSumScalar{\pi}{\chi}$) a \emph{non-abelian Jacobi sum}, because it can be written in the form 

\begin{equation}\label{eq:formula-for-Jdbl}
    \dblJacobiSum{\pi}{\chi} =\dblJacobiSumScalar{\pi}{\chi}\cdot\idmap_{\pi}= \frac{\centralCharacter{\pi}\left(-1\right)}{\sqrt{\sizeof{\lieAlgebra}}} \sum_{\substack{g \in \IsometryGroup^0\left(\hermitianSpace\right)\\
		h \in \GL_{\quadraticExtension}\left(\hermitianSpace\right)\\
		-g + h = \idmap_{\hermitianSpace}}} \chi\left(\detSpace{\hermitianSpace} h\right) \pi\left(-g\right),
\end{equation}
which resembles the definition of a Jacobi sum associated to two characters $\chi_1, \chi_2 \colon \multiplicativegroup{\finiteField} \to \multiplicativegroup{\coefficientsField}$:
$$J\left(\chi_1, \chi_2\right) = \sum_{\substack{x_1, x_2 \in \multiplicativegroup{\finiteField}\\
		x_1 + x_2 = 1}} \chi_1\left(x_1\right) \chi_2\left(x_2\right).$$

When $\coefficientsField = \cComplex$, these non-abelian Jacobi sums are studied in \cite{YostWolffZelingher2025}, where the authors give explicit formulas for $\dblJacobiSum{\pi}{\chi}$ in terms of the Deligne--Lusztig data associated to $\pi$. Their results rely on the multiplicativity results we will prove in the next section. We refer the reader to \Cref{subsec:jacobi-sums-explicit-computation-for-c} for the main result of \cite{YostWolffZelingher2025}.

\subsection{Behavior under parabolic induction}\label{subsec:multiplicativity}

Assume we are in the framework of \Cref{subsec:multiplicity-one-theorems}.

Let $\pi_0$ be an irreducible representation of $\IsometryGroup\left(\hermitianSpace_0\right)$ and let $\tau$ be an irreducible representation of $\GL_{\quadraticExtension}\left(\xIsotropic\right)$. Let $\pi \subset \Ind{P}{\IsometryGroup\left(\hermitianSpace\right)}{\tau \boxtimes \pi_0}$ be an irreducible subrepresentation. Our next goal is to prove the following theorem.
\begin{theorem}\label{thm:identity-of-jacobi-sums}
	Suppose that $\involutionPlusOne{\chi} \ne 1$. Then \begin{enumerate}
		\item If $\left(\hermitianSpace,\innerproduct{\cdot}{\cdot}_\hermitianSpace\right)$ is nondegenerate, we have that
		$$\dblJacobiSumScalar{\pi}{\chi} = \dblJacobiSumScalar{\tau \otimes \minusInvolution{\chi}_{\xIsotropic}}{\involutionPlusOne{\chi}}\dblJacobiSumScalar{\pi_0}{\chi}.$$
		\item If $\innerproduct{\cdot}{\cdot}_\hermitianSpace=0$, we have that
		$$\dblJacobiSumScalar{\pi}{\chi} =  \dblJacobiSumScalar{\tau}{\chi}\dblJacobiSumScalar{\pi_0}{\chi}.$$
	\end{enumerate}
    
\end{theorem}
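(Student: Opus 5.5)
We prove this multiplicativity by comparing two ways of computing the dual zeta operator on the induced representation. The transfer map comes from \Cref{thm:embedding-of-degenerate-ps-to-parabolic-induction-of-degenerate-ps}. The intertwining operator $\intertwiningOperator{\hermitianSpace}{\chi}$ factors compatibly through the Levi. The scalars are then matched through the explicit formula \eqref{eq:def-fin-gamma}.

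The first step is to build an explicit nonzero element
$$\restrictionOperator \in \Hom_{\IsometryGroup\left(\doubleSpace{\hermitianSpace}\right)}\left(\degeneratePrincipalSeries{\hermitianSpace}{\chi}, \Ind{\doublingIsotropicParabolic{\xIsotropic}{\hermitianSpace}}{\IsometryGroup\left(\doubleSpace{\hermitianSpace}\right)}{\degeneratePrincipalSeries{\xIsotropic}{\chi} \boxtimes \degeneratePrincipalSeries{\hermitianSpace_0}{\chi}}\right).$$
It is given by restricting $f$ to $h\,\ell$ with $\ell$ in the Levi $\doublingIsotropicLevi{\xIsotropic}{\hermitianSpace}$, possibly after averaging over a suitable unipotent subgroup. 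By the theorem this space is one-dimensional when $\involutionPlusOne{\chi}\ne 1$.

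Second, we prove an unfolding identity for zeta operators. For $v$ in $\pi \subset \Ind{P}{\IsometryGroup\left(\hermitianSpace\right)}{\tau \boxtimes \pi_0}$, viewed as a function on $\IsometryGroup\left(\hermitianSpace\right)$, the claim is that $\zetaIntegral_{\hermitianSpace,\chi,\pi}(f)v$ evaluated at $1$ equals, up to an explicit nonzero constant, a sum over $P\backslash\IsometryGroup(\hermitianSpace)$ of
$$\zetaIntegral_{\xIsotropic,\chi,\tau}\otimes\zetaIntegral_{\hermitianSpace_0,\chi,\pi_0}$$
applied to $(\restrictionOperator f)(\cdot)$. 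This is the finite-field version of the standard doubling unfolding of Lapid--Rallis and Piatetski-Shapiro--Rallis. To prove it, we decompose $g = n\ell k$ and use the double coset structure of $\iEmbedding(P\times P)$ relative to $\doublingIsotropicParabolic{\xIsotropic}{\hermitianSpace}$. The same identity holds with $\chi$ replaced by $\minusInvolution{\chi}$ for the dual zeta operators.

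Third, we show that intertwining operators commute with $\restrictionOperator$:
$$\restrictionOperator\circ\intertwiningOperator{\hermitianSpace}{\chi} = c\cdot\left(\intertwiningOperator{\xIsotropic}{\chi}\boxtimes\intertwiningOperator{\hermitianSpace_0}{\chi}\right)\circ\restrictionOperator.$$
Both sides lie in the one-dimensional Hom space from step one, with $\chi$ replaced by $\minusInvolution{\chi}$ on the target. To pin down $c$, we factor $\weylDoubleHermitian{\hermitianSpace}$ as a product of $\weylDoubleHermitian{\xIsotropic}$, $\weylDoubleHermitian{\hermitianSpace_0}$ and a Weyl element permuting blocks. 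This uses the simple-reflection decomposition of \Cref{subsec:decomposiiton-into-simple-reflections}. With the normalization $1/\sqrt{\sizeof{\lieAlgebra}}$, we expect $c=1$ or an explicit root of unity times $\chi$-values.

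Fourth, we combine the previous steps. The dual zeta operator $\dualZetaIntegral_{\hermitianSpace,\pi}$ then equals
$$\dblPreGammaFactor{\tau}{\chi}\,\dblPreGammaFactor{\pi_0}{\chi}\,c$$
times $\zetaIntegral_{\hermitianSpace,\pi}$. Comparing with $\dualZetaIntegral_{\hermitianSpace,\pi}(f_0)=\dblPreGammaFactor{\pi}{\chi}$ requires some $f$ whose zeta operator is nonzero on $\pi$. The section $f_0$, or a translate of it, works, since $\zetaIntegral_{\hermitianSpace,\pi}(f_0)=\idmap$.

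Finally, we translate the gamma factors into Jacobi sums using \eqref{eq:def-fin-gamma}. The relevant factors are $\omega_\pi(-1)$, $\chi(-2)^{-d}$, and, in the $\GL$ case, $\chi(4)^{-d}$ together with the twist by $\minusInvolution{\chi}$. We have $\omega_\pi(-1)=\omega_\tau(-1)\omega_{\pi_0}(-1)$, and $\chi(-2)^{-d}$ splits as $\chi(-2)^{-2\dim\xIsotropic}\chi(-2)^{-\dim\hermitianSpace_0}$. The $\GL(\xIsotropic)$ factor is computed in the degenerate case as $\chi(4)^{-\dim \xIsotropic}J(\tau\otimes\minusInvolution{\chi}_\xIsotropic,\involutionPlusOne{\chi})$. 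These constants should cancel, leaving the claimed identity. If they do not cancel, their discrepancy is absorbed in $c$. In the $\GL$ case, $\chi(4)^{-d}$ and the twist $\pi\otimes\minusInvolution{\chi}$ factor over the Levi directly, giving part (2) after replacing $\chi$ by $\involutionPlusOne{\chi}$ appropriately.

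The main obstacle is step three, computing $c$ exactly. The nonzero unfolding in step two also needs care, since the nondegeneracy of $\restrictionOperator$ must be checked.

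As a fallback for steps three and four, we would compute directly with the kernel. The class function $\posHermitianJacobiKernel{\hermitianSpace}{\chi}$ paired with the induced character can be evaluated by the parabolic-induction character formula. This reduces to summing $\chi(\det(1+g))$ over $g\in P$. For $g=\ell n$ we have $\det(1+g)=\det(1+\ell)$, and on the $\xIsotropic$-part the Levi element acts as $(a,\adjointHermitian{a}^{-1})$. This gives
$$\chi(\det(1+a))\,\chi^c(\det(1+a^{-1})),$$
which is the $\GL$ kernel $\involutionPlusOne{\chi}(\det(1+a))\minusInvolution{\chi}(\det a)$. The hypothesis $\involutionPlusOne{\chi}\ne1$ would be used to kill the terms with $\det(1+g)=0$ in the unipotent averaging.
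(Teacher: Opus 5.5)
Your main route has a genuine gap at the end. The constants do cancel exactly: $\centralCharacter{\pi}(-1)=\centralCharacter{\tau}(-1)\centralCharacter{\pi_0}(-1)$ and $\chi(-2)^{-d}=\chi(4)^{-\dim_{\quadraticExtension}\xIsotropic}\chi(-2)^{-\dim_{\quadraticExtension}\hermitianSpace_0}$. So what your steps one through four yield is $\dblJacobiSumScalar{\pi}{\chi}=c\cdot\dblJacobiSumScalar{\tau\otimes\minusInvolution{\chi}_{\xIsotropic}}{\involutionPlusOne{\chi}}\dblJacobiSumScalar{\pi_0}{\chi}$, and the theorem is precisely the assertion $c=1$.

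You never compute $c$ (``we expect $c=1$ or \dots''), and saying that any discrepancy ``is absorbed in $c$'' is circular. The paper never computes this constant either, and it does not need to. It uses the one-dimensionality from \Cref{thm:embedding-of-degenerate-ps-to-parabolic-induction-of-degenerate-ps}, the unfolding, and the support statement \Cref{prop:iembedding-support-of-flat-operator-applied-to-special-section} only qualitatively. The support statement says only $Pg_1=Pg_2$ contributes. You need it too: for general irreducible $\tau,\pi_0$ multiplicity one can fail, so there is no functional equation valid for arbitrary sections and you must work with the special sections. From these ingredients the paper shows that $\dualZetaIntegral_{\hermitianSpace,\rho}(f_{0,\hermitianSpace})$ acts on every $\varphi\in\rho=\Ind{P}{\IsometryGroup\left(\hermitianSpace\right)}{\tau\boxtimes\pi_0}$ by $C$ times a product of two scalars. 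Hence $\dblJacobiSum{\rho}{\chi}$ is a scalar on the whole, possibly reducible, induced representation. This is the only place where $\involutionPlusOne{\chi}\ne 1$ enters.

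Your fallback misses exactly this point. Pairing $\posHermitianJacobiKernel{\hermitianSpace}{\chi}$ with the character of $\rho$ computes the trace of $\dblJacobiSum{\rho}{\chi}$ on $\rho$. For reducible $\rho$ this is a weighted sum of the scalars on its composition factors. It does not determine the scalar on the given $\pi\subset\rho$ unless scalarity on $\rho$ is already known; this is the implicit assumption the paper's remark attributes to Chang. Also, $\involutionPlusOne{\chi}\ne1$ is not what kills the terms with $\detSpace{\hermitianSpace}(\idmap_{\hermitianSpace}+g)=0$; those vanish by the definition of the kernel.

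The repair is to combine your two halves as the paper does. First use steps one through three, with $c$ left unknown, only to prove that $\dblJacobiSum{\rho}{\chi}$ is scalar. Then compute the scalar by applying it to the section $\varphi$ supported on $P$ with $\varphi(\idmap_{\hermitianSpace})=v_\tau\otimes v_{\pi_0}$ and evaluating at $\idmap_{\hermitianSpace}$. This is your sum over $P$:
\begin{itemize}
\item the $N$-sum contributes $\sizeof{N}$;
\item the identity $\chi(\detSpace{\hermitianSpace}(\idmap_{\hermitianSpace}+\ell))=\involutionPlusOne{\chi}(\detSpace{\xIsotropic}(\idmap_{\xIsotropic}+a))\,\minusInvolution{\chi}(\detSpace{\xIsotropic}a)\,\chi(\detSpace{\hermitianSpace_0}(\idmap_{\hermitianSpace_0}+g_0))$ splits the Levi sum;
\item $\sizeof{\lieAlgebra}=\sizeof{\lieAlgebra_0}\sizeof{\EndomorphismRing_{\quadraticExtension}(\xIsotropic)}\sizeof{N}^2$ matches the normalizations.
\end{itemize}
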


The proof of this result requires two steps. The first step is to show that if $\rho = \Ind{P}{\IsometryGroup\left(\hermitianSpace\right)}{\tau \boxtimes \pi_0}$ then $\dblJacobiSum{\rho}{\chi}$ is a scalar operator. This is automatic if $\rho$ is irreducible (in which case $\pi = \rho$). Once we know this, the proof is not too difficult.
\begin{proof}[Proof of \Cref{thm:identity-of-jacobi-sums}, given that $\dblJacobiSum{\rho}{\chi}$ is a scalar operator]
	Since the restriction of $\dblJacobiSum{\rho}{\chi}$ to any irreducible subrepresentation $\pi \subset \rho$ is $\dblJacobiSum{\pi}{\chi}$, we have that $\dblJacobiSumScalar{\pi}{\chi}$ is constant on irreducible subrepresentations $\pi \subset \rho$. Choose $v_{\pi_0} \in \pi_0$ and $v_{\tau} \in \tau$ and let $\varphi \in \rho$ be the unique section supported on $P$ such that its value at $\idmap_{\hermitianSpace}$ is $v_{\tau} \otimes v_{\pi_0}$.

	We now compute $\left(\dblJacobiSum{\rho}{\chi} \varphi \right)\left(\idmap_{\hermitianSpace}\right)$, which is given by
	\begin{equation}\label{eq:recursive-doubling-gauss-sum}
		\frac{1}{\sqrt{\sizeof{\lieAlgebra}}}\sum_{\substack{p \in P \cap \IsometryGroup^0\left(\hermitianSpace\right)\\
				\detSpace{\hermitianSpace}\left(\idmap_{\hermitianSpace} + p\right) \ne 0}} \chi\left(\detSpace{\hermitianSpace}\left(\idmap_{\hermitianSpace} + p\right)\right) \left(\tau \boxtimes \pi_0\right)\left(p\right) v_{\tau} \otimes v_{\pi_0}.
	\end{equation}
	Note that $N\subset\IsometryGroup^0\left(\hermitianSpace\right)$ from which it is straightforward to see that \eqref{eq:recursive-doubling-gauss-sum} equals 
	\begin{equation*}
		\frac{\sizeof{N}}{\sqrt{\sizeof{\lieAlgebra}}}\sum_{\substack{l \in L \cap \IsometryGroup^0\left(\hermitianSpace\right)\\
				\detSpace{\hermitianSpace}\left(\idmap_{\hermitianSpace} + l\right) \ne 0}} \chi\left(\detSpace{\hermitianSpace}\left(\idmap_{\hermitianSpace} + l\right)\right) \left(\tau \boxtimes \pi_0\right)\left(l\right) v_{\tau} \otimes v_{\pi_0}.
	\end{equation*}

	We now assume that $\innerproduct{\cdot}{\cdot}_\hermitianSpace$ is nondegenerate.
	An element $l=(a,g_0)$ in $L\cong\GL_{\quadraticExtension}\left(\xIsotropic\right) \times \IsometryGroup\left(\hermitianSpace_0\right)$ lies in $\IsometryGroup^0\left(\hermitianSpace\right)$ if and only if $g_0\in \IsometryGroup^0\left(\hermitianSpace_0\right)$. Note that $$\detSpace{\hermitianSpace}\left(\idmap_{\hermitianSpace} + l\right) = \detSpace{\xIsotropic}\left(\adjointHermitian{a}\right)^{-1}\detSpace{\xIsotropic}\left(\idmap_{\xIsotropic} + a\right) \detSpace{\xIsotropic}\left(\adjointHermitian{\left(\idmap_{\xIsotropic} + a\right)}\right) \detSpace{\hermitianSpace_0}\left(\idmap_{\hermitianSpace_0} + g_0\right).$$
	Hence by using the fact that $\sizeof{\lieAlgebra} = \sizeof{\lieAlgebra_0} \sizeof{\EndomorphismRing_{\quadraticExtension}\left(\xIsotropic\right)} \sizeof{N}^2$, where $\lieAlgebra_0 = \IsometryLieAlgebra\left(\hermitianSpace_0\right)$ is the Lie algebra of $\IsometryGroup\left(\hermitianSpace_0\right)$,
	we get that \eqref{eq:recursive-doubling-gauss-sum} equals
	\begin{equation}
		\begin{split}
			&q^{-\frac{\grpIndex{\quadraticExtension}{\finiteField} \left(\dim_{\quadraticExtension} \xIsotropic\right)^2}{2}} \sum_{a \in \GL_{\quadraticExtension}\left(\xIsotropic\right)} \posHermitianJacobiKernel{\xIsotropic}{\involutionPlusOne{\chi}}\left(a\right) \minusInvolution{\chi}\left(\detSpace{\xIsotropic} a\right) \tau\left(a\right) v_{\tau} \\
			& \otimes \frac{1}{\sqrt{\sizeof{\lieAlgebra_0}}} \sum_{g_0 \in \IsometryGroup^0\left(\hermitianSpace_0\right)} \posHermitianJacobiKernel{\hermitianSpace_0}{\chi}\left(g_0\right) \pi_0\left(g_0\right) v_{\pi_0},
		\end{split}
	\end{equation}
	which in turn is
	\begin{align*}
		&\dblJacobiSum{\tau \otimes \minusInvolution{\chi_{\GL_{\quadraticExtension}\left(\xIsotropic\right)}}}{\involutionPlusOne{\chi}} v_{\tau} \otimes \dblJacobiSum{\pi_0}{\chi} v_{\pi_0}\\
		=& \dblJacobiSumScalar{\tau \otimes \minusInvolution{\chi_{\GL_{\quadraticExtension}\left(\xIsotropic\right)}}}{\involutionPlusOne{\chi}} \dblJacobiSumScalar{\pi_0}{\chi} v_{\tau} \otimes v_{\pi_0}.
	\end{align*}
	
	Since $\dblJacobiSum{\rho}{\chi}$ is a scalar operator, it follows that $$\dblJacobiSum{\rho}{\chi} = \dblJacobiSumScalar{\tau \otimes \minusInvolution{\chi_{\GL_{\quadraticExtension}\left(\xIsotropic\right)}}}{\involutionPlusOne{\chi}} \dblJacobiSumScalar{\pi_0}{\chi} \cdot \idmap_{\rho}.$$
	Therefore, by the discussion above, $$\dblJacobiSum{\pi}{\chi} = \dblJacobiSumScalar{\tau \otimes \minusInvolution{\chi_{\GL_{\quadraticExtension}\left(\xIsotropic\right)}}}{\involutionPlusOne{\chi}} \dblJacobiSumScalar{\pi_0}{\chi} \cdot \idmap_{\pi}$$
	for any irreducible subrepresentation $\pi \subset \rho$.
	
	If $\innerproduct{\cdot}{\cdot}_\hermitianSpace=0$, an element $l=(a,g_0)$ in $L\cong\GL_{\quadraticExtension}\left(\xIsotropic\right) \times \IsometryGroup\left(\hermitianSpace_0\right)$ satisfies $$\detSpace{\hermitianSpace}\left(\idmap_{\hermitianSpace} + l\right) =\detSpace{\xIsotropic}\left(\idmap_\xIsotropic+a\right)\cdot\detSpace{\hermitianSpace_0}\left(\idmap_{\hermitianSpace_0}+g_0\right) .$$
	We have that $\sizeof{\lieAlgebra} =\sizeof{\EndomorphismRing_{\quadraticExtension}\left(\hermitianSpace\right)}= \sizeof{\EndomorphismRing_{\quadraticExtension}\left(\hermitianSpace_0\right)} \sizeof{\EndomorphismRing_{\quadraticExtension}\left(\xIsotropic\right)} \sizeof{N}^2$ and hence \eqref{eq:recursive-doubling-gauss-sum} equals
	$$	\frac{1}{\sqrt{\sizeof{\EndomorphismRing_{\quadraticExtension}\left(\xIsotropic\right)}}}\sum_{a \in \GL_{\quadraticExtension}\left(\xIsotropic\right)} \posHermitianJacobiKernel{\xIsotropic}{\chi}\left(a\right) \tau\left(a\right) v_{\tau}\otimes \frac{1}{\sqrt{\sizeof{\EndomorphismRing_{\quadraticExtension}\left(\hermitianSpace_0\right)}}} \sum_{g_0 \in \GL_{\quadraticExtension}\left(\hermitianSpace_0\right)} \posHermitianJacobiKernel{\hermitianSpace_0}{\chi}\left(g_0\right) \pi_0\left(g_0\right) v_{\pi_0},$$
	which is
	$$\dblJacobiSumScalar{\tau }{\chi} \dblJacobiSumScalar{\pi_0}{\chi} v_{\tau} \otimes v_{\pi_0}.$$
	Since $\dblJacobiSum{\rho}{\chi}$ is a scalar operator, it follows that $$\dblJacobiSum{\rho}{\chi} = \dblJacobiSumScalar{\tau }{\chi} \dblJacobiSumScalar{\pi_0}{\chi}\cdot \idmap_{\rho},$$
	which again implies that, $$\dblJacobiSum{\pi}{\chi} = \dblJacobiSumScalar{\tau }{\chi} \dblJacobiSumScalar{\pi_0}{\chi}\cdot \idmap_{\pi},$$
	for any irreducible subrepresentation $\pi \subset \rho$.
\end{proof}
\begin{remark}
	A similar proof is given in \cite[Theorem 3.42]{Chang1997}. Notice that this proof implicitly assumes that $\dblJacobiSum{\rho}{\chi}$ is a scalar operator, which is not automatic unless $\rho$ is irreducible.
\end{remark}

The rest of this section is devoted to showing that $\dblJacobiSum{\rho}{\chi}$ is a scalar operator in the general case.

Recall that the pairing $$\innerproduct{\cdot}{\cdot} \colon \Ind{P}{\IsometryGroup\left(\hermitianSpace\right)}{\tau \boxtimes \pi_0} \times \Ind{P}{\IsometryGroup\left(\hermitianSpace\right)}{\Contragredient{\tau} \boxtimes \Contragredient{\pi_0}} \to \coefficientsField$$ is given by $$\innerproduct{\varphi}{\varphi^{\vee}} = \sum_{h \in P \backslash \IsometryGroup\left(\hermitianSpace\right)} \innerproduct{\varphi\left(h\right)}{\varphi^{\vee}\left(h\right)} = \sum_{\left(h_1, h_2\right) \in P^{\Delta} \backslash \IsometryGroup\left(\hermitianSpace\right)^{\Delta}} \innerproduct{\varphi\left(h_1\right)}{\varphi^{\vee}\left(h_2\right)}.$$
We begin by following \cite[Section 4]{LapidRallis2005} in order to find a formula for $\innerproduct{\zetaIntegral_{\hermitianSpace,\rho}\left(f\right) \varphi}{\varphi^{\vee}}$ in terms of zeta integrals for $\pi_0$ and $\tau$, respectively, where $f \in  \degeneratePrincipalSeries{\hermitianSpace}{\chi}$, $\varphi \in \rho = \Ind{P}{\IsometryGroup\left(\hermitianSpace\right)}{\tau \boxtimes \pi_0}$ and $\varphi^{\vee} \in \Contragredient{\rho} = \Ind{P}{\IsometryGroup\left(\hermitianSpace\right)}{\Contragredient{\tau} \boxtimes \Contragredient{\pi_0}}$. 
We have
\begin{align*}
	\innerproduct{\zetaIntegral_{\hermitianSpace,\rho}\left(f\right)\varphi}{\varphi^{\vee}} =& \sum_{\left(g_1,g_2\right) \in \diagSpace{\IsometryGroup\left(\hermitianSpace\right)} \backslash \IsometryGroup\left(\hermitianSpace\right) \times \IsometryGroup\left(\hermitianSpace\right)} f\left(\iEmbedding\left(g_1, g_2\right)\right) {\chi}^{-1}\left(\kappaCharacter{\hermitianSpace}\left(\detSpace{\hermitianSpace} g_2\right)\right)\\
	& \times \sum_{\left(h_1, h_2\right) \in P^{\Delta} \backslash \IsometryGroup\left(\hermitianSpace\right)^{\Delta}} \innerproduct{\varphi\left(h_1 g_1\right)}{\varphi^{\vee}\left(h_2 g_2\right)},
\end{align*}
which unfolds to
\begin{align*}
	\innerproduct{\zetaIntegral_{\hermitianSpace,\rho}\left(f\right)\varphi}{\varphi^{\vee}} =& \sum_{\left(g_1,g_2\right) \in \diagSpace{P} \backslash \IsometryGroup\left(\hermitianSpace\right) \times \IsometryGroup\left(\hermitianSpace\right)} f\left(\iEmbedding\left(g_1, g_2\right)\right) {\chi}^{-1}\left(\kappaCharacter{\hermitianSpace}\left(\detSpace{\hermitianSpace} g_2\right)\right) \innerproduct{\varphi\left(g_1\right)}{\varphi^{\vee}\left(g_2\right)}.
\end{align*}
Decomposing through $P^{\Delta} \backslash P \times P$, we obtain the formula
\begin{align*}
	\innerproduct{\zetaIntegral_{\hermitianSpace,\rho}\left(f\right)\varphi}{\varphi^{\vee}} =& \sum_{\left(g_1,g_2\right) \in P \times P \backslash \IsometryGroup\left(\hermitianSpace\right) \times \IsometryGroup\left(\hermitianSpace\right)} \sum_{\left(p_1, p_2\right) \in P^{\Delta} \backslash P \times P} f\left(\iEmbedding\left(p_1 g_1, p_2 g_2\right)\right)\\
	& \times {\chi}^{-1}\left(\kappaCharacter{\hermitianSpace}\left(\detSpace{\hermitianSpace} \left(p_2 g_2\right)\right)\right) \innerproduct{\varphi\left(p_1 g_1\right)}{\varphi^{\vee}\left(p_2 g_2\right)}.
\end{align*}
Using the Levi decomposition $P = N \rtimes L$, we have that $\innerproduct{\zetaIntegral_{\hermitianSpace,\rho}\left(f\right)\varphi}{\varphi^{\vee}}$ is given by
\begin{align*}
	& \sum_{\left(g_1,g_2\right) \in P \times P \backslash \IsometryGroup\left(\hermitianSpace\right) \times \IsometryGroup\left(\hermitianSpace\right)} \sum_{\left(l_1, l_2\right) \in L^{\Delta} \backslash L \times L} \sum_{\left(u_1, u_2\right) \in N^{\Delta} \backslash N \times N} f\left(\iEmbedding\left(u_1 l_1 g_1, u_2 l_2 g_2\right)\right)\\
	& \times {\chi}^{-1}\left(\kappaCharacter{\hermitianSpace}\left(\detSpace{\hermitianSpace} g_2\right)\right) {\chi}^{-1}\left(\kappaCharacter{\hermitianSpace}\left(\detSpace{\hermitianSpace} l_2\right)\right) \innerproduct{\left(\tau \boxtimes \pi_0\right)\left(l_1\right) \varphi\left(g_1\right)}{\left(\Contragredient{\tau} \boxtimes \Contragredient{\pi_0}\right)\left(l_2\right) \varphi^{\vee}\left(g_2\right)}.
\end{align*}
For $f \in \degeneratePrincipalSeries{\hermitianSpace}{\chi}$, $g \in \IsometryGroup\left(\doubleSpace{\hermitianSpace}\right)$, and $l \in \doublingIsotropicLevi{\xIsotropic}{\hermitianSpace}$, let us denote \begin{equation}\label{eq:definition-of-flat-operator}
	\left(\flatOperator{\hermitianSpace}{\chi} f\right)\left(g\right)\left(l\right) = \sum_{\left(u_1, u_2\right) \in N^{\Delta} \backslash N \times N} f\left(\iEmbedding\left(u_1, u_2\right) l g\right).
\end{equation}
In the next section we will show that $$\flatOperator{\hermitianSpace}{\chi} f \in \Ind{\doublingIsotropicParabolic{\xIsotropic}{\hermitianSpace}}{\IsometryGroup\left(\doubleSpace{\hermitianSpace}\right)}{\degeneratePrincipalSeries{\xIsotropic}{\chi} \boxtimes \degeneratePrincipalSeries{\hermitianSpace_0}{\chi}}.$$

Overall, we obtain that
\begin{equation}\label{eq:zeta-identity-for-parabolic-induction}
	\begin{split}
		\innerproduct{\zetaIntegral_{\hermitianSpace,\rho}\left(f\right)\varphi}{\varphi^{\vee}} =& \sum_{\left(g_1,g_2\right) \in P \times P \backslash \IsometryGroup\left(\hermitianSpace\right) \times \IsometryGroup\left(\hermitianSpace\right)}  \chi^{-1}\left(\kappaCharacter{\hermitianSpace}\left(\detSpace{\hermitianSpace} g_2\right)\right)\\
		& \times \innerproduct{\left(\zetaIntegral_{\xIsotropic,\tau} \otimes \zetaIntegral_{\hermitianSpace_0,\pi_0}\right)\left(\left(\flatOperator{\hermitianSpace}{\chi} f\right)\left(\iEmbedding\left(g_1, g_2\right)\right), \varphi\left(g_1\right)\right)}{\varphi^{\vee}\left(g_2\right)}.
	\end{split}
\end{equation}
In order to proceed, we need an identity of operators.

\subsubsection{An identity of operators}
 If $g_{\xIsotropic} \in \GL_{\quadraticExtension}\left(\doubleSpace{\xIsotropic}\right)$ and $g_{\hermitianSpace_0} \in \IsometryGroup\left(\doubleSpace{\hermitianSpace_0}\right)$, then we will denote by $\Lemb{g_{\xIsotropic}}{g_{\hermitianSpace_0}}$ the corresponding element in $\doublingIsotropicLevi{\xIsotropic}{\hermitianSpace}$.

Recall the following lemma \cite[Lemma 3 part (2)]{LapidRallis2005}.
\begin{lemma}\label{lem:identification-of-cosets-spaces}
	The canonical injection $$\iEmbedding\left(\diagSpace{N}\right) \backslash \iEmbedding\left(N \times N\right) \hookrightarrow \left(\doublingIsotropicUnipotent{\xIsotropic}{\hermitianSpace} \cap \siegelDoublingParabolic{\hermitianSpace}\right) \backslash \doublingIsotropicUnipotent{\xIsotropic}{\hermitianSpace}$$ is an isomorphism.
\end{lemma}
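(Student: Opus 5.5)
The plan is to prove injectivity and surjectivity separately: injectivity from \Cref{prop:intersection-of-i-with-siegel-parabolic}, and surjectivity from a dimension (or, over $\finiteField$, cardinality) count for unipotent groups.

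\textbf{Well-definedness.} First I check that $\iEmbedding\left(N \times N\right) \subset \doublingIsotropicUnipotent{\xIsotropic}{\hermitianSpace}$. An element $u \in N$ preserves the flag $0 \subset \xIsotropic \subset \xIsotropic \oplus \hermitianSpace_0 \subset \hermitianSpace$ and acts trivially on its graded pieces. Hence $\iEmbedding\left(u_1, u_2\right)$ preserves $\doubleSpace{\xIsotropic} \subset \doubleSpace{\xIsotropic} \oplus \doubleSpace{\hermitianSpace_0} \subset \doubleSpace{\hermitianSpace}$ and acts trivially on the graded pieces $\doubleSpace{\xIsotropic}$, $\doubleSpace{\hermitianSpace_0}$, $\doubleSpace{\yIsotropic}$. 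So it lies in the unipotent radical. By \Cref{prop:intersection-of-i-with-siegel-parabolic}, $\iEmbedding\left(\diagSpace{N}\right) \subset \siegelDoublingParabolic{\hermitianSpace}$, so the map on cosets is well defined.

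\textbf{Injectivity.} Suppose $\iEmbedding\left(u_1,u_2\right)$ and $\iEmbedding\left(u_1',u_2'\right)$ have the same image. Then $\iEmbedding\left(u_1'u_1^{-1}, u_2' u_2^{-1}\right) \in \siegelDoublingParabolic{\hermitianSpace}$. By \Cref{prop:intersection-of-i-with-siegel-parabolic} this forces $u_1'u_1^{-1} = u_2'u_2^{-1}$, so the two cosets of $\iEmbedding\left(\diagSpace{N}\right)$ agree.

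\textbf{Surjectivity.} Over a finite field, it suffices to show
$$\sizeof{\doublingIsotropicUnipotent{\xIsotropic}{\hermitianSpace}} = \sizeof{N} \cdot \sizeof{\doublingIsotropicUnipotent{\xIsotropic}{\hermitianSpace} \cap \siegelDoublingParabolic{\hermitianSpace}}.$$
Over a local field, the analogous statement is that $\iEmbedding\left(N\times N\right)$ has an open (hence, for unipotent groups, closed and full) orbit on $\doublingIsotropicUnipotent{\xIsotropic}{\hermitianSpace} / \left(\doublingIsotropicUnipotent{\xIsotropic}{\hermitianSpace} \cap \siegelDoublingParabolic{\hermitianSpace}\right)$. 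For unipotent groups, orbits are closed, so a dimension count of Lie algebras gives both statements simultaneously. It also gives the finite count, since all groups involved are $q$-powers of their dimensions. Concretely, I aim to prove the Lie algebra decomposition
$$\mathfrak n_{\doubleSpace{\xIsotropic}} = \operatorname{Lie}\iEmbedding\left(N \times N\right) + \left(\mathfrak n_{\doubleSpace{\xIsotropic}} \cap \mathfrak p_{\diagSpace{\hermitianSpace}}\right).$$
The intersection of the two summands is $\operatorname{Lie}\iEmbedding\left(\diagSpace N\right)$, by the Lie algebra version of \Cref{prop:intersection-of-i-with-siegel-parabolic}, proved the same way.

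To verify the decomposition, write $\mathfrak n_{\doubleSpace{\xIsotropic}}$ as pairs $\left(A, B\right)$. Here $A \in \Hom\left(\doubleSpace{\hermitianSpace_0}, \doubleSpace{\xIsotropic}\right)$ determines the $\Hom\left(\doubleSpace{\yIsotropic}, \doubleSpace{\hermitianSpace_0}\right)$-part via adjoints, and $B \in \Hom\left(\doubleSpace{\yIsotropic}, \doubleSpace{\xIsotropic}\right)$ satisfies the skew condition. The same description holds for $\mathfrak n$ with $\hermitianSpace$ in place of $\doubleSpace{\hermitianSpace}$. Using the splittings $\doubleSpace{\xIsotropic} = \diagSpace{\xIsotropic} \oplus \antidiagSpace{\xIsotropic}$ (and similarly for $\hermitianSpace_0, \yIsotropic$), the condition of preserving $\diagSpace{\hermitianSpace}$ becomes explicit. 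For the $A$-part it says that $A$ sends $\diagSpace{\hermitianSpace_0}$ into $\diagSpace{\xIsotropic}$, and similarly for the $B$-part. A complement is then given by the part of $\left(A,B\right)$ mapping $\diagSpace{\hermitianSpace_0}$ (respectively $\diagSpace{\yIsotropic}$) into $\antidiagSpace{\xIsotropic}$. I will then check that $\left(X_1, X_2\right) \mapsto \operatorname{Lie}\iEmbedding\left(X_1,X_2\right)$ projects $\mathfrak n \times \mathfrak n$ onto this complement, with kernel the diagonal. In coordinates, $\iEmbedding\left(X_1, X_2\right)$ applied to $v^+ + v^-$ gives $\left(X_1v\right)^+ + \left(X_2 v\right)^-$, whose $\antidiagSpace{\xIsotropic}$-component is $\frac12\left(X_1 - X_2\right)v$, and $X_1 - X_2$ ranges over all of $\mathfrak n$. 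Matching the skew conditions on the $B$-part is the main obstacle. It requires care with the signs $\epsilon_{\hermitianSpace}$ and the $-\innerproduct{\cdot}{\cdot}$ on $\minusSpace{\hermitianSpace}$; I would handle it in the matrix model of \Cref{subsec:matrix-representation}, with a basis adapted to $\xIsotropic \oplus \hermitianSpace_0 \oplus \yIsotropic$. The case $\innerproduct{\cdot}{\cdot}_{\hermitianSpace} = 0$ is the same computation with no skew conditions.
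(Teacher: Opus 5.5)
The paper gives no argument for this lemma; it simply quotes \cite[Lemma 3]{LapidRallis2005}. Your proposal is a correct, self-contained proof in the finite-field setting where the lemma is actually used.

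Injectivity is exactly \Cref{prop:intersection-of-i-with-siegel-parabolic}. So the source has $\sizeof{N}$ elements, and surjectivity reduces to $\sizeof{\doublingIsotropicUnipotent{\xIsotropic}{\hermitianSpace}} = \sizeof{N}\cdot\sizeof{\doublingIsotropicUnipotent{\xIsotropic}{\hermitianSpace}\cap\siegelDoublingParabolic{\hermitianSpace}}$. The count does work out. Put $x=\dim_{\quadraticExtension}\xIsotropic$, $n_0=\dim_{\quadraticExtension}\hermitianSpace_0$ and $e=\grpIndex{\quadraticExtension}{\finiteField}$. Let $s(k)$ be the $\finiteField$-dimension of the space of forms of the relevant (symmetric, alternating or skew-hermitian) type on a $k$-dimensional space. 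Then the Lie algebras of $N$, of $\doublingIsotropicUnipotent{\xIsotropic}{\hermitianSpace}$ and of the intersection have $\finiteField$-dimensions $exn_0+s(x)$, $4exn_0+s(2x)$ and $3exn_0+s(2x)-s(x)$.

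Your description of the quotient is the right mechanism. Write $A_i$, $B_i$ for the blocks of $X_i$ and $b_i(y,y')=\innerproduct{B_iy}{y'}_{\hermitianSpace}$. Up to harmless scalars, the image of $\left(X_1,X_2\right)$ is $\left(A_1-A_2,\, b_1-b_2\right)$, and this map is onto with kernel the diagonal. The minus sign in $b_1-b_2$ comes from $\innerproduct{\cdot}{\cdot}_{\minusSpace{\hermitianSpace}}=-\innerproduct{\cdot}{\cdot}_{\hermitianSpace}$. Compared with the citation, your route costs some linear algebra. In exchange it makes the finite-field multiplicativity argument independent of \cite{LapidRallis2005} and shows concretely why $\iEmbedding\left(N\times N\right)$ fills the coset space.

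Two steps should be made explicit.

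First, it is not automatic that $\doublingIsotropicUnipotent{\xIsotropic}{\hermitianSpace}\cap\siegelDoublingParabolic{\hermitianSpace}$ has exactly $q^{\dim_{\finiteField}(\mathfrak n_{\doubleSpace{\xIsotropic}}\cap\mathfrak p_{\diagSpace{\hermitianSpace}})}$ elements. Use the Cayley transform $T\mapsto\left(1+\tfrac12T\right)\left(1-\tfrac12T\right)^{-1}$. Since $q$ is odd and $T$ is nilpotent, it is a bijection $\mathfrak n_{\doubleSpace{\xIsotropic}}\to\doublingIsotropicUnipotent{\xIsotropic}{\hermitianSpace}$. Under it, $T$ stabilizes $\diagSpace{\hermitianSpace}$ if and only if its image does, because each is a polynomial in the other.

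Second, you describe $\mathfrak n_{\doubleSpace{\xIsotropic}}\cap\mathfrak p_{\diagSpace{\hermitianSpace}}$ by conditions on $A$ and $B$ alone. This silently uses that the $\Hom\left(\doubleSpace{\yIsotropic},\doubleSpace{\hermitianSpace_0}\right)$-block maps $\diagSpace{\yIsotropic}$ into $\diagSpace{\hermitianSpace_0}$ automatically once $A\left(\diagSpace{\hermitianSpace_0}\right)\subset\diagSpace{\xIsotropic}$. That holds because $\diagSpace{\xIsotropic}$ is the annihilator of $\diagSpace{\yIsotropic}$ in $\doubleSpace{\xIsotropic}$ and $\diagSpace{\hermitianSpace_0}$ is maximal isotropic in $\doubleSpace{\hermitianSpace_0}$.

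The local-field remarks can be dropped. The lemma lives in the finite-field section, and over a local field the orbit argument would also need the passage from $\overline{F}$-points to $F$-points.
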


We will use this to show the following.
\begin{proposition}
	The operator $\flatOperator{\hermitianSpace}{\chi}$ defined in \eqref{eq:definition-of-flat-operator} is an intertwining operator $$\flatOperator{\hermitianSpace}{\chi}  \colon \degeneratePrincipalSeries{\hermitianSpace}{\chi} \to \degeneratePrincipalSeries{\xIsotropic, \hermitianSpace_0}{\chi} \coloneq \Ind{\doublingIsotropicParabolic{\xIsotropic}{\hermitianSpace}}{\IsometryGroup\left(\doubleSpace{\hermitianSpace}\right)}{\degeneratePrincipalSeries{\xIsotropic}{\chi} \boxtimes \degeneratePrincipalSeries{\hermitianSpace_0}{\chi}}.$$
\end{proposition}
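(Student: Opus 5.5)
To prove the proposition, I will verify three things directly from the definition \eqref{eq:definition-of-flat-operator}: that $\flatOperator{\hermitianSpace}{\chi} f$ is well defined, that each function $l \mapsto \left(\flatOperator{\hermitianSpace}{\chi} f\right)\left(g\right)\left(l\right)$ lies in $\degeneratePrincipalSeries{\xIsotropic}{\chi} \boxtimes \degeneratePrincipalSeries{\hermitianSpace_0}{\chi}$, and that $g \mapsto \left(\flatOperator{\hermitianSpace}{\chi} f\right)\left(g\right)$ transforms correctly under $\doublingIsotropicParabolic{\xIsotropic}{\hermitianSpace}$ on the left. Intertwining is then automatic, since $\flatOperator{\hermitianSpace}{\chi}$ only involves left translations while $\IsometryGroup\left(\doubleSpace{\hermitianSpace}\right)$ acts by right translation: $\left(\flatOperator{\hermitianSpace}{\chi}(R(h)f)\right)(g) = \left(\flatOperator{\hermitianSpace}{\chi}f\right)(gh)$.

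\textbf{Well-definedness.} Since $\iEmbedding\left(\diagSpace{N}\right) \subset \siegelDoublingUnipotent{\hermitianSpace}$ acts trivially under $\chi_{\diagSpace{\hermitianSpace}}$ (it is unipotent inside $\siegelDoublingParabolic{\hermitianSpace}$ by \Cref{prop:intersection-of-i-with-siegel-parabolic}), the summand is left $\iEmbedding(\diagSpace{N})$-invariant. Hence the sum over $N^{\Delta}\backslash N\times N$ makes sense.

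\textbf{Rewriting via the lemma.} I will use \Cref{lem:identification-of-cosets-spaces} to rewrite the sum as
$$\left(\flatOperator{\hermitianSpace}{\chi} f\right)(g)(l)=\sum_{n\in (\doublingIsotropicUnipotent{\xIsotropic}{\hermitianSpace}\cap\siegelDoublingParabolic{\hermitianSpace})\backslash\doublingIsotropicUnipotent{\xIsotropic}{\hermitianSpace}} f(nlg).$$
This form is visibly invariant under left translation of $g$ by $\doublingIsotropicUnipotent{\xIsotropic}{\hermitianSpace}$, since $\doublingIsotropicLevi{\xIsotropic}{\hermitianSpace}$ normalizes $\doublingIsotropicUnipotent{\xIsotropic}{\hermitianSpace}$. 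It also shows that $(\flatOperator{\hermitianSpace}{\chi} f)(l'g)(l)=(\flatOperator{\hermitianSpace}{\chi} f)(g)(ll')$ for $l'\in\doublingIsotropicLevi{\xIsotropic}{\hermitianSpace}$, which is the left $\doublingIsotropicParabolic{\xIsotropic}{\hermitianSpace}$-equivariance required of an element of the induced representation. In the finite field case there are no modulus characters, so no normalization issues arise.

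\textbf{Equivariance in $l$ (the main step).} It remains to show that, as a function of $l=\Lemb{g_{\xIsotropic}}{g_{\hermitianSpace_0}}$, the expression is left equivariant under $\siegelDoublingParabolic{\xIsotropic}\times\siegelDoublingParabolic{\hermitianSpace_0}$ with character $\chi_{\diagSpace{\xIsotropic}}\boxtimes\minusInvolution{\chi}_{\antidiagSpace{\xIsotropic}}\boxtimes\chi_{\diagSpace{\hermitianSpace_0}}$ (respectively the analogous character in the $\innerproduct{\cdot}{\cdot}_{\hermitianSpace}=0$ case). The key observation is that the embedding of $\siegelDoublingParabolic{\xIsotropic}\times\siegelDoublingParabolic{\hermitianSpace_0}$ into $\doublingIsotropicLevi{\xIsotropic}{\hermitianSpace}$ has the following properties:

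it normalizes $\doublingIsotropicUnipotent{\xIsotropic}{\hermitianSpace}\cap\siegelDoublingParabolic{\hermitianSpace}$;

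its intersection with $\siegelDoublingParabolic{\hermitianSpace}$ contains the Levi-type elements stabilizing $\diagSpace{\xIsotropic}\oplus\diagSpace{\hermitianSpace_0}\oplus\diagSpace{\yIsotropic}=\diagSpace{\hermitianSpace}$.

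For $p$ in this group, I will substitute $n\mapsto pnp^{-1}$ in the sum and track two factors. The first is the Jacobian of conjugation on the quotient $(\doublingIsotropicUnipotent{\xIsotropic}{\hermitianSpace}\cap\siegelDoublingParabolic{\hermitianSpace})\backslash\doublingIsotropicUnipotent{\xIsotropic}{\hermitianSpace}$; this is a counting factor, equal to $1$ since we sum over a finite set and conjugation is a bijection. The second is the value $\chi_{\diagSpace{\hermitianSpace}}(p)$. An element $p$ acting by $a$ on $\diagSpace{\xIsotropic}$ must act on $\diagSpace{\yIsotropic}$ by the inverse adjoint of its action on $\antidiagSpace{\xIsotropic}$, which contributes $\minusInvolution{\chi}$ of the $\antidiagSpace{\xIsotropic}$-determinant. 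Combined with the $\diagSpace{\hermitianSpace_0}$ factor, this gives exactly the required character. The elements of the unipotent radicals of $\siegelDoublingParabolic{\xIsotropic}$ and $\siegelDoublingParabolic{\hermitianSpace_0}$ lie in $\siegelDoublingParabolic{\hermitianSpace}$ modulo $\doublingIsotropicUnipotent{\xIsotropic}{\hermitianSpace}$, so they act trivially after the same change of variables.

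I expect this determinant bookkeeping on $\diagSpace{\yIsotropic}$, together with checking that these elements really do normalize the coset space in the lemma, to be the main obstacle. I will carry it out concretely using the bases of \Cref{subsec:matrix-representation}. The $\innerproduct{\cdot}{\cdot}_{\hermitianSpace}=0$ case is handled identically, with $\yIsotropic$ absent and an extra $\antidiagSpace{\hermitianSpace_0}$ factor.
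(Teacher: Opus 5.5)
Your proposal is correct and follows the paper's proof essentially step for step. Both arguments rewrite $\flatOperator{\hermitianSpace}{\chi}$ via \Cref{lem:identification-of-cosets-spaces}, pull out $\chi_{\diagSpace{\hermitianSpace}}\left(\Lemb{p_{\xIsotropic}}{p_{\hermitianSpace_0}}\right)$, compute the determinant on $\diagSpace{\yIsotropic}$ by duality, and then check left $\doublingIsotropicParabolic{\xIsotropic}{\hermitianSpace}$-equivariance and intertwining under right translation.

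Two small corrections:
\begin{enumerate}
\item $\iEmbedding\left(\diagSpace{N}\right)$ is not contained in the unipotent radical $\siegelDoublingUnipotent{\hermitianSpace}$, since it acts nontrivially on $\diagSpace{\hermitianSpace}$. It does lie in $\siegelDoublingParabolic{\hermitianSpace}$ with unipotent image in the Levi, and that is all your argument needs.
\item The entire image of $\siegelDoublingParabolic{\xIsotropic}\times\siegelDoublingParabolic{\hermitianSpace_0}$ lies in $\siegelDoublingParabolic{\hermitianSpace}\cap\doublingIsotropicLevi{\xIsotropic}{\hermitianSpace}$, not just its Levi-type part and not merely modulo $\doublingIsotropicUnipotent{\xIsotropic}{\hermitianSpace}$. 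This immediately gives the normalization of $\doublingIsotropicUnipotent{\xIsotropic}{\hermitianSpace}\cap\siegelDoublingParabolic{\hermitianSpace}$ that you flagged as the main obstacle, so no matrix computation is needed.
\end{enumerate}
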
 
\begin{proof}
Let $g\in\IsometryGroup\left(\doubleSpace{\hermitianSpace}\right)$ and $l\in\doublingIsotropicLevi{\xIsotropic}{\hermitianSpace}$. By \Cref{lem:identification-of-cosets-spaces} we may rewrite \eqref{eq:definition-of-flat-operator} as
	\begin{equation}\label{eq:alternative-formula-for-flat-operator}
		\left(\flatOperator{\hermitianSpace}{\chi} f\right)\left(g\right)\left(l\right) = \sum_{u\in \left(\doublingIsotropicUnipotent{\xIsotropic}{\hermitianSpace} \cap \siegelDoublingParabolic{\hermitianSpace}\right) \backslash \doublingIsotropicUnipotent{\xIsotropic}{\hermitianSpace}} f\left(u l g\right).
	\end{equation}
    
    We first show that $\left(\flatOperator{\hermitianSpace}{\chi}f\right)\left(g\right)\in\degeneratePrincipalSeries{\xIsotropic}{\chi} \boxtimes \degeneratePrincipalSeries{\hermitianSpace_0}{\chi}$. The proof here is slightly different depending on whether $\innerproduct{\cdot}{\cdot}_\hermitianSpace$ is nondegenerate or not. If $p_{\xIsotropic} \in \siegelDoublingParabolic{\xIsotropic}$ and $p_{\hermitianSpace_0} \in \siegelDoublingParabolic{\hermitianSpace_0}$ then the element $\Lemb{p_{\xIsotropic}}{p_{\hermitianSpace_0}}\in\doublingIsotropicLevi{\xIsotropic}{\hermitianSpace}$ also lies in $\siegelDoublingParabolic{\hermitianSpace}$ and hence normalizes $\siegelDoublingParabolic{\hermitianSpace} \cap \doublingIsotropicUnipotent{\xIsotropic}{\hermitianSpace}$.
    
      In the nondegenerate case, we thus have that
    \begin{align*}
		\left(\flatOperator{\hermitianSpace}{\chi} f\right)\left(g\right)\left(\Lemb{p_{\xIsotropic}}{p_{\hermitianSpace_0}} l\right) &= \sum_{u \in \left(\doublingIsotropicUnipotent{\xIsotropic}{\hermitianSpace} \cap \siegelDoublingParabolic{\hermitianSpace}\right) \backslash \doublingIsotropicUnipotent{\xIsotropic}{\hermitianSpace}} f\left(\Lemb{p_{\xIsotropic}}{p_{\hermitianSpace_0}} u l g\right) \\
		& = \chi_{\diagSpace{\hermitianSpace}}\left(\Lemb{p_{\xIsotropic}}{p_{\hermitianSpace_0}}\right) \flatOperator{\hermitianSpace}{\chi}f\left(g\right)\left(l\right).
	\end{align*}
	Now, if $p_{\xIsotropic} \in \siegelDoublingParabolic{\xIsotropic}$ and $p_{\hermitianSpace_0} \in \siegelDoublingParabolic{\hermitianSpace_0}$ have Levi parts $\left(h_{\xIsotropic,1}, h_{\xIsotropic,2}\right) \in \GL_{\quadraticExtension}\left(\diagSpace{\xIsotropic}\right) \times \GL_{\quadraticExtension}\left(\antidiagSpace{\xIsotropic}\right)$ and $h_{\hermitianSpace_0} \in \GL_{\quadraticExtension}\left(\diagSpace{\hermitianSpace_0}\right)$, respectively, then the determinant of $\Lemb{p_{\xIsotropic}}{p_{\hermitianSpace_0}}$ acting on $\diagSpace{\yIsotropic}$ equals $\minusInvolution{\detSpace{\antidiagSpace{\xIsotropic}}\left(h_{\xIsotropic,2}\right)}$. Overall, this shows that
    $$\chi_{\diagSpace{\hermitianSpace}}\left(\Lemb{p_{\xIsotropic}}{p_{\hermitianSpace_0}}\right)=\chi_{\diagSpace{\xIsotropic}}(h_{\xIsotropic,1})\cdot \minusInvolution{\chi_{\antidiagSpace{\xIsotropic}}}\left(h_{\xIsotropic,2}\right)\cdot\chi_{\diagSpace{\hermitianSpace_0}}\left(h_{\hermitianSpace_0}\right)$$
    and hence $\left(\flatOperator{\hermitianSpace}{\chi}f\right)\left(g\right)\in\degeneratePrincipalSeries{\xIsotropic}{\chi} \boxtimes \degeneratePrincipalSeries{\hermitianSpace_0}{\chi}$.

 If $\innerproduct{\cdot}{\cdot}_\hermitianSpace=0$, we have that 
\begin{align*}
		\left(\flatOperator{\hermitianSpace}{\chi} f\right)\left(g\right)\left(\Lemb{p_{\xIsotropic}}{p_{\hermitianSpace_0}} l\right) &= \sum_{u \in \left(\doublingIsotropicUnipotent{\xIsotropic}{\hermitianSpace} \cap \siegelDoublingParabolic{\hermitianSpace}\right) \backslash \doublingIsotropicUnipotent{\xIsotropic}{\hermitianSpace}} f\left(\Lemb{p_{\xIsotropic}}{p_{\hermitianSpace_0}} u l g\right) \\
		& = \chi_{\diagSpace{\hermitianSpace}}\left(\Lemb{p_{\xIsotropic}}{p_{\hermitianSpace_0}}\right)\cdot\minusInvolution{\chi}_{\doubleSpace{\hermitianSpace}/\diagSpace{\hermitianSpace}}\left(\Lemb{p_{\xIsotropic}}{p_{\hermitianSpace_0}}\right) \flatOperator{\hermitianSpace}{\chi}f\left(g\right)\left(l\right).
	\end{align*}
	Now, if $p_{\xIsotropic} \in \siegelDoublingParabolic{\xIsotropic}$ and $p_{\hermitianSpace_0} \in \siegelDoublingParabolic{\hermitianSpace_0}$ have Levi parts $\left(h_{\xIsotropic,1}, h_{\xIsotropic,2}\right) \in \GL_{\quadraticExtension}\left(\diagSpace{\xIsotropic}\right) \times \GL_{\quadraticExtension}\left(\antidiagSpace{\xIsotropic}\right)$ and $\left(h_{\hermitianSpace_0,1}, h_{\hermitianSpace_0,2}\right) \in \GL_{\quadraticExtension}\left(\diagSpace{\hermitianSpace_0}\right) \times \GL_{\quadraticExtension}\left(\antidiagSpace{\hermitianSpace_0}\right)$, respectively, then 
    $$ \chi_{\diagSpace{\hermitianSpace}}\left(\Lemb{p_{\xIsotropic}}{p_{\hermitianSpace_0}}\right)\cdot \minusInvolution{\chi}_{\doubleSpace{\hermitianSpace}/\diagSpace{\hermitianSpace}}\left(\Lemb{p_{\xIsotropic}}{p_{\hermitianSpace_0}}\right)=\chi_{\diagSpace{\xIsotropic}}(h_{\xIsotropic,1})\cdot \minusInvolution{\chi_{\antidiagSpace{\xIsotropic}}}\left(h_{\xIsotropic,2}\right)\cdot\chi_{\diagSpace{\hermitianSpace_0}}\left(h_{\hermitianSpace_0,1}\right)\cdot\minusInvolution{\chi_{\antidiagSpace{\hermitianSpace_0}}}\left(h_{\hermitianSpace_0,2}\right),$$
    which shows that $\left(\flatOperator{\hermitianSpace}{\chi}f\right)\left(g\right)\in\degeneratePrincipalSeries{\xIsotropic}{\chi} \boxtimes \degeneratePrincipalSeries{\hermitianSpace_0}{\chi}$.

    Next, it is straightforward to see that $\flatOperator{\hermitianSpace}{\chi} f\in \degeneratePrincipalSeries{\xIsotropic, \hermitianSpace_0}{\chi}$, since for $p=mn\in \doublingIsotropicParabolic{\xIsotropic}{\hermitianSpace}$, where $m\in\doublingIsotropicLevi{\xIsotropic}{\hermitianSpace},n\in\doublingIsotropicUnipotent{\xIsotropic}{\hermitianSpace}$, we have
    $$\left(\flatOperator{\hermitianSpace}{\chi}f\right)\left(pg\right)\left(l\right)=\left(\flatOperator{\hermitianSpace}{\chi}f\right)\left(g\right)\left(lm\right)= \left(\left(R_{\xIsotropic} \boxtimes R_{\hermitianSpace_0}\right) \left(m\right) \left(\left(\flatOperator{\hermitianSpace}{\chi}f\right)\left(g\right)\right)\right) \left(l\right),$$
    where $R_{\xIsotropic}$ and $R_{\hermitianSpace_0}$ are the right translation actions on $\degeneratePrincipalSeries{\xIsotropic}{\chi}$ and $\degeneratePrincipalSeries{\hermitianSpace_0}{\chi}$, respectively.
 
Finally, a straightforward calculation shows that
    $$\left(\flatOperator{\hermitianSpace}{\chi}(R_{\hermitianSpace}\left(x\right)f)\right)\left(g\right)\left(l\right)=\left(\flatOperator{\hermitianSpace}{\chi} f\right)\left(gx\right)\left(l\right)= \left(R_{\xIsotropic, \hermitianSpace_0}\left(x\right) \left(\flatOperator{\hermitianSpace}{\chi}f\right)\left(g\right)\right)\left(l\right)$$
for all $x\in\IsometryGroup\left(\doubleSpace{\hermitianSpace}\right)$, where $R_{\hermitianSpace}$ and $R_{\xIsotropic, \hermitianSpace_0}$ are the right translation actions on $\degeneratePrincipalSeries{\hermitianSpace}{\chi}$ and $\degeneratePrincipalSeries{\xIsotropic, \hermitianSpace_0}{\chi}$, respectively. This proves that $\flatOperator{\hermitianSpace}{\chi}$ intertwines the $\IsometryGroup\left(\doubleSpace{\hermitianSpace}\right)$-action.

\end{proof}
For any $\epsilon$-sesquilinear space $\mathbb W$ let $f_{0,\mathbb W}\in\degeneratePrincipalSeries{\mathbb W}{\chi}$ be the unique section supported on $\siegelDoublingParabolic{\mathbb W}$ whose value at $\idmap_{\doubleSpace{\mathbb W}}$ is $1$.
\begin{proposition}\label{prop:flat-operator-of-special-section}
	The image of $f_{0, \hermitianSpace}$ under $\flatOperator{\hermitianSpace}{\chi}$ is the unique function in $\degeneratePrincipalSeries{\xIsotropic, \hermitianSpace_0}{\chi}$ supported on $\doublingIsotropicParabolic{\xIsotropic}{\hermitianSpace} \cdot \siegelDoublingParabolic{\hermitianSpace}$ whose value at $p \in \siegelDoublingParabolic{\hermitianSpace}$ is

    $$\begin{dcases}
        \chi_{\diagSpace{\hermitianSpace}}\left(p\right) f_{0, \xIsotropic} \otimes f_{0, \hermitianSpace_0},&\text{if }\left(\hermitianSpace, \innerproduct{\cdot}{\cdot}_{\hermitianSpace}\right)\text{ is nondegenerate,}\\
  \chi_{\diagSpace{\hermitianSpace}}\left(p\right)\chi_{\doubleSpace{\hermitianSpace}/\diagSpace{\hermitianSpace}}\left(p\right) f_{0, \xIsotropic} \otimes f_{0, \hermitianSpace_0},&\text{if }\left(\hermitianSpace, \innerproduct{\cdot}{\cdot}_{\hermitianSpace}\right)=0.
    \end{dcases}$$

\end{proposition}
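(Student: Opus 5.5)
Since $\flatOperator{\hermitianSpace}{\chi}$ is an intertwining operator into $\degeneratePrincipalSeries{\xIsotropic, \hermitianSpace_0}{\chi}$, and $f_{0,\hermitianSpace}$ transforms on the right under $\siegelDoublingParabolic{\hermitianSpace}$ by $\chi_{\diagSpace{\hermitianSpace}}$ (times $\minusInvolution{\chi}_{\doubleSpace{\hermitianSpace}/\diagSpace{\hermitianSpace}}$ in the case $\innerproduct{\cdot}{\cdot}_{\hermitianSpace}=0$), the same holds for $\Phi = \flatOperator{\hermitianSpace}{\chi} f_{0,\hermitianSpace}$: for $p \in \siegelDoublingParabolic{\hermitianSpace}$ we have $\Phi(gp) = \chi_{\diagSpace{\hermitianSpace}}(p)\Phi(g)$ (with the extra factor in the degenerate case). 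As $\Phi$ is also left $(\doublingIsotropicParabolic{\xIsotropic}{\hermitianSpace}, \degeneratePrincipalSeries{\xIsotropic}{\chi}\boxtimes\degeneratePrincipalSeries{\hermitianSpace_0}{\chi})$-equivariant, it is determined on the double coset $\doublingIsotropicParabolic{\xIsotropic}{\hermitianSpace}\siegelDoublingParabolic{\hermitianSpace}$ by its single value $\Phi(\idmap_{\doubleSpace{\hermitianSpace}})$. So my plan has three steps. First, support: by \eqref{eq:alternative-formula-for-flat-operator}, $\Phi(g)(l) \neq 0$ forces $ulg \in \siegelDoublingParabolic{\hermitianSpace}$ for some $u \in \doublingIsotropicUnipotent{\xIsotropic}{\hermitianSpace}$ and $l \in \doublingIsotropicLevi{\xIsotropic}{\hermitianSpace}$. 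Then $g \in \doublingIsotropicParabolic{\xIsotropic}{\hermitianSpace}\siegelDoublingParabolic{\hermitianSpace}$, which gives the support claim.

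Second, compute $\Phi(\idmap)(l)$ for $l \in \doublingIsotropicLevi{\xIsotropic}{\hermitianSpace}$:
$$\Phi(\idmap)(l) = \sum_{u \in (\doublingIsotropicUnipotent{\xIsotropic}{\hermitianSpace}\cap\siegelDoublingParabolic{\hermitianSpace})\backslash \doublingIsotropicUnipotent{\xIsotropic}{\hermitianSpace}} f_{0,\hermitianSpace}(ul).$$
A term is nonzero only if $ul \in \siegelDoublingParabolic{\hermitianSpace}$. This is the main point to verify, and I would do it with a Levi decomposition argument. Since $\doublingIsotropicParabolic{\xIsotropic}{\hermitianSpace}\cap\siegelDoublingParabolic{\hermitianSpace}$ is the stabilizer in $\doublingIsotropicParabolic{\xIsotropic}{\hermitianSpace}$ of $\diagSpace{\hermitianSpace}$, and $\diagSpace{\hermitianSpace}$ is compatible with the decomposition $\doubleSpace{\xIsotropic}\oplus\doubleSpace{\hermitianSpace_0}\oplus\doubleSpace{\yIsotropic}$, this intersection equals $(\doublingIsotropicLevi{\xIsotropic}{\hermitianSpace}\cap\siegelDoublingParabolic{\hermitianSpace})(\doublingIsotropicUnipotent{\xIsotropic}{\hermitianSpace}\cap\siegelDoublingParabolic{\hermitianSpace})$. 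Hence $ul \in \siegelDoublingParabolic{\hermitianSpace}$ forces $u \in \doublingIsotropicUnipotent{\xIsotropic}{\hermitianSpace}\cap\siegelDoublingParabolic{\hermitianSpace}$ and $l \in \siegelDoublingParabolic{\hermitianSpace}$. To justify the factorization I would check it concretely: an element of $\doublingIsotropicParabolic{\xIsotropic}{\hermitianSpace}$ fixing $\diagSpace{\hermitianSpace}$ induces on the graded pieces maps fixing $\diagSpace{\xIsotropic}$, $\diagSpace{\hermitianSpace_0}$ and $\diagSpace{\yIsotropic}$, so its Levi component lies in $\siegelDoublingParabolic{\hermitianSpace}$.

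It follows that only the trivial coset contributes, and only when $l = \Lemb{l_{\xIsotropic}}{l_{\hermitianSpace_0}} \in \siegelDoublingParabolic{\hermitianSpace}$. By \Cref{prop:intersection-of-i-with-siegel-parabolic}-type reasoning on each block, this means $l_{\xIsotropic} \in \siegelDoublingParabolic{\xIsotropic}$ and $l_{\hermitianSpace_0} \in \siegelDoublingParabolic{\hermitianSpace_0}$. For such $l$ the value is $\chi_{\diagSpace{\hermitianSpace}}(l)$. By the determinant computation in the previous proposition, this equals $\chi_{\diagSpace{\xIsotropic}}\boxtimes\minusInvolution{\chi}_{\antidiagSpace{\xIsotropic}}(l_{\xIsotropic})\cdot\chi_{\diagSpace{\hermitianSpace_0}}(l_{\hermitianSpace_0})$, which is exactly $f_{0,\xIsotropic}(l_{\xIsotropic})f_{0,\hermitianSpace_0}(l_{\hermitianSpace_0})$. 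Thus $\Phi(\idmap) = f_{0,\xIsotropic}\otimes f_{0,\hermitianSpace_0}$.

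Third, the value at a general $p \in \siegelDoublingParabolic{\hermitianSpace}$ follows from the right equivariance noted at the start. Uniqueness is clear, because a function in the induced space supported on a single double coset is determined by its value at one point of that coset. The degenerate case $\innerproduct{\cdot}{\cdot}_{\hermitianSpace}=0$ is handled identically, with the extra quotient character carried along. The factorization of $\doublingIsotropicParabolic{\xIsotropic}{\hermitianSpace}\cap\siegelDoublingParabolic{\hermitianSpace}$ is the step I expect to need the most care.
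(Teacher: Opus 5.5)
Your proposal is correct and is essentially the paper's proof. You use the same support argument and the same key step: $ul\in\siegelDoublingParabolic{\hermitianSpace}$ forces $l\in\doublingIsotropicLevi{\xIsotropic}{\hermitianSpace}\cap\siegelDoublingParabolic{\hermitianSpace}$ and $u\in\doublingIsotropicUnipotent{\xIsotropic}{\hermitianSpace}\cap\siegelDoublingParabolic{\hermitianSpace}$. The paper checks this directly on vectors, using that $u$ is trivial on $\doubleSpace{\xIsotropic}$ and moves $\doubleSpace{\hermitianSpace_0}$ only by $\doubleSpace{\xIsotropic}$, rather than through your graded-pieces factorization. It also evaluates at a general $p$ at once instead of first reducing to $p=\idmap$.

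In the case $\innerproduct{\cdot}{\cdot}_{\hermitianSpace}=0$, your factor $\minusInvolution{\chi}_{\doubleSpace{\hermitianSpace}/\diagSpace{\hermitianSpace}}(p)$ is the right one, as forced by the definition of $\degeneratePrincipalSeries{\hermitianSpace}{\chi}$. The $\chi_{\doubleSpace{\hermitianSpace}/\diagSpace{\hermitianSpace}}(p)$ in the displayed statement is missing the superscript $-c$.
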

\begin{proof}
	We first show that $\flatOperator{\hermitianSpace}{\chi} f_{0, \hermitianSpace}$ is supported on $\doublingIsotropicParabolic{\xIsotropic}{\hermitianSpace} \cdot \siegelDoublingParabolic{\hermitianSpace}$. By \eqref{eq:alternative-formula-for-flat-operator} we have that if $\flatOperator{\hermitianSpace}{\chi} f_{0, \hermitianSpace}\left(g\right)\left(l\right) \ne 0$ for some $g\in\IsometryGroup\left(\doubleSpace{\hermitianSpace}\right)$ and $l\in\doublingIsotropicLevi{\xIsotropic}{\hermitianSpace}$, then $u l g \in \siegelDoublingParabolic{\hermitianSpace}$ for some $u \in \doublingIsotropicUnipotent{\xIsotropic}{\hermitianSpace}$. Thus $g \in l^{-1} \doublingIsotropicUnipotent{\xIsotropic}{\hermitianSpace} \siegelDoublingParabolic{\hermitianSpace} \subset \doublingIsotropicParabolic{\xIsotropic}{\hermitianSpace} \cdot \siegelDoublingParabolic{\hermitianSpace}$, as required.

	Next let $p\in\siegelDoublingParabolic{\hermitianSpace}$ and $l \in \doublingIsotropicLevi{\xIsotropic}{\hermitianSpace}$. If $\flatOperator{\hermitianSpace}{\chi} f_{0, \hermitianSpace}\left(p\right)\left(l\right) \ne 0$, we again obtain that $u l \in \siegelDoublingParabolic{\hermitianSpace}$ for some $u\in\doublingIsotropicUnipotent{\xIsotropic}{\hermitianSpace}$. Now, since $ul\left(\diagSpace{\xIsotropic}\right)\subset\doubleSpace{\xIsotropic}$ and $u\restriction_{\doubleSpace{\xIsotropic}}=\idmap_{\doubleSpace{\xIsotropic}}$ we see that $l\left(\diagSpace{\xIsotropic}\right)=\diagSpace{\xIsotropic}$. Moreover, we have $l\left(\diagSpace{\hermitianSpace_0}\right) \subset l\left(\doubleSpace{\hermitianSpace_0}\right) = \doubleSpace{\hermitianSpace_0}$ and $u\left(v_0^{+} + w_0^{-}\right) \in \left(v_0^{+} + w_0^{-}\right) + \doubleSpace{\xIsotropic}$ for all $v_0, w_0 \in \hermitianSpace_0$. This, together with $ul\in\siegelDoublingParabolic{\hermitianSpace}$ and with the fact that $u$ and $u^{-1}$ act trivially on $\doubleSpace{\xIsotropic}$, implies that $l\left(\diagSpace{\hermitianSpace_0}\right)=\diagSpace{\hermitianSpace_0}$. Analogously, if $\innerproduct{\cdot}{\cdot}_\hermitianSpace$ is nondegenerate, we obtain that $l\left(\diagSpace{\yIsotropic}\right)=\diagSpace{\yIsotropic}$. Overall, this shows that $l\in\doublingIsotropicLevi{\xIsotropic}{\hermitianSpace}\cap\siegelDoublingParabolic{\hermitianSpace}$ and $u\in\doublingIsotropicUnipotent{\xIsotropic}{\hermitianSpace}\cap\siegelDoublingParabolic{\hermitianSpace}$. In particular, if $\innerproduct{\cdot}{\cdot}_\hermitianSpace$ is nondegenerate, we see that
    $$\left(\flatOperator{\hermitianSpace}{\chi} f_{0, \hermitianSpace}\right)\left(p\right)\left(l\right)=f_{0, \hermitianSpace}\left(lp\right)=\chi_{\diagSpace{\hermitianSpace}}\left(lp\right)=\chi_{\diagSpace{\hermitianSpace}}\left(p\right)\cdot\left( f_{0, \xIsotropic} \otimes f_{0, \hermitianSpace_0}\right)(l)$$
    and the result follows analogously if $\innerproduct{\cdot}{\cdot}_\hermitianSpace=0$.

\end{proof}

\begin{proposition}\label{prop:iembedding-support-of-flat-operator-applied-to-special-section}
	Keep the notation as above. If $g\in \IsometryGroup\left(\hermitianSpace\right)$ and $\left(\flatOperator{\hermitianSpace}{\chi} f_{0, \hermitianSpace}\right)\left(\iEmbedding\left(g, \idmap_{\hermitianSpace}\right)\right) \ne 0$ then $g \in P$.
\end{proposition}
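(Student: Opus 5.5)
By \Cref{prop:flat-operator-of-special-section}, the function $\flatOperator{\hermitianSpace}{\chi} f_{0,\hermitianSpace}$ is supported on $\doublingIsotropicParabolic{\xIsotropic}{\hermitianSpace} \cdot \siegelDoublingParabolic{\hermitianSpace}$. The plan is to show that if $\iEmbedding\left(g, \idmap_{\hermitianSpace}\right)$ lies in this double coset, then $g \in P$. Write $\iEmbedding\left(g,\idmap_{\hermitianSpace}\right) = q p$ with $q \in \doublingIsotropicParabolic{\xIsotropic}{\hermitianSpace}$ and $p \in \siegelDoublingParabolic{\hermitianSpace}$.

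Rather than working with matrices, we translate this into a statement about subspaces. Apply both sides to $\diagSpace{\hermitianSpace}$. Since $p$ stabilizes $\diagSpace{\hermitianSpace}$, we get $\iEmbedding\left(g,\idmap_{\hermitianSpace}\right)\diagSpace{\hermitianSpace} = q\diagSpace{\hermitianSpace}$. The left side is the graph-like subspace $\left\{(gv)^{+} + v^{-} \mid v \in \hermitianSpace\right\}$.

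Next we identify an invariant of $q\diagSpace{\hermitianSpace}$ that is detected by $q$. The element $q$ stabilizes $\doubleSpace{\xIsotropic}$, and in the nondegenerate case also its orthogonal complement $\doubleSpace{\xIsotropic} \oplus \doubleSpace{\hermitianSpace_0}$. Consider $\diagSpace{\hermitianSpace} \cap \doubleSpace{\xIsotropic} = \diagSpace{\xIsotropic}$, which has dimension $\dim \xIsotropic$. Applying $q$ gives
$$\dim\left(q\diagSpace{\hermitianSpace} \cap \doubleSpace{\xIsotropic}\right) = \dim \xIsotropic,$$
and in fact $q\diagSpace{\hermitianSpace} \cap \doubleSpace{\xIsotropic} = q\left(\diagSpace{\xIsotropic}\right)$.

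On the other side, a vector $(gv)^{+} + v^{-}$ lies in $\doubleSpace{\xIsotropic} = \plusSpace{\xIsotropic} \oplus \minusSpace{\xIsotropic}$ if and only if $v \in \xIsotropic$ and $gv \in \xIsotropic$. Hence the intersection is identified with $\xIsotropic \cap g^{-1}\xIsotropic$. Comparing dimensions forces $\dim\left(\xIsotropic \cap g^{-1}\xIsotropic\right) = \dim \xIsotropic$, so $g\xIsotropic = \xIsotropic$.

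In the nondegenerate case, stabilizing the totally isotropic space $\xIsotropic$ automatically gives stabilization of $\xIsotropic^\perp = \xIsotropic \oplus \hermitianSpace_0$, so $g$ stabilizes the flag $\mathcal{F}$ and $g \in P$. In the case $\innerproduct{\cdot}{\cdot}_{\hermitianSpace}=0$, the flag is just $0 \subset \xIsotropic \subset \hermitianSpace$, so again $g \in P$.

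The main point requiring care is the claim $q\diagSpace{\hermitianSpace} \cap \doubleSpace{\xIsotropic} = q\left(\diagSpace{\hermitianSpace} \cap \doubleSpace{\xIsotropic}\right)$. This holds simply because $q$ is a bijection preserving $\doubleSpace{\xIsotropic}$, so $q(A)\cap B = q(A\cap q^{-1}B) = q(A \cap B)$. Besides this, one should double-check that $\diagSpace{\hermitianSpace}\cap\doubleSpace{\xIsotropic}$ is exactly $\diagSpace{\xIsotropic}$. This is immediate, since $v^{+}+v^{-} \in \plusSpace{\xIsotropic}\oplus\minusSpace{\xIsotropic}$ if and only if $v \in \xIsotropic$.
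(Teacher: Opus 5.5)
Your proof is correct and is essentially the paper's argument. Both start from the support statement to write $\iEmbedding(g,\idmap_{\hermitianSpace})$ as an element of $\doublingIsotropicParabolic{\xIsotropic}{\hermitianSpace}$ times an element of $\siegelDoublingParabolic{\hermitianSpace}$, and both use that $(gv)^{+}+v^{-}\in\doubleSpace{\xIsotropic}$ forces $v,gv\in\xIsotropic$. The only difference is packaging: you count dimensions of $\iEmbedding(g,\idmap_{\hermitianSpace})\diagSpace{\hermitianSpace}\cap\doubleSpace{\xIsotropic}$, whereas the paper follows the images of the vectors $x^{+}+x^{-}$ for $x\in\xIsotropic$ to get $gh_{\xIsotropic}\xIsotropic\subset\xIsotropic$.
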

\begin{proof}
	By \Cref{prop:flat-operator-of-special-section} we must have that $$\iEmbedding\left(g, \idmap_{\hermitianSpace}\right) p_{\diagSpace{\hermitianSpace}} = p_{\doubleSpace{\xIsotropic}}$$ for some $p_{\diagSpace{\hermitianSpace}} \in \siegelDoublingParabolic{\hermitianSpace}$ and some $p_{\doubleSpace{\xIsotropic}} \in \doublingIsotropicParabolic{\xIsotropic}{\hermitianSpace}$. Let $x \in \xIsotropic$. We have $p_{\diagSpace{\hermitianSpace}}\left(x^{+} + x^{-}\right) = v^{+} + v^{-}$ for some $v \in \hermitianSpace$ and therefore \begin{equation}\label{eq:action-of-siegel-on-x-isotropic-diagonal}
		\left(g v\right)^{+} + v^{-} =  p_{\doubleSpace{\xIsotropic}}\left(x^{+} + x^{-}\right) \in \doubleSpace{\xIsotropic}.
	\end{equation} It follows that $v \in \xIsotropic$. Thus, $p_{\diagSpace{\hermitianSpace}}$ stabilizes the subspace $\diagSpace{\xIsotropic}$, and therefore there exists $h_{\xIsotropic} \in \GL_{\quadraticExtension}\left(\xIsotropic\right)$ such that for any $x \in \xIsotropic$, $$p_{\diagSpace{\hermitianSpace}}\left(x^{+} + x^{-}\right) = \left(h_{\xIsotropic} x\right)^{+} + \left(h_{\xIsotropic} x\right)^{-}.$$
	By \eqref{eq:action-of-siegel-on-x-isotropic-diagonal}, we have that $\left(g h_{\xIsotropic}\right)x\in \xIsotropic$ for any $x \in \xIsotropic$. This implies that $g \xIsotropic \subset \xIsotropic$, which shows that $g \in P$, as required.
\end{proof}

We are now ready to prove the main result of this section.
\begin{theorem}
	Suppose that $\involutionPlusOne{\chi} \ne 1$. Then the operator $\dblJacobiSum{\rho}{\chi}$ is a scalar operator.
\end{theorem}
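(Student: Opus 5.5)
The plan is to obtain the operator $\dblJacobiSum{\rho}{\chi}$ from the dual zeta operator of the special section $f_{0,\hermitianSpace}$, and to use the multiplicity one statement of \Cref{thm:embedding-of-degenerate-ps-to-parabolic-induction-of-degenerate-ps} to factor that operator through $\flatOperator{\hermitianSpace}{\chi}$. By \eqref{eq:def-fin-gamma} and the computation in \Cref{subsec:explicit-computation}, which only uses the class function $\posHermitianJacobiKernel{\hermitianSpace}{\chi}$ and holds for any representation, we have $\dualZetaIntegral_{\hermitianSpace,\rho}(f_{0,\hermitianSpace}) = c \cdot \dblJacobiSum{\rho}{\chi}$ (or its analogue for the twisted GL-sum when $\innerproduct{\cdot}{\cdot}_{\hermitianSpace}=0$) with an explicit nonzero constant $c$. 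It therefore suffices to show that $\zetaIntegral_{\hermitianSpace,\minusInvolution{\chi},\rho}(\intertwiningOperator{\hermitianSpace}{\chi} f_{0,\hermitianSpace})$ is scalar.

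\textbf{Step 1.} Consider the two intertwining maps
$$\flatOperator{\hermitianSpace}{\minusInvolution{\chi}} \circ \intertwiningOperator{\hermitianSpace}{\chi}, \qquad \left(\intertwiningOperator{\xIsotropic}{\chi} \otimes \intertwiningOperator{\hermitianSpace_0}{\chi}\right)_{\ast} \circ \flatOperator{\hermitianSpace}{\chi} \colon \degeneratePrincipalSeries{\hermitianSpace}{\chi} \to \degeneratePrincipalSeries{\xIsotropic,\hermitianSpace_0}{\minusInvolution{\chi}}.$$
Here $(\cdot)_{\ast}$ is the map induced by applying the inner intertwining operators pointwise. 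Since $\involutionPlusOne{(\minusInvolution{\chi})} \ne 1$ as well, \Cref{thm:embedding-of-degenerate-ps-to-parabolic-induction-of-degenerate-ps} (applied with $\minusInvolution{\chi}$ in place of $\chi$, using that $\intertwiningOperator{\hermitianSpace}{\chi}$ is an isomorphism) makes this Hom space one-dimensional. Hence the two maps are proportional, with a constant $\lambda$. I expect $\lambda \ne 0$ because both maps are nonzero: $\flatOperator{}{}$ applied to $f_0$ is nonzero by \Cref{prop:flat-operator-of-special-section}, and the intertwining operators are isomorphisms.

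\textbf{Step 2.} Apply \eqref{eq:zeta-identity-for-parabolic-induction} (with $\minusInvolution{\chi}$) to $f = \intertwiningOperator{\hermitianSpace}{\chi} f_{0,\hermitianSpace}$ and replace $\flatOperator{}{}\circ\intertwiningOperator{}{}$ by $\lambda$ times the other composite. This gives a sum over $(g_1,g_2) \in P\times P\backslash \IsometryGroup(\hermitianSpace)^2$ of
$$\innerproduct{\left(\dualZetaIntegral_{\xIsotropic,\tau}\otimes\dualZetaIntegral_{\hermitianSpace_0,\pi_0}\right)\left(\flatOperator{\hermitianSpace}{\chi}f_{0,\hermitianSpace}(\iEmbedding(g_1,g_2))\right)\varphi(g_1)}{\varphi^{\vee}(g_2)},$$
together with the character factor. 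By \Cref{prop:flat-operator-of-special-section}, $\flatOperator{\hermitianSpace}{\chi} f_{0,\hermitianSpace}(h)$ is a multiple of $R(\cdot)(f_{0,\xIsotropic}\otimes f_{0,\hermitianSpace_0})$ on its support. By Step 1 of \Cref{subsec:explicit-computation} together with \Cref{cor:kernel-zeta-integral-is-a-scalar}, the dual zeta operators of $f_{0,\xIsotropic}$ and $f_{0,\hermitianSpace_0}$ act by scalars, namely the Jacobi sums $\dblJacobiSumScalar{\tau\otimes\minusInvolution{\chi}_{\xIsotropic}}{\involutionPlusOne{\chi}}$ and $\dblJacobiSumScalar{\pi_0}{\chi}$ up to explicit constants. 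These holds only because $\tau$ and $\pi_0$ are irreducible. Using equivariance, translate these scalars out. The remaining sum is then exactly the same unfolded expression as in \eqref{eq:zeta-identity-for-parabolic-induction} for $\zetaIntegral_{\hermitianSpace,\rho}(f_{0,\hermitianSpace}) = \idmap_\rho$. Consequently
$$\innerproduct{\dualZetaIntegral_{\hermitianSpace,\rho}(f_{0,\hermitianSpace})\varphi}{\varphi^\vee} = \mu\innerproduct{\varphi}{\varphi^\vee}$$
for all $\varphi, \varphi^\vee$, for a constant $\mu$. This makes $\dblJacobiSum{\rho}{\chi}$ scalar.

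The main obstacle is making Step 2 rigorous. One must check that the pointwise application of $\intertwiningOperator{\xIsotropic}{\chi}\otimes\intertwiningOperator{\hermitianSpace_0}{\chi}$ commutes with the unfolding, and that the dual zeta operators of the inner sections $R(l)(f_{0,\xIsotropic}\otimes f_{0,\hermitianSpace_0})$ relate to those of $f_0$ by the translation properties of zeta operators. That relation is equivariance: $\zetaIntegral(R(\iEmbedding(h_1,h_2))f)$ equals $\pi(h_2)^{-1}\zetaIntegral(f)\pi(h_1)$ up to a character. It is applied at points $\iEmbedding(g_1,g_2)$ which, by the support analysis in \Cref{prop:iembedding-support-of-flat-operator-applied-to-special-section}, reduce to the Levi $L$. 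An alternative, if tracking $\lambda$ is messy, is to note that only the vanishing of the off-scalar part matters, so the proportionality alone suffices.
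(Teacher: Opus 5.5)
Your proposal is correct and is essentially the paper's proof. Both arguments use \Cref{thm:embedding-of-degenerate-ps-to-parabolic-induction-of-degenerate-ps} (for $\minusInvolution{\chi}$) to obtain $\flatOperator{\hermitianSpace}{\minusInvolution{\chi}} \circ \intertwiningOperator{\hermitianSpace}{\chi} = C \cdot (\intertwiningOperator{\xIsotropic}{\chi} \otimes \intertwiningOperator{\hermitianSpace_0}{\chi}) \circ \flatOperator{\hermitianSpace}{\chi}$, insert this into the unfolded identity \eqref{eq:zeta-identity-for-parabolic-induction}, and use \Cref{prop:iembedding-support-of-flat-operator-applied-to-special-section} to keep only the cosets with $Pg_1 = Pg_2$, on which the inner dual zeta operators act by scalars. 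The only cosmetic difference is that the paper uses the right $\iEmbedding(g,g)$-equivariance of $f_{0,\hermitianSpace}$ to evaluate directly at $g_1 = g_2$, landing exactly on $f_{0,\xIsotropic}\otimes f_{0,\hermitianSpace_0}$, whereas you translate by Levi elements and compare with the unfolding of $\zetaIntegral_{\hermitianSpace,\rho}(f_{0,\hermitianSpace}) = \idmap_{\rho}$; both reach the same conclusion.
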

\begin{proof}
	Notice that $\flatOperator{\hermitianSpace}{\minusInvolution{\chi}}$ and $\left(\intertwiningOperator{\xIsotropic}{\chi} \otimes \intertwiningOperator{\hermitianSpace_0}{\chi}\right) \circ \flatOperator{\hermitianSpace}{\chi} \circ \left(\intertwiningOperator{\hermitianSpace}{\chi}\right)^{-1}$ are both elements of the one-dimensional space from \Cref{thm:embedding-of-degenerate-ps-to-parabolic-induction-of-degenerate-ps}. They are non-zero because of \Cref{prop:flat-operator-of-special-section} and because the intertwining operators $\intertwiningOperator{\cdot}{\chi}$ are non-zero (see \Cref{subsec:intertwining-operator}). Therefore, there exists a constant $C \in \multiplicativegroup{\coefficientsField}$ such that \begin{equation}\label{eq:identity-of-flat-operator-and-intertwining-operator-up-to-constant}
		\flatOperator{\hermitianSpace}{\minusInvolution{\chi}} \circ \intertwiningOperator{\hermitianSpace}{\chi} = C \cdot \left(\intertwiningOperator{\xIsotropic}{\chi} \otimes \intertwiningOperator{\hermitianSpace_0}{\chi}\right) \circ \flatOperator{\hermitianSpace}{\chi}.
	\end{equation}
	By \eqref{eq:zeta-identity-for-parabolic-induction} we have
	\begin{equation}\label{eq:dual-zeta-identity-for-parabolic-induction}
		\begin{split}
			\innerproduct{\dualZetaIntegral_{\hermitianSpace,\rho}\left(f_{0, \hermitianSpace}\right)\varphi}{\varphi^{\vee}} =& \sum_{\left(g_1,g_2\right) \in \left(P \backslash \IsometryGroup\left(\hermitianSpace\right)\right)^2}  \involution{\chi}\left(\kappaCharacter{\hermitianSpace}\left(\detSpace{\hermitianSpace} g_2\right)\right)\\
			& \times \innerproduct{\left(\zetaIntegral_{\xIsotropic,\tau} \otimes \zetaIntegral_{\hermitianSpace_0,\pi_0}\right)\left(\left(\flatOperator{\hermitianSpace}{\minusInvolution{\chi}} \intertwiningOperator{\hermitianSpace}{\chi} f_{0, \hermitianSpace}\right)\left(\iEmbedding\left(g_1, g_2\right)\right)\right)  \varphi\left(g_1\right)}{\varphi^{\vee}\left(g_2\right)}.
		\end{split}
	\end{equation}
	Using the fact that $R_{\hermitianSpace}\left(\iEmbedding\left(g,g\right)\right)f_{0,\hermitianSpace} = \chi\left(\kappaCharacter{\hermitianSpace}\left(\detSpace{\hermitianSpace} g\right)\right) f_{0,\hermitianSpace}$ for $g \in \IsometryGroup\left(\hermitianSpace\right)$, where $R_{\hermitianSpace}$ denotes the right translation action on $\degeneratePrincipalSeries{\hermitianSpace}{\chi}$, and using the fact that $\flatOperator{\hermitianSpace}{\minusInvolution{\chi}}$ and $\intertwiningOperator{\hermitianSpace}{\chi}$ are intertwining operators, we have that $\innerproduct{\dualZetaIntegral_{\hermitianSpace,\rho}\left(f_{0, \hermitianSpace}\right)\varphi}{\varphi^{\vee}}$ equals
	\begin{equation*}
		\begin{split}
			& \sum_{\left(g_1,g_2\right) \in \left(P \backslash \IsometryGroup\left(\hermitianSpace\right)\right)^2}  \involutionPlusOne{\chi}\left(\kappaCharacter{\hermitianSpace}\left(\detSpace{\hermitianSpace} g_2\right)\right)\\
			& \times \innerproduct{\left(\zetaIntegral_{\xIsotropic,\tau} \otimes \zetaIntegral_{\hermitianSpace_0,\pi_0}\right)\left(\left(\flatOperator{\hermitianSpace}{\minusInvolution{\chi}} \intertwiningOperator{\hermitianSpace}{\chi} f_{0, \hermitianSpace}\right)\left(\iEmbedding\left(g_1 g_2^{-1}, \idmap_{\hermitianSpace}\right)\right)\right)  \varphi\left(g_1\right)}{\varphi^{\vee}\left(g_2\right)}.
		\end{split}
	\end{equation*}
	Recall that $\involutionPlusOne{\chi}\left(\kappaCharacter{\hermitianSpace}\left(\detSpace{\hermitianSpace} g\right)\right) = 1$.
	By identity \eqref{eq:identity-of-flat-operator-and-intertwining-operator-up-to-constant}, $\innerproduct{\dualZetaIntegral_{\hermitianSpace,\rho}\left(f_{0, \hermitianSpace}\right)\varphi}{\varphi^{\vee}}$ equals
	\begin{equation}\label{eq:dual-zeta-integral-for-parabolic-induction-after-identity}
		\begin{split}
			& C \cdot \sum_{\left(g_1,g_2\right) \in P \times P \backslash \IsometryGroup\left(\hermitianSpace\right) \times \IsometryGroup\left(\hermitianSpace\right)}   \\
			& \times \innerproduct{\left(\zetaIntegral_{\xIsotropic,\tau} \otimes \zetaIntegral_{\hermitianSpace_0,\pi_0}\right)\left(\left(\left(\intertwiningOperator{\xIsotropic}{\chi} \otimes \intertwiningOperator{\hermitianSpace_0}{\chi}\right) \flatOperator{\hermitianSpace}{\chi} f_{0, \hermitianSpace}\right)\left(\iEmbedding\left(g_1 g_2^{-1}, \idmap_{\hermitianSpace}\right)\right) \right)\varphi\left(g_1\right)}{\varphi^{\vee}\left(g_2\right)}.
		\end{split}
	\end{equation}
	By \Cref{prop:iembedding-support-of-flat-operator-applied-to-special-section}, the summand in \eqref{eq:dual-zeta-integral-for-parabolic-induction-after-identity} is zero unless $g_2^{-1} g_1 \in P$, which is equivalent to $P g_1 = P g_2$. This implies that \eqref{eq:dual-zeta-identity-for-parabolic-induction} equals
	\begin{equation*}
		\begin{split}
			& C \cdot \sum_{g \in P \backslash \IsometryGroup\left(\hermitianSpace\right)} \involution{\chi}\left(\kappaCharacter{\hermitianSpace}\left(\detSpace{\hermitianSpace} g\right)\right) \\
			& \times \innerproduct{\left(\zetaIntegral_{\xIsotropic,\tau} \otimes \zetaIntegral_{\hermitianSpace_0,\pi_0}\right)\left(\left(\left(\intertwiningOperator{\xIsotropic}{\chi} \otimes \intertwiningOperator{\hermitianSpace_0}{\chi}\right) \flatOperator{\hermitianSpace}{\chi} f_{0, \hermitianSpace}\right)\left(\iEmbedding\left(g, g\right)\right) \right)\varphi\left(g\right)}{\varphi^{\vee}\left(g\right)},
		\end{split}
	\end{equation*} 
	which by \Cref{prop:flat-operator-of-special-section} equals
	\begin{equation*}
		\begin{split}
			&C \cdot \sum_{g \in P \backslash \IsometryGroup\left(\hermitianSpace\right)}\involutionPlusOne{\chi}\left(\kappaCharacter{\hermitianSpace}\left(\detSpace{\hermitianSpace} g\right)\right) \innerproduct{\left(\zetaIntegral_{\xIsotropic,\tau} \otimes \zetaIntegral_{\hermitianSpace_0,\pi_0}\right)\left(\intertwiningOperator{\xIsotropic}{\chi} f_{\xIsotropic, 0} \otimes \intertwiningOperator{\hermitianSpace_0}{\chi} f_{\hermitianSpace_0, 0} \right)\varphi\left(g\right)}{\varphi^{\vee}\left(g\right)}.
		\end{split}
	\end{equation*}
	Hence we obtain the identity
	\begin{equation}\label{eq:identity-for-dual-zeta-integral-of-parabolic-induction-at-special-section}
		\begin{split}
			C^{-1} \cdot \dualZetaIntegral_{\hermitianSpace,\rho}\left(f_{\hermitianSpace,0}\right)\varphi\left(g\right) &= \left(\zetaIntegral_{\xIsotropic,\tau} \otimes \zetaIntegral_{\hermitianSpace_0,\pi_0}\right)\left(\intertwiningOperator{\xIsotropic}{\chi} f_{\xIsotropic, 0} \otimes \intertwiningOperator{\hermitianSpace_0}{\chi} f_{\hermitianSpace_0, 0}\right) \varphi\left(g\right) \\
			&= \left(\dualZetaIntegral_{\xIsotropic,\tau} \otimes \dualZetaIntegral_{\hermitianSpace_0,\pi_0}\right)\left(f_{\xIsotropic, 0} \otimes f_{\hermitianSpace_0, 0}\right) \varphi\left(g\right),
		\end{split}
	\end{equation}
	for every $\varphi \in \rho$ and every $g \in \IsometryGroup\left(\hermitianSpace\right)$, where once we again used the fact that $\involutionPlusOne{\chi}\left(\kappaCharacter{\hermitianSpace}\left(\detSpace{\hermitianSpace}g\right)\right) = 1$. 
	
	It follows from \eqref{eq:identity-for-dual-zeta-integral-of-parabolic-induction-at-special-section} and from the results in \Cref{subsec:jacobi-sums} that $\dualZetaIntegral_{\hermitianSpace,\rho}\left(f_{\hermitianSpace,0}\right)$ is a scalar operator on $\rho$. The explicit computations of $\dualZetaIntegral_{\hermitianSpace,\rho}\left(f_{\hermitianSpace,0}\right)$ from \Cref{subsec:explicit-computation} thus imply that $\dblJacobiSum{\rho}{\chi}$ is a scalar operator, as required.
\end{proof}

\subsection{Jacobi sums explicit computation}\label{subsec:jacobi-sums-explicit-computation-for-c}
We will now state the main result of \cite{YostWolffZelingher2025}. To do so, we need to first introduce some notation.

Fix a non-trivial character $\fieldCharacter \colon \finiteField \to \multiplicativegroup{\coefficientsField}$ and an algebraic closure $\algebraicClosure{\finiteField}$ of $\quadraticExtension$. For any $k \ge 1$, let $\finiteFieldExtension{k} \slash \finiteField$ be the unique field extension of degree $k$ in $\algebraicClosure{\finiteField}$ and similarly, let $\quadraticFieldExtension{k} \slash \quadraticExtension$ be the unique field extension of degree $k$ in $\algebraicClosure{\finiteField}$. Let $\fieldCharacter_{k} \colon \finiteFieldExtension{k} \to \multiplicativegroup{\coefficientsField}$ be the character $\fieldCharacter_{k} = \fieldCharacter \circ \trace_{\finiteFieldExtension{k} \slash \finiteField}$ and let $\fieldCharacter_{\quadraticFieldExtension{k}} \colon \quadraticFieldExtension{k} \to \multiplicativegroup{\coefficientsField}$ be the character $\fieldCharacter_{\quadraticFieldExtension{k}} = \fieldCharacter \circ \trace_{\quadraticFieldExtension{k} \slash \finiteField}$. For any $k' \mid k$ we set $\FieldNorm{k}{k'} \colon \multiplicativegroup{\finiteFieldExtension{k}} \to \multiplicativegroup{\finiteFieldExtension{k'}}$ to be the norm map. For any $m \ge 1$, let $$\NormOneGroup{2m} = \left\{ x \in \multiplicativegroup{\finiteFieldExtension{2m}} \mid \FieldNorm{2m}{m}\left(x\right) = 1\right\}.$$

Given a character $\chi \colon \multiplicativegroup{\finiteField} \to \multiplicativegroup{\coefficientsField}$, we denote the corresponding Gauss sum by
$$\tau\left(\chi, \fieldCharacter\right) = -q^{-\frac{1}{2}} \sum_{x \in \multiplicativegroup{\finiteField}} \chi^{-1}\left(x\right) \fieldCharacter\left(x\right).$$
For a character $\alpha \colon \multiplicativegroup{\quadraticFieldExtension{k}} \to \multiplicativegroup{\coefficientsField}$, we denote the twisted Gauss sum corresponding to $\alpha$ and $\chi$ by $$\GaussSumCharacter{\alpha}{\chi}{\fieldCharacter_{\quadraticFieldExtension{k}}} = -q^{-\grpIndex{\quadraticExtension}{\finiteField} k \slash 2} \sum_{x \in \multiplicativegroup{\quadraticFieldExtension{k}}} \alpha^{-1}\left(x\right) \chi^{-1}\left(\aFieldNorm_{\quadraticFieldExtension{k} \slash \quadraticExtension}\left(x\right)\right) \fieldCharacter_{\quadraticFieldExtension{k}}\left(x\right).$$
Given a character $\theta \colon \NormOneGroup{2m} \to \multiplicativegroup{\coefficientsField}$, let $\transfer{\theta} \colon \multiplicativegroup{\finiteFieldExtension{2m}} \to \multiplicativegroup{\coefficientsField}$ be the character obtained by the Hilbert 90 map:
$$\transfer{\theta}\left(x\right) = \theta\left(\frac{x}{\involution{x}}\right).$$
Denote the twisted Gauss corresponding to $\transfer{\theta}$ and $\chi$ by
$$\GaussSumCharacter{\transfer{\theta}}{\chi}{\fieldCharacter_{2m}} = -q^{-m} \sum_{x \in \multiplicativegroup{\finiteFieldExtension{2m}}} \left(\transfer{\theta}\right)^{-1}\left(x\right) \chi^{-1}\left(\FieldNorm{2m}{1}\left(x\right)\right) \fieldCharacter_{2m}\left(x\right).$$

Set
$$c_{\hermitianSpace}\left(\chi, \fieldCharacter\right) = \varepsilon_{\IsometryGroup\left(\hermitianSpace\right)} \cdot \begin{dcases}
	\tau\left(\chi^{-2}, \fieldCharacter\right)^{\frac{\dim_{\finiteField} \hermitianSpace}{2}} & \text{if } \quadraticExtension = \finiteField \text{ and } \dim_{\finiteField}\hermitianSpace \text{ is even},\\
	\chi\left(2\right) \tau\left(\chi^{-2}, \fieldCharacter\right)^{\frac{\dim_{\finiteField} \hermitianSpace - 1}{2}} & \text{if }\quadraticExtension = \finiteField \text{ and } \dim_{\finiteField}\hermitianSpace \text{ is odd},\\		
	\chi\left(-1\right)^{\dim_{\quadraticExtension} \hermitianSpace} \tau\left(\chi^{-1} \restriction_{\multiplicativegroup{\finiteField}}, \fieldCharacter\right)^{\dim_{\quadraticExtension} \hermitianSpace} & \text{if }\quadraticExtension \ne \finiteField,
\end{dcases}$$
where $$\varepsilon_{\IsometryGroup\left(\hermitianSpace\right)} = \begin{dcases}
	\left(-1\right)^{\frac{\dim_{\quadraticExtension} \hermitianSpace}{2} - 1} & \text{if } \quadraticExtension = \finiteField,\,\,\, \dim_{\finiteField} \hermitianSpace \text{ is even and } \hermitianSpace \text{ is not split,}\\
	\left(-1\right)^{\left\lfloor\frac{\dim_{\quadraticExtension} \hermitianSpace}{2}\right\rfloor} & \text{otherwise.}
\end{dcases}$$

We are now ready to state the main result of \cite{YostWolffZelingher2025}. Suppose that $\coefficientsField = \cComplex$ and that $\sqrt{q}$ is the positive square root of $q$ in $\cComplex$.
\begin{theorem}[{\cite[Theorem 3.21]{YostWolffZelingher2025}}]\label{thm:explicit-computation-of-jacobi-sum}
	Suppose that $\involutionPlusOne{\chi} \ne 1$.
	Let $T \subset \IsometryGroup^0\left(\hermitianSpace\right)$ be a maximal torus such that $T \cong \prod_{i=1}^r \multiplicativegroup{\quadraticFieldExtension{k_i}} \times \prod_{j=1}^s \NormOneGroup{2m_j}$. Let $\theta = \alpha_1 \times \dots \times \alpha_r \times \theta_1 \times \dots \times \theta_s$, where $\alpha_i \colon \multiplicativegroup{\quadraticFieldExtension{k_i}} \to \multiplicativegroup{\cComplex}$ and $\theta_j \colon \NormOneGroup{2m_j} \to \multiplicativegroup{\cComplex}$ are characters for every $i$ and $j$. Moreover, let $\pi$ be an irreducible representation of $\IsometryGroup\left(\hermitianSpace\right)$, such that $\innerproduct{\trace \pi \restriction_{\IsometryGroup^0\left(\hermitianSpace\right)}}{R_{T}^{\IsometryGroup^0\left(\hermitianSpace\right)}\left(\theta\right)} \ne 0$. Then the equation
	$$\dblJacobiSumScalar{\pi}{\chi} =  c_{\hermitianSpace}\left(\chi, \fieldCharacter\right) \tau_{\transfer{T}}\left(\transfer{\theta} \times \chi, \fieldCharacter\right),$$
	holds, where $$\tau_{\transfer{T}}\left(\transfer{\theta} \times \chi, \fieldCharacter\right) = \prod_{j=1}^s \GaussSumCharacter{\transfer{\theta}_j}{\chi}{\fieldCharacter_{2m_j}} \cdot \prod_{i=1}^r \GaussSumCharacter{\alpha_i}{\chi}{\fieldCharacter_{\quadraticFieldExtension{k_i}}} \GaussSumCharacter{\minusInvolution{\alpha_i}}{\chi}{\fieldCharacter_{\quadraticFieldExtension{k_i}}}.$$
	Alternatively,
	$$\tau_{\transfer{T}}\left(\transfer{\theta} \times \chi, \fieldCharacter\right) = \left(-1\right)^{\mathrm{rel.rank} \transfer{T}} q^{-\left\lfloor\frac{\dim_{\finiteField} \hermitianSpace}{2}\right\rfloor} \sum_{\transfer{t} \in \transfer{T}} \left(\transfer{\theta}\right)^{-1}\left(\transfer{t}\right) \chi^{-1}\left(\detQuadratic \transfer{t}\right) \fieldCharacter\left(\trace \transfer{t}\right),$$
	where $\transfer{T} \subset \GL_{\frac{2}{\grpIndex{\quadraticExtension}{\finiteField}} \left\lfloor \frac{\dim_{\finiteField} \hermitianSpace}{2}\right\rfloor}\left(\quadraticExtension\right)$ is a torus such that $\transfer{T} \cong \prod_{i=1}^r\left(\multiplicativegroup{\quadraticFieldExtension{k_i}} \times \multiplicativegroup{\quadraticFieldExtension{k_i}}\right) \times \prod_{j=1}^s \multiplicativegroup{\finiteFieldExtension{2 m_j}}$, $\mathrm{rel.rank} \transfer{T} = 2r + s$ and $\transfer{\theta} = \prod_{i=1}^r \left(\alpha_i \times \minusInvolution{\alpha_j}\right) \times \prod_{j=1}^s \transfer{\theta}_j$.
\end{theorem}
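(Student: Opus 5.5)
The plan is to reduce, via the multiplicativity of \Cref{thm:identity-of-jacobi-sums}, to two base cases: anisotropic-torus (cuspidal) data for the classical part, and the general linear Jacobi sum. The first step is to use Harish-Chandra theory. Since $\innerproduct{\trace \pi\restriction_{\IsometryGroup^0(\hermitianSpace)}}{R_T^{\IsometryGroup^0(\hermitianSpace)}\theta}\neq 0$, I would choose a rational Levi $L = \GL_{\quadraticExtension}(\xIsotropic)\times \IsometryGroup(\hermitianSpace_0)$ with the following property: the factors $\multiplicativegroup{\quadraticFieldExtension{k_i}}$ of $T$ sit inside the $\GL$ part, and $\prod_j \NormOneGroup{2m_j}$ is an anisotropic maximal torus $T_0$ of $\IsometryGroup^0(\hermitianSpace_0)$. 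By transitivity of Deligne--Lusztig induction, $R_T^{G}\theta = R_L^G(R_{T_{\GL}}^{\GL}\alpha \boxtimes R_{T_0}\theta_0)$. The next step is to pick irreducible constituents $\tau$ of $R_{T_{\GL}}\alpha$ and $\pi_0$ of $R_{T_0}\theta_0$ such that $\pi$ is a subrepresentation of $\Ind{P}{\IsometryGroup(\hermitianSpace)}{\tau\boxtimes\pi_0}$. For finite groups, Harish-Chandra induction coincides with Lusztig induction from split Levis, so this can be arranged, possibly after iterating over a chain of Levis. One subtle point is the passage between $\IsometryGroup$ and $\IsometryGroup^0$ in the orthogonal case. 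Since $\posHermitianJacobiKernel{\hermitianSpace}{\chi}$ is supported on $\IsometryGroup^0$, the scalar $\dblJacobiSumScalar{\pi}{\chi}$ depends only on $\pi\restriction_{\IsometryGroup^0}$, and this dependence is through a single Lusztig series constituent up to conjugation.

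Next, \Cref{thm:identity-of-jacobi-sums}(1) gives $\dblJacobiSumScalar{\pi}{\chi}=\dblJacobiSumScalar{\tau\otimes\minusInvolution{\chi}_{\xIsotropic}}{\involutionPlusOne{\chi}}\dblJacobiSumScalar{\pi_0}{\chi}$. For the $\GL$ factor, I would apply the $\GL$ version of the Jacobi sum, which I would compute separately (part (2) of the theorem reduces it further to cuspidal $\tau$). For cuspidal $\tau$ of $\GL_k(\quadraticExtension)$ attached to a regular character $\alpha$ of $\multiplicativegroup{\quadraticFieldExtension{k}}$, the value of $\dblJacobiSumScalar{\tau}{\chi}$ should be obtained by expanding the class function $\posHermitianJacobiKernel{}{\chi}$ in Deligne--Lusztig characters. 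Concretely, $\sum_g \chi(\det(1+g))\trace\tau(g)$ can be evaluated by the character formula for $R_T\theta$ on semisimple elements together with a Fourier/Gauss sum evaluation over the Lie algebra. The twist by $\minusInvolution{\chi}$ then produces the pair of Gauss sums $\GaussSumCharacter{\alpha_i}{\chi}{\cdot}\GaussSumCharacter{\minusInvolution{\alpha_i}}{\chi}{\cdot}$.

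The main obstacle is the anisotropic base case: computing $\dblJacobiSumScalar{\pi_0}{\chi}$ for $\pi_0$ in $R_{T_0}\theta_0$ with $T_0$ anisotropic. Here I would first try to write $\chi(\det(1+g))$ through the Cayley transform $g\mapsto X$ (already used in \Cref{subsec:explicit-computation}) as a function on the Lie algebra $\lieAlgebra$, and then pass to a Fourier transform on $\lieAlgebra$. Under this transform, Deligne--Lusztig characters correspond to Kazhdan--Springer Fourier transforms of orbital integrals on $\mathrm{Lie}(T_0)$. That would reduce the sum to a torus sum $\sum_{t}\theta^{-1}\chi^{-1}\fieldCharacter(\trace t)$ over the Hilbert-90 transfer $\transfer{T}$, with the normalizing constant $c_{\hermitianSpace}(\chi,\fieldCharacter)$ arising from the central and sign contributions, namely Gauss sums of $\chi^{-2}$ and the sign $\varepsilon_{\IsometryGroup(\hermitianSpace)}$. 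Finally, the alternative product formula should follow from the Hasse--Davenport-type factorization of the torus sum over the factors of $\transfer{T}$, with sign $(-1)^{\mathrm{rel.rank}\transfer{T}}$ coming from the minus signs in the Gauss sum normalizations.
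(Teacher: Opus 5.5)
Your multiplicativity step is sound in outline, but it does not do the work you need. You take $\tau$ and $\pi_0$ from the Deligne--Lusztig data of the given $T$, not from the cuspidal support of $\pi$. So $\pi_0$ is just some irreducible constituent of $R_{T_0}\theta_0$ with $T_0$ anisotropic. It need not be cuspidal, uniform, or equal to $\pm R_{T_0}\theta_0$. When $r=0$ nothing is reduced at all; for instance the trivial, Steinberg and unipotent cuspidal representations all occur in $R_T(1)$ for suitable anisotropic $T$. Your "base case" is therefore essentially the whole theorem.

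For that case you only offer the Cayley/Fourier sketch, and it does not work as stated. The Cayley map is conjugation-equivariant, but it does not turn $\trace\pi$, or $R_T\theta$ with $\theta$ a multiplicative character, into Fourier transforms of orbital integrals on $\lieAlgebra$. Springer's hypothesis (Kazhdan) is a statement about Green functions, i.e.\ values on unipotent elements. It says nothing about the factor $\theta(s)$ at non-unipotent $s$. Tellingly, your base case never uses $\involutionPlusOne{\chi}\ne1$, although it must: when $\involutionPlusOne{\chi}=1$ the answer has a different shape (\Cref{thm:explicit-computation-of-jacobi-sum-chi-1-plus-c-equals-1}).

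The missing idea is the one you yourself invoke for the $\GL$ factor, once you notice that the kernel is uniform. For a Jordan decomposition $g=su$ one has $\detSpace{\hermitianSpace}(\idmap+su)=\detSpace{\hermitianSpace}(\idmap+s)$, so $\posHermitianJacobiKernel{\hermitianSpace}{\chi}$ depends only on semisimple parts. Let $f$ be such a class function on $G=\IsometryGroup^0(\hermitianSpace)$. The character formula for $R_{T'}\theta'$, together with $\sum_u Q^{H}_{T'}(u)=|H|/|T'|$ for Green functions of $H=C_G(s)^0$, gives two identities:
\[ f=\frac{1}{|G|}\sum_{T'}|T'|\,R_{T'}(f\restriction_{T'}), \qquad \sum_{g\in G}f(g)R_{T'}\theta'(g)=\frac{|G|}{|T'|}\sum_{t\in T'}f(t)\theta'(t). \]
Hence $\sum_g\posHermitianJacobiKernel{\hermitianSpace}{\chi}(g)\trace\pi(g)$ only sees the uniform projection of $\pi$. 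That projection involves only pairs $(T',\theta')$ geometrically conjugate to $(T,\theta)$. Since $\dim$ is uniform and $\dim R_{T'}\theta'=\epsilon_G\epsilon_{T'}|G|_{p'}/|T'|$, you get $\dblJacobiSumScalar{\pi}{\chi}=\lambda$ as soon as $\epsilon_G\epsilon_{T'}q^{-\ell/2}\sum_{t\in T'}\posHermitianJacobiKernel{\hermitianSpace}{\chi}(t)\theta'(t)=\lambda$ for all these pairs, where $\ell$ is the rank.

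These torus sums factor over the factors of $T'$:
\begin{enumerate}
\item A factor $\multiplicativegroup{\quadraticFieldExtension{k}}$ gives an ordinary Jacobi sum of $\involutionPlusOne{\chi}\circ\aFieldNorm$ and $\alpha\cdot(\minusInvolution{\chi}\circ\aFieldNorm)$.
\item For a factor $\NormOneGroup{2m}$, write $t=x^{1-q^m}$, so that $1+t=\trace_{2m/m}(x)/x^{q^m}$. You get a sum $\sum_{x}\transfer{\theta}(x)\chi^{-1}(\aFieldNorm x)\beta(\trace_{2m/m}x)$, with $\chi^{-c}$ in place of $\chi^{-1}$ in the hermitian case. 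Here $\beta=\chi^2\circ\aFieldNorm$, respectively $\chi\restriction_{\multiplicativegroup{\finiteField}}\circ\aFieldNorm$ in the hermitian case.
\end{enumerate}
Expanding $\beta$ in additive characters is legitimate exactly because $\involutionPlusOne{\chi}\ne1$. It yields $\GaussSumCharacter{\transfer{\theta}}{\chi}{\fieldCharacter_{2m}}$ times a power of a Gauss sum of $\chi^{-2}$ (resp.\ $\chi^{-1}\restriction_{\multiplicativegroup{\finiteField}}$), and this is where $c_{\hermitianSpace}(\chi,\fieldCharacter)$ comes from. Hasse--Davenport then shows that the value is constant on the geometric conjugacy class.

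This argument handles non-uniform $\pi$ directly. Multiplicativity becomes optional, or can be used first to reduce to cuspidal $\pi$, where only anisotropic $T'$ contribute.
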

\begin{remark}
	Recall that a torus as in \Cref{thm:explicit-computation-of-jacobi-sum} is \emph{anisotropic} if and only if $r = 0$. It is well-known that if $\pi$ is an  irreducible cuspidal representation of $\IsometryGroup\left(\hermitianSpace\right)$ then $\innerproduct{\trace \pi \restriction_{\IsometryGroup^0\left(\hermitianSpace\right)}}{R_{T}^{\IsometryGroup^0\left(\hermitianSpace\right)}\left(\theta\right)} = 0$ unless $T$ is anisotropic. See \cite[Corollary 5.5]{Prasad2014}.
\end{remark}

We provide the following complementary result of \cite[Appendix C]{YostWolffZelingher2025} for the case $\involutionPlusOne{\chi} = 1$. It is less complete since we do not have a multiplicativity property (Theorem \ref{thm:identity-of-jacobi-sums}) for this case.

\begin{theorem}\label{thm:explicit-computation-of-jacobi-sum-chi-1-plus-c-equals-1}
	Keep the notation as in \Cref{thm:explicit-computation-of-jacobi-sum} but assume $\involutionPlusOne{\chi} = 1$. Suppose that $\theta$ is in general position and that
	\begin{enumerate}
		\item For every $j$, $\alpha_j \ne \chi^{-1} \circ \aFieldNorm_{\quadraticFieldExtension{k} \slash \quadraticExtension}$.
		\item If $\quadraticExtension \ne \finiteField$, then $\chi \ne 1$ or $\theta_j \ne 1$ for every $j$.
	\end{enumerate}
	Let $\pi_0$ be an irreducible representation of $\IsometryGroup^0\left(\hermitianSpace\right)$ such that $\trace \pi_0 = \pm R_{T}^{\IsometryGroup^0\left(\hermitianSpace\right)}\left(\theta\right)$ and suppose that $\pi$ is an irreducible representation of $\IsometryGroup\left(\hermitianSpace\right)$ such that its restriction to $\IsometryGroup^0\left(\hermitianSpace\right)$ contains $\pi_0$. Then
	\begin{equation*}
		\begin{split}
			\dblJacobiSumScalar{\pi}{\chi} =& \varepsilon_{\IsometryGroup\left(\hermitianSpace\right)} \cdot q^{-\frac{1}{2}\left\lfloor\frac{\dim_{\finiteField} \hermitianSpace}{2}\right\rfloor} \cdot \tau_{\transfer{T}}\left(\transfer{\theta} \times \chi, \fieldCharacter\right)^{-1} \\
			& \times \begin{dcases}
				\chi\left(2\right) & \text{if }\quadraticExtension = \finiteField \text{ and } \dim_{\finiteField}\hermitianSpace \text{ is odd}\\
				1 & \text{otherwise.}
			\end{dcases}
		\end{split}
	\end{equation*}
\end{theorem}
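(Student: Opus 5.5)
The strategy is to reduce to the case $\involutionPlusOne{\chi} \ne 1$ of \Cref{thm:explicit-computation-of-jacobi-sum} via a "$\chi \leftrightarrow$ dual" relation. The motivation is the inverse form of the answer: it involves $\tau_{\transfer{T}}^{-1}$. This is what one would get from a functional-equation identity of the form $\dblJacobiSumScalar{\pi}{\chi}\cdot\dblJacobiSumScalar{\Contragredient{\pi}}{\chi^{-1}} = (\text{explicit constant})$, combined with a direct Deligne--Lusztig character computation.

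Concretely, I would expand the class function $\posHermitianJacobiKernel{\hermitianSpace}{\chi}$ in terms of Deligne--Lusztig characters. Because $\trace\pi_0 = \pm R_T^{\IsometryGroup^0(\hermitianSpace)}\theta$ with $\theta$ in general position, $\pi_0$ is determined by $(T,\theta)$. One then has
$$\dblJacobiSumScalar{\pi}{\chi} = \frac{1}{\sqrt{\sizeof{\lieAlgebra}}\dim\pi_0}\sum_{g} \posHermitianJacobiKernel{\hermitianSpace}{\chi}(g)\,\trace\pi_0(g),$$
using that $\posHermitianJacobiKernel{\hermitianSpace}{\chi}$ is supported on $\IsometryGroup^0(\hermitianSpace)$. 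The natural next step is Deligne--Lusztig's character formula for $R_T\theta$: write $g = su$ (Jordan decomposition) and sum over semisimple parts $s \in T$ conjugates, weighted by Green functions of $C(s)^0$.

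Next, I would split $\det(\idmap + g) = \det(\idmap + s)\cdot(\text{unipotent correction})$. Since $u$ commutes with $s$, we get $\det(\idmap+su) = \det(\idmap+s)$. This reduces the problem to sums over $t \in T$ of $\chi(\det(\idmap+t))\theta(t)$ times Green-function sums $\sum_u Q_{T}^{C(t)}(u)$ over unipotent elements. The latter are computable: the sum of a Green function over all unipotents equals $\sizeof{C(t)}/\sizeof{T}$ up to sign, via the orthogonality $\innerproduct{R_T 1}{1}$. So the character sum becomes an abelian sum over $T$ of $\theta(t)\,\chi(\det(\idmap+t))$, together with index factors $\pm\sizeof{\IsometryGroup^0}/(\sizeof{T}\dim\pi_0)$. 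The $\dim\pi_0 = \pm\sizeof{G}_{p'}/\sizeof{T}$ cancellation then yields the power $q^{-\frac12\lfloor\dim/2\rfloor}$ and the sign $\varepsilon_{\IsometryGroup(\hermitianSpace)}$.

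Finally, I would evaluate the abelian sum over each factor $\NormOneGroup{2m_j}$ and $\multiplicativegroup{\quadraticFieldExtension{k_i}}$. Here $\involutionPlusOne{\chi}=1$ is used. On a norm-one torus, writing $t = x/\involution{x}$ gives $\det(\idmap+t)$ in terms of $\FieldNorm{2m}{1}(x+\involution{x})/\FieldNorm{2m}{1}(x)$. Then $\chi$ of this becomes a Jacobi-type sum, which via the Gauss--Jacobi relation $J = \tau\tau/\tau$ and $\tau(\alpha)\tau(\alpha^{-1}) = \alpha(-1)$ collapses to $\GaussSumCharacter{\transfer{\theta}_j}{\chi}{\fieldCharacter_{2m_j}}^{-1}$. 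The hypotheses (1) and (2) exclude exactly the degenerate cases where a Gauss sum is trivial and this inversion fails. The factor $\chi(2)$ arises from the odd-dimensional orthogonal case, from the eigenvalue $1$ contributing $\det = 2$.

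I expect the main obstacle to be the unipotent part. The identity $\sum_u Q^{C(t)}_T(u) = \pm\sizeof{C(t)}/\sizeof{T}$ must be applied uniformly, including for non-connected centralizers in the orthogonal case and for $t$ with eigenvalue $-1$, which must be discarded. It must also be matched correctly with signs $\varepsilon_{C(t)}\varepsilon_T$. I would first verify the bookkeeping on the regular semisimple locus, then argue that non-regular $t$ contribute consistently by the same orthogonality.
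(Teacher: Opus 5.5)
The paper gives no proof of this statement; it is quoted from \cite[Appendix C]{YostWolffZelingher2025}, so there is no in-text argument to compare with. Judged on its own, the direct Deligne--Lusztig computation in your second to fourth paragraphs is a valid proof, and it is simpler than you expect.

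Drop the opening strategy. A $\chi\leftrightarrow\chi^{-1}$ duality cannot reduce to the case $\involutionPlusOne{\chi}\neq 1$, since $\chi^{-1}$ is again conjugate-dual, and you never use it anyway.

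The ``main obstacle'' also dissolves. The character formula only involves $C^{\circ}(s)$, and the unipotent part of $g=su$ lies in $C^{\circ}(s)$. Moreover $\sum_{u}Q^{H}_{T}(u)=\sizeof{H}/\sizeof{T}$ holds with no sign for every connected reductive $H$; your derivation from $\langle R_T1,1\rangle=1$ works by induction over centralizers. Since $\posHermitianJacobiKernel{\hermitianSpace}{\chi}(su)=\posHermitianJacobiKernel{\hermitianSpace}{\chi}(s)$, the factors $\sizeof{C^{\circ}(s)}$ cancel and you get exactly
\[
\sum_{g\in G}\posHermitianJacobiKernel{\hermitianSpace}{\chi}(g)\,R_T\theta(g)=\frac{\sizeof{G}}{\sizeof{T}}\sum_{t\in T}\posHermitianJacobiKernel{\hermitianSpace}{\chi}(t)\,\theta(t),\qquad G=\IsometryGroup^0\left(\hermitianSpace\right).
\]
Hence $\dblJacobiSumScalar{\pi}{\chi}=\varepsilon_G\varepsilon_T\,q^{-\frac12\lfloor\dim_{\finiteField}\hermitianSpace/2\rfloor}\sum_{t\in T}\posHermitianJacobiKernel{\hermitianSpace}{\chi}(t)\theta(t)$, where $\varepsilon_G=\varepsilon_{\IsometryGroup\left(\hermitianSpace\right)}$. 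No separate treatment of non-regular $t$ is needed, and $\involutionPlusOne{\chi}=1$ has not yet been used. The same identity also gives the $\involutionPlusOne{\chi}\neq1$ formula in general position; the difference lies only in the torus sums.

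The last step is not a Jacobi--Gauss manipulation. If $\involutionPlusOne{\chi}=1$, the contribution of the norm of $x+\involution{x}$ is identically $1$. In the unitary case this is because $\chi$ is trivial on $\multiplicativegroup{\finiteField}$; otherwise $\chi^2=1$ and that factor occurs squared.

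So on a factor $\NormOneGroup{2m}$ the sum is $(q^m-1)^{-1}\sum_{\trace_{\finiteFieldExtension{2m}/\finiteFieldExtension{m}}(x)\neq0}\lambda(x)$, where $\lambda$ is a character trivial on $\multiplicativegroup{\finiteFieldExtension{m}}$. This equals $-\lambda(\delta)$ for $\delta$ of trace zero, or $q^m$ if $\lambda=1$. On the other side, $\GaussSumCharacter{\transfer{\theta}}{\chi}{\fieldCharacter_{2m}}$ is the pure Gauss sum of $\lambda^{-1}$, equal to $-\lambda(\delta)^{-1}$, or $q^{-m}$ if $\lambda=1$. Thus each anisotropic factor contributes exactly $\tau^{-1}$, degenerate case included.

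On a split factor $\multiplicativegroup{\quadraticFieldExtension{k}}$ the sum is
\[
\sum_{a\neq0,-1}\chi(\aFieldNorm a)\alpha(a)=-\alpha(-1)\chi(\aFieldNorm(-1))=-\bigl(\GaussSumCharacter{\alpha}{\chi}{\fieldCharacter_{\quadraticFieldExtension{k}}}\GaussSumCharacter{\minusInvolution{\alpha}}{\chi}{\fieldCharacter_{\quadraticFieldExtension{k}}}\bigr)^{-1}
\]
exactly when $\alpha\neq\chi^{-1}\circ\aFieldNorm$. This is where hypothesis (1) is used, and these $-1$ signs cancel $\varepsilon_T$.

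Hypothesis (2) plays no role in this computation. So your claim that it excludes a case where the inversion fails is inaccurate, though harmless. The factor $\chi(2)$ in the odd orthogonal case comes from the fixed line, as you say.
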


\subsection{Special orthogonal groups}\label{subsec:gamma-factor-for-special-orthogonal-groups}
In the orthogonal case, i.e., when $\quadraticExtension = \finiteField$ and $\epsilon_{\hermitianSpace} = 1$, the group $\IsometryGroup\left(\hermitianSpace\right)$ are not the $\finiteField$-points of a connected reductive group. In the framework of the Langlands program, one often considers representations of $\IsometryGroup^0\left(\hermitianSpace\right)$, that is, the special orthogonal group of $\hermitianSpace$, consisting of the elements of $\IsometryGroup\left(\hermitianSpace\right)$ with determinant $1$. We will now extend the definition of $\Gamma^{\mathrm{dbl}}$ to representations $\pi_0$ of $\IsometryGroup^0\left(\hermitianSpace\right)$.
\begin{itemize}
    \item Firstly, if $\dim_\finiteField\left(\hermitianSpace\right)$ is odd, then $\IsometryGroup\left(\hermitianSpace\right)=\left\{\pm \idmap_\hermitianSpace\right\}\times\IsometryGroup^0\left(\hermitianSpace\right)$. Any representation $\pi_0$ of $\IsometryGroup^0\left(\hermitianSpace\right)$ has two extensions $\pi_0^+$ and $\pi_0^-$ to $\IsometryGroup\left(\hermitianSpace\right)$, where $\pi_0^+(-\idmap_\hermitianSpace)=\idmap$ and $\pi_0^-(-\idmap_\hermitianSpace)=-\idmap$. We then set
$$\dblPreGammaFactor{\pi_0}{\chi}=\dblPreGammaFactor{\pi_0^+}{\chi}.$$
Equations \eqref{eq:def-fin-gamma} and \eqref{eq:formula-for-Jdbl} imply that $\dblPreGammaFactor{\pi_0^+}{\chi}=-\dblPreGammaFactor{\pi_0^-}{\chi}$.
\item 
If $\dim_\finiteField\left(\hermitianSpace\right)$ is even, we assume that the characteristic of $\coefficientsField$ does not equal two. Let $\beta\in\IsometryGroup\left(\hermitianSpace\right)$ be an element that does not lie in $\IsometryGroup^0\left(\hermitianSpace\right)$. We set $\pi_0^{\beta}$ to be the representation acting on the space of $\pi_0$ by the action $\pi_0^{\beta}\left(g\right) = \pi_0\left(\beta g \beta^{-1}\right)$ for $g \in \IsometryGroup^0\left(\hermitianSpace\right)$.
\begin{itemize}
    \item  If $\pi_0^\beta\cong\pi_0$, by Satz 3.1 of \cite{WillemsIII}, the representation $\pi_0$ can be extended to an irreducible representation $\pi_0'$ of $\IsometryGroup\left(\hermitianSpace\right)$ and we define 
    $$\dblPreGammaFactor{\pi_0}{\chi}=\dblPreGammaFactor{\pi_0'}{\chi}.$$
    Equations \eqref{eq:def-fin-gamma} and \eqref{eq:formula-for-Jdbl} imply that this is independent of the chosen extension of $\pi_0$.
    \item If $\pi_0^\beta\not\cong \pi_0$, then $\pi_0'=\Ind{\IsometryGroup^0\left(\hermitianSpace\right)}{\IsometryGroup\left(\hermitianSpace\right)}{\pi_0}$ is irreducible by Satz 2.3 of \cite{WillemsII} and we set
    $$\dblPreGammaFactor{\pi_0}{\chi}=\dblPreGammaFactor{\pi_0'}{\chi}.$$
\end{itemize}
\end{itemize}

The following proposition expresses $\dblPreGammaFactor{\pi}{\chi}$ as an analogous Jacobi sum for special 
orthogonal groups.

\begin{proposition}\label{prop:gamma-factor-jacobi-sum-special-orthogonal-group}
	Suppose $\quadraticExtension = \finiteField$ and $\epsilon_{\hermitianSpace} = 1$. Let $\pi_0$ be an irreducible representation of $\IsometryGroup^0\left(\hermitianSpace\right)$. Then
	$$\dblPreGammaFactor{\pi_0}{\chi} = 
		\chi\left(-2\right)^{-\dim_{\finiteField} \hermitianSpace} \dblJacobiSumScalar{\pi_0}{\chi} \cdot \begin{dcases}
			\centralCharacter{\pi_0}\left(-1\right) & \text{if }\dim_{\finiteField} \hermitianSpace \text{ is even}\\
			1 & \text{otherwise}
		\end{dcases},$$
	where $$\dblJacobiSumScalar{\pi_0}{\chi} \cdot \idmap_{\pi_0} = \dblJacobiSum{\pi_0}{\chi} = \frac{1}{\sqrt{\sizeof{\lieAlgebra}}} \sum_{g \in \IsometryGroup^0\left(\hermitianSpace\right)} \posHermitianJacobiKernel{\hermitianSpace}{\chi}\left(g\right) \pi_0\left(g\right).$$
\end{proposition}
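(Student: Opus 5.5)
We reduce everything to \eqref{eq:def-fin-gamma} applied to the extension $\pi_0'$ of $\pi_0$ to $\IsometryGroup\left(\hermitianSpace\right)$ used in the definition, and then compare the operator $\dblJacobiSum{\pi_0'}{\chi}$ with $\dblJacobiSum{\pi_0}{\chi}$. The key observation is that $\posHermitianJacobiKernel{\hermitianSpace}{\chi}$ vanishes off $\IsometryGroup^0\left(\hermitianSpace\right)$. Hence for any representation $\sigma$ of $\IsometryGroup\left(\hermitianSpace\right)$ we have $\dblJacobiSum{\sigma}{\chi} = \dblJacobiSum{\sigma\restriction_{\IsometryGroup^0\left(\hermitianSpace\right)}}{\chi}$ as operators on the same space. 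Note also that $\dblJacobiSum{\pi_0}{\chi}$ is a scalar by Schur's lemma, since $\posHermitianJacobiKernel{\hermitianSpace}{\chi}$ restricted to $\IsometryGroup^0\left(\hermitianSpace\right)$ is a class function there.

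\emph{Odd dimension.} Here $\pi_0' = \pi_0^+$ and $\pi_0^+\restriction_{\IsometryGroup^0} = \pi_0$, so $\dblJacobiSumScalar{\pi_0^+}{\chi} = \dblJacobiSumScalar{\pi_0}{\chi}$. It remains to see that the prefactor $\centralCharacter{\pi_0^+}\left(-1\right)$ in \eqref{eq:def-fin-gamma} equals $1$. This holds because $-\idmap_{\hermitianSpace}$ acts by $\pi_0^+\left(-\idmap_{\hermitianSpace}\right) = \idmap$ by definition. This gives the stated formula with factor $1$.

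\emph{Even dimension, $\pi_0^\beta \cong \pi_0$.} Take $\pi_0'$ to be an extension of $\pi_0$. Again $\pi_0'\restriction_{\IsometryGroup^0} = \pi_0$, so the Jacobi scalars agree. Since $\dim \hermitianSpace$ is even, $-\idmap_{\hermitianSpace}$ has determinant $1$, so it lies in $\IsometryGroup^0\left(\hermitianSpace\right)$, and therefore $\centralCharacter{\pi_0'}\left(-1\right) = \centralCharacter{\pi_0}\left(-1\right)$.

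\emph{Even dimension, $\pi_0^\beta \not\cong \pi_0$.} Take $\pi_0' = \Ind{\IsometryGroup^0\left(\hermitianSpace\right)}{\IsometryGroup\left(\hermitianSpace\right)}{\pi_0}$. By Mackey, $\pi_0'\restriction_{\IsometryGroup^0} \cong \pi_0 \oplus \pi_0^\beta$. The operator $\dblJacobiSum{\pi_0'}{\chi}$ is a scalar because $\pi_0'$ is irreducible, and it restricts on the summand $\pi_0$ to $\dblJacobiSum{\pi_0}{\chi}$. Hence $\dblJacobiSumScalar{\pi_0'}{\chi} = \dblJacobiSumScalar{\pi_0}{\chi}$.

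For the central character, $-\idmap_{\hermitianSpace}$ is central in $\IsometryGroup\left(\hermitianSpace\right)$ and lies in $\IsometryGroup^0\left(\hermitianSpace\right)$. It therefore acts on both summands $\pi_0$ and $\pi_0^\beta$ by the same scalar, since $\pi_0^\beta\left(-1\right) = \pi_0\left(\beta\left(-1\right)\beta^{-1}\right) = \pi_0\left(-1\right)$. So $\centralCharacter{\pi_0'}\left(-1\right) = \centralCharacter{\pi_0}\left(-1\right)$. In all cases \eqref{eq:def-fin-gamma} then yields the claim, with $d = \dim_{\finiteField}\hermitianSpace$.

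The only delicate point is keeping track of the $\centralCharacter{\pi}\left(-1\right)$ factor: it is absorbed as $1$ in the odd case but survives in the even case. No step is a genuine obstacle.
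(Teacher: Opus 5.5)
Your proof is correct and follows the same route as the paper. You apply \eqref{eq:def-fin-gamma} to the chosen extension $\pi_0'$, and since $\posHermitianJacobiKernel{\hermitianSpace}{\chi}$ vanishes off $\IsometryGroup^0\left(\hermitianSpace\right)$, the Jacobi scalar of $\pi_0'$ can be read off on a copy of $\pi_0$ inside its restriction. Your explicit tracking of $\centralCharacter{\pi_0'}\left(-1\right)$ fills in what the paper leaves as ``follows immediately'': it equals $1$ for $\pi_0^+$ in odd dimension, and equals $\centralCharacter{\pi_0}\left(-1\right)$ in even dimension because $-\idmap_{\hermitianSpace}\in\IsometryGroup^0\left(\hermitianSpace\right)$.
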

\begin{proof}
	If $\pi_0$ can be extended to an irreducible representation of $\IsometryGroup\left(\hermitianSpace\right)$, this follows immediately from the definition. Otherwise, let $\pi'_0 = \Ind{\IsometryGroup^0\left(\hermitianSpace\right)}{\IsometryGroup\left(\hermitianSpace\right)}{\pi_0}$. Then $\restrictionOperator_{\IsometryGroup^0\left(\hermitianSpace\right)}^{\IsometryGroup\left(\hermitianSpace\right)} \pi'_0$ contains a subspace isomorphic to $\pi_0$ and we have that
	$\dblJacobiSum{\pi'_0}{\chi}$ acts on this subspace by $\dblJacobiSum{\pi_0}{\chi}$. This implies that $$\dblJacobiSumScalar{\pi'_0}{\chi} = \dblJacobiSumScalar{\pi_0}{\chi}.$$
	The statement now follows.
\end{proof}

	By using the definition above and using \Cref{prop:gamma-factor-jacobi-sum-special-orthogonal-group} we establish an analogous multiplicativity property for special orthogonal groups. Suppose we are in the framework of \Cref{subsec:multiplicity-one-theorems}, but replace $\pi_0$ with an irreducible representation of $\IsometryGroup^0\left(\hermitianSpace\right)$. If $\pi$ is an irreducible subrepresentation of the parabolically induced representation $\Ind{P \cap \IsometryGroup^0\left(\hermitianSpace\right)}{\IsometryGroup^0\left(\hermitianSpace\right)}{\tau \boxtimes \pi_0}$ then $\pi$ can be realized as a subrepresentation of $\restrictionOperator_{\IsometryGroup^0\left(\hermitianSpace\right)}^{\IsometryGroup\left(\hermitianSpace\right)} \Ind{P}{\IsometryGroup\left(\hermitianSpace\right)}{\tau \boxtimes \pi'_0}$, where $\pi'_0$ is as above. The desired equality of scalars follows from \Cref{thm:identity-of-jacobi-sums} and \Cref{prop:gamma-factor-jacobi-sum-special-orthogonal-group}.

\section{Relation to depth zero representations}\label{section:relation-to-depth-zero-representations}

In this section, let $\gField = \localField$ be a non-archimedean local field with ring of integers $\ringOfIntegers$ and maximal ideal $\maximalIdeal$, and denote the residue field by $\finiteField = \ringOfIntegers \slash \maximalIdeal$. Moreover, we fix a generator $\uniformizer$ of $\maximalIdeal$. Let $\gQuadraticExtension = \localQuadraticExtension$, where $\localQuadraticExtension \slash \localField$ is an unramified extension of degree $1$ or $2$, and, as before, denote the generator of the Galois group $\Galois\left(\localQuadraticExtension \slash \localField\right)$ by $x \mapsto \involution{x}$. Let $\quadraticRingOfIntegers$ be the ring of integers of $\localQuadraticExtension$ and let $\quadraticMaximalIdeal = \uniformizer \quadraticRingOfIntegers$ be the maximal ideal of $\quadraticRingOfIntegers$. Denote the residue field of $\localQuadraticExtension$ by $\quadraticExtension = \quadraticRingOfIntegers \slash \quadraticMaximalIdeal$. Let $\residueFieldCardinality = \sizeof{\finiteField}$ and $\quadraticResidueFieldCardinality = \sizeof{\quadraticExtension}$. We normalize the absolute values so that $\abs{\uniformizer}_{\localField} = \residueFieldCardinality^{-1}$ and $\abs{\uniformizer}_{\localQuadraticExtension} = \quadraticResidueFieldCardinality^{-1}$. Let $\quadraticQuotientMap \colon \quadraticRingOfIntegers \slash \quadraticMaximalIdeal \to \quadraticExtension$ be the quotient map. As before, we fix a square of $\residueFieldCardinality$ in $\coefficientsField$, which we denote by $\residueFieldCardinality^{1/2} = \sqrt{\residueFieldCardinality}$. Using the rule $\residueFieldCardinality^{j/2} = \sqrt{\residueFieldCardinality^j} \coloneq \sqrt{\residueFieldCardinality}^j$ for any $j \in \zIntegers$, we also define $\sqrt{\quadraticResidueFieldCardinality}$.

\subsection{Setup}
Let $\left(\localHermitianSpace, \innerproduct{\cdot}{\cdot}_{\localHermitianSpace}\right)$ be a nondegenerate $\epsilon$-sesquilinear space with $\dim_{\localQuadraticExtension} \localHermitianSpace = d$. Suppose that there exists a basis $\mathcal B=\left(b_1,\dots,b_d\right)$ of $\localHermitianSpace$, such that the Gram matrix of $\mathcal B$ with respect to $\innerproduct{\cdot}{\cdot}_{\localHermitianSpace}$ is an element of $\GL_{d}\left(\quadraticRingOfIntegers\right)$. If $\epsilon_{\localHermitianSpace} = -1$ and $\localQuadraticExtension = \localField$, this is always the case and we take $\mathcal B$ to be a basis as in \Cref{subsec:matrix-representation}. Consider the $\quadraticRingOfIntegers$-module $$\localHermitianSpace_{\mathcal B, \quadraticRingOfIntegers} = \Span_{\quadraticRingOfIntegers}\left(b_1,\dots,b_d\right).$$ Let $\left(\hermitianSpace, \innerproduct{\cdot}{\cdot}_{\hermitianSpace}\right)$ be the $\quadraticExtension$-vector space $$\hermitianSpace = \hermitianSpace_{\mathcal B, \quadraticRingOfIntegers} =  \localHermitianSpace_{\mathcal B, \quadraticRingOfIntegers} \slash \quadraticMaximalIdeal \localHermitianSpace_{\mathcal B, \quadraticRingOfIntegers},$$ equipped with the pairing $\innerproduct{\cdot}{\cdot}_{\hermitianSpace}$ defined by the equation $$\innerproduct{\quadraticQuotientMap\left(x\right)}{\quadraticQuotientMap\left(y\right)}_{\hermitianSpace} = \quadraticQuotientMap\left(\innerproduct{x}{y}_{\localHermitianSpace}\right),$$ for all $x, y \in \localHermitianSpace_{B, \quadraticRingOfIntegers}$. Then the Gram matrix of $\quadraticQuotientMap\left(\mathcal B\right)$ in $\hermitianSpace$ with respect to $\innerproduct{\cdot}{\cdot}_{\hermitianSpace}$ is $\quadraticQuotientMap\left(\GramMatrix{\mathcal B}\right)$, which implies that $\left(\hermitianSpace, \innerproduct{\cdot}{\cdot}_{\hermitianSpace}\right)$ is nondegenerate. Let us denote $$\maximalCompact = \left\{ x \in \IsometryGroup\left(\localHermitianSpace\right) \mid x \localHermitianSpace_{\mathcal B, \quadraticRingOfIntegers} \subset \localHermitianSpace_{\mathcal B, \quadraticRingOfIntegers},\,\, \detSpace{\localHermitianSpace} x \in \multiplicativegroup{\quadraticRingOfIntegers} \right\}$$
and $$\maximalCompact_{+} = \left\{ x \in \maximalCompact \mid x b_j - b_j \in \quadraticMaximalIdeal \localHermitianSpace_{\mathcal B, \quadraticRingOfIntegers}\text{ for } 1 \le j \le d\right\}.$$
Then $\maximalCompact$ is a maximal open compact subgroup of $\IsometryGroup\left(\localHermitianSpace\right)$ and $\maximalCompact_{+}$ is an open compact normal subgroup of $\maximalCompact$. Moreover, reduction modulo $\quadraticMaximalIdeal$ yields an isomorphism $\maximalCompact \slash \maximalCompact_{+} \cong \IsometryGroup\left(\hermitianSpace\right)$.

Let $$\doubleSpace{\localHermitianSpace_{\mathcal B, \quadraticRingOfIntegers}} = \Span_{\quadraticRingOfIntegers}\left(b_1^{+},\dots,b_d^{+},b_1^{-},\dots,b_d^{-}\right) \subset \doubleSpace{\localHermitianSpace}.$$
Then we have $$\doubleSpace{\hermitianSpace} \cong \doubleSpace{\localHermitianSpace_{\mathcal B, \quadraticRingOfIntegers}} \slash \quadraticMaximalIdeal \doubleSpace{\localHermitianSpace_{\mathcal B, \quadraticRingOfIntegers}}.$$
Similarly to above, we define $$\maximalCompact_{\doubleSpace{\localHermitianSpace}} = \left\{ x \in \IsometryGroup\left(\doubleSpace{\localHermitianSpace}\right) \mid x \doubleSpace{\localHermitianSpace_{\mathcal B, \quadraticRingOfIntegers}} \subset \doubleSpace{\localHermitianSpace_{\mathcal B, \quadraticRingOfIntegers}}, \detSpace{\doubleSpace{\localHermitianSpace}} x \in \multiplicativegroup{\quadraticRingOfIntegers} \right\}$$
and
$$\maximalCompact_{\doubleSpace{\localHermitianSpace}, +} = \left\{ x \in \maximalCompact_{\doubleSpace{\localHermitianSpace}} \mid x b_j^{+} - b_j^{+}, x b_j^{-} - b_j^{-} \in \quadraticMaximalIdeal \doubleSpace{\localHermitianSpace_{\mathcal B, \quadraticRingOfIntegers}}\text{ for }1 \le j \le d\right\}.$$
Then $\maximalCompact_{\doubleSpace{\localHermitianSpace}}$ is a maximal open compact subgroup of $\IsometryGroup\left(\doubleSpace{\localHermitianSpace}\right)$ and $\maximalCompact_{\doubleSpace{\localHermitianSpace}, +}$ is an open compact normal subgroup of $\maximalCompact_{\doubleSpace{\localHermitianSpace}}$, such that $\maximalCompact_{\doubleSpace{\localHermitianSpace}} \slash \maximalCompact_{\doubleSpace{\localHermitianSpace}, +} \cong \IsometryGroup\left(\doubleSpace{\hermitianSpace}\right)$. 

\subsubsection{Depth zero supercuspidal representations}\label{subsec:depth-zero-supercuspidal-representations}
Let $\pi$ be an irreducible cuspidal representation of $\IsometryGroup\left(\hermitianSpace\right)$. Consider the compactly induced representation of $\IsometryGroup\left(\localHermitianSpace\right)$ given by $$\Pi = \ind{\maximalCompact}{\IsometryGroup\left(\localHermitianSpace\right)}{\pi \circ \quadraticQuotientMap}.$$ Then $\Pi$ is an irreducible supercuspidal representation of $\IsometryGroup\left(\localHermitianSpace\right)$.

\subsubsection{Flat sections}\label{subsec:flat-sections}
Given a character $\chi \colon \multiplicativegroup{\quadraticExtension} \to \multiplicativegroup{\coefficientsField}$ and an element $t \in \multiplicativegroup{\coefficientsField}$ we construct a lift of $\chi$ to $\multiplicativegroup{\localQuadraticExtension}$ by setting $\characterLift_{\chi, t} \colon \multiplicativegroup{\localQuadraticExtension} \to \multiplicativegroup{\coefficientsField}$ to be the character given by $\characterLift_{\chi, t} \restriction_{\multiplicativegroup{\quadraticRingOfIntegers}} = \chi \circ \quadraticQuotientMap \restriction_{\multiplicativegroup{\quadraticRingOfIntegers}}$ and $\characterLift_{\chi,t}\left(\uniformizer\right) = t$. In the doubling method it is necessary to extend the above definition and consider ``universal unramified twists'' of such lifts, hence for an invertible Laurent polynomial $Q\in\multiplicativegroup{\coefficientsField\left[X^{\pm1}\right]}$ we define a character $\characterLift_{\chi,Q}\colon\multiplicativegroup{\localQuadraticExtension}\to\multiplicativegroup{\coefficientsField\left[X^{\pm1}\right]}$ by setting $\characterLift_{\chi, Q} \restriction_{\multiplicativegroup{\quadraticRingOfIntegers}} = \chi \circ \quadraticQuotientMap \restriction_{\multiplicativegroup{\quadraticRingOfIntegers}}$ and $\characterLift_{\chi,Q}\left(\uniformizer\right) = Q$.

 Let $\delta_{\siegelDoublingParabolic{\localHermitianSpace}}^{1/2} \colon \siegelDoublingParabolic{\localHermitianSpace} \to \multiplicativegroup{\coefficientsField}$ be the (square root of the) modulus character corresponding to $\siegelDoublingParabolic{\localHermitianSpace}$. It is given by
$$\delta_{\siegelDoublingParabolic{\localHermitianSpace}}^{1/2}\left(p\right) = \begin{dcases}
	\abs{\detSpace{\diagSpace{\localHermitianSpace}} p}_{\localField}^{\frac{\dim_{\localField} \localHermitianSpace + 1}{2}} & \text{if } \localQuadraticExtension = \localField \text{ and } \epsilon_{\localHermitianSpace} = -1,\\
	\abs{\detSpace{\diagSpace{\localHermitianSpace}} p}_{\localField}^{\frac{\dim_{\localField} \localHermitianSpace - 1}{2}} & \text{if } \localQuadraticExtension = \localField \text{ and } \epsilon_{\localHermitianSpace} = 1,\\
	\abs{\detSpace{\diagSpace{\localHermitianSpace}} p}_{\localQuadraticExtension}^{\frac{\dim_{\localQuadraticExtension} \localHermitianSpace}{2}}& \text{if } \localQuadraticExtension \ne \localField.
\end{dcases}$$

Consider the normalized degenerate principal series representation $$\localdegeneratePrincipalSeries{X}{\localHermitianSpace}{\characterLift_{\chi, t}} \coloneqq\Ind{\siegelDoublingParabolic{\localHermitianSpace}}{\IsometryGroup\left(\localHermitianSpace\right)}{\characterLift_{\chi, tX}\circ \operatorname{det}_{\diagSpace{\localHermitianSpace}}},$$ 
which is a space of functions that are valued in $\coefficientsField[X^{\pm1}]$. When $\coefficientsField=\mathbb C$ by substituting $X=q_\quadraticExtension^{-s}$ for some complex parameter $s$, one obtains the spaces $I^{\localHermitianSpace}\left(\characterLift_{\chi,t},s\right)$ which are considered by Lapid--Rallis in \cite{LapidRallis2005}.

If $\Phi \in \localdegeneratePrincipalSeries{X}{\localHermitianSpace}{\characterLift_{\chi, t}}$, then $$\Phi\left(ph\right) = \delta_{\siegelDoublingParabolic{\localHermitianSpace}}^{1/2}\left(p\right) X^{\operatorname{val}_\localQuadraticExtension\left(\detSpace{\diagSpace{\localHermitianSpace}} \left(p\right)\right)}\characterLift_{\chi, t}\left( \detSpace{\diagSpace{\localHermitianSpace}} \left(p\right)\right) \cdot \Phi\left(h\right),$$
for any $p \in \siegelDoublingParabolic{\localHermitianSpace}$ and any $h \in \IsometryGroup\left(\doubleSpace{\localHermitianSpace}\right)$.

An element $\Phi \in \localdegeneratePrincipalSeries{X}{\localHermitianSpace}{\characterLift_{\chi, t}}$ is called a \emph{flat section} (with respect to $\maximalCompact_{\doubleSpace{\localHermitianSpace}}$) if $\Phi\left(k_0\right) \in \coefficientsField$ for any $k_0 \in \maximalCompact_{\doubleSpace{\localHermitianSpace}}$. Given an element $f \in \degeneratePrincipalSeries{\hermitianSpace}{\chi}$ we define a flat section $\liftOf{f} \in \localdegeneratePrincipalSeries{X}{\localHermitianSpace}{\characterLift_{\chi, t}}$ that is a lift of $f$ as follows. The section $\liftOf{f}$ is defined to be the unique element in $\localdegeneratePrincipalSeries{X}{\localHermitianSpace}{\characterLift_{\chi, t}}$ such that for any $k_0 \in \maximalCompact_{\doubleSpace{\localHermitianSpace}}$ we have $\liftOf{f}\left(k_0\right) = f\left(\quadraticQuotientMap\left(k_0\right)\right)$. Using the Iwasawa decomposition $\IsometryGroup\left(\doubleSpace{\localHermitianSpace}\right) = \siegelDoublingParabolic{\localHermitianSpace} \cdot \maximalCompact_{\doubleSpace{\localHermitianSpace}}$ and the fact that the restriction of $\characterLift_{\chi, tX}$ to $\multiplicativegroup{\quadraticRingOfIntegers}$ is $\chi \circ \quadraticQuotientMap \restriction_{\multiplicativegroup{\quadraticRingOfIntegers}}$, it is straightforward to check that such a section exists and is unique.

\subsection{Intertwining operator on lifts}

The intertwining operator $\intertwiningOperator{\localHermitianSpace}{\characterLift_{\chi, t}} \colon \localdegeneratePrincipalSeries{X}{\localHermitianSpace}{\characterLift_{\chi, t}} \to \localdegeneratePrincipalSeries{X^{-1}}{\localHermitianSpace}{\characterLift_{\minusInvolution{\chi}, t^{-1}}} \otimes_{\coefficientsField\left[X^{\pm 1}\right]} \coefficientsField\left[\left[X\right]\right]\left[X^{-1}\right]$ is defined by the integral
$$\left(\intertwiningOperator{\localHermitianSpace}{\characterLift_{\chi, t}} \Phi\right)\left(h\right) = \int_{\siegelDoublingUnipotent{\localHermitianSpace}} \Phi\left(\weylDoubleHermitian{\localHermitianSpace} u h\right) \mdifferential{u}.$$
As we will see in the next section, this integral results in a Laurent series in $X$. Here $\mdifferential{u}$ is the Haar measure on $\siegelDoublingUnipotent{\localHermitianSpace}$, normalized so that the open subgroup $\siegelDoublingUnipotent{\localHermitianSpace}\left(\quadraticRingOfIntegers\right) \coloneq \siegelDoublingUnipotent{\localHermitianSpace} \cap \maximalCompact_{\doubleSpace{\localHermitianSpace}}$ has measure $\sqrt{\sizeof{\siegelDoublingUnipotent{\hermitianSpace}}} = \sqrt{\sizeof{\lieAlgebra}}$, where $\lieAlgebra = \IsometryLieAlgebra\left(\hermitianSpace\right)$ is the Lie algebra of $\IsometryGroup\left(\hermitianSpace\right)$. Our next goal is to find a relation between $\intertwiningOperator{\localHermitianSpace}{\characterLift_{\chi, t}} \liftOf{f}$ and $\intertwiningOperator{\hermitianSpace}{\chi} f$ for $f \in \degeneratePrincipalSeries{\hermitianSpace}{\chi}$.

\subsubsection{Principal series representations}
Let $d = \dim_{\localQuadraticExtension} \localHermitianSpace$. Using the basis $\mathcal B$, we construct the basis $\tilde{\mathcal B} = \left(e_1^{\Delta},\dots,e_d^{\Delta},e_d^{\nabla},\dots,e_1^{\nabla}\right)$ for $\doubleSpace{\localHermitianSpace}$ as in \Cref{subsec:matrix-representation}. Note that by construction, the elements of $\tilde{\mathcal B}$ lie in $\doubleSpace{\localHermitianSpace_{\mathcal B, \quadraticRingOfIntegers}}$ and the image of $\tilde{\mathcal B}$ under the quotient map $\quadraticQuotientMap$ forms a basis for $\doubleSpace{\hermitianSpace}$, with respect to which the Gram matrix of $\quadraticQuotientMap\left(\tilde{\mathcal B}\right)$ has the standard split form. Let $\borel_{\doubleSpace{\localHermitianSpace}} \subset \IsometryGroup\left(\doubleSpace{\localHermitianSpace}\right)$ be the standard Borel subgroup, consisting of upper triangular matrices, with respect to the basis $\tilde{\mathcal B}$. Notice that for $b \in \borel_{\doubleSpace{\localHermitianSpace}}$ with diagonal part $\left(x_1,\dots,x_d,\minusInvolution{x_d},\dots,\minusInvolution{x_1}\right)$ where $x_i \in \multiplicativegroup{\localQuadraticExtension}$ for every $i$, we have $$\delta^{1 \slash 2}_{\siegelDoublingParabolic{\localHermitianSpace}}\left(b\right) = \delta^{1 \slash 2}_{\borel_{\doubleSpace{\localHermitianSpace}}}\left(b\right) \cdot \prod_{i=1}^{d} \abs{x_i}_{\localQuadraticExtension}^{-\frac{\left(d+1-2i\right)}{2}}.$$
Using this observation, we may realize elements $\liftOf{f}$ for $f \in \degeneratePrincipalSeries{\hermitianSpace}{\chi}$ as elements of the normalized parabolically induced representation \begin{equation}\label{eq:principal-series-representation}
	\Ind{\borel_{\doubleSpace{\localHermitianSpace}}}{\IsometryGroup\left(\doubleSpace{\localHermitianSpace}\right)}{\characterLift_{\chi,\quadraticResidueFieldCardinality^{\left(d-1\right) \slash 2} tX} \boxtimes \characterLift_{\chi,\quadraticResidueFieldCardinality^{\left(d-3\right) \slash 2} tX} \boxtimes \dots \boxtimes \characterLift_{\chi,\quadraticResidueFieldCardinality^{\left(1-d\right) \slash 2} tX}}.
\end{equation}

Similarly, let $\borel_{\doubleSpace{\hermitianSpace}}$ be the standard Borel subgroup of $\IsometryGroup\left(\doubleSpace{\hermitianSpace}\right)$ consisting of upper triangular matrices with respect to the basis $\tilde{\mathcal B}$.

Let $\underline{\chi} = \left(\chi_1,\dots,\chi_d\right)$, where $\chi_i \colon \multiplicativegroup{\quadraticExtension} \to \multiplicativegroup{\coefficientsField}$ are characters for $1\leq i\leq d$. We denote $$\principalSeries\left(\underline{\chi}\right) = \Ind{\borel_{\doubleSpace{\hermitianSpace}}}{\IsometryGroup\left(\doubleSpace{\hermitianSpace}\right)}{\chi_1 \boxtimes \dots \boxtimes \chi_d}.$$ Similarly to above, for a character $\chi \colon \multiplicativegroup{\quadraticExtension} \to \multiplicativegroup{\coefficientsField}$ we have that $\degeneratePrincipalSeries{\hermitianSpace}{\chi}$ is a subrepresentation of $\principalSeries\left(\chi,\dots,\chi\right)$. For $\underline{\mu} = \left(\mu_1,\dots,\mu_d\right) \in \left(\multiplicativegroup{\coefficientsField\left[X^{\pm 1}\right]}\right)^d$, let
\begin{equation*}
	\principalSeries\left(\underline{\chi}, \underline{\mu}\right) = \Ind{\borel_{\doubleSpace{\localHermitianSpace}}}{\IsometryGroup\left(\doubleSpace{\localHermitianSpace}\right)}{\characterLift_{\chi_1, \mu_1} \boxtimes \dots \boxtimes \characterLift_{\chi_d, \mu_d}}.
\end{equation*}

Similarly to \Cref{subsec:flat-sections}, given an element $f \in \principalSeries\left(\chi\right)$, we define an element $\liftOf^{\underline{\mu}} f \in \principalSeries\left(\underline{\chi}, \underline{\mu}\right)$ as follows. The element $\liftOf^{\underline{\mu}} f$ is the unique element in $\principalSeries\left(\underline{\chi}, \underline{\mu}\right)$ such that $\liftOf^{\underline{\mu}} f\left(k_0\right) = f\left(\nu\left(k_0\right)\right)$ for any $k_0 \in \maximalCompact_{\doubleSpace{\localHermitianSpace}}$. As in \Cref{subsec:flat-sections}, using the fact that the restriction of $\characterLift_{\chi_i, \mu_i}$ to $\multiplicativegroup{\quadraticRingOfIntegers}$ coincides with $\chi_i \circ \quadraticQuotientMap \restriction_{\multiplicativegroup{\quadraticRingOfIntegers}}$ and using the Iwasawa decomposition $\IsometryGroup\left(\doubleSpace{\localHermitianSpace}\right) = \borel_{\doubleSpace{\localHermitianSpace}} \cdot \maximalCompact_{\doubleSpace{\localHermitianSpace}}$, it is straightforward to see that such an element exists and is unique.

The idea now is that we can write $\intertwiningOperator{\localHermitianSpace}{\characterLift_{\chi, t}}$ as a composition of operators corresponding to simple reflections between various spaces similar to the one in \eqref{eq:principal-series-representation}.

\subsubsection{Decomposition into simple reflections}\label{subsec:decomposiiton-into-simple-reflections}

For $1 \le i \le d-1$, let us consider the subspace $\doubleSpace{\localHermitianSpace}_i = \Span_{\localQuadraticExtension}\left(e^{\Delta}_i, e^{\Delta}_{i+1}, e^{\nabla}_{i+1}, e^{\nabla}_i\right) \subset \doubleSpace{\localHermitianSpace}$. Let $s_i \in \siegelDoublingParabolic{\localHermitianSpace_i} \subset \IsometryGroup\left(\doubleSpace{\localHermitianSpace}_i\right)$ be the simple reflection swapping the elements $e^{\Delta}_{i}$ and $e^{\Delta}_{i+1}$ (and thus also swapping the elements $e^{\nabla}_{i}$ and $e^{\nabla}_{i+1}$) and let $r_i \colon \localQuadraticExtension \to \siegelDoublingLevi{\localHermitianSpace_i}$ be a parameterization of the unipotent subgroup corresponding to the flag $$0 \subset \localQuadraticExtension e^{\Delta}_i \subset \localQuadraticExtension e^{\Delta}_i \oplus \localQuadraticExtension e^{\Delta}_{i+1}.$$

Similarly, for $i=d$ consider the subspace $\doubleSpace{\localHermitianSpace}_d =  \Span_{\localQuadraticExtension}\left(e_d^{\Delta}, e_d^{\nabla}\right) \subset \doubleSpace{\localHermitianSpace}$, and let $s_d \in \IsometryGroup\left(\doubleSpace{\localHermitianSpace}_d\right)$ be the simple reflection sending the element $e_d^{\Delta}$ to $\epsilon_{\localHermitianSpace} e^{\nabla}_{d}$ and the element $e^{\nabla}_{d}$ to $e^{\Delta}_{d}$. Let $r_d \colon \localField_{\localHermitianSpace} \to \IsometryGroup\left(\doubleSpace{\localHermitianSpace}_d\right)$ be a parameterization of the unipotent subgroup stabilizing the flag $$0 \subset \localQuadraticExtension e^{\Delta}_d \subset \localQuadraticExtension e^{\Delta}_d \oplus \localQuadraticExtension e^{\nabla}_{d}.$$  Here $\localField_{\localHermitianSpace} = \localField$ if $\epsilon_{\localHermitianSpace} = -1$ or $\localQuadraticExtension \ne \localField$, and otherwise $\localField_{\localHermitianSpace} = \left\{0\right\}$.

For any $1 \le i \le d$, we realize $\IsometryGroup\left(\doubleSpace{\localHermitianSpace}_i\right)$ as a subgroup of $\IsometryGroup\left(\doubleSpace{\localHermitianSpace}\right)$, by extending the elements to act trivially on the orthogonal complement of $\doubleSpace{\localHermitianSpace}_i$ in $\doubleSpace{\localHermitianSpace}$. For every $1 \le i \le d$, let $$M_i \colon \principalSeries\left(\underline{\chi},\underline{\mu}\right) \to \principalSeries\left(s_i\underline{\chi}, s_i \underline{\mu}\right) \otimes_{\coefficientsField\left[X^{\pm 1}\right]} \coefficientsField\left[\left[X\right]\right][X^{-1}]$$ be the intertwining operator corresponding to the simple reflection $s_i$, given by the formula $$\left(M_i \Phi\right)\left(h\right) = \int_{\localQuadraticExtension} \Phi\left( s_i r_i\left(x\right) h \right) \differential x,$$
where for $i = d$ we replace $\localQuadraticExtension$ with $\localField_{\localHermitianSpace}$ in the integration domain. The Haar measure $\differential x$ on $\localQuadraticExtension$ (respectively, on $\localField_{\localHermitianSpace}$) is normalized such that $\quadraticRingOfIntegers$ (respectively, $\ringOfIntegers \cap \localField_{\localHermitianSpace}$) has measure $\sqrt{\quadraticResidueFieldCardinality}$ (respectively, $\sqrt{\residueFieldCardinality}$ if $F_V \ne \left\{0\right\}$ and $1$ if $F_V = \left\{0\right\}$). Then we may decompose \begin{equation}\label{eq:decomposition-of-intertwining-operator-into-reflections}
	\intertwiningOperator{\localHermitianSpace}{\characterLift_{\chi, t}} = \chi\left(2\right)^{-d} \chi^{-1}\left(\detQuadratic S\right) \cdot \left(M_d \circ M_{d-1} \circ \dots \circ M_1\right) \circ \dots \circ \left(M_d \circ M_{d-1}\right) \circ M_d \restriction_{\degeneratePrincipalSeries{\localHermitianSpace}{\characterLift_{\chi, t}}},
\end{equation}
where we take the right most $M_d$ to be an operator from the space corresponding to data $\underline{\mu} = \left(\quadraticResidueFieldCardinality^{\left(d-1\right) \slash 2} tX, \dots, \quadraticResidueFieldCardinality^{\left(1-d\right) \slash 2} tX\right)$ and $\underline{\chi} = \left(\chi,\dots,\chi\right)$, and the other spaces are inferred. Here the scalar factor comes from the matrix relation $$\begin{pmatrix}
	& \frac{1}{2}S^{-1}\\
	2S
\end{pmatrix} = \begin{pmatrix}
\frac{1}{2} S^{-1} \IdentityMatrix{d}^{\localHermitianSpace} \\
& 2 S \IdentityMatrix{d}^{\localHermitianSpace}
\end{pmatrix} \cdot \begin{pmatrix}
& \IdentityMatrix{d}^{\localHermitianSpace}\\
\IdentityMatrix{d}^{\localHermitianSpace}
\end{pmatrix},$$
where $S = w_d \GramMatrix{\mathcal{B}}$ (see \Cref{subsec:matrix-representation}), and
$$\IdentityMatrix{d}^{\localHermitianSpace} = \begin{dcases}
	\diag\left(1,-1,1,-1,\dots,1,-1\right) & \text{if } \localQuadraticExtension = \localField \text{ and } \epsilon_{\localHermitianSpace} = -1,\\
	\IdentityMatrix{d} & \text{otherwise}.
\end{dcases}$$
Alternatively, we may decompose
\begin{equation}\label{eq:decomposition-of-intertwining-operator-into-reflections-alternative}
	\intertwiningOperator{\localHermitianSpace}{\characterLift_{\chi, t}} = \chi\left(2\right)^{-d} \chi^{-1}\left(\detQuadratic S\right) \cdot M_d \circ \left(M_{d-1} \circ M_d\right) \circ \dots \circ \left(M_{1} \circ \dots \circ M_d\right) \restriction_{\degeneratePrincipalSeries{\localHermitianSpace}{\characterLift_{\chi, t}}}.
\end{equation}

We define $$\doubleSpace{\hermitianSpace}_i = \Span_{\quadraticExtension}\left(\quadraticQuotientMap\left(e_i^{\Delta}\right), \quadraticQuotientMap\left(e_{i+1}^{\Delta}\right), \quadraticQuotientMap\left(e_{i+1}^{\nabla}\right), \quadraticQuotientMap\left(e_{i}^{\nabla}\right)\right)$$ for $1 \le i \le d-1$ and $$\doubleSpace{\hermitianSpace}_d = \Span_{\quadraticExtension}\left(\quadraticQuotientMap\left(e_d^{\Delta}\right), \quadraticQuotientMap\left(e_d^{\nabla}\right)\right).$$ Let $r_i \colon \quadraticExtension \to \IsometryGroup\left(\doubleSpace{\hermitianSpace}_i\right)$ and $r_d \colon \finiteField_{\hermitianSpace} \to \IsometryGroup\left(\doubleSpace{\hermitianSpace}_d\right)$ be given analogously to the local field case, where $\finiteField_{\hermitianSpace} = \finiteField$ unless $\quadraticExtension = \finiteField$ and $\epsilon_{\hermitianSpace} = 1$, in which case we define $\finiteField_{\hermitianSpace} = \left\{0\right\}$.

We define the finite field version $$m_i \colon \principalSeries\left(\underline{\chi}\right) \to \principalSeries\left(s_i \underline{\chi}\right)$$ of $M_i$ by the formula
$$\left(m_i f\right)\left(h\right) = \frac{1}{\sqrt{\sizeof{\quadraticExtension}}} \sum_{x \in \quadraticExtension} f\left( s_i r_i h \left(x\right) \right),$$
where $h \in \IsometryGroup\left(\doubleSpace{\hermitianSpace}\right)$. As in the local field case, for $i=d$, we replace $\quadraticExtension$ with $\finiteField_{\hermitianSpace}$ in the definition of $m_i$. Similarly to the local field case, we have the decomposition
\begin{equation}\label{eq:decomposition-of-intertwining-operator-into-reflections-finite-field}
	\intertwiningOperator{\hermitianSpace}{\chi} = \chi\left(2\right)^{-d} \chi^{-1}\left(\detQuadratic S\right) \cdot \left(m_d \circ m_{d-1} \circ \dots \circ m_1\right) \circ \dots \circ \left(m_d \circ m_{d-1}\right) \circ m_d \restriction_{\degeneratePrincipalSeries{\hermitianSpace}{\chi}},
\end{equation}
where the right most $m_d$ is an operator from the space corresponding to data $\underline{\chi} = \left(\chi,\dots,\chi\right)$, and the domains of the rest operators are inferred. We also have the decomposition
\begin{equation}\label{eq:decomposition-of-intertwining-operator-into-reflections-finite-field-alternative}
	\intertwiningOperator{\hermitianSpace}{\chi} = \chi\left(2\right)^{-d} \chi^{-1}\left(\detQuadratic S\right) \cdot m_d \circ \left(m_{d-1} \circ m_d\right) \circ \dots \circ \left(m_{1} \circ \dots \circ m_d\right) \restriction_{\degeneratePrincipalSeries{\hermitianSpace}{\chi}}.
\end{equation}

We have that after certain normalization, the $m_i$'s satisfy the Hecke--Iwahori quadratic relations. We explain this below, but first need to introduce some notation. Let $$\Delta_{\hermitianSpace, \chi} = \begin{dcases}
	0 & \text{if } \chi \restriction_{\multiplicativegroup{\finiteField}}  \ne 1 \text{ or } (\quadraticExtension = \finiteField \text{ and } \epsilon_{\localHermitianSpace} = 1),\\
	1 & \text{otherwise}.
\end{dcases}$$
Moreover, set $\delta_{\hermitianSpace} \in \multiplicativegroup{\quadraticExtension}$ to be a trace zero element if $\quadraticExtension \ne \finiteField$ and $\epsilon_{\hermitianSpace} = 1$, and otherwise let $\delta_{\hermitianSpace} = 1$. Denote $m^{\ast}_i = \chi\left(-1\right) m_i$ for $1 \le i \le d-1$ and $m_d^{\ast} = \chi\left(\delta_{\hermitianSpace}\right) m_d$. Then we have the following Iwahori--Hecke algebra relations.
If $\chi \ne \minusInvolution{\chi}$ then for every $1 \le i \le d$, any operator $m^{\ast}_i \colon \principalSeries\left(\underline{\chi'}\right) \to \principalSeries\left(s_i \underline{\chi'}\right)$ from  \eqref{eq:decomposition-of-intertwining-operator-into-reflections-finite-field} or \eqref{eq:decomposition-of-intertwining-operator-into-reflections-finite-field-alternative} is invertible and its inverse is $m^{\ast}_i \colon \principalSeries\left(s_i \underline{\chi'}\right) \to \principalSeries\left(\underline{\chi'}\right)$. 

Suppose that $\chi = \minusInvolution{\chi}$. For $1 \le i \le d-1$ any $m^{\ast}_i$ from \eqref{eq:decomposition-of-intertwining-operator-into-reflections-finite-field} or \eqref{eq:decomposition-of-intertwining-operator-into-reflections-finite-field-alternative} satisfies
$$\left(m_i^{\ast}\right)^2 = \left(\quadraticResidueFieldCardinality^{1/2} - \quadraticResidueFieldCardinality^{-1/2}\right)m_i^{\ast} + \idmap,$$
and similarly any $m_d^{\ast}$ from \eqref{eq:decomposition-of-intertwining-operator-into-reflections-finite-field} or \eqref{eq:decomposition-of-intertwining-operator-into-reflections-finite-field-alternative} satisfies $$\left(m_d^{\ast}\right)^2 = \Delta_{\hermitianSpace, \chi} \left(\residueFieldCardinality^{1/2} - \residueFieldCardinality^{-1/2}\right) m_d^{\ast} + \idmap.$$
It thus follows that all the $m_i^{\ast}$s above are invertible (as zero is not one of their eigenvalues). In particular it follows that $\intertwiningOperator{\hermitianSpace}{\chi}$ is an invertible operator.

The following lemmas are standard.
\begin{lemma}\label{lem:simple-reflection-operator-gl-reflection}
	Let $1 \le i \le d-1$ and suppose that $\chi_i = \minusInvolution{\chi_{i+1}} = \chi^{\pm 1}$.
	\begin{enumerate}
		\item  If $\involutionPlusOne{\chi} \ne 1$, then for any $f \in \degeneratePrincipalSeries{\hermitianSpace}{\chi}$, we have $M_i \liftOf^{\underline{\mu}}{f} = \liftOf^{s_i \underline{\mu}}{m_i f}$.
		\item If $\involutionPlusOne{\chi} = 1$ then $$M_i \liftOf^{\underline{\mu}}{f} = \liftOf^{s_i \underline{\mu}}{m_i f} - \chi_i\left(-1\right) \left(\sqrt{\quadraticResidueFieldCardinality} - \frac{1}{\sqrt{\quadraticResidueFieldCardinality}}\right) \frac{1}{1 - \mu_i^{-1} \mu_{i+1}} \liftOf^{s_i \underline{\mu}}{f}.$$
	\end{enumerate}
\end{lemma}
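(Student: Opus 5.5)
The plan is to reduce to a rank-one computation by evaluating both sides on $\maximalCompact_{\doubleSpace{\localHermitianSpace}}$. Both sides are in $\principalSeries\left(s_i\underline{\chi}, s_i\underline{\mu}\right)$, and a section there is determined by its restriction to $\maximalCompact_{\doubleSpace{\localHermitianSpace}}$ (Iwasawa decomposition). Since $M_i$ commutes with right translation by $\maximalCompact_{\doubleSpace{\localHermitianSpace}}$, and lifting is compatible with right translation by $k_0$ (it corresponds to right translation by $\quadraticQuotientMap(k_0)$ on the finite side), it suffices to compare values at $h = \idmap$ for an arbitrary $f$ in the relevant principal series. Note that the argument only uses that $f$ lies in $\principalSeries\left(\underline{\chi}\right)$. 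Moreover, $M_i$ only involves integration over the root subgroup $r_i\left(\localQuadraticExtension\right)$ of the rank-one Levi $\GL_2$ attached to $\doubleSpace{\localHermitianSpace}_i$. So the computation reduces to the standard $\GL_2(\localQuadraticExtension)$ intertwining integral applied to an Iwahori-type lift of a function on $\GL_2(\quadraticExtension)$.

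Next I split the integral $\int_{\localQuadraticExtension}\liftOf^{\underline{\mu}}f\left(s_i r_i(x)\right)\differential x$ into two regions.

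\textbf{The region $x \in \quadraticRingOfIntegers$.} Here $s_i r_i(x) \in \maximalCompact_{\doubleSpace{\localHermitianSpace}}$, so the integrand is $f\left(s_i r_i(\quadraticQuotientMap(x))\right)$, which is constant on cosets of $\quadraticMaximalIdeal$. With the normalization $\mathrm{vol}(\quadraticRingOfIntegers) = \sqrt{\quadraticResidueFieldCardinality}$, each coset has volume $\quadraticResidueFieldCardinality^{-1/2}$. This region therefore contributes exactly $\left(m_i f\right)(\idmap)$.

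\textbf{The region $x \notin \quadraticRingOfIntegers$.} Write $\operatorname{val}(x) = -n$ with $n \ge 1$. Use the $\GL_2$ Bruhat identity
$$s_i r_i(x) = r_i\left(-x^{-1}\right)\cdot \diag\left(x^{-1}, x\right)\cdot r_i'\left(x^{-1}\right),$$
with a lower unipotent factor $r_i'\left(x^{-1}\right)$ that lies in $\maximalCompact_{\doubleSpace{\localHermitianSpace}}$ and reduces to $\idmap$ modulo $\quadraticMaximalIdeal$. The integrand then equals
$$\characterLift_{\chi_i,\mu_i}\left(x^{-1}\right)\characterLift_{\chi_{i+1},\mu_{i+1}}\left(x\right)\cdot \abs{x}^{-1}\cdot f(\idmap),$$
where the modulus factor is read off from the normalization. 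Setting $x = \uniformizer^{-n}u$ with $u$ a unit, the unit part becomes $\chi_i^{-1}\chi_{i+1}\left(\quadraticQuotientMap(u)\right)$. The hypothesis $\chi_i = \minusInvolution{\chi_{i+1}} = \chi^{\pm1}$ turns this into $\left(\involutionPlusOne{\chi}\right)^{\mp1}$ evaluated at $\quadraticQuotientMap(u)$, up to a sign.
\begin{itemize}
\item If $\involutionPlusOne{\chi} \neq 1$, the integral over $u \in \multiplicativegroup{\quadraticRingOfIntegers}$ vanishes by character orthogonality. This proves part (1).
\item If $\involutionPlusOne{\chi} = 1$, the shell at valuation $-n$ has volume $\sqrt{\quadraticResidueFieldCardinality}\left(\quadraticResidueFieldCardinality^{n} - \quadraticResidueFieldCardinality^{n-1}\right)$. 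Multiplying by $\mu_i^{n}\mu_{i+1}^{-n}$ and by the modulus factor $\quadraticResidueFieldCardinality^{-n}$, the geometric series over $n \ge 1$ gives
$$\left(\sqrt{\quadraticResidueFieldCardinality} - \tfrac{1}{\sqrt{\quadraticResidueFieldCardinality}}\right)\cdot\frac{\left(\mu_i^{-1}\mu_{i+1}\right)^{\pm1}\text{-type terms}}{1 - \mu_i^{-1}\mu_{i+1}}\cdot f(\idmap).$$
It converges formally in $\coefficientsField\left[\left[X\right]\right]\left[X^{-1}\right]$ because of the $X$-dependence of the $\mu_j$. The remaining scalar must come out as $-\chi_i(-1)$ times the stated constant after combining the leading term.
\end{itemize}

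The main obstacle is the bookkeeping in the second region: tracking the exact signs (the $\chi_i(-1)$ coming from the Bruhat identity and the choice of $s_i$), the orientation of $\mu_i^{-1}\mu_{i+1}$ versus its inverse, and the interaction of the modulus character with the chosen measure normalization, so that the geometric series lands exactly on $-\chi_i(-1)\left(\sqrt{\quadraticResidueFieldCardinality}-\quadraticResidueFieldCardinality^{-1/2}\right)/\left(1-\mu_i^{-1}\mu_{i+1}\right)$. I would fix these by checking the identity directly in $\GL_2$ with explicit matrices.
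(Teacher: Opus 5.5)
Your proposal is correct and is essentially the argument the paper has in mind; the paper omits this proof as a simpler variant of the proof of \Cref{lem:simple-reflection-operator-central-reflection}. Both reduce to $h=\idmap$ via the Iwasawa decomposition and the compatibility of lifts with right translation by $\maximalCompact_{\doubleSpace{\localHermitianSpace}}$, let the $\quadraticRingOfIntegers$-part produce $m_i f$, and handle $\abs{x}>1$ by the Bruhat identity followed by orthogonality or a geometric series over valuation shells. To close your bookkeeping: with $s_i$ the plain swap the identity reads $s_i r_i(x) = r_i(x^{-1})\,\diag(-x^{-1},x)\,r_i'(x^{-1})$, which is where $\chi_i(-1)$ comes from; the shells then sum to $\chi_i(-1)\bigl(\sqrt{\quadraticResidueFieldCardinality}-1/\sqrt{\quadraticResidueFieldCardinality}\bigr)\,y/(1-y)\cdot f(\idmap)$ with $y=\mu_i\mu_{i+1}^{-1}$ (a positive power of $X$ in the application), and $y/(1-y)=-1/(1-\mu_i^{-1}\mu_{i+1})$ gives the stated minus sign directly, with no further leading term to combine.
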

We omit the proof. It is similar to (and simpler than) the proof of the following lemma.

\begin{lemma}\label{lem:simple-reflection-operator-central-reflection}
	Suppose that $\epsilon_{\localHermitianSpace} = -1$ or $\quadraticExtension \ne \finiteField$.
	\begin{enumerate}
		\item  If $\chi_d \restriction_{\multiplicativegroup{\finiteField}} \ne 1$, then for any $f \in \principalSeries\left(\underline{\chi}\right)$, we have $M_d \liftOf^{\underline{\mu}} f = \liftOf^{s_d \underline{\mu}} m_d f$.
		\item If $\chi_d \restriction_{\multiplicativegroup{\finiteField}} = 1$ then $$M_d \liftOf^{\underline{\mu}} f = \liftOf^{s_d \underline{\mu}} m_d f - \chi_d\left(\quadraticQuotientMap\left(\delta\right)\right)^{-1} \left(\sqrt{\residueFieldCardinality} - \frac{1}{\sqrt{\residueFieldCardinality}}\right) \frac{1}{1 - \mu_d^{-1}} \liftOf^{s_d \underline{\mu}}{f},$$
		where $\delta \in \multiplicativegroup{\quadraticRingOfIntegers}$ is a trace zero element if  $\quadraticExtension \ne \finiteField$ and $\epsilon_{\localHermitianSpace} = 1$ and otherwise $\delta = 1$.
	\end{enumerate}
	\end{lemma}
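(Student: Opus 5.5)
The statement is a rank-one computation, so I reduce it to an explicit integral over $\localField_{\localHermitianSpace}\cong\localField$. Write $\Phi=\liftOf^{\underline{\mu}}f$. Both sides of the claimed identity lie in $\principalSeries\left(s_d\underline{\chi}, s_d\underline{\mu}\right)$, and elements there are determined by their restriction to $\maximalCompact_{\doubleSpace{\localHermitianSpace}}$ via the Iwasawa decomposition $\IsometryGroup\left(\doubleSpace{\localHermitianSpace}\right)=\borel_{\doubleSpace{\localHermitianSpace}}\maximalCompact_{\doubleSpace{\localHermitianSpace}}$. Moreover $M_d$ commutes with right translation by $\maximalCompact_{\doubleSpace{\localHermitianSpace}}$, and so does the lifting $f\mapsto \liftOf^{\underline{\mu}} f$ (compatibly with reduction $\quadraticQuotientMap$). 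It therefore suffices to evaluate $\left(M_d\Phi\right)(k_0)$ for $k_0\in\maximalCompact_{\doubleSpace{\localHermitianSpace}}$, and after replacing $f$ by its right translate by $\quadraticQuotientMap(k_0)$, to evaluate it at $h=\idmap$.

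Split the integral $\int_{\localField}\Phi\left(s_d r_d(x)\right)\differential x$ into the regions $x\in\ringOfIntegers$ and $x\notin\ringOfIntegers$. On $\ringOfIntegers$, the element $s_d r_d(x)$ lies in $\maximalCompact_{\doubleSpace{\localHermitianSpace}}$, so $\Phi\left(s_d r_d(x)\right)=f\left(s_d r_d(\quadraticQuotientMap(x))\right)$. Since $\ringOfIntegers$ has measure $\sqrt{\residueFieldCardinality}$ and each residue class has measure $\residueFieldCardinality^{-1/2}$, this piece equals exactly $\left(m_d f\right)(\idmap)=\left(\liftOf^{s_d\underline{\mu}} m_d f\right)(\idmap)$.

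For $\operatorname{val}(x)=-k<0$, I will use the $\SL_2$-type identity in $\IsometryGroup\left(\doubleSpace{\localHermitianSpace}_d\right)$:
\[
s_d r_d(x) = b(x)\, r_d'(x^{-1}),
\]
where $b(x)$ is upper triangular with $d$-th diagonal entry $\delta x$ (up to the $\epsilon$-sign and trace-zero normalisation; in the hermitian case $x$ is replaced by an element of trace-zero type) and $r_d'(x^{-1})$ is lower-unipotent, so that $r_d'(x^{-1})\in\maximalCompact_{\doubleSpace{\localHermitianSpace}}$ reduces to $\idmap$. This gives
\[
\Phi\left(s_d r_d(x)\right)=\characterLift_{\chi_d,\mu_d}(\delta x)\,\abs{x}^{-1}\cdot(\text{modulus factor})\cdot f(\idmap).
\]
Writing $x=\uniformizer^{-k}u$ and integrating over the shell $\uniformizer^{-k}\multiplicativegroup{\ringOfIntegers}$, the $u$-integral is $\chi_d(\quadraticQuotientMap(\delta))^{-1}\sum_{u\in\multiplicativegroup{\finiteField}}\chi_d(u)^{\pm1}$ times the measure normalisation, and this vanishes when $\chi_d\restriction_{\multiplicativegroup{\finiteField}}\neq1$. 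That proves part (1).

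When $\chi_d\restriction_{\multiplicativegroup{\finiteField}}=1$, each shell contributes $(\residueFieldCardinality-1)\residueFieldCardinality^{-1/2}\mu_d^{-k}$ times the sign constant. Once the measure and modulus normalisations are tracked, the sum over $k\ge1$ gives $-\chi_d(\quadraticQuotientMap(\delta))^{-1}\left(\sqrt{\residueFieldCardinality}-1/\sqrt{\residueFieldCardinality}\right)\frac{1}{1-\mu_d^{-1}}$ times $f(\idmap)$ (as a formal series in $X$), plus a correction. That correction is the $k=0$ boundary term: the quantity $\residueFieldCardinality^{-1/2}\sum_{u\in\multiplicativegroup{\finiteField}}$ appearing in $m_d f$ at $x\neq0$ must be regrouped against $f\left(s_d\right)$ versus $f\left(\idmap\right)$.

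Concretely, I expect the careful form of the claim to be that the shell sum assembles into $\left(\liftOf^{s_d\underline{\mu}}f\right)(\idmap)$ times the displayed scalar, with $f$ evaluated after the change $r_d'$. The main obstacle is exactly this bookkeeping: the precise decomposition of $s_d r_d(x)$ in each of the symplectic and hermitian cases, the sign $\chi_d(\quadraticQuotientMap(\delta))^{-1}$, and verifying that the geometric series has ratio $\mu_d^{-1}$ with the stated leading factor. I would first do the $\SL_2(\localField)$ symplectic case by hand, then transport it to $\mathrm{U}(1,1)$ using the trace-zero element $\delta$.
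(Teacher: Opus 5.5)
Your overall strategy is the paper's: reduce to evaluation at $\idmap$ using the Iwasawa decomposition and $\maximalCompact_{\doubleSpace{\localHermitianSpace}}$-equivariance of the lift, split the integral into $\ringOfIntegers$ and $\abs{x}_{\localField}>1$, identify the $\ringOfIntegers$-piece with $(m_d f)(\idmap)$, and kill the outer piece shell by shell when $\chi_d\restriction_{\multiplicativegroup{\finiteField}}\neq 1$. That part, and hence part (1), is fine.

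Part (2), however, is not established, and the computation you sketch for it fails. In the basis $(e_d^{\Delta},e_d^{\nabla})$ one has
\[
\begin{pmatrix} & 1\\ \epsilon_{\localHermitianSpace} & \end{pmatrix}\begin{pmatrix} 1 & \delta x\\ & 1\end{pmatrix}=\begin{pmatrix} -\frac{1}{\delta x} & 1\\ & \epsilon_{\localHermitianSpace}\delta x\end{pmatrix}\begin{pmatrix} 1 & \\ \frac{1}{\delta x} & 1\end{pmatrix}.
\]
The character $\characterLift_{\chi_d,\mu_d}$ of the $d$-th slot is evaluated at the $e_d^{\Delta}$-entry $-1/(\delta x)$. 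It is not evaluated at $\delta x$, which is the $e_d^{\nabla}$-entry.

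On the shell $x=\uniformizer^{-k}u$ with $k\ge 1$, this entry has valuation $+k$. So you get $\mu_d^{k}$, not $\mu_d^{-k}$. The modulus factor is $\abs{x}_{\localField}^{-1}=\residueFieldCardinality^{-k}$, and it cancels the additive volume $\residueFieldCardinality^{k}(\sqrt{\residueFieldCardinality}-1/\sqrt{\residueFieldCardinality})$ of the shell exactly. Each shell therefore contributes $\chi_d(-\quadraticQuotientMap(\delta))^{-1}(\sqrt{\residueFieldCardinality}-1/\sqrt{\residueFieldCardinality})\,\mu_d^{k} f(\idmap)$. Summing over $k\ge 1$ gives $\frac{\mu_d}{1-\mu_d}=-\frac{1}{1-\mu_d^{-1}}$, and $\chi_d(-1)=1$ because $\chi_d$ is trivial on $\multiplicativegroup{\finiteField}$.

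Consequently there is no ``$k=0$ boundary term'', and nothing in $m_d f$ has to be regrouped against $f(s_d)$. The $\ringOfIntegers$-piece is exactly $(\liftOf^{s_d\underline{\mu}} m_d f)(\idmap)$. The outer piece is exactly the displayed scalar times $f(\idmap)=(\liftOf^{s_d\underline{\mu}} f)(\idmap)$.

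Your version with $\mu_d^{-k}$ cannot be fixed by bookkeeping. It gives $\frac{\mu_d^{-1}}{1-\mu_d^{-1}}$, which is not the target scalar. Moreover, for $\mu_d$ a nonzero multiple of $X$ (as in the applications), $\sum_{k\ge 1}\mu_d^{-k}$ is not even an element of $\coefficientsField[[X]][X^{-1}]$. This mismatch is what the unexplained ``correction'' in your sketch was trying to absorb. The missing step is simply the correct identification of which diagonal entry of the Borel factor carries $\characterLift_{\chi_d,\mu_d}$.
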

	\begin{proof}		
		Throughout the proof we will omit the superscript from the $\liftOf$. Thanks to the Iwasawa decomposition, it suffices to prove this equality for elements of the maximal compact subgroup $\maximalCompact_{\doubleSpace{\localHermitianSpace}}$. Since for $k_0 \in \maximalCompact_{\doubleSpace{\localHermitianSpace}}$ we have $\liftOf{\rho\left(\quadraticQuotientMap\left(k_0\right)\right) f} = \rho\left(k_0\right) \liftOf{f}$ (where $\rho$ denotes right translation), it suffices to prove this equality at the identity element $\idmap_{\localHermitianSpace}$.
		We may reduce the computation to a computation of $\SL_2\left(\localField\right)$ (corresponding to the Levi part $\siegelDoublingLevi{\localHermitianSpace_d}$ of the Siegel parabolic of $\IsometryGroup\left(\doubleSpace{\localHermitianSpace}_d\right)$). Throughout the proof all matrices represent linear maps on $\Span_{\localQuadraticExtension}\left\{e_d^{\Delta}, e_{d}^{\nabla}\right\}$ with respect to the basis $\left(e_d^{\Delta}, e_d^{\nabla}\right)$. We need to compute \begin{align*}
			& \int_{\localField} \liftOf{f}\left(\begin{pmatrix}
				& 1\\
				\epsilon_{\localHermitianSpace}
			\end{pmatrix}\begin{pmatrix}
				1 & \delta x\\
				& 1
			\end{pmatrix}\right) \differential  x \\
			= & \int_{\abs{x}_{\localField} > 1} \liftOf{f}\left(\begin{pmatrix}
				& 1\\
				\epsilon_{\localHermitianSpace}
			\end{pmatrix}\begin{pmatrix}
				1 & \delta x\\
				& 1
			\end{pmatrix}\right) \differential x + \int_{\ringOfIntegers} \liftOf{f}\left(\begin{pmatrix}
				& 1\\
				\epsilon_{\localHermitianSpace}
			\end{pmatrix}\begin{pmatrix}
				1 & \delta x\\
				& 1
			\end{pmatrix}\right) \differential x.
		\end{align*}
		The integral over $\ringOfIntegers$ evaluates to $$\int_{\ringOfIntegers} \liftOf{f}\left(\begin{pmatrix}
			& 1\\
			\epsilon_{\localHermitianSpace}
		\end{pmatrix}\begin{pmatrix}
			1 & \delta x\\
			& 1
		\end{pmatrix}\right) \differential x = \frac{1}{\sqrt{\residueFieldCardinality}}\sum_{x \in \finiteField} f\left(\begin{pmatrix}
			& 1\\
			\epsilon_{\hermitianSpace}
		\end{pmatrix}\begin{pmatrix}
			1 & \quadraticQuotientMap\left(\delta\right) x\\
			& 1
		\end{pmatrix}\right).$$
		Regarding the integral over $\abs{x}_{\localField} > 1$, we may rewrite it as $$\int_{\abs{x}_{\localField} > 1} \liftOf{f}\left(\begin{pmatrix}
			-\frac{1}{\delta x}& 1\\
			& \epsilon_{\hermitianSpace} \delta x
		\end{pmatrix}\begin{pmatrix}
			1 & \\
			\frac{1}{\delta x} & 1
		\end{pmatrix}\right) \abs{x}_{\localField} \mdifferential x,$$
		where we normalize correctly so that $\ringOfIntegers$ has measure $\sqrt{\residueFieldCardinality}$ with respect to the multiplicative Haar measure $\mdifferential x$, and thus $\multiplicativegroup{\ringOfIntegers}$ has measure $\sqrt{\residueFieldCardinality} - \frac{1}{\sqrt{\residueFieldCardinality}}$ with respect to $\mdifferential x$. Changing variables $x \mapsto x^{-1}$, we get \begin{align*}
			&\int_{\abs{x}_{\localHermitianSpace} < 1} \liftOf{f}\left(\begin{pmatrix}
				-\frac{x}{\delta} & 1\\
				& \frac{\epsilon_{\hermitianSpace} \delta}{x}
			\end{pmatrix}\begin{pmatrix}
				1 & \\
				\frac{x}{\delta} & 1
			\end{pmatrix}\right) \abs{x}_{\localField}^{-1} \mdifferential x \\
			=& \chi_d\left(-\quadraticQuotientMap\left(\delta\right)\right)^{-1} \sum_{k = 1}^{\infty} \mu_d^{k} \int_{\multiplicativegroup{\ringOfIntegers}} \chi_d\left(\quadraticQuotientMap\left(x\right)\right) \mdifferential x \cdot f\left(\idmap_{\localHermitianSpace}\right).
		\end{align*}
		The integral over $\multiplicativegroup{\ringOfIntegers}$ vanishes unless $\chi_d \restriction_{\multiplicativegroup{\finiteField}}$ is the trivial character. Thus when $\chi_d \restriction_{\multiplicativegroup{\finiteField}} \ne 1$, we simply have $$\int_{\localField} \liftOf{f}\left(\begin{pmatrix}
			& 1\\
			\epsilon_{\localHermitianSpace}
		\end{pmatrix}\begin{pmatrix}
			1 & \delta x\\
			& 1
		\end{pmatrix}\right) \differential  x = \frac{1}{\sqrt{\residueFieldCardinality}}\sum_{x \in \finiteField} f\left(\begin{pmatrix}
		& 1\\
		\epsilon_{\hermitianSpace}
		\end{pmatrix}\begin{pmatrix}
		1 & \quadraticQuotientMap\left(\delta\right) x\\
		& 1
		\end{pmatrix}\right).$$
		
		Suppose now that $\chi_d \restriction_{\multiplicativegroup{\finiteField}} = 1$. Then we get that $$\int_{\abs{x}_{\localField} > 1} \liftOf{f}\left(\begin{pmatrix}
			-\frac{1}{\delta x}& 1\\
			& \epsilon_{\hermitianSpace} \delta x
		\end{pmatrix}\begin{pmatrix}
			1 & \\
			\frac{1}{\delta x} & 1
		\end{pmatrix}\right) \differential x = \chi_d\left(\quadraticQuotientMap\left(\delta\right)\right)^{-1} \left(\sqrt{\residueFieldCardinality} - \frac{1}{\sqrt{\residueFieldCardinality}}\right) \frac{\mu_d}{1 - \mu_d} f\left(\idmap_{\localHermitianSpace}\right).$$
		The result now follows.
	\end{proof}
	\begin{remark}
		If $\quadraticExtension \ne \finiteField$ and $\involutionPlusOne{\chi} = 1$ we have that $\chi\restriction_{\multiplicativegroup{\finiteField}} = 1$. Thus in this case the value $\chi\left(\delta_{\hermitianSpace}\right)$ is independent of the non-zero trace zero element $\delta_{\hermitianSpace} \in \multiplicativegroup{\quadraticExtension}$.
	\end{remark}

Repeatedly applying Lemmas \ref{lem:simple-reflection-operator-gl-reflection} and \ref{lem:simple-reflection-operator-central-reflection} to \eqref{eq:decomposition-of-intertwining-operator-into-reflections} or \eqref{eq:decomposition-of-intertwining-operator-into-reflections-alternative} we obtain the following computation of $\intertwiningOperator{\localHermitianSpace}{\characterLift_{\chi, t}}$.
\begin{theorem}\label{thm:explicit-computaiton-of-intertwining-operator}
	Let $\chi \colon \multiplicativegroup{\quadraticExtension} \to \multiplicativegroup{\coefficientsField}$ be a character and let $f \in \degeneratePrincipalSeries{\hermitianSpace}{\chi}$,
	\begin{enumerate}
		\item If $\involutionPlusOne{\chi} \ne 1$ then
		$$\intertwiningOperator{\localHermitianSpace}{\characterLift_{\chi, t}} \liftOf{f} = \liftOf{\left(\intertwiningOperator{\hermitianSpace}{\chi} f\right)}.$$
		\item If $\involutionPlusOne{\chi} = 1$ then
	\begin{equation}
		\begin{split}
			\intertwiningOperator{\localHermitianSpace}{\characterLift_{\chi, t}} \liftOf{f}
			=& \chi\left(2\right)^{-d} \chi^{-1}\left(\detQuadratic \hermitianSpace\right) \\
			& \times \liftOf{} \left( \prod_{j=1}^d  \left[ \left(m_d - \Delta_{\hermitianSpace, \chi} \chi\left(\delta_{\hermitianSpace}\right) \frac{\residueFieldCardinality^{1/2} - \residueFieldCardinality^{-1/2}}{1 - \quadraticResidueFieldCardinality^{-1/2\left(d+1 - 2j\right)} t^{-1}X^{-1}} \cdot \idmap\right) \right.\right. \\
				& \times \left.\left. \prod_{i=1}^{d-j} \left(m_{d-i} - \chi\left(-1\right) \frac{\quadraticResidueFieldCardinality^{1/2} - \quadraticResidueFieldCardinality^{-1/2}}{1 - \quadraticResidueFieldCardinality^{-\left(d+1-i-2j\right)} t^{-2}X^{-2}} \cdot \idmap\right) \right]f \right),
		\end{split}
	\end{equation}
	where all products $\prod$ are taken left to right. We also have
	\begin{equation}
	\begin{split}
		\intertwiningOperator{\localHermitianSpace}{\characterLift_{\chi, t}} \liftOf{f}
		=& \chi\left(2\right)^{-d} \chi^{-1}\left(\detQuadratic \hermitianSpace\right) \\
		& \times \liftOf{} \left( \prod_{j=1}^d  \left[ \prod_{i=1}^{d-j} \left(m_{d-i} - \chi\left(-1\right) \frac{\quadraticResidueFieldCardinality^{1/2} - \quadraticResidueFieldCardinality^{-1/2}}{1 - \quadraticResidueFieldCardinality^{-\left(d+1-i-2j\right)} t^{-2}X^{-2}} \cdot \idmap\right) \right.\right. \\
		& \times \left.\left. \left(m_d - \Delta_{\hermitianSpace, \chi} \chi\left(\delta_{\hermitianSpace}\right) \frac{\residueFieldCardinality^{1/2} - \residueFieldCardinality^{-1/2}}{1 - \quadraticResidueFieldCardinality^{-1/2\left(d+1 - 2j\right)} t^{-1}X^{-1}} \cdot \idmap\right) \right]f \right),
	\end{split}	
	\end{equation}
	where all products $\prod$ are taken right to left.
	\end{enumerate}
\end{theorem}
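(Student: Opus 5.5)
The plan is to start from the decomposition \eqref{eq:decomposition-of-intertwining-operator-into-reflections} of $\intertwiningOperator{\localHermitianSpace}{\characterLift_{\chi, t}}$ as the scalar $\chi\left(2\right)^{-d}\chi^{-1}\left(\detQuadratic S\right)$ times a word in the $M_i$. Reading from the right, we track at each stage the current pair $\left(\underline{\chi}', \underline{\mu}'\right)$ of data. At each stage the current vector will be a lift $\liftOf^{\underline{\mu}'} f'$ for some $f' \in \principalSeries\left(\underline{\chi}'\right)$. We then apply Lemma \ref{lem:simple-reflection-operator-gl-reflection} or Lemma \ref{lem:simple-reflection-operator-central-reflection}, which rewrites $M_i \liftOf^{\underline{\mu}'} f'$ as $\liftOf^{s_i\underline{\mu}'}\left(m_i f' - c_i f'\right)$ for an explicit scalar $c_i$. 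This keeps the vector of the same shape, so we can induct along the word. At the end we compare with the finite field decomposition \eqref{eq:decomposition-of-intertwining-operator-into-reflections-finite-field}, which carries the same scalar prefactor, using $\detQuadratic S = \pm\detQuadratic \hermitianSpace$ up to the sign absorbed in $\chi$. In case (1) all the $c_i$ vanish and we obtain $\liftOf{\left(\intertwiningOperator{\hermitianSpace}{\chi} f\right)}$ directly. In case (2) we obtain the displayed product of shifted operators.

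The bookkeeping goes as follows. The initial data are $\underline{\chi} = \left(\chi,\dots,\chi\right)$ and $\underline{\mu} = \left(\quadraticResidueFieldCardinality^{(d+1-2k)/2} tX\right)_{k}$. Each application of $s_d$ replaces the last entry $\left(\chi_d, \mu_d\right)$ by $\left(\minusInvolution{\chi_d}, \mu_d^{-1}\right)$, and each $s_i$ with $i<d$ swaps entries $i$ and $i+1$. One checks by induction on the position in the word $\left(M_d \cdots M_1\right)\cdots\left(M_d M_{d-1}\right) M_d$ the following two facts.
\begin{itemize}
\item Whenever $M_i$ with $i<d$ acts, the relevant entries are $\chi_i = \chi^{\pm 1}$ and $\chi_{i+1} = \minusInvolution{\chi_i}$, one of them being already reflected. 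This is exactly the hypothesis of Lemma \ref{lem:simple-reflection-operator-gl-reflection}.
\item In that situation $\mu_i^{-1}\mu_{i+1}$ equals $\quadraticResidueFieldCardinality^{-(d+1-i-2j)}t^{-2}X^{-2}$ up to the indexing convention.
\end{itemize}
Similarly, whenever $M_d$ acts, its entry is $\left(\chi, \quadraticResidueFieldCardinality^{(d+1-2j)/2}tX\right)$, which gives the factor $1-\quadraticResidueFieldCardinality^{-(d+1-2j)/2}t^{-1}X^{-1}$.

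It remains to match the correction terms with the hypotheses of the lemmas.
\begin{itemize}
\item For $M_d$: if $\involutionPlusOne{\chi} \ne 1$ we need $\chi\restriction_{\multiplicativegroup{\finiteField}} \ne 1$. This fails when $\quadraticExtension=\finiteField$ and $\chi$ is quadratic, but then $\involutionPlusOne{\chi}=\chi^2=1$. In the unitary case, $\involutionPlusOne{\chi}\ne 1$ means $\chi$ is nontrivial on norms, and norms are all of $\multiplicativegroup{\finiteField}$. So $\involutionPlusOne{\chi}\neq1$ forces $\chi\restriction_{\multiplicativegroup{\finiteField}}\ne 1$, and the $\Delta$-term vanishes.
\item In case (2), the $M_d$ coefficient $\chi\left(\delta\right)^{-1}$ agrees with $\Delta_{\hermitianSpace,\chi}\chi\left(\delta_{\hermitianSpace}\right)$. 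In the orthogonal split case $\localField_{\localHermitianSpace}=\{0\}$, so there is no correction and $\Delta=0$.
\item The GL-type correction carries $\chi_i\left(-1\right)=\chi\left(-1\right)$, since $\chi\left(-1\right)=\chi^{-1}\left(-1\right)$.
\end{itemize}
The second formula follows the same way from the alternative decomposition \eqref{eq:decomposition-of-intertwining-operator-into-reflections-alternative}.

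I expect the main obstacle to be the indexing: verifying that at every step the data satisfy the lemmas' hypotheses and that the exponents of $\quadraticResidueFieldCardinality$ in $\mu_i^{-1}\mu_{i+1}$ come out as stated. My approach is to first do $d=2$ by hand, then set up the induction on the Weyl word, recording the position of each reflected entry.
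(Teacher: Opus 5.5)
Your proposal is correct and is exactly the paper's argument: the paper's proof consists only of applying Lemmas \ref{lem:simple-reflection-operator-gl-reflection} and \ref{lem:simple-reflection-operator-central-reflection} step by step along \eqref{eq:decomposition-of-intertwining-operator-into-reflections} and \eqref{eq:decomposition-of-intertwining-operator-into-reflections-alternative}, and your bookkeeping for the first word is right and reproduces the printed exponents (the block $M_d\circ\dots\circ M_j$ carries $(\chi,\mu_j)$ past $(\minusInvolution{\chi},\mu_d^{-1}),\dots,(\minusInvolution{\chi},\mu_{j+1}^{-1})$ and then meets $M_d$). Three small cautions: (i) on the alternative word, the block $M_j\circ\dots\circ M_d$ first reflects $(\chi,\mu_{d-j+1})$ and then carries it leftwards, so the same bookkeeping yields the factors $1-\quadraticResidueFieldCardinality^{(d+1-2j)/2}t^{-1}X^{-1}$ and $1-\quadraticResidueFieldCardinality^{d+1-i-2j}t^{-2}X^{-2}$, i.e.\ the opposite signs from the printed second display (for $d=2$ both words are $M_2 M_1 M_2$, yet the two displays attach $\quadraticResidueFieldCardinality^{\pm 1/2}$ to the two copies of $m_2$ in opposite orders), so ``follows the same way'' only holds after that correction, which is harmless for the constant term in $X^{-1}$ used later; (ii) the decompositions carry the prefactor $\chi(2)^{-d}\chi^{-1}(\detQuadratic S)$ with $\det S=\det(w_d)\det\GramMatrix{\mathcal B}$, and since $\chi(\det w_d)$ is not automatically $1$ when $\quadraticExtension=\finiteField$, rewriting the prefactor as $\chi(2)^{-d}\chi^{-1}(\detQuadratic\hermitianSpace)$ is an identification the paper also makes without comment, not a sign that $\chi$ absorbs; (iii) for $\quadraticExtension=\finiteField$ the condition $\chi\restriction_{\multiplicativegroup{\finiteField}}\neq 1$ fails only for $\chi=1$, not for quadratic $\chi$, although your conclusion that $\involutionPlusOne{\chi}\neq 1$ kills all correction terms is right.
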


\begin{remark}
	When $\involutionPlusOne{\chi} = 1$ the formula for $\intertwiningOperator{\localHermitianSpace}{\characterLift_{\chi, t }}\liftOf f$ is rather complicated. When $\coefficientsField = \cComplex$, this expression is closely related to the Weil representation. Namely, by \cite[Proposition 8.1]{GanIchino2014} the section $\beta_{\localHermitianSpace}\left(X,\characterLift_{\chi, t}, \fieldCharacter\right) \intertwiningOperator{\localHermitianSpace}{\characterLift_{\chi, t}}\liftOf f$ is a meromorphic section with at most simple poles (see \Cref{subsec:normalized-gamma-factor} for the definition of $\beta_{\localHermitianSpace}\left(X,\characterLift_{\chi, t}, \fieldCharacter\right)$). By Section 8.2 and Propositions 6.2 and 7.2 of \cite{GanIchino2014}, the residue at each of these simple poles lies in a space related to corresponding Weil representations.
\end{remark}
\begin{remark}
	If $\chi = 1$ we can take $f$ to be the constant section, given by $f\left(g\right) = 1$ for every $g \in \IsometryGroup\left(\hermitianSpace\right)$. For this $f$ we have that $m_{d-i}f = \quadraticResidueFieldCardinality^{1/2} f$ for $1 \le i \le d-1$ and $m_d f = \residueFieldCardinality^{1/2} f$. Thus, if $\epsilon_{\localHermitianSpace} = -1$ or $\quadraticExtension \ne \finiteField$, we obtain that
	\begin{equation*}
		\begin{split}
			\intertwiningOperator{\localHermitianSpace}{\characterLift_{\chi, t X}} \liftOf{f}
			=& \prod_{j=1}^d  \left[ \residueFieldCardinality^{1/2} \frac{1 - \residueFieldCardinality^{\grpIndex{\quadraticExtension}{\finiteField}\left(d+1 - 2j\right)/2 - 1} t X}{1 - \residueFieldCardinality^{\grpIndex{\quadraticExtension}{\finiteField}\left(d+1 - 2j\right)/2} t X}  \prod_{i=1}^{d-j} \left(\quadraticResidueFieldCardinality^{1/2} \frac{1-\quadraticResidueFieldCardinality^{d-i-2j} t^2 X^2}{1 - \quadraticResidueFieldCardinality^{d+1-i-2j} t^2 X^2} \right) \right] \cdot \liftOf{f}\\
			=& \sqrt{\sizeof{\lieAlgebra}} \cdot \prod_{j=1}^d  \left[ \frac{1 - \residueFieldCardinality^{\grpIndex{\quadraticExtension}{\finiteField}\left(d+1 - 2j\right)/2 - 1} t X}{1 - \residueFieldCardinality^{\grpIndex{\quadraticExtension}{\finiteField}\left(d+1 - 2j\right)/2} t X} \cdot \frac{1-\quadraticResidueFieldCardinality^{-j} t^2 X^2}{1 - \quadraticResidueFieldCardinality^{d-2j} t^2 X^2} \right] \cdot \liftOf{f}.
		\end{split}
	\end{equation*}
	The expression for the case $\quadraticExtension = \finiteField$ and $\epsilon_{\hermitianSpace} = 1$ is similar and does not contain the first factor in the product. After simplifying this product we obtain the unramified computation in \cite[Page 601]{GanIchino2014}.
\end{remark}

\subsection{Gamma factors}

\subsubsection{Tate gamma factors}
Let $\fieldCharacter \colon \localField \to \multiplicativegroup{\coefficientsField}$ be a non-trivial additive character with conductor $\maximalIdeal$, i.e., $\fieldCharacter$ is trivial on $\maximalIdeal$ but not on $\ringOfIntegers$. We will often regard $\fieldCharacter$ as a non-trivial character of $\finiteField$ via the identification $\finiteField \cong \ringOfIntegers \slash \maximalIdeal$.

For a character $\alpha \colon \multiplicativegroup{\finiteField} \to \multiplicativegroup{\coefficientsField}$ and an element $t\in\multiplicativegroup{\coefficientsField}$, let $\characterLift_{\alpha, t} = \characterLift_{\alpha, t}^{\localField} \colon \multiplicativegroup{\localField} \to \multiplicativegroup{\coefficientsField}$ be the extension defined by the rule $\characterLift_{\alpha, t}^{\localField} \restriction_{\multiplicativegroup{\ringOfIntegers}} = \alpha \circ \quadraticQuotientMap \restriction_{\multiplicativegroup{\ringOfIntegers}}$ and $\characterLift_{\alpha, t}\left(\uniformizer\right) = t$. We recall the following formula for the Tate gamma factor $\gamma\left(X, \characterLift_{\alpha, t}, \fieldCharacter\right)$ (which is usually denoted $\gamma\left(s, \characterLift_{\alpha, t}, \fieldCharacter\right)$ when $\coefficientsField = \cComplex$ and $X = \residueFieldCardinality^{-s}$):
\begin{equation}\label{eq:tate-gamma-factor-computation}
	\gamma\left(X, \characterLift_{\alpha, t}, \fieldCharacter\right) = \begin{dcases}
		-\residueFieldCardinality^{-1/2} \cdot \frac{1 - t^{-1} X^{-1}}{1 - \residueFieldCardinality^{-1} t^{-1} X^{-1} } & \text{if }\alpha = 1,\\
		-\GaussSumSingleCharacter{\alpha}{\fieldCharacter} & \text{otherwise.}
	\end{dcases}
\end{equation}
Here $$\GaussSumSingleCharacter{\alpha}{\fieldCharacter} = -\residueFieldCardinality^{-1/2} \sum_{x \in \multiplicativegroup{\finiteField}} \alpha^{-1}\left(x\right) \fieldCharacter\left(x\right)$$ is a Gauss sum (see also \Cref{subsec:jacobi-sums-explicit-computation-for-c}). Notice that $\GaussSumSingleCharacter{1}{\fieldCharacter} = \residueFieldCardinality^{-1/2}$.

We will also use the epsilon factor $$\varepsilon\left(X, \characterLift_{\alpha, t}, \fieldCharacter\right) = \begin{dcases}
	 \residueFieldCardinality^{-1/2}t^{-1}X^{-1} & \text{if }\alpha = 1,\\
	-\GaussSumSingleCharacter{\alpha}{\fieldCharacter} & \text{otherwise}.
\end{dcases}$$

We will often substitute a power of $q$ for $X$. In order to resemble the complex gamma factor notation, for a half integer $s_0$ we denote $$\gamma\left(X, \characterLift_{\alpha, t}, \fieldCharacter\right)\mid_{s = s_0} \coloneqq \gamma\left(\residueFieldCardinality^{-s_0}, \characterLift_{\alpha, t}, \fieldCharacter\right),$$
and
$$\varepsilon\left(X, \characterLift_{\alpha, t}, \fieldCharacter\right)\mid_{s = s_0}\coloneqq \varepsilon\left(\residueFieldCardinality^{-s_0}, \characterLift_{\alpha, t}, \fieldCharacter\right) = \begin{dcases}
	\residueFieldCardinality^{s_0 - 1/2} t^{-1} & \text{if } \alpha = 1,\\
	-\tau\left(\alpha, \fieldCharacter\right) & \text{otherwise.} 
\end{dcases}$$
In particular, we have that the root number, which is defined as the value of $\varepsilon\left(X, \characterLift_{\alpha, t}, \fieldCharacter\right)$ at $s = 1/2$, is given by $$\varepsilon\left(X, \characterLift_{\alpha, t}, \fieldCharacter\right) \mid_{s = 1/2} = \begin{dcases}
	t^{-1} & \text{if } \alpha = 1,\\
	-\tau\left(\alpha, \fieldCharacter\right) & \text{otherwise.}
\end{dcases}$$

The following proposition regarding the lowest degree term of the Taylor expansion of $\gamma\left(X, \characterLift_{\chi, t}, \fieldCharacter\right)$ follows immediately from \eqref{eq:tate-gamma-factor-computation}: 
\begin{proposition}\label{prop:lowest-degree-term-of-taylor-expansion-of-tate-gamma-factor}
	The expansion of $\gamma\left(X, \characterLift_{\alpha, t}, \fieldCharacter\right) $ as a power series in $X^{-1}$ has the constant term $-\GaussSumSingleCharacter{\alpha}{\fieldCharacter}$.
\end{proposition}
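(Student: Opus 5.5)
The proof is a case check on the explicit formula \eqref{eq:tate-gamma-factor-computation}. In each case I will view $\gamma\left(X, \characterLift_{\alpha, t}, \fieldCharacter\right)$ as an element of $\coefficientsField\left[\left[X^{-1}\right]\right]$, expand it in $X^{-1}$, and read off the coefficient of $X^{0}$.

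\textbf{Case $\alpha \ne 1$.} Here the gamma factor is the constant $-\GaussSumSingleCharacter{\alpha}{\fieldCharacter}$, independent of $X$. So its expansion is just this constant, and the constant term is $-\GaussSumSingleCharacter{\alpha}{\fieldCharacter}$, as claimed.

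\textbf{Case $\alpha = 1$.} Here
$$\gamma\left(X, \characterLift_{1, t}, \fieldCharacter\right) = -\residueFieldCardinality^{-1/2} \left(1 - t^{-1} X^{-1}\right)\left(1 - \residueFieldCardinality^{-1} t^{-1} X^{-1}\right)^{-1}.$$
The denominator is $1$ minus an element of $X^{-1}\coefficientsField\left[\left[X^{-1}\right]\right]$. Hence it is invertible in $\coefficientsField\left[\left[X^{-1}\right]\right]$, with inverse given by the geometric series
$$\sum_{k \ge 0} \residueFieldCardinality^{-k} t^{-k} X^{-k},$$
whose constant term is $1$. The numerator $1 - t^{-1}X^{-1}$ also has constant term $1$. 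The constant term of a product of power series is the product of the constant terms, so the constant term of the whole expression is $-\residueFieldCardinality^{-1/2}$.

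It remains to identify this with the claimed value. By the remark after the definition of the Gauss sum, $\GaussSumSingleCharacter{1}{\fieldCharacter} = \residueFieldCardinality^{-1/2}$. To check this, note that $\sum_{x \in \multiplicativegroup{\finiteField}} \fieldCharacter\left(x\right) = -1$, because $\fieldCharacter$ is non-trivial on $\finiteField$, which holds since $\fieldCharacter$ has conductor $\maximalIdeal$. Therefore $-\residueFieldCardinality^{-1/2} = -\GaussSumSingleCharacter{1}{\fieldCharacter}$, which completes this case.

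There is no real obstacle. The only point needing care is that the expansion is taken in $X^{-1}$ rather than in $X$. This is what makes the rational function in the $\alpha = 1$ case expand with constant term $1$ in the denominator, so that no pole appears and the constant term is simply the product of leading coefficients.
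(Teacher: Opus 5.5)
Your proof is correct and is exactly the paper's argument: the paper simply says the proposition follows immediately from the explicit formula \eqref{eq:tate-gamma-factor-computation}. Your case check is the routine verification behind that remark. For $\alpha \neq 1$ the factor is constant. For $\alpha = 1$, expanding the geometric series in $X^{-1}$ gives constant term $-\residueFieldCardinality^{-1/2} = -\GaussSumSingleCharacter{1}{\fieldCharacter}$.
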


\subsubsection{Zeta integrals}

Let $\Pi$ be an irreducible depth zero supercuspidal representation of $\IsometryGroup\left(\localHermitianSpace\right)$ as in \Cref{subsec:depth-zero-supercuspidal-representations}. The local doubling method zeta integrals are defined for $\Phi \in \localdegeneratePrincipalSeries{X}{\localHermitianSpace}{\characterLift_{\chi, t}}$, $v \in \Pi$ and $v^{\vee} \in \Pi^{\vee}$ by
$$\zetaIntegral_{\localHermitianSpace}\left(\Phi, v, v^{\vee}\right) = \zetaIntegral_{\localHermitianSpace, \characterLift_{\chi, t}}\left(\Phi, v, v^{\vee}\right) \coloneq \int_{\IsometryGroup\left(\localHermitianSpace\right)} \Phi\left(\iEmbedding\left(g_{\localHermitianSpace}, \idmap\right)\right) \innerproduct{\Pi\left(g_{\localHermitianSpace}\right)v}{v^{\vee}} \mdifferential g_{\localHermitianSpace},$$
and
$$\dualZetaIntegral_{\localHermitianSpace}\left(\Phi, v, v^{\vee}\right) \coloneq \zetaIntegral_{\localHermitianSpace, \characterLift_{\minusInvolution{\chi}, t^{-1}}}\left(\intertwiningOperator{\localHermitianSpace}{\characterLift_{\chi, t}} \Phi, v, v^{\vee}\right).$$

The \emph{Piatetski-Shapiro--Rallis doubling method gamma factor} $\dblPreLocalGammaFactor{X}{\Pi}{\characterLift_{\chi, t}}\in \coefficientsField\left(X\right)$ is defined via the functional equation \cite{PiatetskiShapiroRallis1986,LapidRallis2005,Yamana2014,Girsch2023}
$$\dualZetaIntegral_{\localHermitianSpace}\left(\Phi, v, v^{\vee}\right) = \dblPreLocalGammaFactor{X}{\Pi}{\characterLift_{\chi, t}}\zetaIntegral_{\localHermitianSpace}\left(\Phi, v, v^{\vee}\right).$$

\subsubsection{Normalized gamma factor}\label{subsec:normalized-gamma-factor}
The standard gamma factor for $\IsometryGroup\left(\localHermitianSpace\right) \times \GL_1\left(\localQuadraticExtension\right)$ is given by a certain normalization of $\dblPreLocalGammaFactor{X}{\Pi}{\characterLift_{\chi, t}}$. This is originally defined in \cite[Section 2]{PiatetskiShapiroRallis1986} and \cite[Section 9]{LapidRallis2005}. One reference for explicit computations of the normalization factor $c\left(s,\omega,A,\fieldCharacter\right)$ is \cite[Appendix A]{GanIchino2014} (see \cite[Pages 518, 539, 546]{GanIchino2014} for important definitions). We refer the reader to \cite[Section 5.2]{Kakuhama2020} for corrections\footnote{There are some typos in \cite{Kakuhama2020} which are addressed in \cite{Kakuhama2022}. One of the typos not addressed is the one in \cite[Remark 5.8]{Kakuhama2020} where $\omega_{s \pm \frac{1}{2}}\left(N\left(R\right)\right)$ should be replaced with $\abs{N\left(R\right)}^{-\left(n \pm \frac{1}{2}\right)}$} of the normalization given in \cite{LapidRallis2005}.

The \emph{standard Lapid--Rallis doubling method gamma factor} is given by
\begin{equation*}
	\dblLocalGammaFactor{X \quadraticResidueFieldCardinality^{1/2}}{\Pi}{\characterLift_{\chi, t}} {\fieldCharacter} = \centralCharacter{\Pi}\left(-1\right) \chi\left(-2\right)^{d} \beta_{\localHermitianSpace}\left(X,\characterLift_{\chi, t}, \fieldCharacter\right) \dblPreLocalGammaFactor{X}{\Pi}{\characterLift_{\chi, t}},
\end{equation*}
where $\centralCharacter{\Pi} = \centralCharacter{\pi} \circ \quadraticQuotientMap \restriction_{Z\left(\IsometryGroup\left(\localHermitianSpace\right)\right)}$ is the central character of $\Pi$ and where $\beta_{\localHermitianSpace}\left(X,\characterLift_{\chi, t}, \fieldCharacter\right)$ is defined case-by-case as follows. Note that we added a $\chi\left(-1\right)^d$ factor that seems to be missing in the literature.
\begin{enumerate}
	\item If $\localQuadraticExtension \ne \localField$ then
	$$\beta_{\localHermitianSpace}\left(X,\characterLift_{\chi, t}, \fieldCharacter\right) =  \gamma_{\fieldCharacter}\left(\localHermitianSpace\right)^{-\binom{d}{2}} \prod_{i=1}^{d} \gamma\left(\residueFieldCardinality^{i-1}X, \characterLift_{\chi, t} \restriction_{\multiplicativegroup{\localField}} \cdot \eta^{d - i}, \fieldCharacter\right),$$
	where $\gamma_{\fieldCharacter}\left(\localHermitianSpace\right)$ is the Weil index associated with the hermitian space $\localHermitianSpace$ and the character $\fieldCharacter$, and where $\eta \colon \multiplicativegroup{\localField} \to \multiplicativegroup{\coefficientsField}$ is the character associated to $\localQuadraticExtension \slash \localField$ by local class field theory. That is, $\eta = \characterLift^{\localField}_{1, -1}$ is trivial on $\multiplicativegroup{\ringOfIntegers}$ and $\eta\left(\uniformizer\right) = -1$. By \cite[Equation (3.1.4)]{Chen2024}, we have that $$\gamma_{\fieldCharacter}\left(\localHermitianSpace\right) = \residueFieldCardinality^{1/2} X \cdot \varepsilon\left(X, \eta, \fieldCharacter\right) = -1.$$ By using the fact that $\left(-1\right)^{\binom{d}{2}} = \left(-1\right)^{\left\lfloor\frac{d}{2}\right\rfloor}$, we may rewrite $\beta_{\localHermitianSpace}\left(X,\characterLift_{\chi, t}, \fieldCharacter\right)$ as
	$$\beta_{\localHermitianSpace}\left(X,\characterLift_{\chi, t}, \fieldCharacter\right) = \left(-1\right)^{\left\lfloor\frac{d}{2}\right\rfloor} \prod_{i=1}^{d} \gamma\left(\residueFieldCardinality^{i-1}X, \characterLift_{\chi, \left(-1\right)^{d - i} \cdot t} \restriction_{\multiplicativegroup{\localField}}, \fieldCharacter\right).$$
	\item If $\localQuadraticExtension = \localField$ and $\epsilon_{\localHermitianSpace} = 1$ and $d$ is odd then
	$$\beta_{\localHermitianSpace}\left(X,\characterLift_{\chi, t}, \fieldCharacter\right) = \chi\left(2\right)^{-1} \prod_{i=1}^{\frac{d - 1}{2}} \gamma\left(\residueFieldCardinality^{2i-1}X^2, \characterLift_{\chi^2, t^2}, \fieldCharacter\right).$$	
	\item If $\localQuadraticExtension = \localField$ and $\epsilon_{\localHermitianSpace} = -1$ (in which case $d$ is even) then
	$$\beta_{\localHermitianSpace}\left(X,\characterLift_{\chi, t}, \fieldCharacter\right) = \gamma\left(\residueFieldCardinality^{\frac{d-1}{2}}X, \characterLift_{\chi, t}, \fieldCharacter\right) \prod_{i=1}^{\frac{d}{2}} \gamma\left(\residueFieldCardinality^{2i-2}X^2, \characterLift_{\chi^2, t^2}, \fieldCharacter\right).$$
	\item If $\localQuadraticExtension = \localField$ and $\epsilon_{\localHermitianSpace} = 1$ and $d$ is even then 
	$$\beta_{\localHermitianSpace}\left(X,\characterLift_{\chi, t}, \fieldCharacter\right) = \prod_{i=1}^{\frac{d}{2}} \gamma\left(\residueFieldCardinality^{2i-2}X^2, \characterLift_{\chi^2, t^2}, \fieldCharacter\right) \cdot \begin{dcases}
		1 & \text{if }\hermitianSpace \text{ is split,}\\
		-1 & \text{otherwise.}
	\end{dcases}$$
	The sign in this case comes from the root number $\varepsilon\left(X, \eta_{\localHermitianSpace}, \fieldCharacter\right)\mid_{s = 1/2}$ where $\eta_{\localHermitianSpace} \colon \multiplicativegroup{\localField} \to \multiplicativegroup{\coefficientsField}$ is given by the Hilbert symbol $\eta_{\localHermitianSpace}\left(x\right) = \left(x, \discriminant \localHermitianSpace\right)$, where $\discriminant \localHermitianSpace = \left(-1\right)^{\frac{d}{2}} \detSpace{\localQuadraticExtension} \GramMatrix{\mathcal{B}} \in \multiplicativegroup{\localField} \slash \left(\multiplicativegroup{\localField}\right)^2$ is the discriminant of $\localHermitianSpace$. Explicitly, $\eta_{\localHermitianSpace}\restriction_{\multiplicativegroup{\ringOfIntegers}} = 1$ and $$\eta_{\localHermitianSpace}\left(\uniformizer\right) = \begin{dcases}
		1 & \text{if }\hermitianSpace \text{ is split,}\\
		-1 & \text{otherwise.}
	\end{dcases}$$
\end{enumerate}
\begin{remark}
	The notation $\beta_{\localHermitianSpace}\left(X,\characterLift_{\chi, t}, \fieldCharacter\right)$ is inspired by \cite[Section 2.2]{JiangLuoZhang2024}. We remark that since the characteristic of $\finiteField$ is not $2$, we have that $\abs{2}_\localField = 1$ which slightly simplifies the normalization factor. Also, since in our case the extension $\localQuadraticExtension \slash \localField$ is unramified and since our Gram matrix $\GramMatrix{\mathcal{B}}$ has determinant in $\multiplicativegroup{\quadraticRingOfIntegers}$, other factors get simplified as well.
\end{remark}

By using \Cref{prop:lowest-degree-term-of-taylor-expansion-of-tate-gamma-factor} we obtain the following result regarding the Taylor expansion of $\beta_{\localHermitianSpace}\left(X,\characterLift_{\chi, t},\fieldCharacter\right)$. For the relation to $c_{\hermitianSpace}\left(\chi, \fieldCharacter\right)$, we use the formula $\GaussSumSingleCharacter{\alpha}{\fieldCharacter} \GaussSumSingleCharacter{\alpha^{-1}}{\fieldCharacter} = \alpha\left(-1\right)$ for a non-trivial character $\alpha \colon \multiplicativegroup{\finiteField} \to \multiplicativegroup{\coefficientsField}$.
\begin{proposition}\label{prop:constant-coefficient-of-beta}
	The expansion of $\beta_{\localHermitianSpace}\left(X,\characterLift_{\chi, t}, \fieldCharacter\right)$ as a power series in $X^{-1}$ has $\beta^{\ast}_{\hermitianSpace}\left(\chi, \fieldCharacter\right)$ as its constant term, which is also its lowest degree term, where
	\begin{equation*}
		\begin{split}
			\beta^{\ast}_{\hermitianSpace}\left(\chi, \fieldCharacter\right) = \varepsilon_{\IsometryGroup\left(\hermitianSpace\right)}  \cdot \begin{dcases} \left(-\GaussSumSingleCharacter{\chi \restriction_{\multiplicativegroup{\finiteField}}}{\fieldCharacter}\right)^{d} & \text{if }\quadraticExtension \ne \finiteField,\\
				\chi\left(2\right)^{-1} \GaussSumSingleCharacter{\chi^2}{\fieldCharacter}^{\frac{d-1}{2}} & \text{if }\quadraticExtension = \finiteField,\,\, \epsilon_{\hermitianSpace} = 1 \text{ and } d \text{ is odd},\\
				-\GaussSumSingleCharacter{\chi}{\fieldCharacter} \GaussSumSingleCharacter{\chi^2}{\fieldCharacter}^{\frac{d}{2}} & \text{if }\quadraticExtension = \finiteField \text{ and } \epsilon_{\hermitianSpace} = -1,\\
				\GaussSumSingleCharacter{\chi^2}{\fieldCharacter}^{\frac{d}{2}} & \text{if }\quadraticExtension = \finiteField,\,\, \epsilon_{\hermitianSpace} = 1 \text{ and } d \text{ is even.}
			\end{dcases}
		\end{split}
	\end{equation*}
	If $\involutionPlusOne{\chi} \ne 1$ then
	\begin{equation*}
		\beta_{\hermitianSpace}^{\ast}\left(\chi, \fieldCharacter\right) = c_{\hermitianSpace}\left(\chi, \fieldCharacter\right)^{-1} \cdot \begin{dcases}
			\left(-1\right)^{d} & \text{if }\quadraticExtension \ne \finiteField,\\
			-\GaussSumSingleCharacter{\chi}{\fieldCharacter} & \text{if }\quadraticExtension = \finiteField \text{ and } \epsilon_{\hermitianSpace} = -1,\\
			1 & \text{otherwise.}
		\end{dcases}
	\end{equation*}
	If $\involutionPlusOne{\chi} = 1$ then
	\begin{equation*}
			\begin{split}
			\beta^{\ast}_{\hermitianSpace}\left(\chi, \fieldCharacter\right) =& \varepsilon_{\IsometryGroup\left(\hermitianSpace\right)} \cdot \residueFieldCardinality^{-\frac{1}{2}  \left\lfloor\frac{\grpIndex{\quadraticExtension}{\finiteField} \cdot d}{2}\right\rfloor} \cdot \begin{dcases}
				\left(-1\right)^{d} & \text{if }\quadraticExtension \ne \finiteField,\\
				\chi\left(2\right)^{-1} & \text{if }\quadraticExtension = \finiteField,\,\, \epsilon_{\hermitianSpace} = 1 \text{ and } d \text{ is odd},\\
				-\GaussSumSingleCharacter{\chi}{\fieldCharacter} & \text{if }\quadraticExtension = \finiteField \text{ and } \epsilon_{\hermitianSpace} = -1,\\
				1 & \text{otherwise.}
			\end{dcases}
		\end{split}
	\end{equation*}
\end{proposition}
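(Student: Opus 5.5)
The plan is to read off the constant term of each Tate gamma factor in the definition of $\beta_{\localHermitianSpace}$ using \Cref{prop:lowest-degree-term-of-taylor-expansion-of-tate-gamma-factor}, and then rewrite the resulting product of Gauss sums in terms of $c_{\hermitianSpace}\left(\chi, \fieldCharacter\right)$. First observe that substituting $\residueFieldCardinality^{j}X$ or $\residueFieldCardinality^{j}X^2$ for $X$ does not change the constant term in $X^{-1}$. Indeed, by \eqref{eq:tate-gamma-factor-computation} each factor is either a constant $-\GaussSumSingleCharacter{\alpha}{\fieldCharacter}$ or a rational function of $X^{-1}$ with constant term $-\residueFieldCardinality^{-1/2} = -\GaussSumSingleCharacter{1}{\fieldCharacter}$. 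So each factor $\gamma\left(\cdot, \characterLift_{\alpha, s}, \fieldCharacter\right)$ contributes $-\GaussSumSingleCharacter{\alpha}{\fieldCharacter}$, which is nonzero, hence it is also the lowest degree term. Since a product of power series in $X^{-1}$ with nonzero constant terms has constant term equal to the product, we go through the four cases one at a time.

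In the case $\localQuadraticExtension \ne \localField$, the characters $\characterLift_{\chi,\pm t}\restriction_{\multiplicativegroup{\localField}}$ all restrict on $\multiplicativegroup{\ringOfIntegers}$ to $\chi\restriction_{\multiplicativegroup{\finiteField}}$. This gives $(-1)^{\lfloor d/2\rfloor}\left(-\GaussSumSingleCharacter{\chi\restriction_{\multiplicativegroup{\finiteField}}}{\fieldCharacter}\right)^d$, and $(-1)^{\lfloor d/2 \rfloor}=\varepsilon_{\IsometryGroup(\hermitianSpace)}$. In the orthogonal and symplectic cases the $X^2$-factors give $\left(-\GaussSumSingleCharacter{\chi^2}{\fieldCharacter}\right)^{k}$, with $k=(d-1)/2$ or $d/2$, and the symplectic case has the extra factor $-\GaussSumSingleCharacter{\chi}{\fieldCharacter}$. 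I will then check that $(-1)^k$, together with the split/nonsplit sign in the even orthogonal case, equals $\varepsilon_{\IsometryGroup(\hermitianSpace)}$. For the symplectic case this uses $\dim_{\quadraticExtension}\hermitianSpace=d$, so that $\lfloor d/2\rfloor=d/2$. For the even orthogonal case, $(-1)^{d/2}$ times $-1$ in the nonsplit case gives $(-1)^{d/2-1}$, matching $\varepsilon_{\IsometryGroup(\hermitianSpace)}$. This bookkeeping of signs is the only delicate point in the first formula.

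For the case $\involutionPlusOne{\chi}\ne 1$, the plan is to compare with the definition of $c_{\hermitianSpace}$ using $\GaussSumSingleCharacter{\alpha}{\fieldCharacter}\GaussSumSingleCharacter{\alpha^{-1}}{\fieldCharacter}=\alpha(-1)$. For $\quadraticExtension=\finiteField$, the condition $\involutionPlusOne{\chi}=\chi^2\ne1$ gives $\GaussSumSingleCharacter{\chi^2}{\fieldCharacter}\GaussSumSingleCharacter{\chi^{-2}}{\fieldCharacter}=\chi^2(-1)=1$. Combined with $\varepsilon_{\IsometryGroup(\hermitianSpace)}^2=1$, this yields $\beta^\ast c = 1$ in the orthogonal cases (where the $\chi(2)$ factors cancel) and $\beta^\ast c=-\GaussSumSingleCharacter{\chi}{\fieldCharacter}$ in the symplectic case. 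For $\quadraticExtension\ne\finiteField$, $\chi^{1+q}\ne1$ means exactly that $\chi\restriction_{\multiplicativegroup{\finiteField}}\ne 1$, since $x\mapsto x^{1+q}$ is the norm onto $\multiplicativegroup{\finiteField}$. Writing $\alpha=\chi\restriction_{\multiplicativegroup{\finiteField}}$, we get $\beta^\ast c=(-1)^d\chi(-1)^d\left(\GaussSumSingleCharacter{\alpha}{\fieldCharacter}\GaussSumSingleCharacter{\alpha^{-1}}{\fieldCharacter}\right)^d=(-1)^d\chi(-1)^{2d}=(-1)^d$, using $\alpha(-1)=\chi(-1)$.

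For the case $\involutionPlusOne{\chi}=1$: when $\quadraticExtension\ne\finiteField$ we have $\chi\restriction_{\multiplicativegroup{\finiteField}}=1$, and when $\quadraticExtension=\finiteField$ we have $\chi^2=1$. The relevant Gauss sums are then $\GaussSumSingleCharacter{1}{\fieldCharacter}=\residueFieldCardinality^{-1/2}$. Counting exponents gives $\residueFieldCardinality^{-d/2}$ in the unitary case (note $\lfloor 2d/2\rfloor=d$), and $\residueFieldCardinality^{-k/2}$ with $k=\lfloor d/2\rfloor$ in the other cases. This is exactly the stated expression.
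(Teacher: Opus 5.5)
Your proposal is correct and follows the paper's own argument. You read off the constant term $-\GaussSumSingleCharacter{\alpha}{\fieldCharacter}$ of each Tate factor via \Cref{prop:lowest-degree-term-of-taylor-expansion-of-tate-gamma-factor}, match the signs with $\varepsilon_{\IsometryGroup\left(\hermitianSpace\right)}$, and compare with $c_{\hermitianSpace}\left(\chi, \fieldCharacter\right)$ using $\GaussSumSingleCharacter{\alpha}{\fieldCharacter}\GaussSumSingleCharacter{\alpha^{-1}}{\fieldCharacter}=\alpha(-1)$. The one step you assert without justification is that each $\GaussSumSingleCharacter{\alpha}{\fieldCharacter}$ is nonzero over a general $\coefficientsField$; this follows from that same identity for $\alpha\ne 1$ and from $\GaussSumSingleCharacter{1}{\fieldCharacter}=\residueFieldCardinality^{-1/2}$.
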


Let us denote the coefficient of the lowest degree term of $\dblLocalGammaFactor{X}{\Pi}{\characterLift_{\chi, t}}{\fieldCharacter}$ with respect to $X^{-1}$ by $\dblEpsilonZeroFactor{\pi}{\chi}{\fieldCharacter}$ (see \Cref{app:Appendix} for the motivation of this definition). It should be thought of as the twisted $\varepsilon_0$-factor corresponding to the standard functorial lift of $\pi$ \cite{YeZelingher2021}. It is not obvious at this point that $\dblEpsilonZeroFactor{\pi}{\chi}{\fieldCharacter}$ does not depend on $t$, but this will follow from our computations below.

\subsection{$\varepsilon_0$-factor relation}

Let $\pi$ be an irreducible cuspidal representation of $\IsometryGroup\left(\hermitianSpace\right)$, and let $\Pi$ be as in \Cref{subsec:depth-zero-supercuspidal-representations}. Let $\Pi'$ be the irreducible depth zero supercuspidal representation associated with $\pi^{\vee}$. We identify $\Pi^{\vee}$ with $\Pi'$ via the pairing
$$\innerproduct{v_{\Pi}}{v_{\Pi'}} = \int_{\maximalCompact \backslash \IsometryGroup\left(\localHermitianSpace\right)} \innerproduct{v_{\Pi}\left(g_{\localHermitianSpace}\right)}{v_{\Pi'}\left(g_{\localHermitianSpace}\right)} \mdifferential g_{\localHermitianSpace},$$
where we normalize the quotient measure such that integrating the characteristic function of $\maximalCompact$ results with $1$.

Let $v \in \pi$ and $v^{\vee} \in \pi^{\vee}$. We define elements $\liftOf v \in \Pi$ and $\liftOf v^{\vee} \in \Pi^{\vee}$ as follows. The element $\liftOf v$ is the unique function in $\Pi=\ind{\maximalCompact}{\IsometryGroup\left(\localHermitianSpace\right)}{\pi \circ \quadraticQuotientMap}$ supported on $\maximalCompact$ such that $\left(\liftOf v\right)\left(\idmap_{\localHermitianSpace}\right) = v$. We define $\liftOf v^{\vee}$ analogously. It is straightforward to verify the following statement about matrix coefficients.

\begin{proposition}
	Let $h \in \IsometryGroup\left(\hermitianSpace\right)$. We have the equality
	$$\innerproduct{\Pi\left(h\right) \liftOf v}{\liftOf v^{\vee}} = \begin{dcases}
		\innerproduct{\pi\left(\quadraticQuotientMap\left(h\right)\right)v}{v^{\vee}} & \text{if }h \in \maximalCompact,\\
		0 & \text{otherwise.}
	\end{dcases}$$
\end{proposition}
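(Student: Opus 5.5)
The plan is to unwind the definitions directly. The matrix coefficient $\innerproduct{\Pi\left(h\right)\liftOf v}{\liftOf v^{\vee}}$ is, by the chosen identification of $\Pi^{\vee}$ with $\Pi'$, the integral
$$\int_{\maximalCompact \backslash \IsometryGroup\left(\localHermitianSpace\right)} \innerproduct{\left(\liftOf v\right)\left(g h\right)}{\left(\liftOf v^{\vee}\right)\left(g\right)} \mdifferential g.$$
Here $h$ should be read as an element of $\IsometryGroup\left(\localHermitianSpace\right)$, with $\quadraticQuotientMap\left(h\right)$ its reduction when $h \in \maximalCompact$. The integrand is a well-defined function on $\maximalCompact \backslash \IsometryGroup\left(\localHermitianSpace\right)$, because both functions transform under left translation by $k \in \maximalCompact$ via $\pi\left(\quadraticQuotientMap\left(k\right)\right)$ and $\pi^{\vee}\left(\quadraticQuotientMap\left(k\right)\right)$, and the pairing is invariant.

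The first step is the support. Since $\liftOf v^{\vee}$ is supported on $\maximalCompact$, only the coset $\maximalCompact g = \maximalCompact$ contributes. We may take the representative $g = \idmap_{\localHermitianSpace}$, and by the normalization of the quotient measure this coset has volume $1$. The integral therefore collapses to $\innerproduct{\left(\liftOf v\right)\left(h\right)}{v^{\vee}}$.

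The second step is to evaluate $\left(\liftOf v\right)\left(h\right)$. The function $\liftOf v$ is supported on $\maximalCompact$, so it vanishes if $h \notin \maximalCompact$, which gives the second case. If $h \in \maximalCompact$, left equivariance gives
$$\left(\liftOf v\right)\left(h\right) = \pi\left(\quadraticQuotientMap\left(h\right)\right)\left(\liftOf v\right)\left(\idmap_{\localHermitianSpace}\right) = \pi\left(\quadraticQuotientMap\left(h\right)\right) v,$$
which gives the first case.

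There is no real obstacle here. The only points to check are bookkeeping: the convention for the action and induction in $\ind{\maximalCompact}{\IsometryGroup\left(\localHermitianSpace\right)}{\pi \circ \quadraticQuotientMap}$ (left equivariance together with right translation), and the fact that the measure normalization assigns volume $1$ to the identity coset.
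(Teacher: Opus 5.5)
Your proof is correct and is exactly the direct unwinding of the definitions that the paper intends; the paper only remarks that the statement is straightforward to verify. The support of $\liftOf v^{\vee}$ collapses the integral over $\maximalCompact \backslash \IsometryGroup\left(\localHermitianSpace\right)$ to the identity coset of volume $1$, left $\maximalCompact$-equivariance of $\liftOf v$ gives the two cases, and you are right to read $h$ as an element of $\IsometryGroup\left(\localHermitianSpace\right)$ rather than $\IsometryGroup\left(\hermitianSpace\right)$, which is a typo in the statement.
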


By using this, it is straightforward to obtain the following result.
\begin{proposition}\label{prop:evaluation-of-zeta-integral}
	For any $v \in \pi$, $v^{\vee} \in \pi^{\vee}$ and $f \in \degeneratePrincipalSeries{\hermitianSpace}{\chi}$
	$$\zetaIntegral\left(\liftOf v, \liftOf v^{\vee}, \liftOf f\right) = \innerproduct{\zetaIntegral\left(f\right)v}{v^{\vee}},$$
	where the Haar measure on $\IsometryGroup\left(\localHermitianSpace\right)$ is normalized so that $\maximalCompact_{+}$ has measure $1$.
\end{proposition}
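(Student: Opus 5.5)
The plan is to unfold the local zeta integral directly using the support properties of the matrix coefficient of $\liftOf v$ and $\liftOf v^{\vee}$, and then reduce the remaining compact integral to a finite sum over $\IsometryGroup\left(\hermitianSpace\right)$. By definition,
$$\zetaIntegral\left(\liftOf v, \liftOf v^{\vee}, \liftOf f\right) = \int_{\IsometryGroup\left(\localHermitianSpace\right)} \liftOf f\left(\iEmbedding\left(g_{\localHermitianSpace}, \idmap\right)\right) \innerproduct{\Pi\left(g_{\localHermitianSpace}\right)\liftOf v}{\liftOf v^{\vee}} \mdifferential g_{\localHermitianSpace}.$$
By the previous proposition on matrix coefficients, the integrand vanishes off $\maximalCompact$, and on $\maximalCompact$ the matrix coefficient equals $\innerproduct{\pi\left(\quadraticQuotientMap\left(g_{\localHermitianSpace}\right)\right)v}{v^{\vee}}$. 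So the integral reduces to an integral over $\maximalCompact$.

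Next I check that for $k \in \maximalCompact$ the element $\iEmbedding\left(k, \idmap\right)$ lies in $\maximalCompact_{\doubleSpace{\localHermitianSpace}}$. It preserves the lattice $\doubleSpace{\localHermitianSpace_{\mathcal B, \quadraticRingOfIntegers}}$, since it acts by $k$ on the $+$ part and trivially on the $-$ part, and its determinant is $\detSpace{\localHermitianSpace} k \in \multiplicativegroup{\quadraticRingOfIntegers}$. Its reduction modulo $\quadraticMaximalIdeal$ is $\iEmbedding\left(\quadraticQuotientMap\left(k\right), \idmap_{\hermitianSpace}\right)$, because reduction is compatible with the $\pm$ decomposition of the lattice. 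By the defining property of the flat section $\liftOf f$, we therefore get $\liftOf f\left(\iEmbedding\left(k,\idmap\right)\right) = f\left(\iEmbedding\left(\quadraticQuotientMap\left(k\right), \idmap_{\hermitianSpace}\right)\right)$, which lies in $\coefficientsField$ with no dependence on $X$.

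After these two steps the integrand on $\maximalCompact$ is a function of $\quadraticQuotientMap\left(k\right)$ alone, so it is right $\maximalCompact_{+}$-invariant. I then decompose $\maximalCompact$ into cosets $k\maximalCompact_{+}$ and use $\maximalCompact/\maximalCompact_{+} \cong \IsometryGroup\left(\hermitianSpace\right)$ together with the normalization $\mathrm{vol}\left(\maximalCompact_{+}\right)=1$. This turns the integral into
$$\sum_{g \in \IsometryGroup\left(\hermitianSpace\right)} f\left(\iEmbedding\left(g,\idmap_{\hermitianSpace}\right)\right)\innerproduct{\pi\left(g\right)v}{v^{\vee}},$$
which is exactly $\innerproduct{\zetaIntegral\left(f\right)v}{v^{\vee}}$ by the definition of the zeta operator.

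The only point needing care is the compatibility of $\iEmbedding$ with reduction mod $\quadraticMaximalIdeal$. For this I use that $\maximalCompact_{\doubleSpace{\localHermitianSpace}}$ and the reduction map are defined via the lattice spanned by the $b_j^{\pm}$, rather than via the basis $\tilde{\mathcal B}$. This matters because $\tilde{\mathcal B}$ involves $1/2$ and $1/(2t_i)$; these are units, since $q$ is odd and $t_i \in \multiplicativegroup{\quadraticRingOfIntegers}$ as the Gram matrix of $\mathcal B$ lies in $\GL_d\left(\quadraticRingOfIntegers\right)$. Either description therefore gives the same lattice, and the compatibility follows.
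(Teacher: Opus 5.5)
Your proposal is correct and is the argument the paper means when it calls this ``straightforward'' from the matrix-coefficient proposition: restrict the integral to $\maximalCompact$, use that for $k \in \maximalCompact$ the element $\iEmbedding(k,\idmap_{\localHermitianSpace})$ lies in $\maximalCompact_{\doubleSpace{\localHermitianSpace}}$ and reduces to $\iEmbedding(\quadraticQuotientMap(k),\idmap_{\hermitianSpace})$, so flatness gives $\liftOf f(\iEmbedding(k,\idmap_{\localHermitianSpace})) = f(\iEmbedding(\quadraticQuotientMap(k),\idmap_{\hermitianSpace}))$, and then sum over $\maximalCompact/\maximalCompact_{+} \cong \IsometryGroup(\hermitianSpace)$ with $\mathrm{vol}(\maximalCompact_{+})=1$. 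Your closing remark about $\tilde{\mathcal B}$ is harmless but unnecessary, since $\maximalCompact_{\doubleSpace{\localHermitianSpace}}$ and the reduction map are defined directly from the lattice spanned by the $b_j^{\pm}$.
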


\subsubsection{The case $\involutionPlusOne{\chi} \ne 1$}
We may combine \Cref{prop:evaluation-of-zeta-integral} with \Cref{thm:explicit-computaiton-of-intertwining-operator} to find an explicit expression for $\dblPreLocalGammaFactor{X}{\Pi}{\characterLift_{\chi, t}}$, when $\involutionPlusOne{\chi} \ne 1$. \Cref{thm:explicit-computaiton-of-intertwining-operator} implies that $\intertwiningOperator{\localHermitianSpace}{\characterLift_{\chi, t}}\liftOf f = \liftOf \intertwiningOperator{\hermitianSpace}{\chi} f$ and we thus obtain that $\dblPreLocalGammaFactor{X}{\Pi}{\characterLift_{\chi, t}} = \dblPreGammaFactor{\pi}{\chi}$. Note that since $\involutionPlusOne{\chi} \ne 1$ the Tate gamma factors in the expression $\beta_{\localHermitianSpace}\left(X,\characterLift_{\chi, t}, \fieldCharacter\right)$ are all constants and thus $\beta_{\localHermitianSpace}\left(X,\characterLift_{\chi, t}, \fieldCharacter\right) = \beta_{\hermitianSpace}^{\ast}\left(\chi, \fieldCharacter\right)$. The following theorem summarizes these observations.
\begin{theorem}\label{thm:epsilon-zero-equation-for-chi-1-plus-c-is-non-trivial}
	Suppose that $\involutionPlusOne{\chi} \ne 1$. Then
	$$\dblPreLocalGammaFactor{X}{\Pi}{\characterLift_{\chi, t}} = \dblPreGammaFactor{\pi}{\chi}$$
	and $\dblLocalGammaFactor{X}{\Pi}{\characterLift_{\chi, t}}{\fieldCharacter} = \dblEpsilonZeroFactor{\pi}{\chi}{\fieldCharacter}$, which is given by \begin{equation*}
		\begin{split}
			\dblEpsilonZeroFactor{\pi}{\chi}{\fieldCharacter} &= 
			\beta_{\hermitianSpace}^{\ast}\left(\chi\right) \dblJacobiSumScalar{\pi}{\chi}. 
		\end{split}
	\end{equation*}
	Moreover, if $\coefficientsField = \cComplex$ and $\innerproduct{\trace \pi \restriction_{\IsometryGroup^0\left(\hermitianSpace\right)}}{R_T^{\IsometryGroup^0\left(\hermitianSpace\right)}\left(\theta\right)} \ne 0$ for a character $\theta \colon T \to \multiplicativegroup{\cComplex}$ of a maximal anisotropic torus $T$ of $\IsometryGroup^0\left(\hermitianSpace\right)$ then
	\begin{equation*}
		\begin{split}
			\dblEpsilonZeroFactor{\pi}{\chi}{\fieldCharacter} &= 
			\tau_{\transfer{T}}\left(\transfer{\theta} \times \chi, \fieldCharacter\right) \cdot  \begin{dcases}
				\left(-1\right)^{d} & \text{if }\quadraticExtension \ne \finiteField,\\
				-\GaussSumSingleCharacter{\chi}{\fieldCharacter} & \text{if }\quadraticExtension = \finiteField \text{ and } \epsilon_{\localHermitianSpace} = -1,\\
				1 & \text{otherwise.}
			\end{dcases}
		\end{split}
	\end{equation*}
	(See the notation in \Cref{thm:explicit-computation-of-jacobi-sum}.)
\end{theorem}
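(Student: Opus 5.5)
The plan is to compare the local zeta integrals with the finite field zeta operators on lifted data, and then take lowest-order coefficients in $X^{-1}$.

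Fix $v \in \pi$, $v^{\vee} \in \pi^{\vee}$ and $f = f_{0,\hermitianSpace} \in \degeneratePrincipalSeries{\hermitianSpace}{\chi}$, with lifts $\liftOf v$, $\liftOf v^{\vee}$ and $\liftOf f$. By \Cref{prop:evaluation-of-zeta-integral} we have $\zetaIntegral(\liftOf v, \liftOf v^{\vee}, \liftOf f) = \innerproduct{\zetaIntegral(f)v}{v^{\vee}}$. Since $\zetaIntegral_{\hermitianSpace,\pi}(f_0) = \idmap$, this equals $\innerproduct{v}{v^{\vee}}$, which is nonzero for a suitable choice of $v, v^{\vee}$.

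For the dual side, I would apply \Cref{thm:explicit-computaiton-of-intertwining-operator}(1), which uses $\involutionPlusOne{\chi} \ne 1$, to get $\intertwiningOperator{\localHermitianSpace}{\characterLift_{\chi,t}}\liftOf f = \liftOf(\intertwiningOperator{\hermitianSpace}{\chi} f)$. One point needs checking here. The target space is $\localdegeneratePrincipalSeries{X^{-1}}{\localHermitianSpace}{\characterLift_{\minusInvolution{\chi},t^{-1}}}$, and the lift of $\intertwiningOperator{\hermitianSpace}{\chi} f$ should be the flat section in that space. Its restriction to $\maximalCompact_{\doubleSpace{\localHermitianSpace}}$ is the reduction of the finite function, so \Cref{prop:evaluation-of-zeta-integral} applies again, now with $\minusInvolution{\chi}$. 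This gives $\dualZetaIntegral(\liftOf v, \liftOf v^{\vee}, \liftOf f) = \innerproduct{\dualZetaIntegral_{\hermitianSpace,\pi}(f_0) v}{v^{\vee}} = \dblPreGammaFactor{\pi}{\chi}\innerproduct{v}{v^{\vee}}$, using the definition of $\dblPreGammaFactor{\pi}{\chi}$ via $f_0$ and \Cref{cor:kernel-zeta-integral-is-a-scalar}. Dividing the two sides in the local functional equation gives $\dblPreLocalGammaFactor{X}{\Pi}{\characterLift_{\chi,t}} = \dblPreGammaFactor{\pi}{\chi}$, a constant.

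Next I would normalize. Because $\involutionPlusOne{\chi} \ne 1$, each Tate factor appearing in $\beta_{\localHermitianSpace}$ has a nontrivial finite character: $\chi\restriction_{\multiplicativegroup{\finiteField}} \ne 1$ when $\quadraticExtension \ne \finiteField$, and $\chi^2 \ne 1$ when $\quadraticExtension = \finiteField$ (for the extra factor with $\chi$, note $\chi \ne 1$ as well). By \eqref{eq:tate-gamma-factor-computation} every such factor is a constant Gauss sum, so $\beta_{\localHermitianSpace} = \beta^{\ast}_{\hermitianSpace}(\chi,\fieldCharacter)$ by \Cref{prop:constant-coefficient-of-beta}. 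Hence the normalized gamma factor is the constant $\centralCharacter{\pi}(-1)\chi(-2)^{d}\beta^{\ast}_{\hermitianSpace}\dblPreGammaFactor{\pi}{\chi}$. Substituting \eqref{eq:def-fin-gamma} in the nondegenerate case, the factors $\centralCharacter{\pi}(-1)^2$ and $\chi(-2)^{d}\chi(-2)^{-d}$ cancel, leaving $\beta^{\ast}_{\hermitianSpace}\dblJacobiSumScalar{\pi}{\chi}$. Since this is constant, it equals its own lowest-order coefficient $\dblEpsilonZeroFactor{\pi}{\chi}{\fieldCharacter}$; this also shows independence of $t$.

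For the explicit formula over $\cComplex$, I would insert \Cref{thm:explicit-computation-of-jacobi-sum}, namely $\dblJacobiSumScalar{\pi}{\chi} = c_{\hermitianSpace}(\chi,\fieldCharacter)\tau_{\transfer{T}}$. Then I would use the second formula of \Cref{prop:constant-coefficient-of-beta}, which expresses $\beta^{\ast}_{\hermitianSpace}$ as $c_{\hermitianSpace}^{-1}$ times the stated case factor, so that $c_{\hermitianSpace}$ cancels. The main obstacle is the bookkeeping of the lift in the dual step. One must make sure the flatness and measure normalizations are correct: $\siegelDoublingUnipotent{\localHermitianSpace}(\quadraticRingOfIntegers)$ has measure $\sqrt{\sizeof{\lieAlgebra}}$, matching the $1/\sqrt{\sizeof{\lieAlgebra}}$ in $\intertwiningOperator{\hermitianSpace}{\chi}$. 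One must also check that $\characterLift_{\minusInvolution{\chi},t^{-1}}$ with $X^{-1}$ is exactly the data for which \Cref{prop:evaluation-of-zeta-integral} applies. All of these are already built into the earlier theorem, so I would only verify consistency.
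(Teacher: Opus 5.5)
Your proposal is correct and follows the paper's argument. You use \Cref{thm:explicit-computaiton-of-intertwining-operator}(1) to commute $\intertwiningOperator{\localHermitianSpace}{\characterLift_{\chi, t}}$ with lifting, and evaluate both zeta integrals on lifted data via \Cref{prop:evaluation-of-zeta-integral} to get $\dblPreLocalGammaFactor{X}{\Pi}{\characterLift_{\chi, t}} = \dblPreGammaFactor{\pi}{\chi}$. You note that all Tate factors in $\beta_{\localHermitianSpace}$ are constant, so $\beta_{\localHermitianSpace} = \beta^{\ast}_{\hermitianSpace}\left(\chi, \fieldCharacter\right)$, and then combine \eqref{eq:def-fin-gamma}, \Cref{thm:explicit-computation-of-jacobi-sum} and \Cref{prop:constant-coefficient-of-beta}; along the way you make explicit the cancellations of $\centralCharacter{\pi}\left(-1\right)^2$ and $\chi\left(-2\right)^{\pm d}$ that the paper leaves implicit.
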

\begin{remark}
	The computation of $\dblLocalGammaFactor{X}{\Pi}{\characterLift_{\chi, t}}{\fieldCharacter}$ in the case $\coefficientsField = \cComplex$ agrees with the twisted $\varepsilon_0$-factor attached to the functorial lifting of $\pi$, as expected by Langlands functoriality.
\end{remark}

\subsubsection{The case $\involutionPlusOne{\chi} = 1$}\label{subsec:depth-zero-for-chi-1-plus-c-is-trivial}
When $\involutionPlusOne{\chi} = 1$, the relation between the intertwining operators $\intertwiningOperator{\localHermitianSpace}{\characterLift_{\chi, t}}$ and $\intertwiningOperator{\hermitianSpace}{\chi}$ is more complicated. Recall the functional equation
\begin{equation*}
	\begin{split}
		&\zetaIntegral_{\localHermitianSpace}\left(\liftOf f, \liftOf v, \liftOf v^{\vee}\right) \dblLocalGammaFactor{X \quadraticResidueFieldCardinality^{1/2}}{\Pi}{\characterLift_{\chi, t}} {\fieldCharacter}\\
		=& \centralCharacter{\Pi}\left(-1\right) \chi\left(-2\right)^{d} \beta_{\localHermitianSpace}\left(X,\characterLift_{\chi, t}, \fieldCharacter\right) \dualZetaIntegral_{\localHermitianSpace}\left(\liftOf f, \liftOf v, \liftOf v^{\vee}\right).
	\end{split}
\end{equation*}
By comparing the constant term, which is also the lowest degree term, of the Taylor expansions with respect to the variable $X^{-1}$ of both sides of this equation and by using \Cref{thm:explicit-computaiton-of-intertwining-operator} and \Cref{prop:evaluation-of-zeta-integral}, we obtain the following expression for $\dblEpsilonZeroFactor{\pi}{\chi}{\fieldCharacter}$.
\begin{proposition}\label{prop:epsilon-zero-equation-for-chi-1-plus-c-is-trivial}
	Suppose that $\involutionPlusOne{\chi} = 1$. Then, for any $f \in \degeneratePrincipalSeries{\hermitianSpace}{\chi}$, 
	\begin{equation*}
		\begin{split}
			\dblEpsilonZeroFactor{\pi}{\chi}{\fieldCharacter} \zetaIntegral_{\hermitianSpace,\pi}\left(f\right) = & \centralCharacter{\pi}\left(-1\right) \chi^{-1}\left(\left(-1\right)^d \detQuadratic \hermitianSpace\right) \beta^{\ast}_{\hermitianSpace}\left(\chi, \fieldCharacter\right)\\
			& \times \zetaIntegral_{\hermitianSpace,\pi}\left(\prod_{j=1}^d\left(\left(m_d - \Delta_{\hermitianSpace, \chi} \chi\left(\delta_{\hermitianSpace}\right) \left(\residueFieldCardinality^{1/2} - \residueFieldCardinality^{-1/2}\right) \cdot \idmap \right)\right. \right. \\
			& \times \left.\left. \prod_{i=1}^{d-j}\left(m_{d-i} - \chi\left(-1\right)\left(\quadraticResidueFieldCardinality^{1/2}-\quadraticResidueFieldCardinality^{-1/2}\right) \cdot \idmap \right)\right) f\right),		
		\end{split}
	\end{equation*}
	where all products $\Pi$ are taken left to right.
\end{proposition}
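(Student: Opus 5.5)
The plan is to start from the functional equation recalled just before the statement,
$$\zetaIntegral_{\localHermitianSpace}\left(\liftOf f, \liftOf v, \liftOf v^{\vee}\right) \dblLocalGammaFactor{X \quadraticResidueFieldCardinality^{1/2}}{\Pi}{\characterLift_{\chi, t}}{\fieldCharacter} = \centralCharacter{\Pi}\left(-1\right) \chi\left(-2\right)^{d} \beta_{\localHermitianSpace}\left(X,\characterLift_{\chi, t}, \fieldCharacter\right) \dualZetaIntegral_{\localHermitianSpace}\left(\liftOf f, \liftOf v, \liftOf v^{\vee}\right),$$
and compare the lowest degree terms in $X^{-1}$ on both sides. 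This comparison is made for all $v \in \pi$, $v^{\vee} \in \Contragredient{\pi}$ and $f \in \degeneratePrincipalSeries{\hermitianSpace}{\chi}$.

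\emph{Left-hand side.} By \Cref{prop:evaluation-of-zeta-integral}, $\zetaIntegral_{\localHermitianSpace}(\liftOf f,\liftOf v,\liftOf v^\vee) = \innerproduct{\zetaIntegral_{\hermitianSpace,\pi}(f)v}{v^\vee}$. The reason is that $\liftOf v$ is supported on $\maximalCompact$, so only $g \in \maximalCompact$ contributes, and on $\maximalCompact$ the flat section $\liftOf f$ takes constant values in $\coefficientsField$. This quantity is therefore independent of $X$. Consequently the lowest-degree coefficient of the left side is $\dblEpsilonZeroFactor{\pi}{\chi}{\fieldCharacter}\innerproduct{\zetaIntegral_{\hermitianSpace,\pi}(f)v}{v^\vee}$. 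Replacing $X$ by $X\quadraticResidueFieldCardinality^{1/2}$ only rescales higher powers of $X^{-1}$, so it does not affect the constant term.

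\emph{Right-hand side.} \Cref{prop:constant-coefficient-of-beta} gives the constant term of $\beta_{\localHermitianSpace}$ as $\beta^{\ast}_{\hermitianSpace}(\chi,\fieldCharacter)$, and it has no negative powers of $X^{-1}$. For the dual zeta integral, I would apply \Cref{thm:explicit-computaiton-of-intertwining-operator}(2), which expresses $\intertwiningOperator{\localHermitianSpace}{\characterLift_{\chi,t}}\liftOf f$ as $\liftOf$ of a finite-field section. The coefficients of that section are rational functions such as
$$\frac{\quadraticResidueFieldCardinality^{1/2}-\quadraticResidueFieldCardinality^{-1/2}}{1-\quadraticResidueFieldCardinality^{-(d+1-i-2j)}t^{-2}X^{-2}},$$
each a power series in $X^{-1}$ whose constant term is the numerator. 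The lift here is taken in the principal series with parameters $X^{-1}$ and $t^{-1}$. As on the left, \Cref{prop:evaluation-of-zeta-integral} (applied with $\minusInvolution{\chi}$) turns the zeta integral of such a lift into $\innerproduct{\zetaIntegral_{\hermitianSpace,\pi}(\cdot)v}{v^\vee}$, extended $\coefficientsField[[X^{-1}]]$-linearly. The constant term is then obtained by setting $X^{-1}=0$ in every factor, which yields the operator product in the statement. The prefactor $\chi(2)^{-d}\chi^{-1}(\detQuadratic\hermitianSpace)$ from the theorem combines with $\chi(-2)^d$ from the normalization to give $\chi^{-1}((-1)^d\detQuadratic\hermitianSpace)$, using $\involutionPlusOne{\chi}=1$ where needed. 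The term $\centralCharacter{\Pi}(-1)$ equals $\centralCharacter{\pi}(-1)$.

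\emph{Main obstacle.} The delicate step is justifying that comparison of lowest-degree terms is legitimate. This requires $\zetaIntegral_{\hermitianSpace,\pi}(f)\neq 0$ for some $f$, which holds for $f_0$ by \Cref{subsec:explicit-computation}. It also requires that both sides are genuinely power series in $X^{-1}$ with no negative powers, so that the lowest-degree term of $\gamma^{\mathrm{dbl}}$ is its constant term. I would check this via the explicit $\beta$ formula, where the Tate factors tend to finite constants as $X^{-1}\to 0$ by \Cref{prop:lowest-degree-term-of-taylor-expansion-of-tate-gamma-factor}. Finally, since the identity holds for all $v,v^\vee$, it lifts from matrix coefficients to the stated operator identity.
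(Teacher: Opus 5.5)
Your proposal is correct and is exactly the paper's argument: compare the constant terms in $X^{-1}$ of the functional equation, evaluating both zeta integrals on lifts via \Cref{prop:evaluation-of-zeta-integral} (the dual one after applying \Cref{thm:explicit-computaiton-of-intertwining-operator}), and using \Cref{prop:constant-coefficient-of-beta} for $\beta_{\localHermitianSpace}$.

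One correction to your last paragraph. Knowing that everything is a power series in $X^{-1}$ only shows that the lowest degree of $\gamma^{\mathrm{dbl}}$ is $\ge 0$. You also need its constant coefficient to be nonzero, in order to identify it with $\dblEpsilonZeroFactor{\pi}{\chi}{\fieldCharacter}$. That nonvanishing does not follow from the $\beta$ formula; it follows from your constant-term identity itself. By the Iwahori--Hecke relations, each factor of the operator product is a nonzero multiple of some $(m_i^{\ast})^{-1}$, so the product is invertible on $\degeneratePrincipalSeries{\hermitianSpace}{\chi}$. Taking $f$ to be the preimage of $f_{0,\hermitianSpace}$ then makes the right-hand side a nonzero multiple of $\idmap_{\pi}$, which forces the constant coefficient to be nonzero. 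The paper also leaves this point implicit.
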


By the discussion in \Cref{subsec:decomposiiton-into-simple-reflections} we have the relations
$$\left(m_i^{\ast}\right)^{-1} = m_i^{\ast} - \left(\quadraticResidueFieldCardinality^{1/2} - \quadraticResidueFieldCardinality^{-1/2}\right) \cdot \idmap$$ and
$$\left(m_d^{\ast}\right)^{-1} = m_d^{\ast} - \Delta_{\hermitianSpace, \chi} \left(\residueFieldCardinality^{1/2} - \residueFieldCardinality^{-1/2}\right) \cdot \idmap.$$

Substituting these in the formula in \Cref{prop:epsilon-zero-equation-for-chi-1-plus-c-is-trivial} and using \eqref{eq:decomposition-of-intertwining-operator-into-reflections-finite-field-alternative} we obtain the following theorem.
\begin{theorem}\label{thm:epsilon-zero-equation-for-chi-1-plus-c-is-trivial}
	Suppose that $\involutionPlusOne{\chi} = 1$. Then for any irreducible cuspidal representation $\pi$ of $\IsometryGroup\left(\hermitianSpace\right)$ and any $f \in \degeneratePrincipalSeries{\hermitianSpace}{\chi}$, we have that
	\begin{equation*}
		\begin{split}
			\dblEpsilonZeroFactor{\pi}{\chi}{\fieldCharacter} \zetaIntegral_{\hermitianSpace,\pi}\left(f\right) = & \centralCharacter{\pi}\left(-1\right) \cdot \beta^{\ast}_{\hermitianSpace}\left(\chi, \fieldCharacter\right) \cdot \chi\left(-2\right)^{-d} \cdot  \zetaIntegral_{\hermitianSpace,\pi}\left(\intertwiningOperator{\hermitianSpace}{\chi}^{-1} f\right),			
		\end{split}
	\end{equation*}
	 and thus
	$$\dblEpsilonZeroFactor{\pi}{\chi}{\fieldCharacter} = \frac{\centralCharacter{\pi}\left(-1\right) \beta_{\hermitianSpace}^{\ast}\left(\chi, \fieldCharacter\right) \chi\left(-2\right)^{-d}}{\dblPreGammaFactor{\pi}{\chi}} = \frac{\beta_{\hermitianSpace}^{\ast}\left(\chi, \fieldCharacter\right)}{\dblJacobiSumScalar{\pi}{\chi}}.$$
	
	Moreover, suppose that $\coefficientsField = \cComplex$. Let $T$ be a maximal anisotropic torus of $\IsometryGroup^0\left(\hermitianSpace\right)$ and let $\theta \colon T \to \multiplicativegroup{\cComplex}$ be a character in general position. Let $\pi_0$ be the irreducible cuspidal representation of $\IsometryGroup^0\left(\hermitianSpace\right)$ such that $\trace \pi_0 = \pm R_{T}^{\IsometryGroup^0\left(\hermitianSpace\right)}\left(\theta\right)$ and suppose that $\pi$ is an irreducible cuspidal representation of $\IsometryGroup\left(\hermitianSpace\right)$ such that its restriction to $\IsometryGroup^0\left(\hermitianSpace\right)$ contains $\pi_0$. Then
	\begin{equation*}
	\begin{split}
		\dblEpsilonZeroFactor{\pi}{\chi}{\fieldCharacter} &= 
		\tau_{\transfer{T}}\left(\transfer{\theta} \times \chi, \fieldCharacter\right) \cdot  \begin{dcases}
			\left(-1\right)^{d} & \text{if }\quadraticExtension \ne \finiteField,\\
			-\GaussSumSingleCharacter{\chi}{\fieldCharacter} & \text{if }\quadraticExtension = \finiteField \text{ and } \epsilon_{\localHermitianSpace} = -1,\\
			1 & \text{otherwise.}
		\end{dcases}
	\end{split}
\end{equation*}
(See the notation in \Cref{thm:explicit-computation-of-jacobi-sum}.)
\end{theorem}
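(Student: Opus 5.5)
The plan is to start from \Cref{prop:epsilon-zero-equation-for-chi-1-plus-c-is-trivial} and recognize the operator inside the zeta operator on the right as a scalar multiple of $\intertwiningOperator{\hermitianSpace}{\chi}^{-1}$. Write each factor in terms of the normalized operators $m_i^{\ast}$. For $1\le i\le d-1$ we have $m_{d-i} - \chi(-1)(\quadraticResidueFieldCardinality^{1/2}-\quadraticResidueFieldCardinality^{-1/2})\idmap = \chi(-1)\bigl(m_{d-i}^{\ast} - (\quadraticResidueFieldCardinality^{1/2}-\quadraticResidueFieldCardinality^{-1/2})\idmap\bigr) = \chi(-1)(m_{d-i}^{\ast})^{-1}$. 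Similarly, $m_d - \Delta_{\hermitianSpace,\chi}\chi(\delta_{\hermitianSpace})(\residueFieldCardinality^{1/2}-\residueFieldCardinality^{-1/2})\idmap = \chi(\delta_{\hermitianSpace})^{-1}(m_d^{\ast})^{-1}$. Here we use that $\chi(\delta_{\hermitianSpace})^2=1$ when $\involutionPlusOne{\chi}=1$, since $\delta_{\hermitianSpace}^{1+c}=-\delta_{\hermitianSpace}^2$ and $\chi$ is trivial on $\multiplicativegroup{\finiteField}$ in the unitary case. These quadratic relations are valid because $\involutionPlusOne{\chi}=1$ forces $\chi=\minusInvolution{\chi}$.

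The left-to-right product of inverses $\prod_j\bigl[(m_d^{\ast})^{-1}\prod_{i=1}^{d-j}(m_{d-i}^{\ast})^{-1}\bigr]$ is exactly the inverse of the right-to-left composition $m_d^{\ast}\circ(m_{d-1}^{\ast}\circ m_d^{\ast})\circ\dots\circ(m_1^{\ast}\circ\dots\circ m_d^{\ast})$. By \eqref{eq:decomposition-of-intertwining-operator-into-reflections-finite-field-alternative}, that composition equals, up to the scalar $\chi(2)^{-d}\chi^{-1}(\detQuadratic S)$ and the product of the normalizing constants $\chi(-1)$ and $\chi(\delta_{\hermitianSpace})$, the operator $\intertwiningOperator{\hermitianSpace}{\chi}$. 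This step does need care about the domains: each inverse must be the operator between the correct principal series, and the order of the factors must reverse correctly. I would check this by indexing both products by the same reduced word of the long Siegel Weyl element.

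The remaining step is bookkeeping of the scalars. There are $d$ reflections $m_d$ and $\binom{d}{2}$ reflections of the other type, so the $\chi(-1)$ factors contribute $\chi(-1)^{\binom d2}$ twice and cancel, and likewise for $\chi(\delta_{\hermitianSpace})^{\pm d}$. The factor $\chi^{-1}(\detQuadratic S)$ versus $\chi^{-1}(\detQuadratic\hermitianSpace)$ differs by $\chi(\det w_d)$, which together with the $(-1)^d$ in the proposition should combine to the stated $\chi(-2)^{-d}$. Here $\chi(2)^{\pm d}$ comes from inverting the $\chi(2)^{-d}$ in the decomposition, with $\chi^{2}(\cdot)$ of norms trivial. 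I expect this sign and constant bookkeeping to be the main obstacle, and I would verify it first in the case $d=1$.

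This gives the operator identity. Applying it to $f=f_0$, where $\zetaIntegral_{\hermitianSpace,\pi}(f_0)=\idmap$, and using $\zetaIntegral_{\hermitianSpace,\pi}(\intertwiningOperator{\hermitianSpace}{\chi}^{-1}f_0)$, we need to identify this with $\dblPreGammaFactor{\pi}{\chi}^{-1}$. Since $\intertwiningOperator{\hermitianSpace}{\chi}^{-1}$ is (up to scalar) $\intertwiningOperator{\hermitianSpace}{\minusInvolution\chi}$ with $\minusInvolution{\chi}=\chi$, I would instead apply the identity to $f=\intertwiningOperator{\hermitianSpace}{\chi}f_0$. Then the right side becomes $\zetaIntegral(f_0)=\idmap$, and the left side becomes $\dblEpsilonZeroFactor{\pi}{\chi}{\fieldCharacter}\,\dblPreGammaFactor{\pi}{\chi}$. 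Dividing and using \eqref{eq:def-fin-gamma} gives both displayed formulas.

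For the explicit formula when $\coefficientsField=\cComplex$, substitute \Cref{thm:explicit-computation-of-jacobi-sum-chi-1-plus-c-equals-1} for $\dblJacobiSumScalar{\pi}{\chi}$ and the $\involutionPlusOne{\chi}=1$ case of \Cref{prop:constant-coefficient-of-beta}. The factors $\varepsilon_{\IsometryGroup(\hermitianSpace)}$, the powers of $q$ and $\chi(2)$ cancel, leaving $\tau_{\transfer T}$ times the stated case factor. The hypotheses of that theorem, namely general position and anisotropy of $T$, which makes condition (1) vacuous, need to be checked; condition (2) may require a brief remark.
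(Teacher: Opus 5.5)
Your proposal follows the paper's proof. You substitute $(m_i^{\ast})^{-1}=m_i^{\ast}-(\cdots)\,\idmap$ into \Cref{prop:epsilon-zero-equation-for-chi-1-plus-c-is-trivial}, recognize the product as the inverse of the composition in \eqref{eq:decomposition-of-intertwining-operator-into-reflections-finite-field-alternative}, and then combine \Cref{thm:explicit-computation-of-jacobi-sum-chi-1-plus-c-equals-1} with \Cref{prop:constant-coefficient-of-beta}; evaluating the identity at $f=\intertwiningOperator{\hermitianSpace}{\chi}f_0$ is a clean way to get the second display. Two small corrections: when $\involutionPlusOne{\chi}=1$ the operator $\intertwiningOperator{\hermitianSpace}{\chi}^{-1}$ is in general \emph{not} a scalar multiple of $\intertwiningOperator{\hermitianSpace}{\minusInvolution{\chi}}=\intertwiningOperator{\hermitianSpace}{\chi}$, since the $m_i^{\ast}$ satisfy genuine quadratic relations (your argument never needs this); and the scalars do not close up by absorbing a $\chi(\det w_d)$ into $(-1)^d$, but because $c_0=\chi(2)^{-d}\chi^{-1}(\detQuadratic S)$, coming from the factorization of $\weylDoubleHermitian{\hermitianSpace}$, enters twice: once through \eqref{eq:decomposition-of-intertwining-operator-into-reflections} (together with $\chi(-2)^{d}$ it gives the prefactor in \Cref{prop:epsilon-zero-equation-for-chi-1-plus-c-is-trivial}) and once through \eqref{eq:decomposition-of-intertwining-operator-into-reflections-finite-field-alternative}, so the total is $\chi(-2)^{d}c_0^{2}=\chi(-2)^{-d}$ because $\chi^{2}$ is trivial on $\multiplicativegroup{\finiteField}$.
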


\begin{remark}
	Notice that if $\involutionPlusOne{\chi} \ne 1$ then $\intertwiningOperator{\hermitianSpace}{\minusInvolution{\chi}}^{-1} = \intertwiningOperator{\hermitianSpace}{\chi}$. The formula in \Cref{thm:epsilon-zero-equation-for-chi-1-plus-c-is-trivial} suggests that for any irreducible representation of $\IsometryGroup\left(\hermitianSpace\right)$ and any character $\chi \colon \multiplicativegroup{\quadraticExtension} \to \multiplicativegroup{\coefficientsField}$, the factor $\dblEpsilonZeroFactor{\pi}{\chi}{\fieldCharacter}$ is given by the formula
	\begin{equation}\label{eq:conjectural-doubling-method-epsilon-factor}
		\dblEpsilonZeroFactor{\pi}{\chi}{\fieldCharacter} \cdot \idmap_{\pi} = \centralCharacter{\pi}\left(-1\right) \cdot \beta^{\ast}_{\hermitianSpace}\left(\chi, \fieldCharacter\right) \cdot \chi\left(-2\right)^{d} \cdot  \zetaIntegral_{\hermitianSpace,\pi}\left(\intertwiningOperator{\hermitianSpace}{\minusInvolution{\chi}}^{-1} f_{0, \hermitianSpace}\right),
	\end{equation} 
	where $f_{0, \hermitianSpace} \in \degeneratePrincipalSeries{\hermitianSpace}{\minusInvolution{\chi}}$ is the unique section supported on $\siegelDoublingParabolic{\hermitianSpace}$ whose value at $\idmap_{\doubleSpace{\hermitianSpace}}$ is $1$. Notice that similarly to the discussion after \Cref{cor:kernel-zeta-integral-is-a-scalar}, we have that $\zetaIntegral_{\hermitianSpace, \pi}\left(\intertwiningOperator{\hermitianSpace}{\minusInvolution{\chi}}^{-1}f_0\right)$ is indeed a scalar operator. We remark that if $\pi$ is cuspidal, \eqref{eq:conjectural-doubling-method-epsilon-factor} agrees both with \Cref{thm:epsilon-zero-equation-for-chi-1-plus-c-is-non-trivial} and  \Cref{thm:epsilon-zero-equation-for-chi-1-plus-c-is-trivial} since in the case $\involutionPlusOne{\chi} = 1$ we have that $\chi\left(2\right) = \pm 1$ and thus $\chi\left(-2\right)^{-d} = \chi\left(-2\right)^{d}$.
\end{remark}

\begin{remark}
	When $\coefficientsField = \cComplex$, the existence of the poles of $\dblLocalGammaFactor{X}{\Pi}{\characterLift_{\chi, t}}{\fieldCharacter}$ is closely related to the theta correspondence. By \cite[Proposition 10.1]{GanIchino2014}, the gamma factor $\dblLocalGammaFactor{X}{\Pi}{\characterLift_{\chi, t}}{\fieldCharacter}$ is holomorphic unless $X = \pm \quadraticResidueFieldCardinality^{-\frac{\ell + 1}{2}}$ for an integer $\ell \ge 1$. Moreover, by \cite[Proposition 11.2]{GanIchino2014} it has a pole at $X = \pm \quadraticResidueFieldCardinality^{-\frac{\ell+1}{2}}$ if and only if there is non-zero a theta lift of $\Pi$ to $\IsometryGroup\left(\localHermitianSpace'\right)$ where $\localHermitianSpace'$ is a non-degenerate $\epsilon_{\localHermitianSpace'}$-sesquilinear space over $\localQuadraticExtension$ with $\epsilon_{\localHermitianSpace'} = -\epsilon_{\hermitianSpace}$ and $$\dim_{\localQuadraticExtension}\localHermitianSpace' = \begin{dcases}
		\dim_{\quadraticExtension}\hermitianSpace - \ell & \text{if } \quadraticExtension \ne \finiteField,\\
		\dim_{\finiteField}\hermitianSpace - \ell - 1 & \text{if } \quadraticExtension = \finiteField \text{ and } \epsilon_{\hermitianSpace} = 1,\\
		\dim_{\finiteField}\hermitianSpace - \ell + 1 & \text{if } \quadraticExtension = \finiteField \text{ and } \epsilon_{\hermitianSpace} = -1.\\		
	\end{dcases}$$
Notice that in the latter case ($\quadraticExtension = \finiteField$ and $\epsilon_{\hermitianSpace} = -1$) we will always have a pole at $X = \pm \residueFieldCardinality^{-1}$ because for $\ell = 1$ we have $\dim_{\localField} \localHermitianSpace' = \dim_{\localField} \localHermitianSpace$, and it is known that the first occurrence for the theta lift of $\Pi$ happens at the latest at $\dim_{\localField} \localHermitianSpace$.
	
Theta lifts of $\Pi$ can be reduced to those of $\pi$ using results of Pan \cite[Theorem 5.6]{Pan2002}. Thus determining the poles of $\dblLocalGammaFactor{X}{\Pi}{\characterLift_{\chi, t}}{\fieldCharacter}$ can be reduced to the determination of the first occurrence of $\pi$. When $q$ is large and $\pi$ is a unipotent cuspidal representation, this was worked out in \cite{AubertMichelRouquier1996} (see also \cite[Proposition 6.3]{LiuWang2020b}) and \cite[Theorem 3.7]{LiuWang2020}. For a general cuspidal $\pi$, determining its first occurrence can be reduced to the unipotent cuspidal case using \cite[Propositions 2.18, 2.19 and 2.28]{Pan2024}.

Finally, we remark that when $\coefficientsField = \cComplex$, using \cite[Theorem 11.5]{GanIchino2014}, the results cited above and \Cref{thm:epsilon-zero-equation-for-chi-1-plus-c-is-trivial}, one can find an explicit expression for $\dblLocalGammaFactor{X}{\Pi}{\characterLift_{\chi, t}}{\fieldCharacter}$.
\end{remark}

\appendix

\section{$\varepsilon_0$-factors}\label{app:Appendix}
In this appendix we give a justification for our definition of $\dblEpsilonZeroFactor{\pi}{\chi}{\fieldCharacter}$. We limit our discussion to representations over the complex numbers $\cComplex$.

Let $\localField$ be a non-archimedean local field with ring of integers $\ringOfIntegers$, maximal ideal $\maximalIdeal$ and residue field $\finiteField = \ringOfIntegers \slash \maximalIdeal$. Moreover, we choose a uniformizer $\uniformizer_{\localField} \in \maximalIdeal \setminus \maximalIdeal^2$ and set $\residueFieldCardinality = \sizeof{\finiteField}$. Fix an algebraic closure $\algebraicClosure{\localField}$ of $\localField$ and let ${\localField^{\unramified} \slash \localField}$ be the maximal unramified extension in $\algebraicClosure{\localField}$. Recall that the \emph{inertia subgroup} $I_{\localField}$ is defined as the kernel of the homomorphism $\Galois\left(\algebraicClosure{\localField} \slash \localField \right) \to \Galois\left(\localField^{\unramified} \slash \localField \right)$ given by $\sigma \mapsto \sigma\restriction_{\localField^{\unramified}}$. We have that $\Galois\left(\localField^{\unramified} \slash \localField\right)$ is isomorphic to $\Galois\left(\algebraicClosure{\finiteField} \slash \finiteField\right)$ where $\algebraicClosure{\finiteField}$ is the algebraic closure of $\finiteField$. Let $\overline{\Frobenius} \colon \algebraicClosure{\finiteField} \to \algebraicClosure{\finiteField}$ be the geometric Frobenius automorphism given by $\overline{\Frobenius} (x) = x^{1/\residueFieldCardinality}$. The \emph{Weil group} is defined as the inverse image of the subgroup $\left\langle\overline{\Frobenius}\right\rangle \subset \Galois\left(\algebraicClosure{\finiteField} \slash \finiteField\right)$ generated by $\overline{\Frobenius}$. This fits into the following diagram, where the second row is a short exact sequence:
\begin{displaymath}
	\xymatrix{
		& & \Galois\left(\algebraicClosure{\localField} \slash \localField\right) \ar[r] 
		& \Galois\left(\localField^{\unramified} \slash \localField\right) \cong \Galois\left(\algebraicClosure{\finiteField} \slash \finiteField\right) \\
		0 \ar[r] & I_{\localField} \ar[r] 
		& W_{\localField} \ar[r] \ar@{^{(}->}[u] 
		& \left\langle \overline{\Frobenius} \right\rangle \ar[r] \ar@{^{(}->}[u] 
		& 0.
	}
\end{displaymath}

For an element $\Frobenius \in W_{\localField}$ in the preimage of $\overline{\Frobenius}$ we have that $W_{\localField} = \left\langle\Frobenius\right\rangle \ltimes I_{\localField}$. We define a character $\abs{\cdot} \colon W_{\localField} \to \multiplicativegroup{\qRational}$ by setting $$\abs{\Frobenius^k \cdot i} = \residueFieldCardinality^{-k},$$
for any $k \in \zIntegers$ and any $i \in I_{\localField}$. Notice that $w \in W_{\localField}$ acts on the quotient ring $\finiteField = \ringOfIntegers \slash \maximalIdeal$ by $w\left(x + \maximalIdeal\right) = x^{\abs{w}} + \maximalIdeal$.

A \emph{Weil--Deligne representation} is a tuple $\varphi = \left(\left(\rho, V_{\rho}\right), N\right)$ where:
\begin{enumerate}
	\item \label{item:wd-homomorphism} $V_{\rho}$ is a finite-dimensional $\cComplex$-vector space and $\rho \colon W_{\localField} \to \GL\left(V_{\rho}\right)$ is a group homomorphism.
	\item \label{item:wd-nilpotent} $N \in \EndomorphismRing\left(V_{\rho}\right)$ is a nilpotent element such that for any $w \in W_{\localField}$ we have $$\rho(w) \circ N \circ \rho(w)^{-1} = \abs{w} N.$$
	\item \label{item:wd-smooth}(Smoothness): There exists an open subgroup $U$ of $I_{\localField}$ such that $\rho\left(U\right) = \left\{\idmap_{V_{\rho}}\right\}$.
	\item \label{item:wd-frobenius}(Frobenius semi-simple): $\rho\left(\Frobenius\right)$ is semi-simple. This condition does not depend on the choice of the pre-image $\Frobenius$.
\end{enumerate}
For $i=1,2$, let $\varphi_i = \left(\left(\rho_i, V_{\rho_i}\right), N_i\right)$ be a Weil--Deligne representation. A homomorphism from $\varphi_1$ to $\varphi_2$ is a linear map $T \colon V_{\rho_1} \to V_{\rho_2}$ such that for every $w \in W_{\localField}$ we have $T \circ \rho_1\left(w\right)=\rho_2\left(w\right) \circ T$ and $T \circ N_1 = N_2 \circ T$. An isomorphism from $\varphi_1$ to $\varphi_2$ is a homomorphism $T \colon V_{\rho_1} \to V_{\rho_2}$ as above such that $T$ is an isomorphism.

The local Langlands correspondence for general linear groups (c.f.\ \cite{harris2001geometry} and \cite{henniart2000preuve}) is a natural bijection between equivalence classes of irreducible admissible representations of $\GL_n\left(\localField\right)$ and equivalence classes of $n$-dimensional semi-simple Weil--Deligne representations. It depends on the choices of $\uniformizer_{\localField}$ and of $\Frobenius$. Local factors are required to be preserved under this correspondence, and this requirement uniquely determines the correspondence (this is due to the converse theorem). The Weil--Deligne representation $\varphi = \left(\left(\rho, V_{\rho}\right), N\right)$, which corresponds to an irreducible admissible representation $\depthZero$ of $\GL_n\left(\localField\right)$ under this bijection, is called \emph{a Langlands parameter} of $\depthZero$ and the equivalence class of $\varphi$ is called \emph{the $L$-parameter of $\depthZero$}.

We will now recall the definition of local factors of Weil--Deligne representations. We start with the $L$-factor $L\left(s, \varphi\right)$ associated to a Weil--Deligne representation $\varphi = \left(\left(\rho, V_{\rho}\right), N\right)$. It is defined as $$L\left(s, \varphi\right) = \det\left(\left(\idmap - \residueFieldCardinality^{-s} \rho\left(\Frobenius\right)\right) \restriction_{V_{\rho, N}^{I_{\localField}}} \right)^{-1},$$
where $V_{\rho, N} = \ker N$ and $V_{\rho, N}^{I_{\localField}}$ is the subspace of $V_{\rho, N}$ consisting of $\rho\left(I_{\localField}\right)$-fixed points.

Next, we recall the definition of the $\varepsilon$-factor associated to $\varphi = \left(\left(\rho, V_{\rho}\right), N\right)$ following \cite[Section 8.12]{Deligne1973}. Let $\fieldCharacter \colon F \to \multiplicativegroup{\cComplex}$ be a non-trivial character.
We have that
$$\varepsilon\left(s, \varphi, \fieldCharacter\right) = \residueFieldCardinality^{-\left(\dim V \cdot \mathbf{n}\left(\fieldCharacter\right) + \mathbf{a}\left(\rho\right) + \dim V_{\rho}^{I_{\localField}} - \dim V_{\rho,N}^{I_{\localField}} \right)s} \cdot \det\left(-\rho\left(\Frobenius\right) \restriction_{V_{\rho, N}^{I_{\localField}}}\right)^{-1} \cdot \varepsilon_0\left(\rho, \fieldCharacter\right),$$
where
\begin{enumerate}
	\item $\mathbf{a}\left(\rho\right)$ is the \emph{Artin conductor} of $\rho$, see \cite[Section 2]{GrossReeder2010}.
	\item $\mathbf{n}\left(\fieldCharacter\right)$ is the \emph{exponent} of $\fieldCharacter$, i.e., the largest integer $k$ such that $\fieldCharacter$ is trivial on $\maximalIdeal^{-k}$.
	\item $\varepsilon_0\left(\rho, \fieldCharacter\right)$ is a factor defined by Deligne \cite[Section 8.12, Section 5]{Deligne1973}. In the notation of \cite[Section 3.2]{Wedhorn2008}, it is given by
	$$\varepsilon_0\left(\rho, \fieldCharacter\right) = \varepsilon\left(\rho, \fieldCharacter, \differential x\right) \cdot \det\left(-\rho\left(\Frobenius\right)\restriction_{V_{\rho}^{I_{\localField}}}\right),$$
	where $V_{\rho}^{I_{\localField}}$ is the subspace of $V_{\rho}$ consisting of $\rho\left(I_{\localField}\right)$-fixed points. Note that our $\varphi$ and $\rho$ correspond to $\rho$ and $r$ of \cite{Wedhorn2008}, respectively.
\end{enumerate}
\begin{remark}
	We follow the convention of \cite[Equations (8.12.1), (8.12.4)]{Deligne1973} and \cite[Equation (7)]{GrossReeder2010} that include the additional quantity $\dim V_{\rho}^{I_{\localField}} - \dim V_{\rho, N}^{I_{\localField}}$ in the exponent. See also \cite[Page 7]{Yao2017}. It seems that \cite[Section 3.2]{Wedhorn2008} and \cite{YeZelingher2021} do not incorporate this quantity. These could be fixed by replacing the second displayed formula on Page 38 in \cite{Wedhorn2008} with $$\varepsilon(\rho, \psi, s)=\varepsilon(|\cdot |^s r, \psi, d x) \operatorname{det}(-\left. q^{-s} \cdot \Phi\right|_{V^{I_K} / V_N^{I_K}}),$$
	replacing the displayed formula above Equation (4) in \cite{YeZelingher2021} with
	$$\epsilon(s, \phi, \psi, d x)=\epsilon(\phi, \psi, d x) q^{(\dim V-a(\phi) + \dim V_N^{I} - \dim V^{I}) s},$$
	and by replacing Equation (4) in \cite{YeZelingher2021} with
	$$\epsilon\left(s, \phi_\lambda, \psi, d x\right)=\epsilon\left(\phi_\lambda, \psi, d x\right) q^{\left(\dim V_{\lambda, N_{\lambda}}^I\right) s},$$
	where $\phi_{\lambda} = \left(\rho_{\lambda}, N_{\lambda}\right)$ is a representative of the tamely ramified representation under the bijection (in particular, this quantity depends on the representative). Note that when $N_{\lambda} = 0$ or when $V_{\lambda}^{I} = 0$ we have $V_{\lambda}^{I} =V_{\lambda, N_{\lambda}}^I$, so in this case the formulas in \cite{YeZelingher2021} are true. This is the only case where these formulas are used in \cite{YeZelingher2021} so the results of \cite{YeZelingher2021} remain true. We also take this opportunity to note a typo in \cite[Pages 140-141]{YeZelingher2021} where the exponents $m\left(s - \frac{1}{2}\right)s$ should be $m\left(s - \frac{1}{2}\right)$.
\end{remark}

The gamma factor $\gamma\left(s, \varphi, \psi\right)$ is defined by the formula
$$\gamma\left(s, \varphi, \psi\right) = \varepsilon\left(s, \varphi, \fieldCharacter\right) \frac{L\left(1-s, \varphi^{\vee}\right)}{L\left(s, \varphi\right)}.$$
It is standard knowledge that $\gamma\left(s, \varphi, \fieldCharacter\right)$ and $\varepsilon\left(s, \varphi, \fieldCharacter\right)$ depend only on $\rho$ and not on $N$, so we can write $\gamma\left(s, \varphi, \fieldCharacter\right) = \gamma\left(s, \rho, \fieldCharacter\right)$ and $\varepsilon\left(s, \varphi, \fieldCharacter\right)$ = $\varepsilon\left(s, \rho, \fieldCharacter\right)$. Using the facts above we can rewrite
\begin{equation*}
	\begin{split}
		\gamma\left(s, \varphi, \fieldCharacter\right) & = \varepsilon_0\left(\rho, \fieldCharacter\right) \cdot \residueFieldCardinality^{-\left(\dim V_{\rho} \cdot \mathbf{n}\left(\fieldCharacter\right) + \mathbf{a}\left(\rho\right) + \dim V_{\rho}^{I_{\localField}}\right) s} \frac{\det\left(\left(\idmap -  \residueFieldCardinality^{s} \rho\left(\Frobenius\right)^{-1}\right) \restriction_{V_{\rho, N}^{I_{\localField}}}\right)}{\det\left(\left(\idmap - \residueFieldCardinality^{-1} \residueFieldCardinality^{s} \rho\left(\Frobenius\right)^{-1}\right) \restriction_{V_{\rho, N}^{I_{\localField}}} \right)}.
	\end{split}
\end{equation*}
Thus we see that the lowest degree term in the Laurent expansion of $\gamma\left(s, \varphi, \fieldCharacter\right)$ with respect to the variable $\residueFieldCardinality^s$ is $\varepsilon_0\left(\rho, \fieldCharacter\right) \residueFieldCardinality^{-\left(\dim V \cdot \mathbf{n}\left(\fieldCharacter\right) + \mathbf{a}\left(\rho\right) + \dim V_{\rho}^{I_{\localField}}\right) s}$, and that its coefficient is $\varepsilon_0\left(\rho, \fieldCharacter\right)$.

Additionally, suppose that $\varphi = \left(\left(\rho, V\right), N\right)$ is a semi-simple Weil--Deligne representation and that $\left(\rho, V\right) = \left(\rho_1 \oplus \rho_2, V_1 \oplus V_2\right)$. Then 
$$\gamma\left(s, \varphi, \fieldCharacter\right) = \gamma\left(s, \rho, \fieldCharacter\right) = \gamma\left(s, \rho_1, \fieldCharacter\right)\gamma\left(s, \rho_2, \fieldCharacter\right).$$
This implies that $\varepsilon_0\left(\rho, \fieldCharacter\right) = \varepsilon_0\left(\rho_1, \fieldCharacter\right) \varepsilon_0\left(\rho_2, \fieldCharacter\right)$.

\subsection{Reduction to supercuspidals}\label{subsec:reduction-to-supercuspidals}
Under the local Langlands correspondence, irreducible supercuspidal representations of $\GL_n\left(\localField\right)$ correspond to irreducible Weil--Deligne representations of dimension $n$. The irreduciblity enforces the nilpotent part of the associated Langlands parameter to be zero.

We quickly explain how given a correspondence between irreducible supercuspidal representations and irreducible Weil--Deligne representations one can obtain the full local Langlands correspondence for general linear groups.

\subsubsection{Steinberg representation}
Let $\depthZero$ be an irreducible supercuspidal representation of $\GL_n\left(\localField\right)$.

By \cite[Section 9]{Zelevinsky1980} the normalized parabolically induced representation $$\Ind{P_{\left(n^d\right)}}{\GL_{nd}\left(\localField\right)}{\abs{\det}^{-\frac{d-1}{2}} \depthZero \boxtimes \abs{\det}^{-\frac{d-3}{2}}\depthZero \boxtimes \dots \boxtimes \abs{\det}^{\frac{d-1}{2}}\depthZero}$$
has a unique irreducible quotient, where $P_{\left(n^d\right)}$ is the standard upper parabolic subgroup of $\GL_{nd}\left(\localField\right)$ corresponding to the composition $\left(n^d\right) = \left(n,n,\dots,n\right)$. This unique irreducible quotient is called the \emph{generalized Steinberg representation} and denoted $\SteinbergRepresentation{\depthZero}{d}$. 

\subsubsection{Langlands parameter of Steinberg representation}
To describe the Langlands parameter of the generalized Steinberg representation, we first need to introduce special Weil--Deligne representations.

The \emph{special $n$-dimensional Weil--Deligne} representation $$\standardRepresentation_n = \left(\left(\rho_{\standardRepresentation_n},\cComplex^n\right), N_{\standardRepresentation_n}\right)$$ is the representation that with respect to the standard basis of $\cComplex^n$ is given by
\begin{align*}
	\rho_{\standardRepresentation_n}\left(w\right) = \begin{pmatrix}
		\abs{w}^{-\frac{n-1}{2}}\\
		& \abs{w}^{-\frac{n-3}{2}}\\
		& & \ddots\\
		& & & \abs{w}^{\frac{n-1}{2}}
	\end{pmatrix} \,\,\text{ and }\,\, N_{\standardRepresentation_n} = \begin{pmatrix}
	0\\
	1 & 0\\
	 & 1 & 0 \\
	& & \ddots & \ddots\\
	& & & 1 & 0\\
	\end{pmatrix}.
\end{align*}

Let $\depthZero$ be an irreducible supercuspidal representation of $\GL_n\left(\localField\right)$ and let $\varphi_{\depthZero} = \left(\left(\rho_{\depthZero}, V_{\rho_{\depthZero}}\right), 0\right)$ be its Langlands parameter. The Langlands parameter of $\SteinbergRepresentation{\depthZero}{d}$ is given by $$\varphi_{\SteinbergRepresentation{\depthZero}{d}} = \left(\left(\rho_{\SteinbergRepresentation{\depthZero}{d}}, V_{\rho_{\SteinbergRepresentation{\depthZero}{d}}}\right), N_{\SteinbergRepresentation{\rho}{d}}\right) = \left(\left(\rho_{\depthZero} \otimes \rho_{\standardRepresentation_d}, V_{\rho_{\depthZero}} \otimes \cComplex^d\right), \idmap_{V_{\rho_{\depthZero}}} \otimes N_{\standardRepresentation_d}\right).$$

\subsubsection{Langlands quotients}
One form of the Langlands classification for general linear groups states that for any irreducible representation $\Sigma$ of $\GL_n\left(\localField\right)$ there exist
\begin{enumerate}
    \item positive integers $n_1,\dots,n_{\ell}$, 
    \item irreducible supercuspidal representations $\depthZero_1$, $\dots$, $\depthZero_{\ell}$ of $\GL_{n_1}\left(\localField\right)$, $\dots$, $\GL_{n_{\ell}}\left(\localField\right)$, respectively,  that have unitary central characters,
    \item positive integers $d_1$, $\dots$, $d_{\ell}$, 
such that $n_1 d_1 + \dots + n_{\ell} d_{\ell} = n$, 
\item and real numbers $r_1\geq r_2 \geq \dots \geq r_{\ell}$,
\end{enumerate}  such that $\Sigma$ is the unique irreducible quotient of the normalized parabolically induced representation \begin{equation}\label{eq:standard-module}
\Ind{P_{(n_1 d_1, \dots, n_{\ell} d_{\ell})}}{\GL_n\left(\localField\right)}{\abs{\det}^{r_1} \SteinbergRepresentation{\depthZero_1}{d_1} \boxtimes \dots \boxtimes \abs{\det}^{r_{\ell}} \SteinbergRepresentation{\depthZero_{\ell}}{d_{\ell}}}.
\end{equation}
To such an irreducible representation $\Sigma$ we attach the Langlands parameter
$$\varphi_{\Sigma} = \bigoplus_{i=1}^{\ell} \abs{\cdot}^{r_i} \varphi_{\SteinbergRepresentation{\depthZero_i}{d_i}} = \bigoplus_{i=1}^{\ell} \left(\left(\abs{\cdot}^{r_i} \rho_{\SteinbergRepresentation{\depthZero_i}{d_i}}, V_{\rho_{\SteinbergRepresentation{\depthZero_i}{d_i}}}\right), N_{\SteinbergRepresentation{\depthZero_i}{d_i}}\right).$$

\subsubsection{Representations of $W_{\localField} \times \SL_2\left(\cComplex\right)$}\label{subsec:representations-of-WF-times-SL2}
In the literature one often replaces semi-simple Weil--Deligne representations with semi-simple representations $\left(\phi, V_{\phi}\right)$ of $W_{\localField} \times \SL_2\left(\cComplex\right)$ such that (\ref{item:wd-homomorphism}), (\ref{item:wd-smooth}) and (\ref{item:wd-frobenius}) hold (where $W_{\localField}$ is identified with $W_{\localField} \times \left\{\IdentityMatrix{2}\right\}$) and (\ref{item:wd-nilpotent}) is replaced with the condition that the restriction of $\phi$ to $\left\{1\right\} \times \SL_2\left(\cComplex\right)$ is an algebraic representation. To see that these two definitions are equivalent one uses the exponential map, see \cite[Section 2.1]{GrossReeder2010}.

Explicitly, given $\varphi_{\Sigma}$ from the end of the previous section we replace $\varphi_{\Sigma}$ with the following semi-simple representation of $W_{\localField} \times \SL_2\left(\cComplex\right)$:
\begin{equation*}
	\phi_{\Sigma} = \bigoplus_{i=1}^{\ell} \left(\abs{\cdot}^{r_i} \rho_{\depthZero_i} \boxtimes \mathrm{Sym}^{d_i-1}, V_{\rho_{\depthZero_i}} \boxtimes \mathrm{Sym}^{d_i-1}\left(\cComplex^2\right) \right),
\end{equation*}
where $\mathrm{Sym}^{d}$ is the unique irreducible representation of $\SL_2\left(\cComplex\right)$ of dimension $d+1$, that is, the $d$-th symmetric power of $\cComplex^2$.

\subsection{Depth zero representations of $\GL_n\left(\localField\right)$}

We say that an irreducible admissible representation of $\GL_n\left(\localField\right)$ is a depth zero representation if it has a non-zero vector invariant under a pro-unipotent radical $\quotientMap^{-1}\left(N\right) \subset \GL_n\left(\ringOfIntegers\right)$ for some unipotent radical $N$ of some parabolic subgroup $P$ of $\GL_n\left(\finiteField\right)$. We allow $P = \GL_n\left(\finiteField\right)$, in which case $N = \left\{\IdentityMatrix{n}\right\}$. The purpose of this section is to recall the classification of irreducible depth zero representations of general linear groups and their Langlands parameters.

\subsubsection{Depth zero supercuspidal representations}
We begin with recalling the classification of irreducible depth zero supercuspidal representations of $\GL_n\left(\localField\right)$.

Let $\tau$ be an irreducible cuspidal representation of $\GL_n\left(\finiteField\right)$. Given $t \in \multiplicativegroup{\cComplex}$ we define $\depthZero_{\tau, t}$ to be the compactly induced representation
$$\depthZero_{\tau, t} = \ind{\multiplicativegroup{\localField} \cdot \GL_n\left(\ringOfIntegers\right)}{\GL_n\left(\localField\right)}{\characterLift_{\centralCharacter{\tau}, t} \otimes \tau \circ \quotientMap}.$$
Here $\quotientMap \colon \GL_n\left(\ringOfIntegers\right) \to \GL_n\left(\finiteField\right)$ is the quotient map and $\characterLift_{\centralCharacter{\tau}, t} \colon \multiplicativegroup{\localField} \to \multiplicativegroup{\cComplex}$ is the character defined by $\characterLift_{\centralCharacter{\tau}, t}\left(\uniformizer^j \cdot x\right) = t^j \centralCharacter{\tau}\left(\quotientMap\left(x\right)\right)$ for any $j \in \zIntegers$ and $x \in \multiplicativegroup{\ringOfIntegers}$, where $\quotientMap \colon \multiplicativegroup{\ringOfIntegers} \to \multiplicativegroup{\finiteField}$ is also the quotient map. By \cite[Theorem 6.2]{PrasadRaghuram2008}, the representation $\depthZero_{\tau, t}$ is an irreducible supercuspidal representation of $\GL_n\left(\localField\right)$. It is clear that $\depthZero_{\tau, t}$ has a non-zero vector invariant under the congruence subgroup $\IdentityMatrix{n} + \uniformizer \squareMatrix_n\left(\ringOfIntegers\right)$, and thus it is a depth zero representation. Conversely, by \cite[Proposition 6.8]{MoyPrasad1996} any irreducible depth zero supercuspidal representation of $\GL_n\left(\localField\right)$ arises in this way.

\subsubsection{Depth zero representations}

By \cite[Theorem 5.2]{MoyPrasad1996} (see also \cite[Proposition 2.1]{LanskyRanghuram2003} and \cite[Section 2.6]{AubertBaumPlymenSolleveld2016}), the unique irreducible quotient of \eqref{eq:standard-module} is of depth zero if and only if the supercuspidal representations $\depthZero_1$, $\dots$, $\depthZero_{\ell}$ are of depth zero. Thus by Section \ref{subsec:reduction-to-supercuspidals} the description of Langlands parameters of irreducible depth zero representations of $\GL_n\left(\localField\right)$ reduces to the ones of irreducible depth zero supercuspidal representations. To establish the latter, we recall the parameterization of irreducible cuspidal representations of $\GL_n\left(\finiteField\right)$ and some facts about the character groups for $\finiteField$.

\subsubsection{Character groups}
For any $n \ge 1$, let $\finiteFieldExtension{n} \slash \finiteField$ be the unique extension of degree $n$ in $\algebraicClosure{\finiteField}$. If $n' \mid n$, let $\FieldNorm{n}{n'} \colon \multiplicativegroup{\finiteFieldExtension{n}} \to \multiplicativegroup{\finiteFieldExtension{n'}}$ be the norm map.

Let $\charactergroup{n}$ be the character group of $\multiplicativegroup{\finiteFieldExtension{n}}$, consisting of characters $\alpha \colon \multiplicativegroup{\finiteFieldExtension{n}}\to \multiplicativegroup{\cComplex}$. If $n' \mid n$, we have a group homomorphism $\charactergroup{n'} \to \charactergroup{n}$ given by $\alpha' \mapsto \alpha' \circ \FieldNorm{n}{n'}$. This homomorphism is injective since the norm map $\FieldNorm{n}{n'}$ is surjective. Thus we obtain a directed system $\left(\charactergroup{n}\right)_{n\in\mathbb N}$ with respect to the divisibility partial order and we set $$\Gamma_{\finiteField} = \lim_{\longrightarrow} \charactergroup{n}.$$
Given a character $\alpha \in \charactergroup{n}$, we denote its image in $\Gamma_{\finiteField}$ by $\alpha_{\Gamma_{\finiteField}}$. The \emph{Frobenius orbit} of an element $\theta \in \Gamma_{\finiteField}$ is the set $\left[\theta\right] \coloneq \left\{ \theta^{q^j} \mid j \ge 0 \right\}$. The \emph{degree} of $\theta$, denoted $\deg \theta$ or $\deg \left[\theta\right]$, is the cardinality of its Frobenius orbit $\left[\theta\right]$. If $d = \deg \theta$ then $d$ is the smallest positive integer such that $\theta = \alpha_{\Gamma_{\finiteField}}$ for some $\alpha \in \charactergroup{d}$, and if $\beta \in \charactergroup{n}$ is such that $\theta = \beta_{\Gamma_{\finiteField}}$ then $d \mid n$ and $\beta = \alpha \circ \FieldNorm{n}{d}$.

If $\alpha \in \charactergroup{n}$, we will sometimes identify the Frobenius orbit of $\alpha_{\Gamma_{\finiteField}}$ with the set $\left[\alpha\right] = \left\{ \alpha^{q^j} \mid j \ge 0 \right\}$ and denote $\deg \alpha = \deg \alpha_{\Gamma_{\finiteField}}$ for the cardinality of $\left[\alpha\right]$. We say that $\alpha \in \charactergroup{n}$ is \emph{regular} if $\deg \alpha = n$.

\subsubsection{Wild inertia}
Let $P_{\localField} \subset I_{\localField}$ be the wild inertia subgroup $$P_{\localField} = \left\{ w \in \Galois\left(\algebraicClosure{\localField} \slash \localField\right) \mid w\left(x\right) - x \in \uniformizer_{\localField}^2 \mathfrak{o}_{\algebraicClosure{\localField}} \text{ for all } x \in \mathfrak{o}_{\algebraicClosure{\localField}} \right\},$$
where $\mathfrak{o}_{\algebraicClosure{\localField}}= \left\{x \in \algebraicClosure{\localField} \mid \abs{x} \le 1\right\}$ and $\abs{\cdot} \colon \algebraicClosure{\localField} \to \rReal$ is the unique extension of the absolute value on $\localField$ to its algebraic closure $\algebraicClosure{\localField}$.

We have that $P_{\localField}$ is a normal subgroup of $I_{\localField}$. Moreover, $$I_{\localField} \slash P_{\localField} \cong \lim_{\longleftarrow} \multiplicativegroup{\finiteFieldExtension{n}},$$
where the right hand side is the inverse limit of the inverse system $\left(\multiplicativegroup{\finiteFieldExtension{n}}\right)_{n\in\mathbb N}$ with respect to the divisibility partial order and with respect to the transition maps $\FieldNorm{n}{n'} \colon \multiplicativegroup{\finiteFieldExtension{n}} \to \multiplicativegroup{\finiteFieldExtension{n'}}$ for $n' \mid n$. Thus we have that the character group of $I_{\localField} \slash P_{\localField}$ can be naturally identified with $\Gamma_{\finiteField}$.

\subsubsection{Langlands parameters and $\varepsilon_0$-factors for depth zero supercuspidal representations}
By \cite[Section 6]{Gelfand1970} irreducible cuspidal representations of $\GL_n\left(\finiteField\right)$ are in bijection with Frobenius orbits of degree $n$ in $\charactergroup{n}$. We may use this and the work above to construct the Langlands parameter of an irreducible depth zero supercuspidal representation of $\GL_n\left(\localField\right)$.

For any $n \ge 1$, let $\localField_{n} \slash \localField$ be the unique unramified extension extension of degree $n$ in $\algebraicClosure{\localField}$. We identify the residue field of $\localField_{n}$ with $\finiteFieldExtension{n}$. We have that the Weil group of $\localField_{n}$ is given by $W_{\localField_n} = \left\langle \Frobenius^{n} \right\rangle \ltimes I_{\localField}$.

Let $\tau$ be an irreducible cuspidal representation of $\GL_n\left(\finiteField\right)$ corresponding to the Frobenius orbit $\left[\theta\right]$ for $\theta \in \Gamma_{\finiteField}$ with $\deg \theta = n$. For $t \in \multiplicativegroup{\cComplex}$ we consider the character $\mathcal{Y}^{W_{\localField_n}}_{\theta, t} \colon W_{\localField_n} \to \multiplicativegroup{\cComplex}$ given by $$\mathcal{Y}^{W_{\localField_n}}_{\theta, t}\left( \Frobenius^{nk} \cdot i \right) = \left(\left(-1\right)^{n-1} t\right)^k \cdot \theta\left(i\right),$$
for every $k \in \zIntegers$ and $i \in I_{\localField}$. Then the Langlands parameter of $\depthZero_{\tau, t}$ is given by $$\varphi = \left(\left(\rho_{\theta, t}, \Ind{W_{\localField_n}}{W_{\localField}}{\mathcal{Y}^{W_{\localField_n}}_{\theta, t}}\right), 0\right).$$ This is an irreducible Weil--Deligne representation of dimension $n$ and $\rho_{\theta, t}$ acts on the induced space by right translation. The equivalence class of $\varphi$ is independent of the representative of the Frobenius orbit $\left[\theta\right]$.

In more details, after choosing a basis consisting of the functions supported on the cosets $1,\Frobenius,\dots,\Frobenius^{n-1}$ of $W_{\localField_n} \backslash W_{\localField}$, with respect to this basis we have
\begin{align*}
	\rho_{\theta,t}\left(i\right) = \begin{pmatrix}
		\theta\left(i\right)\\
		& \theta^q\left(i\right)\\
		& & \ddots\\
		& & & \theta^{q^{n-1}}\left(i\right)
	\end{pmatrix} & \,\,\text{ and }\,\, \rho_{\theta,t}\left(\Frobenius\right) = \begin{pmatrix}
	0 & & & & \left(-1\right)^{n-1} t\\
	1 & 0\\
	& 1 & 0 \\
	& & \ddots & \ddots\\
	& & & 1 & 0\\
	\end{pmatrix}.
\end{align*}

From now on suppose that the character $\fieldCharacter \colon \localField \to \multiplicativegroup{\cComplex}$ has conductor $\maximalIdeal$ (equivalently, the exponent of $\fieldCharacter$ is $\mathbf{n}\left(\fieldCharacter\right)=-1$). By \cite[Section 5]{Deligne1973} (see also \cite{Macdonald1980,YeZelingher2021}) the $\varepsilon_0$-factor corresponding to $\varphi$ is given by the following Gauss sum $$\varepsilon_0\left(\rho_{\theta, t}, \fieldCharacter\right) = \left(-1\right)^n \tau\left(\alpha, \fieldCharacter_{n}\right) = \left(-1\right)^{n-1} q^{-\frac{n}{2}} \sum_{x \in \multiplicativegroup{\finiteFieldExtension{n}}} \alpha^{-1}\left(x\right) \fieldCharacter_n\left(x\right),$$
where $\alpha \in \charactergroup{n}$ is a regular character such that $\theta = \alpha_{\Gamma_{\finiteField}}$ and $\fieldCharacter_n \colon \finiteFieldExtension{n} \to \multiplicativegroup{\cComplex}$ is the character $\fieldCharacter \circ \trace_{\finiteFieldExtension{n} \slash \finiteField}$. Notice that this value does not depend on $t$ nor on the representative of $\left[\alpha\right]$. If we write $\varepsilon_0\left(\tau, \fieldCharacter\right) = \left(-1\right)^n \tau\left(\alpha, \fieldCharacter_n\right)$ then $$\varepsilon_0\left(\rho_{\theta, t}, \fieldCharacter\right) = \varepsilon_0\left(\tau, \fieldCharacter\right).$$

\subsubsection{Langlands parameters and $\varepsilon_0$-factors for depth zero representations}
Suppose now that $\Sigma$ is an irreducible depth zero representation of $\GL_n\left(\localField\right)$ and that $\Sigma$ is the unique irreducible quotient of $$\Ind{P_{(n_1 d_1, \dots, n_{\ell} d_{\ell})}}{\GL_n\left(\localField\right)}{\abs{\det}^{r_1} \SteinbergRepresentation{\depthZero_1}{d_1} \boxtimes \dots \boxtimes \abs{\det}^{r_{\ell}} \SteinbergRepresentation{\depthZero_{\ell}}{d_{\ell}}},$$ where the notations are as before. Then, as we explained above, $\depthZero_i$ is an irreducible depth zero supercuspidal representation for all $1 \le i \le \ell$. For every $1 \le i \le \ell$ we write $\depthZero_i = \depthZero_{\tau_i, t_i}$, where $\tau_i$ is an irreducible cuspidal representation of $\GL_{n_i}\left(\finiteField\right)$ and $t_i \in \multiplicativegroup{\cComplex}$ satisfies $\abs{t_i} = 1$. Then we have that the Langlands parameter of $\Sigma$ is $$\phi_{\Sigma} = \bigoplus_{i=1}^{\ell} \left(\abs{\cdot}^{r_i} \rho_{\depthZero_{\tau_i, t_i}} \boxtimes \mathrm{Sym}^{d_i-1}, V_{\rho_{\depthZero_i}} \boxtimes \mathrm{Sym}^{d_i-1}\left(\cComplex^2\right) \right).$$
By the multiplicativity property of $\varepsilon_0$-factors we obtain that
$$\varepsilon_0\left(\phi_{\Sigma}, \fieldCharacter\right) = \varepsilon_0\left(\Sigma, \fieldCharacter\right) = \prod_{i=1}^{\ell} \varepsilon_0\left(\tau_i, \fieldCharacter\right)^{d_i} = \left(-1\right)^n \prod_{i=1}^{\ell} \tau\left(\alpha_i, \fieldCharacter_{n_i}\right)^{d_i},$$
where for every $1 \le i \le \ell$ we have that $\alpha_i \colon \multiplicativegroup{\finiteFieldExtension{n_i}} \to \multiplicativegroup{\cComplex}$ is a regular character such that $\tau_i$ corresponds to the Frobenius orbit of $\alpha_i$.

\subsubsection{Tamely ramified parameters}\label{subsec:tamely-ramified-parameters}

A representation $\left(\rho, V_{\rho}\right)$ of  $W_{\localField}$ is called \emph{tamely ramified} if the wild inertia subgroup $P_{\localField}$ acts trivially, i.e., if $\rho\left(P_{\localField}\right) = \{\idmap_{V_{\rho}}\}$. By \cite[Section 5.16]{Deligne1973} (see also \cite[Theorem 2.1]{YeZelingher2021}), a finite-dimensional representation $\rho$ is tamely ramified if and only if $$\rho = \bigoplus_{i=1}^{\ell} \Ind{W_{\localField_{n_i}}}{W_{\localField}}{\mathcal{Y}_{\theta_i, t_i}^{W_{\localField_{n_i}}}},$$
where $\theta_1, \dots, \theta_{\ell} \in \Gamma_{\finiteField}$ and $t_1,\dots,t_{\ell} \in \multiplicativegroup{\cComplex}$. Moreover, let $\theta'_1, \dots, \theta'_r \in \Gamma_{\finiteField}$ be the representatives for all the different Frobenius orbits among $\left[\theta_1\right]$, $\dots$, $\left[\theta_{\ell}\right]$. For every $1 \le i \le r$ let $b_i \ge 1$ be the number of $1 \le j \le \ell$ such that $\left[\theta_j\right] = \left[\theta'_i\right]$. Then
$$\rho \restriction_{I_{\localField}} = \bigoplus_{i=1}^{r} \bigoplus_{j=0}^{\deg \left[\theta_i\right] - 1} \left(\theta'_i\right)^{q^j} \boxtimes \cComplex^{b_i},$$ and
$$\varepsilon_0\left(\rho, \fieldCharacter\right) = \left(-1\right)^{\dim V_{\rho}} \prod_{i=1}^r \tau\left(\alpha_i, \fieldCharacter_{\deg \left[\theta_i\right]}\right)^{b_i},$$
where for every $1 \le i \le r$, we have that $\alpha_i \colon \multiplicativegroup{\finiteFieldExtension{\deg \theta'_i}} \to \multiplicativegroup{\cComplex}$ is a character such that $\left(\alpha_i\right)_{\Gamma_{\finiteField}} = \theta'_i$.

A Weil--Deligne representation $\varphi = \left(\left(\rho, V_{\rho}\right), N\right)$ (respectively, a representation $\phi \colon W_{\localField} \times \SL_2\left(\cComplex\right) \to \GL\left(V_{\phi}\right)$ as in Section \ref{subsec:representations-of-WF-times-SL2}) is called \emph{tamely ramified} if $\rho$ (respectively, $\phi \restriction_{W_{\localField} \times \{\IdentityMatrix{2}\}}$) is tamely ramified. By the results above we see that under the local Langlands correspondence for general linear groups, irreducible depth zero representations correspond to tamely ramified semi-simple Weil--Deligne representations.

Suppose that $\varphi = \left(\left(\rho, V_{\rho}\right), N\right)$ is a tamely ramified representation. Then the Artin conductor is given by $\mathbf{a}\left(\rho\right) = \dim V_{\rho} - \dim V_{\rho}^{I_{\localField}}$ and since $\mathbf{n}\left(\fieldCharacter\right) = -1$ we have
\begin{equation*}
	\begin{split}
		\gamma\left(s, \varphi, \fieldCharacter\right) & = \varepsilon_0\left(\rho, \fieldCharacter\right) \cdot \frac{\det\left(\left(\idmap -  \residueFieldCardinality^{s} \rho\left(\Frobenius\right)^{-1}\right) \restriction_{V_{\rho, N}^{I_{\localField}}}\right)}{\det\left(\left(\idmap - \residueFieldCardinality^{-1} \residueFieldCardinality^{s} \rho\left(\Frobenius\right)^{-1}\right) \restriction_{V_{\rho, N}^{I_{\localField}}} \right)}.
	\end{split}
\end{equation*}
In other words, we see that the lowest degree term of the Taylor expansion of $\gamma\left(s, \varphi, \fieldCharacter\right)$ with respect to the variable $q^s$ is its constant term which equals $\varepsilon_0\left(\rho, \fieldCharacter\right)$.

\subsection{Depth zero representations of classical groups}
We use the notations of \Cref{section:relation-to-depth-zero-representations}.

Let $G = \IsometryGroup^0\left(\localHermitianSpace\right)$ and let $\hat{G} \subset \GL_M\left(\cComplex\right)$ be the complex dual group of $G$. Explicitly, we have the following table:
\begin{table}[h]
	\centering
	\begin{tabular}{|c|c|l|}
		\hline
		$\hat{G}$ & $M$ & \\
		\hline
		$\GL_d(\cComplex)$      & $d$    & $\localQuadraticExtension \ne \localField$, $\epsilon_{\localHermitianSpace}=1$ and $\dim_{\localQuadraticExtension}\localHermitianSpace = d$    \\
		$\Sp_{2n}(\cComplex)$   & $2n$   & $\localQuadraticExtension = \localField$, $\epsilon_{\localHermitianSpace} = 1$ and $\dim_{\localField} \localHermitianSpace = 2n+1$ \\
		$\SO_{2n+1}(\cComplex)$ & $2n+1$ & $\localQuadraticExtension = \localField$, $\epsilon_{\localHermitianSpace} = -1$ and $\dim_{\localField} \localHermitianSpace = 2n$  \\
		$\SO_{2n}(\cComplex)$   & $2n$   & $\localQuadraticExtension = \localField$, $\epsilon_{\localHermitianSpace} = 1$ and $\dim_{\localField} \localHermitianSpace = 2n$   \\
		\hline
	\end{tabular}
\end{table}

Let $$K = \begin{dcases}
	\localQuadraticExtension & \text{if } \localQuadraticExtension \ne \localField \text{ and } \epsilon_{\localHermitianSpace}=1, \\
	\localField_2 & \text{if } \localQuadraticExtension = \localField, \epsilon_{\localHermitianSpace}=1, \dim_{\localField} \localHermitianSpace \text{ is even and } \hermitianSpace \text{ is not split,}\\
	\localField & \text{otherwise.}
\end{dcases}$$

We set $\LGroup{G} = \hat{G} \rtimes \Galois\left(K \slash \localField\right)$.
We refer to \cite[Section 5.3]{MinguezSecherre2025} for the action of $\Galois\left(K \slash \localField\right)$ on $\hat{G}$ in the two cases where $K \ne \localField$. In the non-split even orthogonal group case where $\localQuadraticExtension = \localField$, $\epsilon_{\localHermitianSpace} = 1$, $\dim_{\localField} \localHermitianSpace=2n$ and $\hermitianSpace$ is not split, we can and will identify $\LGroup{G}$ with the orthogonal group $\mathrm{O}_{2n}\left(\cComplex\right)$ (see \cite[Section 5.3]{MinguezSecherre2025}).

A \emph{Langlands parameter} for $G$ is a homomorphism $\phi \colon W_{\localField} \times \SL_2\left(\cComplex\right) \to \LGroup{G}$ such that
\begin{enumerate}
	\item There exists an open subgroup $U$ of $I_{\localField}$ such that the projection of $\phi\left(U, \IdentityMatrix{2}\right)$ to $\hat{G}$ is the identity.
	\item The restriction of $\phi$ to $\left\{1\right\} \times \SL_2\left(\cComplex\right)$ is algebraic.
	\item For every $w \in W_{\localField}$ and $x \in \SL_2\left(\cComplex\right)$, the projection of $\phi\left(w, x\right)$ to $\Galois\left(K \slash \localField\right)$ is $w \restriction_{K}$.
	\item For every $w \in W_{\localField}$ and $x \in \SL_2\left(\cComplex\right)$, the projection of $\phi\left(w, x\right)$ to $\widehat{G}$ is semisimple.
\end{enumerate}

In general, the local Langlands correspondence seeks a map with finite fibers between equivalence classes of irreducible admissible representations of $G$ and $\widehat{G}$-conjugacy classes of Langlands parameters of $G$. This map is expected to satisfy many natural properties. For all the classical groups considered in this paper this correspondence has been constructed (c.f.\ \cite{harris2001geometry} and \cite{henniart2000preuve} for the general linear case, \cite{arthur2013endoscopic} for the symplectic and orthogonal case and \cite{mok2015endoscopic} for the unitary case) and many of its expected properties have been established. In particular, by Proposition A.4.1 of \cite{atobe2024local} the doubling method gamma factor of an irreducible admissible representation of $G$ equals the gamma factor of its Langlands parameter.

Given a Langlands parameter $\phi$ for $G$, we may obtain a Langlands parameter $\phi' \colon W_{\localQuadraticExtension} \times \SL_2\left(\cComplex\right) \to \GL_{M}\left(\cComplex\right)$ for the general linear group $\GL_M\left(\localQuadraticExtension\right)$ given by realizing $\phi'\left(w,x\right) = \phi\left(w, x\right) \in \GL_M\left(\cComplex\right)$, where $W \in W_{\localQuadraticExtension}$ and $x \in \SL_2\left(\cComplex\right)$. Thus if $\phi$ is the Langlands parameter corresponding to an irreducible admissible representation $\Pi$ of $G$, by invoking the local Langlands correspondence for general linear groups on $\phi'$ we obtain an irreducible admissible representation of $\GL_M\left(\localQuadraticExtension\right)$ called \emph{the standard functorial lift of $\Pi$}.

Let $T \subset \IsometryGroup^0\left(\hermitianSpace\right)$ be a maximal anisotropic torus and let $\theta \colon T \to \multiplicativegroup{\cComplex}$ be a character. Suppose that $\pi$ is an irreducible cuspidal representation of $\IsometryGroup^0\left(\hermitianSpace\right)$ such that $\innerproduct{\trace \pi}{\DeligneLusztigInduction{T}{\IsometryGroup^0\left(\hermitianSpace\right)} \theta} \ne 0$. Let $$\Pi = \ind{\maximalCompact \cap G}{G}{\pi \circ \quadraticQuotientMap}$$ be a depth zero representation of $G$ constructed via $\pi$.

Suppose that $\phi$ is the Langlands parameter of $\Pi$. Then $\phi'$ is a tamely ramified Weil--Deligne representation \cite{LustStevens2020}. It is possible to describe the restriction of $\phi'$ to $I_{\localField} \times \SL_2\left(\cComplex\right)$ explicitly \cite{LustStevens2020}. However, for the computation of $\varepsilon_0$-factors we only need to know the restriction of $\phi'$ to $I_{\localField} \times \left\{\IdentityMatrix{2}\right\}$, which is easier to describe and we outline this in the following.

Suppose that $T = \prod_{i=1}^s \NormOneGroup{2m_i}$ and $\theta = \theta_1 \times \dots \times \theta_s$, where $\theta_i \colon \NormOneGroup{2m_i} \to \multiplicativegroup{\cComplex}$, where $\sum_{i=1}^s m_i = \frac{2}{\grpIndex{\quadraticExtension}{\finiteField}} \left\lfloor \frac{\dim_{\finiteField} \hermitianSpace}{2} \right\rfloor$. Recall that if $\quadraticExtension \ne \finiteField$ then the $m_1,\dots,m_s$ are all odd. Let $\transfer{\theta}_i \colon \multiplicativegroup{\finiteFieldExtension{2m_i}} \to \multiplicativegroup{\cComplex}$ be the character $\transfer{\theta}_i\left(x\right) = \theta_i\left(x^{1-q^{m_i}}\right)$. We choose representatives $\beta_1, \dots, \beta_t \in \Gamma_{\quadraticExtension}$ for the different Frobenius orbits among $\left[\left(\transfer{\theta}_1\right)_{\Gamma_{\quadraticExtension}}\right]$, $\dots$, $\left[\left(\transfer{\theta}_t\right)_{\Gamma_{\quadraticExtension}}\right]$. For any $1 \le i \le t$ let $$b_i = \frac{1}{\deg_{\Gamma_{\quadraticExtension}}\beta_i} \sum_{\substack{1 \le j \le s\\
\left[\transfer{\theta}_j\right]_{\Gamma_{\quadraticExtension}} = \left[\beta_i\right]}} \frac{2m_j}{\grpIndex{\quadraticExtension}{\finiteField}}.$$
Then the restriction of $\phi$ to $I_{\localQuadraticExtension} = I_{\localField}$ is given by \begin{equation}\label{eq:restriction-to-inertia-of-depth-zero-parameter}
	\phi \restriction_{I_{\localField} \times \left\{ \IdentityMatrix{2} \right\}} = \left[1\boxtimes\cComplex\right] \oplus \bigoplus_{j=1}^t \bigoplus_{k=0}^{\deg \beta_j-1} \beta_j^{q^k} \boxtimes \cComplex^{b_j},
\end{equation}
where the summand $\left[1 \boxtimes \cComplex\right]$ is only present in the symplectic group case where $\localQuadraticExtension = \localField$ and $\epsilon_{\localHermitianSpace} = -1$.

\begin{remark}
	We can describe $\phi'$ in a special case. See the introduction and Section 8 of \cite{LustStevens2020}. Assume that $\theta_i^2 \ne 1$ for every $1 \le i \le s$. For any $1 \le i \le t$ let $\tau_i$ be the irreducible cuspidal representation of $\GL_{\deg \beta_i}\left(\quadraticExtension\right)$ corresponding to the Frobenius orbit $\left[\beta_i\right]$. Then $b_i = \frac{d_i\left(d_i+1\right)}{2}$ for some $d_i \ge 1$ and $$\phi' = \left[1 \boxtimes \cComplex\right] \oplus \bigoplus_{j=1}^t \bigoplus_{k=1}^{d_j} \rho_{\depthZero_{\tau_j, \left(-1\right)^{k} e_{\localHermitianSpace}}} \boxtimes \mathrm{Sym}^{k-1}.$$
	Here $\rho_{\depthZero_{\tau_j, \left(-1\right)^{k} e_{\localHermitianSpace}}} = \rho_{\beta_j, \left(-1\right)^{k-1} e_{\localHermitianSpace}}$ where $$e_{\localHermitianSpace} = \begin{dcases}
		-1 & \text{if }\localQuadraticExtension = \localField, \epsilon_{\localHermitianSpace} = 1 \text{ and } \dim_{\localField}\localHermitianSpace \text{ is odd},\\
		1 & \text{otherwise},
	\end{dcases}$$
	and the summand $\left[1 \boxtimes \cComplex\right]$ is only present in the symplectic group case where $\localQuadraticExtension = \localField$ and $\epsilon_{\localHermitianSpace} = -1$.
\end{remark}

Let us finish this appendix by showing that our computation of $\dblEpsilonZeroFactor{\pi}{\chi}{\fieldCharacter}$ agrees with the one of $\varepsilon_0\left(\phi' \otimes \mathcal{Y}_{\chi_{\Gamma_{\quadraticExtension}}^{W_{\localQuadraticExtension}}, t}, \fieldCharacter\right)$ where $\chi \colon \multiplicativegroup{\quadraticExtension} \to \multiplicativegroup{\cComplex}$ is a character and $t \in \multiplicativegroup{\cComplex}$. Keep the notation above and for any $1 \le i \le s$ let $\alpha_i \colon \multiplicativegroup{\quadraticFieldExtension{\deg_{\Gamma_{\quadraticExtension}} \transfer{\theta}_i}} \to \multiplicativegroup{\cComplex}$ be a character such that $\transfer{\theta}_i = \alpha_i \circ \FieldNorm{2m_i}{\deg_{\Gamma_{\quadraticExtension}} \transfer{\theta}_i}$. Then
$$\tau_{\transfer{T}}\left(\transfer{\theta} \times \chi, \fieldCharacter\right) = \prod_{i = 1}^{s} \tau\left(\transfer{\theta}_i \times \chi, \fieldCharacter_{2m_i}\right) = \prod_{i = 1}^{s} \tau\left(\alpha_i \circ \FieldNorm{2m_i}{\deg_{\Gamma_{\quadraticExtension}} \transfer{\theta}_i} \times \chi, \fieldCharacter_{2m_i}\right).$$
By the Hasse--Davenport lifting relation this equals
$$\tau_{\transfer{T}}\left(\transfer{\theta} \times \chi, \fieldCharacter\right) = \prod_{i = 1}^{s} \tau\left(\alpha_i  \times \chi, \fieldCharacter_{\quadraticFieldExtension{\deg_{\Gamma_{\quadraticExtension}} \transfer{\theta}_i}}\right)^{\frac{2m_i}{\grpIndex{\quadraticExtension}{\finiteField} \deg_{\Gamma_{\quadraticExtension}} \transfer{\theta}_i}}.$$
For every $1 \le i \le t$ let $\alpha'_i \colon \multiplicativegroup{\quadraticFieldExtension{\deg_{\Gamma_{\quadraticExtension}} \beta_i}} \to \multiplicativegroup{\cComplex}$ be such that $\beta_i = \left(\alpha'_i\right)_{\Gamma_{\quadraticExtension}}$. Then $\tau_{\transfer{T}}\left(\transfer{\theta} \times \chi, \fieldCharacter\right)$ equals
$$\prod_{i = 1}^t \prod_{\substack{1 \le j \le s\\
		\left[\transfer{\theta}_j\right]_{\Gamma_{\quadraticExtension}} = \left[\beta_i\right]}}  \tau\left(\alpha'_i  \times \chi, \fieldCharacter_{\quadraticFieldExtension{\deg_{\Gamma_{\quadraticExtension}}} \beta_i}\right)^{\frac{2m_j}{\grpIndex{\quadraticExtension}{\finiteField} \deg_{\Gamma_{\quadraticExtension}} \beta_j}} = \prod_{i = 1}^t \tau\left(\alpha'_i  \times \chi, \fieldCharacter_{\quadraticFieldExtension{\deg_{\Gamma_{\quadraticExtension}}} \beta_i}\right)^{b_i}.$$
From this and from Section \ref{subsec:tamely-ramified-parameters} and \eqref{eq:restriction-to-inertia-of-depth-zero-parameter}, we obtain
$$\varepsilon_0\left(\phi' \otimes \mathcal{Y}_{\chi_{\Gamma_{\quadraticExtension}}^{W_{\localQuadraticExtension}}, t}, \fieldCharacter\right) = \left(-1\right)^M \tau_{\transfer{T}}\left(\transfer{\theta} \times \chi, \fieldCharacter\right) \cdot  \begin{dcases}
	\GaussSumSingleCharacter{\chi}{\fieldCharacter} & \text{if }\quadraticExtension = \finiteField \text{ and } \epsilon_{\localHermitianSpace} = -1,\\
	1 & \text{otherwise.}
\end{dcases}$$
This agrees with the computation of $\dblEpsilonZeroFactor{\pi}{\chi}{\fieldCharacter}$ from \Cref{thm:epsilon-zero-equation-for-chi-1-plus-c-is-non-trivial} and \Cref{thm:epsilon-zero-equation-for-chi-1-plus-c-is-trivial}.
\bibliographystyle{abbrv}
\bibliography{references}
\end{document}